\documentclass[11pt,a4paper]{amsart}
\usepackage[T1]{fontenc}
\usepackage[utf8]{inputenc}
\usepackage{amsmath,amssymb,amsthm,mathrsfs,mathtools,microtype,booktabs,array}
\usepackage[margin=1.02in]{geometry}
\usepackage[svgnames]{xcolor}
\usepackage[unicode,colorlinks,linkcolor=FireBrick,citecolor=DarkGreen,urlcolor=MidnightBlue]{hyperref}
\hypersetup{pdftitle={A three-block higher-rank deformed Hermitian\textendash Yang\textendash Mills correspondence on P1 x P1}}
\allowdisplaybreaks
\newtheorem{theorem}{Theorem}[section]
\newtheorem{proposition}[theorem]{Proposition}
\newtheorem{lemma}[theorem]{Lemma}

\theoremstyle{definition}
\newtheorem{definition}[theorem]{Definition}

\theoremstyle{remark}

\title[A three-block dHYM correspondence]{A three-block higher-rank deformed Hermitian--Yang--Mills correspondence on \(\mathbb P^1\times\mathbb P^1\)}
\author{Yuqi Li}
\address{C. N. Yang Institute for Theoretical Physics, Stony Brook University, Stony Brook, New York 11794, USA}
\email{yuqi.li@stonybrook.edu}

\author{Hao-Yu Sun}
\address{Weinberg Institute, The University of Texas at Austin, 2515 Speedway, C1600, Austin, Texas 78712, USA}
\address{SpaceXAI, USA}
\email{hkdavidsun@utexas.edu}
\date{October 12, 2026}
\subjclass[2020]{Primary 53C07;
	Secondary 53C55, 32L05, 35J47}
\keywords{deformed Hermitian--Yang--Mills equation,
	equivariant $Z$-stability, holomorphic chains,
	quiver bundles, dimensional reduction,
	nonlinear elliptic systems}

\begin{document}

\begin{abstract}
We study the invariant rank-four deformed Hermitian--Yang--Mills equation
associated with the chain $\mathcal O(1)\to\mathcal O(2)^{\oplus2}\to\mathcal O(3)$ over $\mathbb P^1$, with vertical weights $4,2,0$ and a fixed product
K\"ahler class and phase. For arbitrary linear arrows $\psi=Bz$ and $\phi=(Az)^T$, smooth strictly calibrated reduced solutions exist exactly when $A$ is invertible and $z^TA^TBz$ is not identically zero. The same condition characterizes stability, semistability and polystability in the specified equivariant
coherent category. The classification includes all eleven nonempty
coefficient strata and both coincident and distinct divisor points on
the rank-one lower boundary. The analytic proof retains the ordered,
noncommuting middle equation. After constructing positive-parameter
solutions, it rules out unbounded metric values by combining the source
budgets with moving spectral tests; on the rank-one lower boundary,
the divisor mass must also be retained. Global value, gradient and symbol
estimates then give a smooth strictly calibrated endpoint on the original chain.
No equivalence with full coherent or derived-category stability, uniqueness,
or estimate uniform across changing chains is asserted.
\end{abstract}

\maketitle

\section*{Introduction}

The deformed Hermitian--Yang--Mills equation asks for a Hermitian metric
whose curvature has a prescribed complex phase and a positive calibration.
In the present compact surface construction, the equation is
\[
 \operatorname{Im}\!\left(e^{-i\theta}
          (\widetilde\omega I-F_D)^2\right)=0,\qquad
 \operatorname{Re}\!\left(e^{-i\theta}
          (\widetilde\omega I-F_D)^2\right)>0.
\]
Here \(D\) is the Chern connection of the induced holomorphic bundle;
the second inequality means positive definiteness of the endomorphism
obtained after division by the positive volume form. The raw-curvature
convention, product form and principal phase are fixed explicitly below.
Our question is whether existence of such a metric is characterized
by the phases of all equivariant coherent subobjects.

We consider the chain
\[
 \mathcal O(1)\xrightarrow{\ Bz\ }\mathcal O(2)^{\oplus2}
          \xrightarrow{\ (Az)^T\ }\mathcal O(3),
 \qquad A,B\in\operatorname{Mat}_2(\mathbb C).
\]
It induces a rank-four bundle on
\(\mathbb P^1\times\mathbb P^1\) with three vertical weights.
Even though the two end vertices have rank one, the middle curvature
equation is genuinely matrix-valued. Its two arrow products need not
commute. Removing an arrow, diagonalizing the metric by an extra ansatz,
or grouping two vertices would change the system rather than prove the
statement for these original coefficients.

The result has a simple coefficient description:
\begin{quote}
For every original pair \(A,B\), a smooth strictly calibrated reduced
solution exists if and only if the induced object is equivariantly
\(Z\)-stable, if and only if it is equivariantly \(Z\)-semistable or
\(Z\)-polystable. These equivalent conditions hold precisely when
\(\operatorname{rank}A=2\) and \(Q(z)=z^TA^TBz\not\equiv0\).
\end{quote}
The exact statement is Theorem~\ref{geo-main}. The word \emph{equivariant}
is essential: the category, quotient positivity condition, admissible
subobjects and principal arguments are given in the algebraic section.
The theorem includes the rank-one lower arrow, whose image has one
divisor zero, and does not identify it with a pointwise subbundle at that
zero. It also includes repeated composition roots in the full-rank
family. Metrics are allowed to vary over all six real metric fields
for each fixed chain; no extra symmetry on the base is imposed.

\subsection*{The prescribed-phase equation and dimensional reduction}

Calibrated geometry provides a motivating scalar model. In the local
semi-flat mirror construction of Leung, Yau and Zaslow, a special
Lagrangian section with a flat line connection transforms into a line
bundle satisfying dHYM \cite[\S\S2--3.1]{LYZ2000}.
Jacob and Yau formulate the line-bundle equation on compact K\"ahler
manifolds using a metric-independent curvature integral to specify the
target phase \cite[\S1; Definition 2.1]{JY2014}. These models explain
the prescribed-phase question; they do not determine the ordering,
phase branch or strict matrix calibration in our higher-rank equation.
The correspondence problem asks whether algebraic phase inequalities
detect existence of a metric satisfying that pointwise matrix condition.

In the scalar setting, Collins, Jacob and Yau prove estimates and existence
under the supercritical-phase and subsolution hypotheses of their
Theorems~1.1--1.2 \cite{CJY2015}. These are eigenvalue estimates for a real
\((1,1)\)-form, not estimates for a noncommuting bundle-curvature system.
The physics background also concerns a different setting: Mari\~no,
Minasian, Moore and Strominger derive nonlinear calibrated equations for
abelian worldvolume fields on Calabi--Yau cycles, including the background
\(B\)-field \cite[\S\S3.2.3--3.2.4]{MMMS2000}. Douglas, Fiol and
R\"omelsberger propose \(\Pi\)-stability using lifted central-charge grades
\cite[\S2]{DFR2000}. Neither statement identifies our principal-phase
equivariant coherent test with a physical BPS or derived-category
stability condition, or realizes this rank-four bundle as a nonabelian
brane configuration.

An endomorphism-valued higher-rank dHYM formulation is proposed by
Collins and Yau \cite[\S8.1]{CY2018}. Dervan, McCarthy and Sektnan place
dHYM within the more general \(Z\)-critical equations. Their
large-volume correspondence requires a simple, sufficiently smooth
slope-semistable bundle and uniformly bounded solutions for all
sufficiently large K\"ahler scalings; it uses asymptotic \(Z\)-stability
against subbundles \cite[Theorem 1.1; Definition 2.15; Theorem 4.1]{DMS2024}.
It is not a correspondence at our fixed product class and phase.
Keller and Scarpa prove necessity of \(Z\)-stability for indecomposable
rank-two bundles on compact K\"ahler surfaces carrying \(Z\)-positive
\(Z\)-critical metrics. Their subbundle conclusion extends to saturated
coherent subsheaves under the additional sign condition~(1.4)
\cite[Theorem 1.1]{KS2025}. Their positivity tests the formal derivative
of the charge form, rather than our positive calibrated curvature square;
no rank-four sufficiency theorem is imported from this result.

Two other higher-rank constructions clarify the dimensional-reduction
comparison. Pingali's vector-bundle Monge--Amp\`ere equation has a
rank-two, Griffiths-positive necessity result for MA-stability and a
vortex correspondence on a curve times \(\mathbb P^1\)
\cite[Theorems 1.2--1.3; \S4.4]{Pingali2022}. That vortex bundle has two
line summands and one arrow; its parameters \(r_1,r_2\) are twisting
integers, not vertex ranks. Takahashi proves a rank-two necessity result
for \(J\)-stability on surfaces and, in arbitrary rank, constructs dHYM
solutions at sufficiently small volume from a simple bundle already
carrying a \(J\)-positive solution and satisfying the cohomological
conditions~(1.4) \cite[Theorems 1.2 and 1.6; \S7]{Takahashi2023}.
The \(J\)-equation, its positivity and the small-volume limit do not
supply existence for our fixed-volume three-block equation.

The weighted chain belongs to the dimensional-reduction framework of
\'Alvarez-C\'onsul and Garc\'ia-Prada \cite[\S2.3, Theorem 2.5; \S5.2]{AGGP}:
equivariance along the projective-line factor records bundles as weighted
vertices and holomorphic arrows on the base. Collins and Zhang develop
the dHYM system for two weights and one map from a line bundle to a
vector bundle \cite{CZ2026}. Unlike classical vortex coupling, its
reduced equations contain quadratic covariant-arrow derivatives and
products of arrow endomorphisms with curvature. Here the middle vertex
both receives and sends an arrow. The two arrow products can fail to
commute, and both derivatives enter its equation. Thus the second arrow
changes the matrix analysis, rather than adding a scalar copy of the
one-arrow problem.

Collins and Zhang's correspondence does not supply a three-weight,
two-arrow theorem, and their proved implication from equivariant
Bridgeland stability is not a proved converse equivalence.
Charbonneau, Oliveira and Sena-Dias construct
irreducible higher-rank solutions on the full flag manifold in complex
dimension three \cite{COS2026}; those homogeneous constructions involve
a different curvature power and radius regime. Correa treats rank-one
equations on noncompact crepant resolutions using a Calabi ansatz
\cite{Correa2026}. Neither geometry is the compact three-block system
considered here. These are bounded comparisons with the specified
versions, not a claim of exhaustive prior-art coverage or global priority.

The coherent quiver equivalence just cited is used at its equivariant
coherent scope, not as a dHYM existence theorem. The analytic construction
uses bounded real-Banach compact-map degree and continuation
from Leray and Schauder \cite{LS1934}. Their required hypotheses and the
application are stated in the proof; neither source is invoked as an
existence theorem for this chain.

\subsection*{Outline of the proof}

Proposition~\ref{geo-algebraic-classification} classifies the
equivariantly stable, semistable and polystable locus. It begins with
all coherent subchains, including
divisor modifications and arbitrary finite-length middle quotients.
An explicit charge formula turns their phase comparisons into integral
degree inequalities. The incidence conditions control the unbounded
degree range, while separate zero-charge and negative-real-axis checks
keep the chosen phase branch visible. This yields the coefficient
criterion without an analytic existence premise.

The analytic classification has separate necessity and existence steps.
Proposition~\ref{geo-elementary-exclusions} excludes low upper rank
and a vanishing lower arrow. In the remaining composition-zero case,
the arrows form an Euler sequence. Testing the middle equation with
its metric-orthogonal image and using the endpoint rows gives an
integrated identity incompatible with strict calibration
(Proposition~\ref{geo-euler-exclusion}). These exclusions apply to
arbitrary smooth positive metric triples.

For the two surviving strata, Theorems~\ref{primary-existence} and
\ref{lower-existence} prove existence. Both use the common analytic
construction of Section~\ref{common-section}, which produces a
positive-parameter solution set at one finite auxiliary mass. The
reference inverse retains the coupled first-order terms and the ordered
Lyapunov operator, rather than replacing the middle equation by scalar
multiplication. A compact-map degree argument then produces the positive-stage
component without assuming an undamped endpoint.

The decisive compactness step is to rule out metric-value escape on the
chain. In the full-rank family, the nonzero composition restricts the
limiting eigendirections, while a moving spectral test retains the
upper-arrow kinetic terms. When $B$ has coefficient rank one, its image is $L(-p)$ inside
the saturated line $L$, and the composition has divisor $p+q_0$, also
when these points coincide. A subbundle argument would lose that divisor
mass. The lower-boundary proof therefore keeps atomic and diffuse source contributions,
the damping terms, and the coupled derivative and angular-flux terms.
Local convergence away from the divisor points is used inside this
contradiction, not as a substitute for global endpoint convergence.
Once values are bounded, the gradient estimates and every principal
symbol gap give strong global convergence to a smooth strictly calibrated
solution of the original equation.

Theorem~\ref{geo-main} joins the algebraic classification and these
analytic necessity and existence results on the same coefficient pairs.
They are complementary parts of the correspondence, not two independent
complete proofs of it. The seven sections follow this division: geometry,
algebraic classification, analytic exclusions and the correspondence,
then the common construction and the two value-bound and existence proofs.

\tableofcontents

\section{The three-block construction and its coefficient domain}
\label{geo-setup}

Let $X=\mathbb P^1$, with a fixed K\"ahler form $\omega_X$ satisfying
$\int_X\omega_X=2\pi$. On $\mathcal X=X\times\mathbb P^1$ fix
\[
 \widetilde\omega=p_1^*\omega_X+2p_2^*\omega_{FS},\qquad
 \int_{\mathbb P^1}\omega_{FS}=2\pi.
\]
Thus the vertical normalization is $\sigma=4\pi$ and the curvature
parameter is $q=4\pi/\sigma=1$. Fix a basis
$z=(z_0,z_1)^T$ of $H^0(\mathbb P^1,\mathcal O(1))$. For arbitrary
$A,B\in\operatorname{Mat}_2(\mathbb C)$ consider the original chain
\begin{equation}\label{geo-chain}
 E_2=\mathcal O(1)\xrightarrow{\ \psi=Bz\ }
 E_1=\mathcal O(2)^{\oplus2}\xrightarrow{\ \phi=(Az)^T\ }
 E_0=\mathcal O(3),\qquad Q(z)=z^TA^TBz.
\end{equation}
The transpose in this coefficient formula is complex transpose. The vertex
order $0,1,2$ has ranks $(1,2,1)$ and degrees $(3,4,1)$; the displayed
arrow order has degrees $(1,4,3)$. Coefficient rank records independence
of the global linear components, rather than pointwise bundle-map rank.
There is no imposed relation $\phi\psi=0$.

The primary family has $\operatorname{rank}A=\operatorname{rank}B=2$ and
$Q\not\equiv0$. In this splitting, the upper basis condition is equivalent
to pointwise surjectivity of $\phi$, and the lower basis condition to
pointwise injectivity of $\psi$. The lower arrow has ranks $1\to2$ and is
not an isomorphism. The coefficient domain also includes rank drops
and zero arrows.

\subsection{Invariant holomorphic structure and ordered curvature}

Put $V_j=p_1^*E_j\otimes p_2^*\mathcal O(2j)$ and
$\mathcal E_{A,B}=V_0\oplus V_1\oplus V_2$ as a smooth bundle.
Under the tensor identifications
\[
 \operatorname{Hom}(\mathcal O(2),\mathcal O)
 =\mathcal O(-2)=\operatorname{Hom}(\mathcal O(4),\mathcal O(2)),
\]
choose the same invariant $(0,1)$-form $\alpha$ on both adjacent arrows,
with
$\alpha\wedge\alpha^\dagger=2i\omega_{FS}$.
This equality of the chosen invariant bases is a normalization convention;
independent rescaling of the two bases would require inverse rescaling of
the maps. For smooth positive Hermitian metrics $h_j$ on $E_j$, use
$\widetilde h_j=p_1^*h_j\otimes p_2^*h_{FS}^{2j}$, where
$iF_{h_{FS}}=\omega_{FS}$. With pullbacks suppressed, set
$\beta_1=\phi\alpha$, $\beta_2=\psi\alpha$. The Dolbeault operator and
Chern connection are
\begin{equation}\label{geo-dolbeault}
 \bar\partial_{\mathcal E}=
 \begin{pmatrix}\bar\partial_0&\beta_1&0\\
 0&\bar\partial_1&\beta_2\\0&0&\bar\partial_2\end{pmatrix},\qquad
 D=\begin{pmatrix}D_0&\beta_1&0\\
 -\beta_1^\dagger&D_1&\beta_2\\0&-\beta_2^\dagger&D_2\end{pmatrix}.
\end{equation}
The form $\alpha$ is parallel for the homogeneous connections. Both maps
are holomorphic, and
$\beta_1\wedge\beta_2=(\phi\psi)\alpha\wedge\alpha=0$, since the fiber
is a curve. This proves integrability without a length-two relation.

We use the Chern-connection conventions of
\cite[Ch.~V, \S12]{Demailly2012}:
\[
 D_{h_j}^{1,0}=\partial+h_j^{-1}\partial h_j,\qquad
 F_{h_j}=\bar\partial(h_j^{-1}\partial h_j),\qquad
 H_j=i\Lambda_XF_{h_j},\qquad
 \xi_1=\nabla^{1,0}\phi,\quad\xi_2=\nabla^{1,0}\psi.
\]
The diagonal connections on $V_j$ have curvature
$F_{h_j}-2ji\omega_{FS}$. Multiplying the connection blocks in their
displayed order gives
\begin{equation}\label{geo-curvature}
\begin{aligned}
 F_{00}&=F_{h_0}-2i\phi\phi^\dagger\omega_{FS},\\
 F_{11}&=F_{h_1}-2i\omega_{FS}
       +2i(\phi^\dagger\phi-\psi\psi^\dagger)\omega_{FS},\\
 F_{22}&=F_{h_2}-4i\omega_{FS}+2i\psi^\dagger\psi\omega_{FS},\\
 F_{01}&=\xi_1\wedge\alpha,&F_{10}&=-\xi_1^\dagger\wedge\alpha^\dagger,\\
 F_{12}&=\xi_2\wedge\alpha,&F_{21}&=-\xi_2^\dagger\wedge\alpha^\dagger.
\end{aligned}
\end{equation}
The formal blocks $F_{02}=\beta_1\wedge\beta_2$ and
$F_{20}=\beta_2^\dagger\wedge\beta_1^\dagger$ vanish by vertical bidegree.
For a general tridiagonal connection they need not vanish.

Define the ordered endomorphisms
\begin{gather}
 P_1=\phi\phi^\dagger,\quad R_1=\phi^\dagger\phi,\quad
 P_2=\psi\psi^\dagger,\quad R_2=\psi^\dagger\psi,\label{geo-products}\\
 S_0=P_1,\qquad S_1=I-R_1+P_2,\qquad S_2=2I-R_2.\label{geo-S}
\end{gather}
The derivative endomorphisms are
\begin{equation}\label{geo-K}
\begin{aligned}
 K_0&=\frac i2\Lambda_X(\xi_1\wedge\xi_1^\dagger),\\
 K_1&=-\frac i2\Lambda_X(\xi_1^\dagger\wedge\xi_1)
       +\frac i2\Lambda_X(\xi_2\wedge\xi_2^\dagger),\\
 K_2&=-\frac i2\Lambda_X(\xi_2^\dagger\wedge\xi_2).
\end{aligned}
\end{equation}
In unitary local coordinates, with
$\omega_X=ig_{z\bar z}\,dz\wedge d\bar z$, these are
\[
 K_0=\tfrac12g^{z\bar z}\xi_{1,z}\xi_{1,z}^\dagger,\quad
 K_1=\tfrac12g^{z\bar z}
 (\xi_{1,z}^\dagger\xi_{1,z}+\xi_{2,z}\xi_{2,z}^\dagger),\quad
 K_2=\tfrac12g^{z\bar z}\xi_{2,z}^\dagger\xi_{2,z}.
\]
In particular $K_j\ge0$.

\begin{lemma}[Ordered square]\label{geo-square}
For $\Omega=\widetilde\omega I-F_D$,
\[
 \frac{\Omega^2}{\widetilde\omega^2}
 =\operatorname{diag}(\mathcal A_0+i\mathcal B_0,
                       \mathcal A_1+i\mathcal B_1,
                       \mathcal A_2+i\mathcal B_2),
\]
where
\begin{equation}\label{geo-AB}
 \mathcal A_j=I-\tfrac12(H_jS_j+S_jH_j)+K_j,\qquad
 \mathcal B_j=H_j+S_j.
\end{equation}
\end{lemma}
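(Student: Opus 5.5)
The plan is a direct block computation of $\Omega^2$. I will split each block of $\Omega$ into its horizontal part (forms along $X$), its vertical part (forms along the fiber) and its mixed part. Then I will use bidegree counting on the product $\mathcal X=X\times\mathbb P^1$, where each factor has complex dimension one, to decide which wedge products survive. Throughout, matrix factors stay in the order in which they occur.

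\emph{Step 1: diagonal blocks of $\Omega$.} Write $\widetilde\omega=\omega_X+2\omega_{FS}$ with pullbacks suppressed. On the curve $X$ we have $F_{h_j}=(\Lambda_XF_{h_j})\omega_X=-iH_j\omega_X$. Reading off \eqref{geo-curvature} and using the definitions \eqref{geo-S}, I expect
\[
 \Omega_{jj}=\omega_X(I+iH_j)+2\omega_{FS}(I+iS_j),\qquad j=0,1,2.
\]
For example, for $j=1$ the vertical part is $2\omega_{FS}+2i\omega_{FS}-2i(R_1-P_2)\omega_{FS}=2\omega_{FS}(I+iS_1)$. The cases $j=0,2$ are the same bookkeeping.

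\emph{Step 2: off-diagonal blocks and vanishing.} From \eqref{geo-curvature}, $\Omega_{01}=-\xi_1\wedge\alpha$, $\Omega_{10}=\xi_1^\dagger\wedge\alpha^\dagger$, and similarly for the $(1,2)$ pair. Every $\Omega_{j,j\pm1}$ has horizontal degree one and vertical degree one. Consider first an off-diagonal block of $\Omega^2$ such as $\Omega_{00}\wedge\Omega_{01}+\Omega_{01}\wedge\Omega_{11}$. Each term pairs a mixed form with $\omega_X$ or $\omega_{FS}$, so it has horizontal or vertical degree three and vanishes. The block $(0,2)$ is $\Omega_{01}\wedge\Omega_{12}$, which contains $\alpha\wedge\alpha=0$, and the block $(2,0)$ vanishes in the same way. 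Hence $\Omega^2$ is block diagonal.

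\emph{Step 3: diagonal blocks of $\Omega^2$.} Since $\omega_X\wedge\omega_X=0=\omega_{FS}\wedge\omega_{FS}$, the product $\Omega_{jj}\wedge\Omega_{jj}$ keeps only the cross terms, which are even forms and commute as forms:
\[
 \Omega_{jj}\wedge\Omega_{jj}
 =2\omega_X\wedge\omega_{FS}\bigl[(I+iH_j)(I+iS_j)+(I+iS_j)(I+iH_j)\bigr].
\]
Dividing by $\widetilde\omega^2=4\omega_X\wedge\omega_{FS}$ gives $I+i(H_j+S_j)-\tfrac12(H_jS_j+S_jH_j)$. The matrix order is kept, and this is exactly where the symmetrized product comes from.

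Next come the mixed contributions. For $j=0$ the extra term is $\Omega_{01}\wedge\Omega_{10}=-\xi_1\wedge\alpha\wedge\xi_1^\dagger\wedge\alpha^\dagger$. Commuting the one-forms $\alpha$ and $\xi_1^\dagger$ gives $\xi_1\wedge\xi_1^\dagger\wedge\alpha\wedge\alpha^\dagger=2i\,\Lambda_X(\xi_1\wedge\xi_1^\dagger)\,\omega_X\wedge\omega_{FS}$. After division by $\widetilde\omega^2$ this becomes $K_0$ of \eqref{geo-K}. The block $j=2$ receives $\Omega_{21}\wedge\Omega_{12}$, which has the opposite order of $\alpha^\dagger$ and $\alpha$. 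Since $\alpha^\dagger\wedge\alpha=-\alpha\wedge\alpha^\dagger$, this produces the displayed sign $-\tfrac i2\Lambda_X(\xi_2^\dagger\wedge\xi_2)$. The middle block receives both products $\Omega_{10}\wedge\Omega_{01}$ and $\Omega_{12}\wedge\Omega_{21}$, which give the two terms of $K_1$. Collecting real and imaginary parts then yields \eqref{geo-AB}.

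\emph{Main obstacle.} The only delicate point is the sign and ordering bookkeeping in Step 3. This means tracking the graded commutation of the one-forms $\xi$, $\xi^\dagger$, $\alpha$, $\alpha^\dagger$ correctly, together with the normalization $\alpha\wedge\alpha^\dagger=2i\omega_{FS}$. I would check it against the unitary-coordinate formulas for $K_j$, which must come out nonnegative. As a sanity check, the identity at $j=0$ with $\xi_1=0$ should reduce to the line-bundle dHYM expansion.
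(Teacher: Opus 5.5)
Your proposal is correct and follows the paper's proof essentially step for step. It uses the same diagonal blocks $\Omega_{jj}=(I+iH_j)\omega_X+2(I+iS_j)\omega_{FS}$, the same bidegree and $\alpha\wedge\alpha=0$ argument killing every off-diagonal block of $\Omega^2$, the same ordered cross terms giving $\tfrac12\{H_j,S_j\}$, and the same graded products $\Omega_{j,j\pm1}\wedge\Omega_{j\pm1,j}$ giving exactly $K_j$, with signs that check out; the paper's remark that products involving both $R_1$ and $P_2$ vanish is covered by your use of $\omega_{FS}\wedge\omega_{FS}=0$ once they are packaged into $S_1$.
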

\begin{proof}
Here $\widetilde\omega^2=4\omega_X\wedge\omega_{FS}$ and
$\Omega_{jj}=(I+iH_j)\omega_X+2(I+iS_j)\omega_{FS}$. Thus the diagonal
square, before including adjacent off-diagonal products, is
\[
 \tfrac12\big[(I+iH_j)(I+iS_j)+(I+iS_j)(I+iH_j)\big]
 =I+i(H_j+S_j)-\tfrac12\{H_j,S_j\}.
\]
For $\gamma_i=\xi_i\wedge\alpha$, graded multiplication gives
\[
 \gamma_i\wedge\gamma_i^\dagger
 =-2i\xi_i\wedge\xi_i^\dagger\wedge\omega_{FS},\qquad
 \gamma_i^\dagger\wedge\gamma_i
 =2i\xi_i^\dagger\wedge\xi_i\wedge\omega_{FS}.
\]
After division by $\widetilde\omega^2$ these contribute exactly
\eqref{geo-K}. A pure base or fiber $(1,1)$-form wedges to zero with each
mixed $\xi_i\wedge\alpha$, and the two consecutive mixed forms wedge to
zero because $\alpha\wedge\alpha=0$; the adjoint products behave likewise.
Products involving both $R_1$ and $P_2$ vanish by fiber form degree. These
facts remove every off-diagonal square block and retain the ordered middle
anticommutator. No commutation of the endomorphisms has been used.
\end{proof}

For a fixed real phase $\theta$, the invariant equation
$\operatorname{Im}(e^{-i\theta}\Omega^2)=0$ and positive-definite
calibration are consequently equivalent to
\[
 \cos\theta\,\mathcal B_j=\sin\theta\,\mathcal A_j,\qquad
 \cos\theta\,\mathcal A_j+\sin\theta\,\mathcal B_j>0
 \quad(j=0,1,2).
\]
The reduction uses holomorphicity of the arrows, their parallel vertical
form, the curve identities, and the Chern conventions. It uses no rank,
injectivity, surjectivity, simplicity, compactness, or commutation
assumption. A merely smooth arrow would have extra derivative terms.
The zero-arrow checks retain the same shifts. If $\psi=0$, the upper
two-block connection and square decouple from the $\mathcal O(4)$ block;
in a rank-one middle block their terms agree with the two-block square.
If $\phi=0$, the lower pair is tensorized by the vertical
$\mathcal O(2)$: its curvature is shifted by $-2i\omega_{FS}I$,
so its square at the fixed K\"ahler form still contains the $I$ and $2I$
shifts of \eqref{geo-S}. It cannot be substituted into an unshifted
two-block existence theorem. With both arrows zero all blocks decouple,
and independent vertex scalars enlarge the chain stabilizer. Simplicity
is a chain-endomorphism property, not a claim about the kernel of the
linearized three-block equation.

\subsection{Charge, phase, and the exact equation}

For any coherent equivariant sheaf use
\begin{equation}\label{geo-charge-definition}
 Z(\mathcal F)=-\int_{\mathcal X}
       e^{-i\widetilde\omega/(2\pi)}\operatorname{ch}(\mathcal F).
\end{equation}
For comparison with normalized-curvature formulas, put
\(\omega_{\rm n}=\widetilde\omega/(2\pi)\) and
\(\mathcal F_D=iF_D/(2\pi)\). Then
\((\widetilde\omega I-F_D)^2=(2\pi)^2
(\omega_{\rm n}I+i\mathcal F_D)^2\).
The Chern--Weil charge forms in
\cite[\S2.3.1, Example 2.25]{DMS2024} and
\cite[\S2]{KS2025} use normalized curvature; any common charge rotation
must be tracked before comparing arguments. The minus sign in
\eqref{geo-charge-definition} and the phase below are fixed here.
The charge depends on the three filtration quotients, not on a holomorphic
splitting. More generally, for volume $V_X$, vertical parameter $\sigma$,
ranks $r_j$ and degrees $d_j$, their sum is
\begin{equation}\label{geo-charge-general}
\begin{split}
 Z(\mathcal E)={}&\frac{\sigma V_X}{4\pi^2}\sum_jr_j-2d_1-4d_2\\
 &+i\left[\frac{\sigma}{2\pi}\sum_jd_j
             +\frac{V_X}{\pi}r_1+\frac{2V_X}{\pi}r_2\right].
\end{split}
\end{equation}
Indeed set $x=p_1^*\omega_X/(2\pi)$, $y=p_2^*\omega_{FS}/(2\pi)$ and
$s_v=\sigma/(2\pi)$. Then $x^2=y^2=0$,
$\int_Xx=V_X/(2\pi)$, $\int_{\mathbb P^1}y=1$, and the $j$th quotient
has Chern character $(r_j+p_1^*c_1(E_j))(1+2jy)$. The degree-four part of
$-e^{-i(x+s_vy)}$ times this character integrates to
\[
 \frac{\sigma V_X}{4\pi^2}r_j-2jd_j
 +i\left(\frac{\sigma}{2\pi}d_j+\frac{jV_X}{\pi}r_j\right).
\]
Summing proves \eqref{geo-charge-general} by additivity of the Chern
character. In the present normalization the three charges are
\[
 2+6i,\qquad -4+12i,\qquad -2+6i,
 \qquad Z(\mathcal E_{A,B})=-4+24i.
\]
Fix the principal phase and its sine and cosine:
\begin{equation}\label{geo-phase}
 \theta_4=\pi-\arctan6\in(\pi/2,\pi),\qquad
 \chi=-\frac1{\sqrt{37}},\qquad\eta=\frac6{\sqrt{37}}.
\end{equation}
Write $\mathrm{Ex}(A,B)$ for existence of a smooth positive metric triple
on the original bundles satisfying
\begin{equation}\label{geo-equation}
 \chi\mathcal B_j=\eta\mathcal A_j,\qquad
 \mathcal C_j:=\chi\mathcal A_j+\eta\mathcal B_j>0
 \quad(j=0,1,2).
\end{equation}
All adjoints and covariant derivatives in this equation use that actual
metric triple. No auxiliary base symmetry restricts these unknowns.

\begin{lemma}[Coefficient strata and original divisors]\label{geo-domain}
There are precisely eleven nonempty triples
$(a,b,\epsilon)=(\operatorname{rank}A,\operatorname{rank}B,
\mathbf1_{\{Q\not\equiv0\}})$. They and exact representatives are listed
below, with $O=0$, $D=\operatorname{diag}(1,0)$,
$D^\perp=\operatorname{diag}(0,1)$ and
$J=\left(\begin{smallmatrix}0&1\\-1&0\end{smallmatrix}\right)$.
\[
\begin{array}{c|c|c|c}
 (a,b,\epsilon)&A&B&Q(z)\\\hline
 (0,0,0)&O&O&0\\(0,1,0)&O&D&0\\(0,2,0)&O&I&0\\
 (1,0,0)&D&O&0\\(1,1,0)&D&D^\perp&0\\
 (1,1,1)&D&D&z_0^2\\(1,2,1)&D&I&z_0^2\\
 (2,0,0)&I&O&0\\(2,1,1)&I&D&z_0^2\\
 (2,2,0)&I&J&0\\(2,2,1)&I&I&z_0^2+z_1^2
\end{array}
\]
If $a=2,b=1$, the original lower map has the form
$\psi=v\sigma$, with saturated line $L=\mathcal O(2)v$,
$\operatorname{im}\psi=L(-p)$, and
$\operatorname{div}(\phi|_L)=q_0$ for simple divisor points $p,q_0$.
Then $\operatorname{div}Q=p+q_0$, allowing $p=q_0$.
\end{lemma}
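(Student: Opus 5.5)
The argument splits into a counting part and a divisor part: first show that exactly eleven triples occur and that the tabulated pairs realize them, then analyze the stratum $a=2$, $b=1$ separately.

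\textbf{Counting the strata.} I would work with the symmetric matrix $S=\tfrac12(A^TB+B^TA)$, since $Q(z)=z^TA^TBz=z^TSz$. Hence $Q\equiv0$ exactly when $A^TB$ is antisymmetric.

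\emph{Forced vanishing.} If $a=0$ or $b=0$, then $A^TB=0$ and so $\epsilon=0$. This gives the five triples $(0,b,0)$ for $b=0,1,2$ and $(a,0,0)$ for $a=1,2$.

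\emph{The case $a=1$, $b\ge1$.} Now $A^TB$ has rank at most one. An antisymmetric $2\times2$ matrix has rank $0$ or $2$, so $Q\equiv0$ forces $A^TB=0$.
\begin{itemize}
\item If $b=2$, then $A^TB=0$ would force $A=0$. So $(1,2,0)$ is impossible and only $(1,2,1)$ remains.
\item If $b=1$, both $(1,1,0)$ and $(1,1,1)$ occur.
\end{itemize}

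\emph{The case $a=2$.} Here $A$ is invertible, so $A^TB$ has rank $b$.
\begin{itemize}
\item If $b=1$, the matrix $A^TB$ has rank one, so it is not antisymmetric. Hence $\epsilon=1$.
\item If $b=2$, both values of $\epsilon$ occur, for instance via $B=J$ or $B=I$.
\end{itemize}

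\emph{Total and representatives.} The count is $5+3+1+2=11$, and every other triple is excluded by the arguments above. For the representatives I would just compute $Q$ for each pair in the table. For example, $(D,D^\perp)$ gives $A^TB=0$, and $(I,J)$ gives $z^TJz=0$.

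\textbf{The stratum $a=2$, $b=1$.} Since $B$ has rank one, write $B=vw^T$ with $v,w\neq0$. Then $\psi=v\,\sigma$ with $\sigma=w^Tz\in H^0(\mathcal O(1))$, viewed as a section $\mathcal O(1)\to\mathcal O(2)$ of $\mathcal O(1)$.

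\emph{The image of $\psi$.} The sub-line bundle $L=\mathcal O(2)v\subset\mathcal O(2)^{\oplus2}$ is saturated, because $v$ is a constant nonzero vector. The image of $\psi$ is $\sigma\cdot\mathcal O(1)\,v=L(-p)$, where $p$ is the single simple zero of the nonzero linear form $w^Tz$.

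\emph{The divisor $q_0$.} The restriction of $\phi$ to $L$ is multiplication by $(Az)^Tv=z^TA^Tv$, a linear form. It is nonzero because $A^T$ is invertible and $v\ne0$. Its simple zero is $q_0$.

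\emph{The divisor of $Q$.} We have $Q(z)=z^TA^TB z=(z^TA^Tv)(w^Tz)$, a product of two nonzero linear forms. Thus $\operatorname{div}Q=p+q_0$, and $p=q_0$ is allowed precisely when the two forms are proportional. Both situations actually occur; for example, $A=I$ together with $B=D$ or with $B=e_1e_2^T$.

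\textbf{Main obstacle.} The only nontrivial point is the rank argument showing that $(1,2,0)$ and $(2,1,0)$ are empty. It rests on the fact that antisymmetric $2\times2$ matrices have even rank, so the expected difficulty is only bookkeeping.
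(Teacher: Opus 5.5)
Your proposal is correct and follows essentially the same route as the paper. You reduce $Q\equiv0$ to skewness of $A^TB$, use that skew $2\times2$ matrices have rank $0$ or $2$ to rule out $(1,2,0)$ and $(2,1,0)$, and factor $B=vw^T$ to get $\operatorname{im}\psi=L(-p)$, $\phi|_L=(A^Tv)^Tz$ and $\operatorname{div}Q=p+q_0$, with the same $A=I$ examples. The paper additionally records the $(1,1)$ criterion $Q\equiv0\iff u^Tv=0$ and shows that every ordered pair $(p,q_0)$ is realized, but neither is needed for the statement as written.
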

\begin{proof}
Put $C=A^TB$. Expanding $z^TCz$ shows that $Q\equiv0$ exactly when
$C+C^T=0$. Every skew $2\times2$ matrix is $\lambda J$, of rank zero
if $\lambda=0$ and rank two otherwise. If a matrix has rank zero the
composition vanishes. If the coefficient ranks are $(1,2)$ or $(2,1)$,
$C$ has rank one, so its quadratic form is nonzero. In the $(1,1)$ case
write $A=u\alpha_c^T$, $B=v\beta_c^T$; then
\[
 C=\alpha_c(u^Tv)\beta_c^T,\qquad
 Q=(u^Tv)(\alpha_c^Tz)(\beta_c^Tz).
\]
Since the polynomial ring has no zero divisors, $Q=0$ precisely when
$u^Tv=0$. If both ranks are two, $C$ is invertible and can be skew or
have a nonzero symmetric part. This proves exhaustion, and the table
proves nonemptiness. The seven impossible triples are the five zero-rank
pairs with $\epsilon=1$, together with $(1,2,0)$ and $(2,1,0)$.

For $a=2,b=1$, factor the actual matrix $B=v\beta_c^T$ with nonzero
vectors. The nonzero section $\sigma=\beta_c^Tz$ has one simple zero
$p$, so $\psi=v\sigma$ is sheaf-injective and has image $L(-p)$; its
fiber map does vanish at $p$. Also
$\phi|_L=(A^Tv)^Tz\ne0$, since $A$ is invertible. This section of
$\operatorname{Hom}(L,E_0)=\mathcal O(1)$ has one simple zero $q_0$.
Their product is nonzero with divisor $p+q_0$. For any prescribed
$p,q_0$, choose nonzero linear sections $\sigma,\tau$ with those
divisors and put
$v=(A^T)^{-1}\operatorname{coeff}(\tau)$,
$B=v\operatorname{coeff}(\sigma)^T$. Thus no ordered pair is omitted.
For $A=I$, $v=e_0$, $\sigma=z_0$ gives $p=q_0=[0:1]$, while
$\sigma=z_1$ gives $p=[1:0]\ne q_0=[0:1]$.
Inverse scalar rescaling of $v,\sigma$ does not alter the original map
or the divisors. No metric splitting along $L$ or base change is used.

Repeated primary roots also remain: $A=I$ and
$B=\left(\begin{smallmatrix}1&1\\-1&0\end{smallmatrix}\right)$ have
both ranks two and $Q=z_0^2$.
\end{proof}

\subsection{Isomorphisms and transport of arbitrary metrics}

Allowed chain isomorphisms cover $\operatorname{id}_X$ and have the form
$(g_0,g_1,g_2)=(a,G,c)$, with nonzero scalars $a,c$ and
$G\in GL_2(\mathbb C)$. Their coefficient action is
\begin{equation}\label{geo-coefficient-action}
 (A,B)\longmapsto(aG^{-T}A,c^{-1}GB),\qquad
 A'^TB'=(a/c)A^TB.
\end{equation}
Consequently the primary orbit invariant is $[A^TB]$. Normalize by
$G=A^T$, $a=c=1$ to obtain $(I,A^TB)$. Transformations preserving the
upper normalized matrix have $G=aI$ and multiply the lower matrix by
$a/c$. Thus the primary orbit set is precisely
\[
 \mathcal U_4=\{[C]\in\mathbb P^3:\det C\ne0,\ C+C^T\ne0\}.
\]
This asserts an orbit set, rather than a universal family or separated
coarse moduli space. Every primary chain is simple: a chain endomorphism
satisfies $f_0\phi=\phi f_1$ and $f_1\psi=\psi f_2$; since the upper
components form a basis, the first relation forces $f_1=f_0I$, and the
nonzero lower arrow then forces $f_2=f_0$.

The primary family is not one base-isometry orbit. The matrices
\[
 B_0=\begin{pmatrix}1&1\\-1&0\end{pmatrix},\qquad
 B_1=\begin{pmatrix}0&2\\-1&0\end{pmatrix}
\]
have symmetric parts of ranks one and two. A base change acts on the
projective composition by $[C]\mapsto[T^TCT]$, so these ranks separate
even the $PGL_2$ orbits, and hence the $PSU(2)$ orbits. Moreover for
$B_*=\left(\begin{smallmatrix}1&0\\1&1\end{smallmatrix}\right)$,
at $z=(1,0)^T$ with standard local metrics, $R_1$ projects onto the first
coordinate while $P_2$ has image spanned by $(1,1)^T$; their commutator is
nonzero. The middle equation therefore cannot be scalarized by a
commuting representative.

\begin{lemma}[Metric covariance]\label{geo-metric-covariance}
An allowed holomorphic chain isomorphism gives a bijection of all smooth
positive metric triples, preserves the ordered equation and strict
calibration, and also intertwines the invariant equation on $\mathcal X$.
\end{lemma}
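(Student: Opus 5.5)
The plan is to define the transport explicitly and check that every ingredient of \eqref{geo-equation} is carried covariantly. Given an allowed isomorphism $g=(a,G,c)$ from the chain $(\phi,\psi)$ to $(\phi',\psi')$, so that $\phi'=a\phi G^{-1}$ and $\psi'=G\psi c^{-1}$ as bundle maps (the coefficient action \eqref{geo-coefficient-action}), and a smooth positive triple $(h_0,h_1,h_2)$, I will set
\[
 h_0'=|a|^{-2}h_0,\qquad h_1'=(G^{-1})^\dagger h_1G^{-1},\qquad h_2'=|c|^{-2}h_2.
\]
This is the pushforward metric making each $g_j$ an isometry $(E_j,h_j)\to(E_j,h_j')$. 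Since $g$ is invertible, the assignment is a bijection on smooth positive triples, with inverse given by $g^{-1}$. Because $a,c,G$ are constants covering $\operatorname{id}_X$, the $g_j$ are holomorphic bundle isomorphisms.

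The key step is to verify conjugation covariance of all ordered blocks. Holomorphic isometries intertwine Chern connections, so $F_{h_j'}=g_jF_{h_j}g_j^{-1}$ and hence $H_j'=g_jH_jg_j^{-1}$. For the arrows, the $h'$-adjoint satisfies $\phi'^{\dagger'}=g_1\phi^\dagger g_0^{-1}$, and similarly for $\psi$. This gives
\[
 P_1'=g_0P_1g_0^{-1},\qquad R_1'=g_1R_1g_1^{-1},\qquad P_2'=g_1P_2g_1^{-1},\qquad R_2'=g_2R_2g_2^{-1},
\]
so $S_j'=g_jS_jg_j^{-1}$. The Hom bundle connections are also intertwined, so $\xi_1'=g_0\xi_1g_1^{-1}$ and $\xi_2'=g_1\xi_2g_2^{-1}$, and therefore $K_j'=g_jK_jg_j^{-1}$. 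It follows that $\mathcal A_j'$, $\mathcal B_j'$ and $\mathcal C_j'$ are conjugates of the unprimed quantities by $g_j$. The linear equation $\chi\mathcal B_j=\eta\mathcal A_j$ is therefore preserved.

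For strict calibration I need to be slightly careful. Positivity of $\mathcal C_j$ means positive definiteness as an $h_j$-self-adjoint endomorphism. Since $g_j$ is an isometry from $h_j$ to $h_j'$, the conjugate $g_j\mathcal C_jg_j^{-1}$ is $h_j'$-self-adjoint with the same spectrum, and it is positive definite for $h_j'$ exactly when $\mathcal C_j$ is for $h_j$. I expect this to be the only point needing care: conjugation by a non-unitary $G$ does not preserve positivity in a fixed metric, but here the metric changes compatibly.

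On $\mathcal X$, the map $\mathcal G=\operatorname{diag}(p_1^*g_0,p_1^*g_1,p_1^*g_2)$ is a holomorphic isomorphism $\mathcal E_{A,B}\to\mathcal E_{A',B'}$. It intertwines \eqref{geo-dolbeault}, because $\beta_1'=g_0\beta_1g_1^{-1}$ with the same $\alpha$, and likewise for $\beta_2'$. It is an isometry from $\oplus\widetilde h_j$ to $\oplus\widetilde h_j'$, so $F_{D'}=\mathcal GF_D\mathcal G^{-1}$ and $\Omega'^2=\mathcal G\Omega^2\mathcal G^{-1}$. The invariant equation and its positivity then transfer by the same self-adjointness argument, consistent with Lemma~\ref{geo-square}. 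No step uses commutation or the rank hypotheses, so the statement holds on the whole coefficient domain, including the zero-arrow strata.
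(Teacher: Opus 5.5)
Your proposal is correct and follows essentially the same route as the paper. Both arguments transport metrics by $h_j'=(g_j^{-1})^*h_jg_j^{-1}$ so that each $g_j$ is an isometry, and then conjugate the arrows, adjoints, curvatures and derivatives, hence $\mathcal A_j,\mathcal B_j,\mathcal C_j$, at each vertex. Both obtain positivity from $h'_j(\mathcal C'_jw',w')=h_j(\mathcal C_jg_j^{-1}w',g_j^{-1}w')$ and lift to $\mathcal X$ by $\bigoplus_j p_1^*g_j$. The paper also notes that this lift commutes with the vertical $SU(2)$ action and fixes $\widetilde\omega$; your construction gives this implicitly, since the transported metrics keep the prescribed product form.
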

\begin{proof}
Transport metrics by
$h'_j=(g_j^{-1})^*h_jg_j^{-1}$, with inverse
$h_j=g_j^*h'_jg_j$. Then $g_j:(E_j,h_j)\to(E_j,h'_j)$ is an isometry.
For an arrow $\Phi:E_s\to E_t$,
\[
 \Phi'=g_t\Phi g_s^{-1},\qquad
 (\Phi')^{\dagger_{h'}}=g_s\Phi^{\dagger_h}g_t^{-1}.
\]
Hence each of $P_1,R_1,P_2,R_2$ is conjugated at its own vertex. The
Chern connections and curvatures obey
\[
 D'_j=g_jD_jg_j^{-1},\quad F'_j=g_jF_jg_j^{-1},\quad
 H'_j=g_jH_jg_j^{-1},\quad
 \xi'_i=g_{i-1}\xi_i g_i^{-1}.
\]
The last formula follows by applying the transported source and target
connections; their derivative terms cancel. The allowed automorphisms
are constant, so no local $\partial g_j$ term is needed. Wedge
composition and metric adjoints give
\[
 K'_j=g_jK_jg_j^{-1},\quad S'_j=g_jS_jg_j^{-1},\quad
 \mathcal A'_j=g_j\mathcal A_jg_j^{-1},\quad
 \mathcal B'_j=g_j\mathcal B_jg_j^{-1}.
\]
In particular the entire anticommutator is conjugated, with no change in
order. The calibration is also conjugated, and for $w'\ne0$,
\[
 h'_j(\mathcal C'_jw',w')
 =h_j(\mathcal C_jg_j^{-1}w',g_j^{-1}w').
\]
This proves positivity in the transported metric.
The induced $\widetilde g=\bigoplus_jp_1^*g_j\otimes I_{\mathcal O(2j)}$
intertwines \eqref{geo-dolbeault}, commutes with the vertical $SU(2)$
action, and fixes the K\"ahler form. It conjugates $F_D$ and
$(\widetilde\omega I-F_D)^2$, including their Hermitian real and imaginary
parts, proving invariant transfer of the equation and calibration.
\end{proof}

For $(a,b,\epsilon)=(2,2,0)$, $A^TB=\lambda J$ with $\lambda\ne0$.
Taking $(g_0,g_1,g_2)=(1,A^T,\lambda)$ gives $(A',B')=(I,J)$, namely
\[
 \mathcal O(1)\xrightarrow{(z_1,-z_0)^T}\mathcal O(2)^2
       \xrightarrow{(z_0,z_1)}\mathcal O(3).
\]
The lower image is the upper kernel. There is no holomorphic or metric
splitting implicit here. Pullback of arbitrary representative metrics is
$h_0=h'_0$, $h_1=(A^T)^*h'_1A^T$, $h_2=|\lambda|^2h'_2$.
Thus a result for arbitrary metrics transfers to every coefficient pair;
a result for a restricted metric class would not suffice. These
fixed-chain bijections give no uniform estimate as $A$ loses conditioning
or $\lambda\to0$. A common $\mathcal O(2)$ tensor shift likewise preserves
only orbit geometry, since it changes degrees, charge, phase, and the
curvature terms of the equation.

\section{All equivariant coherent subobjects and their phases}

\subsection{The category and the compact-to-complex bridge}

The category is $\mathcal D_K=\operatorname{Coh}^{SU(2)}(\mathcal X)$,
with vertical action only. Set
\[
 \mathbb H_{\rm res}=\{\operatorname{Im}z>0\}
      \cup\{\operatorname{Im}z=0,\operatorname{Re}z<0\}.
\]
For either $\mathcal D_K$ or the separately defined ordinary coherent
category, the torsion class is
\[
 \mathcal T_{\mathcal D}=
 \{F:F\text{ is torsion}\}\ \cup\
 \{F:\operatorname{Im}Z(Q')>0\text{ for every nonzero quotient }
                                      F\twoheadrightarrow Q'\text{ in }\mathcal D\}.
\]
For an object with charge in $\mathbb H_{\rm res}$, stability means
membership in this class and
$\operatorname{Arg}Z(S)<\operatorname{Arg}Z(E)$ for every nonzero proper
exact subobject $S$ whose charge lies in $\mathbb H_{\rm res}$.
For its nonzero quotient in $0\to S\to E\to Q'\to0$, this is
equivalently $\operatorname{Arg}Z(E)<\operatorname{Arg}Z(Q')$.
Arguments are principal in $(0,\pi]$. Semistability uses $\le$;
polystability means semistability together with a finite direct sum in
the same category into nonzero stable objects of the same phase. The
ordinary coherent version remains separate throughout.

The algebraic input we use is the coherent-category equivalence of
\'Alvarez-C\'onsul and Garc\'ia-Prada, in
arXiv:math/0112160v2, \S2.3, Theorem~2.5 (the correspondence between
equivariant coherent sheaves and quiver sheaves), together with the
one-projective-line construction in \S1.6.2, ``Homogeneous bundles on
products of complex projective lines,'' and its chain-filtration
specialization in \S5.2, equation~(5.6) (TeX label \texttt{eq:hfil-P\^{}1})
\cite{AGGP}. In its notation, $G$ is connected, simply
connected, complex semisimple; $P$ is parabolic; $X$ is compact K\"ahler
with trivial $G$-action; and equivariant coherent actions are holomorphic.
The theorem is an equivalence of abelian categories between coherent
$G$-equivariant sheaves on $X\times G/P$ and finite-support coherent
quiver sheaves on $X$, and it matches locally free objects and their
filtration quotients.

For our application $G=SL_2(\mathbb C)$ and
$P=\left\{\left(\begin{smallmatrix}*&0\\ *&*\end{smallmatrix}\right)\right\}$,
so $G/P=\mathbb P^1$. Its nilradical has dimension one. The arrows lower
weights by two, and the relation space is zero because the second exterior
power of the nilradical is zero. The weights $0,2,4$ therefore give exactly
coherent three-vertex chains, with filtration quotients
$p_1^*S_j\otimes p_2^*\mathcal O(2j)$. Exactness preserves kernels,
cokernels and weight support. Thus no weight outside these three can enter
a subobject of the present bundle. The source proof uses restriction and
induction to the $P$-slice, coherent Levi multiplicity sheaves and the
nilradical representation; its coherent invariant-section and presentation
lemmas are part of the imported theorem. Its cited Roberts foundations
are not separately re-proved here.

\begin{lemma}[Compact and complex equivariant subobjects]\label{geo-bridge}
Restriction from $SL_2(\mathbb C)$ to $SU(2)$ gives a bijection on coherent
equivariant subobjects of every bundle \eqref{geo-dolbeault}. The underlying
image sheaf, monomorphism, quotient, nonsaturation, and nonreduced structure
are preserved.
\end{lemma}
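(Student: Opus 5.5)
Restriction along $SU(2)\subset SL_2(\mathbb C)$ gives an injection on equivariant subsheaves. The plan is to show it is also surjective: every $SU(2)$-invariant coherent subsheaf of $\mathcal E_{A,B}$ is automatically $SL_2(\mathbb C)$-invariant. The other listed structures will then follow because they are defined from the underlying sheaf.

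\textbf{Injectivity and restriction.} Let $\mathcal S\subset\mathcal E_{A,B}$ be an $SL_2(\mathbb C)$-equivariant subobject. Restricting the holomorphic action to $SU(2)$ keeps the same underlying subsheaf, monomorphism and quotient. Two subobjects with the same underlying subsheaf coincide, since on a subsheaf of a fixed equivariant sheaf the equivariant structure is induced. This gives the injective map.

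\textbf{Surjectivity.} Let $\mathcal S\subset\mathcal E=\mathcal E_{A,B}$ be an $SU(2)$-invariant coherent subsheaf, and fix $g\in SL_2(\mathbb C)$. The translate $g\cdot\mathcal S$ is well defined because $\mathcal E$ carries a holomorphic $SL_2(\mathbb C)$-linearization, coming from the homogeneous bundles $\mathcal O(2j)$ and the invariant form $\alpha$ in \eqref{geo-dolbeault}. Consider the subsheaf
\[
 \mathcal S'=\bigcap_{g\in SL_2(\mathbb C)}g\cdot\mathcal S .
\]
This intersection is coherent by Noetherianity, and it is $SL_2(\mathbb C)$-invariant. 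The task is to show $\mathcal S'=\mathcal S$, and this is the main obstacle.

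\textbf{Approach to the main step.} The condition $g\cdot\mathcal S\subseteq\mathcal S$ is a closed analytic condition on $g$, because $\mathcal S$ is the kernel of $\mathcal E\to\mathcal E/\mathcal S$. So the stabilizer $\operatorname{Stab}(\mathcal S)$ is a closed complex Lie subgroup of $SL_2(\mathbb C)$ containing $SU(2)$. Two routes then finish the argument.

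\emph{Lie-algebra route.} The Lie algebra of $\operatorname{Stab}(\mathcal S)$ is a complex subalgebra containing $\mathfrak{su}(2)$, hence it contains $\mathfrak{su}(2)\otimes\mathbb C=\mathfrak{sl}_2(\mathbb C)$. Since $SL_2(\mathbb C)$ is connected, $\operatorname{Stab}(\mathcal S)=SL_2(\mathbb C)$.

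\emph{Holomorphic-extension route.} For each local section $s$ of $\mathcal S$, consider $g\mapsto g\cdot s$ modulo $\mathcal S$. This map is holomorphic in $g$ and vanishes on the totally real submanifold $SU(2)$. By the identity theorem it vanishes on all of $SL_2(\mathbb C)$.

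The care needed in either route is to make the relevant map genuinely holomorphic in $g$. The cleanest way is to work on the finite-dimensional spaces of sections of $\mathcal E/\mathcal S\otimes\mathcal O(m)$, twisted by a sufficiently ample linearized line bundle. There the action is an algebraic representation of $SL_2(\mathbb C)$, whose complexified restriction to $SU(2)$ is faithful on invariant subspaces. Unicity of equivariant structures then follows: a compatible equivariant structure on $\mathcal S$ is determined by the one on $\mathcal E$, for both groups.

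\textbf{Preserved structures.} Because the bijection fixes the underlying image sheaf inside $\mathcal E$, it preserves the monomorphism and the quotient $\mathcal E/\mathcal S$. Nonsaturation is preserved, since it is the condition that $\mathcal E/\mathcal S$ has torsion. Nonreduced structure, meaning the scheme-theoretic support of the quotient's torsion, is preserved as well. All of these are properties of the underlying sheaves alone. Combined with the Álvarez-Cónsul–García-Prada equivalence quoted above, this identifies $SU(2)$-subobjects with coherent subchains.
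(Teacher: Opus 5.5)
Your proposal is correct and follows essentially the paper's route: you reduce to extending invariance of the image sheaf from $SU(2)$ to $SL_2(\mathbb C)$, and then finish either by an identity-theorem argument or by complexifying $\mathfrak{su}(2)$ on the finite-dimensional $SL_2(\mathbb C)$-stable space $H^0(\mathcal E\otimes L_a^n)$. These are the paper's two arguments: it runs the identity theorem on the Quot-scheme orbit $z\mapsto\exp(z\xi)\cdot[q]$, and it uses $d\rho(i\xi)W\subset W$ together with polar decomposition and recovery of $\mathcal S$ as the image of the evaluation map. Two small repairs are needed. First, drop the intersection $\mathcal S'$: your stabilizer argument never uses it, and its coherence does not follow from Noetherianity, since arbitrary intersections of coherent subsheaves need not be coherent. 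Second, the group should act on $H^0(\mathcal E\otimes L_a^n)$ and its subspace $H^0(\mathcal S\otimes L_a^n)$, not on sections of $\mathcal E/\mathcal S\otimes\mathcal O(m)$ before invariance is known; with this, global generation turns $gW=W$ into $g\cdot\mathcal S=\mathcal S$, which also justifies your stabilizer being a closed algebraic subgroup.
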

\begin{proof}
An equivariant monomorphism is determined by its invariant image
$\mathcal S\subset\mathcal E$: injectivity forces its linearization to
be the restricted one. An invariant image and its quotient inherit their
linearizations. It is therefore enough to extend invariance of
$\mathcal S$ from $K=SU(2)$ to its complexification $G=SL_2(\mathbb C)$.

For $\xi\in\mathfrak{su}(2)$, consider $\exp(z\xi)$, $z\in\mathbb C$.
Since $\mathcal X$ is projective, GAGA
\cite[\S12, Theorems 2--3]{SerreGAGA} places the coherent quotient
$q:\mathcal E\twoheadrightarrow\mathcal E/\mathcal S$ at a point of the
projective Quot scheme with its Hilbert polynomial, by Grothendieck's
construction \cite[Theorem 5.1]{NitsureQuot}. The given
$G$-linearization induces a holomorphic action on this scheme. The orbit
map $z\mapsto\exp(z\xi)\cdot[q]$ is constant on the real axis by
$K$-invariance. The holomorphic identity theorem, applied in local
coordinates and then using separatedness, makes it constant on
$\mathbb C$. Equality of Quot points gives equality of kernels. Thus
$\mathcal S$ is invariant under $\exp(i\xi)$ for every $\xi$.
Polar decomposition $G=K\exp(i\mathfrak{su}(2))$ proves $G$-invariance.
The inherited $G$-linearization is unique and gives the quotient
linearization in the unchanged exact coherent sequence. No saturation or
local freeness on the surface was required.

One can also see the extension in a finite-dimensional representation.
Take the naturally linearized ample bundle
$L_a=p_1^*\mathcal O(1)\otimes p_2^*\mathcal O(1)$. For $n$ sufficiently
large, Serre global generation, as recalled in
\cite[\S2, Lemma 2.1(c)]{NitsureQuot}, gives
\[
 W=H^0(\mathcal X,\mathcal S\otimes L_a^n)
       \subset H^0(\mathcal X,\mathcal E\otimes L_a^n)=V.
\]
This is a $K$-invariant complex subspace of a finite-dimensional
holomorphic $G$-representation. Differentiating real one-parameter
invariance gives $d\rho(\xi)W\subset W$; complex linearity gives
$d\rho(i\xi)W\subset W$. Polar decomposition then makes $W$
$G$-invariant. The equivariant evaluation map
$W\otimes L_a^{-n}\to\mathcal E$ has image exactly $\mathcal S$.
Both arguments preserve all finite-support quotient data and apply to
every boundary map, whose induced linearization is unchanged.
\end{proof}

Combining this lemma with the precise external equivalence identifies the
required subobjects with all coherent subchains. It does not identify
equivariant and ordinary coherent stability.

\subsection{Complete sheaf inventory and quotient positivity}

Write a subchain as $S=(S_0,S_1,S_2)$, with ranks $\rho_j$ and degrees
$e_j$; absent components have rank and degree zero. All its components
are subsheaves of vector bundles on the smooth curve, hence torsion-free
and locally free. A nonzero rank-zero vertex torsion sheaf cannot inject.
Every possible component is of the following form:
\begin{itemize}
 \item $S_0=\mathcal O(3)(-D_0)$ if $\rho_0=1$, with any effective
 divisor $D_0$, so $e_0=3-\deg D_0$;
 \item $S_2=\mathcal O(1)(-D_2)$ if $\rho_2=1$, so
 $e_2=1-\deg D_2$;
 \item if $\rho_1=1$, its saturation is an embedded line
 $\mathcal O(k)\subset\mathcal O(2)^2$, with $k\le2$, and
 $S_1=\mathcal O(k)(-D_1)$, so $e_1=k-\deg D_1$;
 \item if $\rho_1=2$, it is the kernel of an arbitrary finite-length
 quotient $E_1\twoheadrightarrow T_1$, of length $\ell_1=4-e_1$.
\end{itemize}
The divisors and finite quotients allow arbitrary multiplicity and
nonreduced support. The exact, necessary and sufficient incidences are
\begin{equation}\label{geo-sheaf-incidences}
 \phi(S_1)\subset S_0,\qquad\psi(S_2)\subset S_1.
\end{equation}
For clarity, their coefficient refinements are as follows. If $a=2$,
$K_A=\ker\phi=\mathcal O(1)$; if $a=1$, then
$K_A=\mathcal O(2)$ and $\operatorname{im}\phi=\mathcal O(3)(-p_A)$ for
one simple point $p_A$. If $a=0$, the upper arrow imposes no condition.
If $b=2$, $I_B=\operatorname{im}\psi=\mathcal O(1)$ is saturated; if
$b=1$, it is $M_B(-p_B)$ inside the constant summand
$M_B=\mathcal O(2)$; if $b=0$, there is no lower condition.
When $a>0$, a middle line killed by $\phi$ is exactly a divisor modification of
$K_A$. Otherwise, if its saturated upper restriction has image divisor
$D_\phi$, its actual image is
$\mathcal O(3)(-D_\phi-D_1)$, and the first incidence says
$D_0\le D_\phi+D_1$. When $S_2\ne0$, a middle line is
$I_B(-D_1)$ with $D_1\le D_2$ for $b=2$, or
$M_B(-D_1)$ with $D_1\le p_B+D_2$ for $b=1$.
For rank-two $S_1$, the finite quotient $E_1/S_1$ must annihilate
$\psi(S_2)$, and $E_0/S_0$ must annihilate $\phi(S_1)$.
These are sheaf incidences, retaining non-diagonal quotients, support and
all multiplicities, rather than mere numerical bounds.

At the generic point the complete nonzero rank-pattern lists are
\begin{equation}\label{geo-all-patterns}
\begin{array}{c|l}
 \phi,\psi\ne0, \phi\psi\ne0&100,010,110,120,111,121\\
 \phi,\psi\ne0, \phi\psi=0&100,010,110,120,111,121,011\\
 \phi\ne0, \psi=0&010,011,001,100,101,110,111,120,121\\
 \phi=0, \psi\ne0&010,020,011,021,100,110,120,111,121\\
 \phi=\psi=0&\text{every nonzero vector in }
                    \{0,1\}\times\{0,1,2\}\times\{0,1\}.
\end{array}
\end{equation}
Here the digits are subobject ranks, not coefficient strata. If
$\rho_0=0$ and $\phi\ne0$, the middle component lies in its generic
kernel and has rank at most one. A nonzero composition then forbids
$\rho_2=1$. If $\rho_1=0$ and $\psi\ne0$, the bottom component must
vanish. These restrictions, or their removal when an arrow vanishes,
give precisely the lists above; the incidences refine them at every
point. The pattern $121$ includes full-rank finite-colength subchains
as well as the improper whole chain.

\begin{lemma}[Every quotient has positive imaginary charge]\label{geo-torsion}
Every coefficient pair belongs to $\mathcal T_{\mathcal D_K}$, including
all nonsaturated and mixed coherent quotients.
\end{lemma}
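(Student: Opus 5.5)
The plan is to compute $\operatorname{Im}Z$ of an arbitrary nonzero quotient directly from the general charge formula \eqref{geo-charge-general}, extended to coherent quotient chains, and to check that every vertex contributes a nonnegative amount, with a strictly positive amount from every nonzero vertex. Since the whole object is locally free, it lies in $\mathcal T_{\mathcal D_K}$ through the second clause of the definition once this positivity is established.

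\textbf{Reduction to quotient chains.} By Lemma~\ref{geo-bridge} and the cited equivalence of \cite{AGGP}, a nonzero quotient $\mathcal E_{A,B}\twoheadrightarrow Q'$ in $\mathcal D_K$ corresponds to a quotient chain $(Q_0,Q_1,Q_2)$ with $Q_j=E_j/S_j$ for a coherent subchain $S$ as inventoried above. Its filtration quotients are $p_1^*Q_j\otimes p_2^*\mathcal O(2j)$. The $Q_j$ may carry torsion; this is where the nonsaturated and mixed cases enter.

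\textbf{Extending the charge formula.} I would extend \eqref{geo-charge-general} to coherent $Q_j$ on the curve $X$. Set $r_j=\operatorname{rank}Q_j$ and $d_j=\deg Q_j=\deg(Q_j/\mathrm{tors})+\operatorname{length}(\mathrm{tors})$. The extension follows from additivity of $\operatorname{ch}$ along the short exact sequence $0\to S_j\to E_j\to Q_j\to0$, or along the torsion filtration, together with $\operatorname{ch}(\mathcal O_x)=[\mathrm{pt}]$ on $X$. The degree-four integration in the proof of \eqref{geo-charge-general} is linear in $(r_j,c_1)$, so it applies verbatim. With $\sigma=4\pi$ and $V_X=2\pi$ this gives
\[
 \operatorname{Im}Z(Q')=2d_0+(2d_1+2r_1)+(2d_2+4r_2).
\]
I expect this step to be the only genuinely delicate point: one must verify that the pushforward and tensor description of filtration quotients commutes with taking non-locally-free $Q_j$. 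Exactness of the equivalence, which preserves cokernels and weight support, handles this.

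\textbf{Vertexwise positivity.} Each vertex is then checked separately.
\begin{itemize}
 \item For $j=0$: $Q_0$ is $0$, $\mathcal O(3)$, or torsion of positive length. Hence $d_0\ge0$, with equality only if $Q_0=0$.
 \item For $j=2$: similarly $Q_2$ is $0$, $\mathcal O(1)$ (with $d_2=1$, $r_2=1$), or nonzero torsion. Hence $2d_2+4r_2>0$ unless $Q_2=0$.
 \item For $j=1$: write $Q_1$ as torsion plus a locally free quotient $F$ of $\mathcal O(2)^{\oplus2}$. Then $F$ is globally generated, so $F(-2)$ is a globally generated quotient of $\mathcal O^2$. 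Every summand $\mathcal O(m)$ of $F$ therefore has $m\ge2$, giving $\deg F\ge2\operatorname{rank}F$. Thus $2d_1+2r_1\ge0$, with equality only if $Q_1=0$.
\end{itemize}
Since $Q'\ne0$, some $Q_j\ne0$, and so $\operatorname{Im}Z(Q')>0$.

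This argument uses no property of $A$ or $B$, so it holds for every coefficient pair and for all nonsaturated quotients, as claimed.
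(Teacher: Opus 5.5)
Your proof is correct and is essentially the paper's argument. By additivity of the Chern character, your vertexwise quotient charge $2d_0+(2d_1+2r_1)+(2d_2+4r_2)$ equals $2(12-B_Z)$ for the kernel subchain. Your lower bounds, in particular $\deg F\ge 2\operatorname{rank}F$ for locally free quotients of $\mathcal O(2)^{\oplus2}$, are the quotient-side form of the paper's component bounds $e_0\le3$, $e_1\le2$ or $4$, $e_2\le1$. The only difference is bookkeeping: the paper bounds $B_Z$ of the subobject with a case split (dropped ranks versus full rank with finite colengths), whereas you avoid the split by showing that each nonzero quotient vertex contributes strictly positively.
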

\begin{proof}
The charge computation above, applied to coherent filtration quotients,
gives
\begin{equation}\label{geo-subobject-charge}
 Z(S)=2(A_Z+iB_Z),\qquad
 A_Z=\rho_0+\rho_1+\rho_2-e_1-2e_2,\qquad
 B_Z=e_0+e_1+e_2+\rho_1+2\rho_2.
\end{equation}
For the ambient chain these numbers are $(-2,12)$. The component bounds
are $e_0\le3$, $e_2\le1$, and $e_1\le2$ or $4$ according as its rank is
one or two. Omitting any ambient rank reduces the maximum $B_Z$ from
$3+6+3=12$ to at most $9$. For full ranks, put
$e_0=3-\ell_0$, $e_1=4-\ell_1$, $e_2=1-\ell_2$; then
$B_Z=12-\ell_0-\ell_1-\ell_2$. Equality forces all quotient lengths
zero and the entire chain. Thus every proper subchain has $B_Z<12$ and
every nonzero quotient has
$\operatorname{Im}Z(\mathcal E/S)=2(12-B_Z)>0$.
This argument includes arbitrary finite-colength and nonreduced quotients.
\end{proof}

\subsection{The strict chamber and the exceptional phase cases}

Rotation by the fixed phase gives
\begin{equation}\label{geo-phase-functional}
 \operatorname{Im}(e^{-i\theta_4}Z(S))
 =-\frac2{\sqrt{37}}F(S),\qquad
 F(S)=B_Z+6A_Z
 =e_0-5e_1-11e_2+6\rho_0+7\rho_1+8\rho_2.
\end{equation}
For charges in $\mathbb H_{\rm res}$, principal phase decrease is
equivalent to $F>0$. Indeed the phase difference lies in $(-\pi,\pi)$,
so its sine has its sign. Equality $F=0$ is the only same-phase wall.
The sink $(E_0,0,0)$ has $F=9$, fixing the orientation.

\begin{proposition}[Equivariant algebraic classification]
\label{geo-algebraic-classification}
For every original coefficient pair, equivariant stability,
semistability and polystability coincide, and hold exactly when
$a=2$, $b>0$, $Q\not\equiv0$.
\end{proposition}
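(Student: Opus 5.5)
The plan is to reduce every phase comparison to the sign of the integer functional $F(S)=B_Z+6A_Z$ from \eqref{geo-phase-functional}, applied to the coherent subchains of Lemma~\ref{geo-bridge}. By Lemma~\ref{geo-torsion} every coefficient pair lies in the torsion class. So stability (resp.\ semistability) is equivalent to $F(S)>0$ (resp.\ $F(S)\ge0$) for every nonzero proper subchain $S$ whose charge lies in $\mathbb H_{\rm res}$, that is, with $B_Z>0$, or with $B_Z=0$ and $A_Z<0$. Since stable implies semistable, and a stable object is trivially polystable (a one-term sum), it suffices to prove two things: in the failing strata there is a subchain in $\mathbb H_{\rm res}$ with $F<0$, which excludes semistability and polystability simultaneously; and in the strata $a=2$, $b>0$, $Q\not\equiv0$ every admissible proper subchain has $F>0$.

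\emph{Failing strata.} I exhibit explicit destabilizers using the coefficient refinements listed after \eqref{geo-sheaf-incidences}.
\begin{itemize}
 \item If $a\le1$, then $\ker\phi$ contains a line $\mathcal O(2)$. The subchain $(0,\mathcal O(2),0)$ satisfies both incidences and has $(A_Z,B_Z)=(-1,3)$ and $F=-10+7=-3<0$.
 \item If $a=2$ and $b=0$, the subchain $(0,0,\mathcal O(1))$ is admissible, with $(A_Z,B_Z)=(-1,3)$ and $F=-11+8=-3$.
 \item If $a=2=b$ and $Q\equiv0$, Lemma~\ref{geo-domain} and \eqref{geo-coefficient-action} give $\operatorname{im}\psi=\ker\phi=\mathcal O(1)$. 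The subchain $(0,\mathcal O(1),\mathcal O(1))$ has $(A_Z,B_Z)=(-1,5)$ and $F=-5-11+7+8=-1$.
\end{itemize}
By Lemma~\ref{geo-domain} these cases exhaust the complement of the claimed locus.

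\emph{Surviving strata.} Here $\phi\psi\ne0$, so by \eqref{geo-all-patterns} the rank patterns are $100,010,110,120,111,121$. I bound $F$ in each.
\begin{itemize}
 \item Pattern $100$: $F=e_0+6$, and membership in $\mathbb H_{\rm res}$ forces $B_Z=e_0>0$.
 \item Pattern $010$: $S_1\subset\ker\phi=\mathcal O(1)$, so $e_1\le1$ and $F=7-5e_1\ge2$.
 \item Pattern $110$: $\phi|_{S_1}\neq0$, so $e_0\ge e_1$ and $e_1\le2$, giving $F\ge13-4e_1\ge5$.
 \item Patterns $120$ and $121$: write colengths $\ell_j$. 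Pointwise surjectivity of $\phi$ gives $\ell_0\le\ell_1$, hence
 \[
 F=-\ell_0+5\ell_1+11\ell_2\ge4\ell_1+11\ell_2 ,
 \]
 after using $F(\mathcal E)=0$; for $120$, absence of $S_2$ adds a positive constant. Thus $F>0$ unless $S$ is the whole chain.
 \item Pattern $111$ with $b=2$: $S_1=I_B(-D_1)$ with $D_1\le D_2$, so $e_2\le e_1\le1$. The restriction $\phi|_{I_B}$ vanishes exactly on $\operatorname{div}Q$ of degree two, so $e_0\ge e_1+2$ and $F\ge23-4e_1-11e_2\ge8$.
 \item Pattern $111$ with $b=1$: $S_1=M_B(-D_1)$ with $D_1\le p+D_2$, so $e_2\le e_1\le2$ and $e_2\le1$. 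The restriction $\phi|_{M_B}$ vanishes only at $q_0$, so $e_0\ge e_1+1$ and $F\ge22-4e_1-11e_2\ge3$.
\end{itemize}
This covers coincident and distinct $p,q_0$ alike, since only degrees of the divisors enter.

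The step I expect to be the main obstacle is the pattern $111$ on the lower boundary $b=1$. There one must use the actual sheaf incidence, namely that $\psi(S_2)=M_B(-p-D_2)$ sits inside the saturated line $M_B$ and that $\phi$ loses exactly the divisor $q_0$ there. Treating $\operatorname{im}\psi$ as a subbundle, or losing the divisor $p$, would misstate the bounds $e_1\ge e_2$ and $e_0\ge e_1+1$. A second point needing care is the colength inequality $\ell_0\le\ell_1$ for nonreduced finite quotients. I would prove it by noting that the surjection $E_1/S_1\twoheadrightarrow E_0/\phi(S_1)$ has target surjecting onto $E_0/S_0$, so length is monotone.
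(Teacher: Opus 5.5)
Your route is the paper's: reduce to the sign of $F$ using the torsion lemma, give destabilizers in the failing strata, and bound $F$ pattern by pattern. Your uniform choice $(0,\mathcal O(2),0)$ for all $a\le1$ works as well as the paper's $(0,E_1,0)$ when $a=0$. The failing half, and patterns $100$, $010$, $120$, $121$, are correct, including your colength argument for $\ell_0\le\ell_1$.

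\textbf{Missing case in pattern $110$.} It is not true that $\phi|_{S_1}\neq0$ there. A subchain may have $S_1\subset K_A=\mathcal O(1)$, and then the incidence $\phi(S_1)\subset S_0$ puts no restriction on $S_0=\mathcal O(3)(-D_0)$. For example, $(\mathcal O(3)(-D_0),K_A,0)$ has $F=11-\deg D_0$, which is negative once $\deg D_0\ge12$. Such subchains are excluded only by the admitted-charge condition, which you never invoke for them:
\begin{itemize}
\item if $B_Z=e_0+e_1+1>0$, then $e_0\ge-e_1$, so $F\ge13-6e_1\ge7$;
\item if $B_Z=0$, then $A_Z=2-e_1\ge1$, so the charge is not on the negative real axis.
\end{itemize}
Your reduction explicitly admits charges with $B_Z=0$, $A_Z<0$, for which $F=6A_Z<0$, so this check is required. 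Apart from pattern $100$, it is the only place in the stable strata where membership in $\mathbb H_{\rm res}$, rather than an incidence, does the work. This is why the paper splits $110$ into the subcases $S_1\subset K_A$ and $S_1\not\subset K_A$.

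\textbf{False inequalities in pattern $111$.} Your bounds $e_0\ge e_1+2$ (for $b=2$) and $e_0\ge e_1+1$ (for $b=1$) are wrong. The restriction $\phi|_{S_1}$ is a nonzero, hence injective, map of line bundles. So $\phi(S_1)\cong S_1$ has degree exactly $e_1$. The zeros on $\operatorname{div}Q$ or at $q_0$ only locate this image inside $\mathcal O(3)$; they do not raise its degree. Two counterexamples:
\begin{itemize}
\item For $A=I$, $B=\operatorname{diag}(1,0)$, the subchain $(\mathcal O(3)(-q_0),M_B,E_2)$ has $(e_0,e_1,e_2)=(2,2,1)$ and $F=2$, below your claimed $3$.
\item For $A=B=I$, the subchain $(\mathcal O(3)(-\operatorname{div}Q),I_B,E_2)$ has $F=6$, below your claimed $8$.
\end{itemize}
The conclusion still holds with the correct bound $e_0\ge e_1$, together with $e_1\le2$ and $e_2\le1$. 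These give $F\ge21-4e_1-11e_2\ge2$, which is the paper's bound and is attained by the first example. So the step you flagged as the main obstacle needs no divisor information. It needs only that $\phi$ does not vanish on $S_1\supset\psi(S_2)$, which follows from $Q\not\equiv0$.
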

\begin{proof}
Assume first these three conditions. The complete rank list is the first
row of \eqref{geo-all-patterns}, with $K_A=\mathcal O(1)$. If the pattern
is $010$, then $S_1\subset K_A$ and $e_1\le1$. For $110$ either the
same inclusion holds, or the nonzero upper restriction gives $e_0\ge e_1$.
For $120$, surjectivity of $\phi$ induces a surjection
$E_1/S_1\twoheadrightarrow E_0/S_0$; hence
$\ell_0\le\ell_1$, or $e_0\ge e_1-1$. In pattern $111$ both restrictions
are nonzero, so $e_0\ge e_1\ge e_2$, $e_2\le1$ and $e_1\le2$.
If $b=2$ its middle saturation is $\mathcal O(1)$ and $e_1\le1$; if
$b=1$ the saturation is $M_B=\mathcal O(2)$, retaining the length-one
torsion between $\operatorname{im}\psi$ and $M_B$. For $121$ the same
upper quotient surjection gives $\ell_0\le\ell_1$.

For $B_Z>0$, these actual incidences give the following bounds:
\begin{equation}\label{geo-strict-bounds}
\begin{array}{c|l|c}
 \rho&\text{incidence or admitted-charge condition}&F(S)\\\hline
 100&e_0>0&e_0+6\ge7\\
 010&e_1\le1&7-5e_1\ge2\\
 110, S_1\subset K_A&e_1\le1, e_0\ge-e_1&F\ge13-6e_1\ge7\\
 110, S_1\not\subset K_A&e_0\ge e_1, e_1\le2&F\ge13-4e_1\ge5\\
 120&e_0\ge e_1-1, e_1\le4&F\ge19-4e_1\ge3\\
 111&e_0\ge e_1\ge e_2, e_1\le2, e_2\le1&F\ge2\\
 121&\ell_0\le\ell_1&F=-\ell_0+5\ell_1+11\ell_2>0.
\end{array}
\end{equation}
For $111$, the estimate is $F\ge21-4e_1-11e_2\ge2$.
For proper $121$, $F\ge4\ell_1+11\ell_2$; equality would give
$\ell_0=\ell_1=\ell_2=0$ and the whole chain.

The admitted negative real axis has not been omitted. If $B_Z=0$, the
same incidences give $A_Z>0$: for $100,010$ it is respectively $1,2$;
for the killed $110$ case $A_Z=2-e_1\ge1$, and otherwise
$e_0=-e_1-1\ge e_1$ forces $e_1\le-1$. In $120$,
$e_0=-e_1-2\ge e_1-1$ forces $e_1\le-1$, so $A_Z\ge4$.
For $111$, $e_0+e_1+e_2=-3$ and $e_0\ge e_2$ imply
$e_1+2e_2\le-3$, hence $A_Z\ge6$. For $121$,
$\ell_0+\ell_1+\ell_2=12$ and $\ell_0\le\ell_1$ imply
$\ell_1+\ell_2\ge6$, so $A_Z=-2+\ell_1+2\ell_2\ge4$.
None of these charges is negative real. Lemma~\ref{geo-torsion} and
\eqref{geo-strict-bounds} prove stability, including every middle
embedding and nonsaturated modification.

Every other coefficient stratum has the following strict destabilizer:
\begin{equation}\label{geo-destabilizers}
\begin{array}{c|c|c|c}
 \text{condition}&S&(A_Z,B_Z)&F(S)\\\hline
 a=0&(0,E_1,0)&(-2,6)&-6\\
 a=1&(0,K_A,0),\ K_A=\mathcal O(2)&(-1,3)&-3\\
 a=2, b=0&(0,0,E_2)&(-1,3)&-3\\
 (a,b,\epsilon)=(2,2,0)&(0,K_A,E_2)&(-1,5)&-1.
\end{array}
\end{equation}
In the last row $\operatorname{im}\psi=\ker\phi=\mathcal O(1)$,
so it is a legal subchain. Each charge has positive imaginary part and
phase strictly above $\theta_4$. The eleven-stratum exhaustion in
Lemma~\ref{geo-domain} shows that these rows cover every other pair.
They exclude semistability. A stable object is polystable through its
one-summand decomposition; conversely the definition of polystability
includes semistability. The three loci therefore coincide.
\end{proof}

For completeness, the same-phase and zero-charge cases admit a complete
degree and sheaf description. For every nonzero rank vector the wall
functional is
\begin{equation}\label{geo-all-walls}
\begin{array}{c|c@{\qquad}c|c}
100&6+e_0&010&7-5e_1\\
020&14-5e_1&001&8-11e_2\\
101&14+e_0-11e_2&110&13+e_0-5e_1\\
120&20+e_0-5e_1&011&15-5e_1-11e_2\\
021&22-5e_1-11e_2&111&21+e_0-5e_1-11e_2\\
121&28+e_0-5e_1-11e_2&&
\end{array}
\end{equation}
Solving $F=0$ with the component-degree bounds, integral degrees, and
$B_Z>0$ gives precisely
\begin{equation}\label{geo-wall-tuples}
\begin{array}{c|c|c}
\rho&(e_0,e_1,e_2)&\text{realizable coefficient strata}\\\hline
120&(0,4,0)&a=0\\
111&(0,2,1)&(0,0,0),(0,1,0),(1,0,0),(1,1,0)\\
121&(-2,3,1)&a=0.
\end{array}
\end{equation}
All have charge $2(-1+6i)=Z(\mathcal E)/2$; the only additional tuple is
$(3,4,1)$ in rank $121$, namely the improper whole chain.
One way to check the unbounded degree calculation is to eliminate $e_0$.
For $120$, $B_Z=6e_1-18>0$ forces $e_1=4$.
For $111$, $B_Z=6e_1+12e_2-18>0$ forces
$e_1+2e_2\ge4$, whose upper bounds force $(e_1,e_2)=(2,1)$.
For $121$, $B_Z=6e_1+12e_2-24>0$ forces
$e_1+2e_2\ge5$, leaving $(e_1,e_2)=(3,1),(4,1)$.
In $100,010,020,001,101,110$, their one-variable wall equations give
no integral tuple with positive $B_Z$ and the stated upper bounds.
For $011$, $5e_1+11e_2=15$ gives $e_1=3-11k$, $e_2=5k$;
$e_1\le2,e_2\le1$ are incompatible. For $021$,
$5e_1+11e_2=22$ gives $e_1=11k$, $e_2=2-5k$, again incompatible
with $e_1\le4,e_2\le1$. Thus no finite degree window is being used.

The first wall is realized by $(\mathcal O,E_1,0)$ whenever $a=0$.
It cannot occur for a nonzero upper arrow, whose image has colength at
most one, whereas $S_0=\mathcal O$ has colength three.
For the second, take a constant $\mathcal O(2)$ middle line killed by
$\phi$. When $b=1$ this line is forced to be $M_B$, and with $a=1$
the required incidence is $M_B=K_A$, exactly zero composition. If $b=2$
the saturated lower image has degree one and cannot lie in such a
different saturated line; if $a=2$ the upper kernel has degree one.
These observations give exactly the four listed strata. For the third,
take a colength-one middle subbundle containing $\psi(E_2)$ and an
arbitrary $S_0=\mathcal O(-2)$ of colength five when $a=0$.
Such a middle component is obtained by taking the inverse image of
the kernel of a length-one quotient of $E_1/\operatorname{im}\psi$
when $b=2$, or of $E_1/M_B$ when $b=1$; when $b=0$ any length-one
quotient of $E_1$ suffices.
For $a\ne0$, a length-one modification can make the upper image have
colength at most two, so the required colength five is impossible.
Every wall-bearing stratum already has a strict destabilizer in
\eqref{geo-destabilizers}. Thus equality restores no semistability;
there is no proper wall on either stable stratum.

The solutions of $A_Z=B_Z=0$ with the same component bounds are
\begin{equation}\label{geo-zero-tuples}
\begin{array}{c|c|l}
\rho&(e_0,e_1,e_2)&\text{realizable coefficient strata}\\\hline
101&(-3,0,1)&B=0\\
110&(-3,2,0)&a\le1\\
120&(-5,3,0)&a=0\\
111&(-5,1,1)&a=0; (1,0,0),(1,1,0),(2,0,0),(2,2,0)\\
121&(-8,4,0),\ (-7,2,1)&a=0.
\end{array}
\end{equation}
Indeed $A_Z=0$ says $e_1+2e_2=\rho_0+\rho_1+\rho_2$ and
$B_Z=0$ determines $e_0$; the upper degree bounds give precisely the
displayed tuples, and none for the other ranks. Their realization also
follows from \eqref{geo-sheaf-incidences}. In $101$ the bottom cannot be
present unless $B=0$. In $110$ the degree-two middle line must be killed
by $\phi$, available exactly when $a\le1$. In $120$, a length-one middle
modification cannot map into the colength-eight top subsheaf unless
$a=0$. In $111$ the middle line has degree one and contains the entire
lower image. For $a=0$ this is possible for every lower map. For $a>0$
it must be killed by $\phi$, since the top degree is $-5$; this gives
precisely the four additional strata shown. For $121$, the first tuple
has the whole middle bundle and the second a length-two middle
modification containing $\psi(E_2)$. They are realizable for $a=0$:
for the second, take the inverse image of the kernel of a length-two
quotient of $E_1/\operatorname{im}\psi$ when $b=2$, or of $E_1/M_B$
when $b=1$; when $b=0$ take any length-two quotient of $E_1$.
For $a>0$, their upper-image colengths are at most one and three,
respectively, insufficient for the top colengths eleven and ten.
Each tuple is outside $\mathbb H_{\rm res}$ and lies on an already
strictly unstable stratum. Also $F=0=B_Z$ would force zero charge;
there is no admitted zero-imaginary same-phase wall or phase ambiguity.

The two initial filtration charges provide useful checks on signs:
\[
 Z(E_0,0,0)=2+6i,\quad Z(\mathcal E/(E_0,0,0))=-6+18i,
\]
\[
 Z(E_0,E_1,0)=-2+18i,\quad Z(0,0,E_2)=-2+6i.
\]
Their phase cross-products against $-4+24i$ are $-72,-24$.
For primary maps, $E_1/\psi(E_2)$ is a degree-three line, with charge
imaginary part $8$ in the middle weight. The quotients by
$(0,K_A,0)$, $(E_0,K_A,0)$ and $(E_0,\operatorname{im}\psi,0)$ have
imaginary charges $20,14,14$. A finite-length vertex quotient of length
$\ell$ has imaginary charge $2\ell>0$.
These checks are consequences of the complete inventory, rather than
replacements for it.

\section{Metric positivity and the excluded coefficient strata}

\subsection{Pointwise and integrated identities}

\begin{lemma}[Lyapunov identities and the symbol]\label{geo-lyapunov}
At a solution of \eqref{geo-equation}, put
$T_j=\chi I+\eta S_j$ and $L_j=\chi I+\eta H_j$. Then
\begin{equation}\label{geo-two-lyapunov}
 \tfrac12\{T_j,\mathcal C_j\}=I+S_j^2+K_j>0,\qquad
 \tfrac12\{T_j,L_j\}=I+\eta^2K_j>0,
 \qquad T_j>0,\quad L_j>0.
\end{equation}
The metric-linearized principal symbol is, up to the common curvature
sign, $g^{z\bar z}\zeta_z\bar\zeta_z\{T_j,u_j\}/2$ at each vertex.
Every strictly calibrated solution is therefore positively strongly
elliptic.
\end{lemma}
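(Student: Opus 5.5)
The plan is to derive everything from the two scalar relations in \eqref{geo-equation} and the normalization $\chi^2+\eta^2=1$, without ever commuting $H_j$ and $S_j$.

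\emph{Step 1: express $\mathcal A_j$ and $\mathcal B_j$ through $\mathcal C_j$.} From $\chi\mathcal B_j=\eta\mathcal A_j$ we get
\[
\chi\mathcal C_j=\chi^2\mathcal A_j+\chi\eta\mathcal B_j=(\chi^2+\eta^2)\mathcal A_j=\mathcal A_j,
\]
and likewise $\mathcal B_j=\eta\mathcal C_j$. Substituting $H_j=\eta\mathcal C_j-S_j$ from the $\mathcal B_j$ relation, and using only bilinearity of the anticommutator, gives
\[
\tfrac12\{H_j,S_j\}=\tfrac{\eta}{2}\{\mathcal C_j,S_j\}-S_j^2 .
\]
The $\mathcal A_j$ relation of Lemma~\ref{geo-square} then reads
\[
I-\tfrac{\eta}{2}\{\mathcal C_j,S_j\}+S_j^2+K_j=\chi\mathcal C_j .
\]
Since $\tfrac12\{T_j,\mathcal C_j\}=\chi\mathcal C_j+\tfrac{\eta}{2}\{S_j,\mathcal C_j\}$, this is exactly the first identity in \eqref{geo-two-lyapunov}.

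\emph{Step 2: the second identity.} Expand
\[
\tfrac12\{T_j,L_j\}=\chi^2I+\chi\eta(H_j+S_j)+\tfrac{\eta^2}{2}\{S_j,H_j\}.
\]
Replace $\chi\eta(H_j+S_j)=\eta\cdot\chi\mathcal B_j=\eta^2\mathcal A_j=\eta^2\bigl(I-\tfrac12\{H_j,S_j\}+K_j\bigr)$. The anticommutators cancel, and $\chi^2+\eta^2=1$ leaves $I+\eta^2K_j$.

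Both right-hand sides are positive definite, since $S_j$ is $h_j$-Hermitian, so $S_j^2\ge0$, and $K_j\ge0$ was shown after \eqref{geo-K}.

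\emph{Step 3: positivity of $T_j$ and $L_j$ by the Lyapunov eigenvector argument.} All the operators involved are $h_j$-Hermitian, because $H_j$ is $i\Lambda$ of a Chern curvature and $S_j$ is built from $P_i,R_i$. Let $T_jv=\lambda v$ with $v\ne0$. Then
\[
h_j(\{T_j,\mathcal C_j\}v,v)=2\lambda\,h_j(\mathcal C_jv,v).
\]
The left side is positive by Step 1 and $h_j(\mathcal C_jv,v)>0$ by strict calibration, so $\lambda>0$ and $T_j>0$. Repeating the argument with $L_j$ in place of $T_j$ and the now positive $T_j$ in place of $\mathcal C_j$, using the second identity, gives $L_j>0$.

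\emph{Step 4: the principal symbol.} Vary $h_j\mapsto h_je^{u_j}$ with $u_j$ $h_j$-Hermitian. The arrows $\phi,\psi$ are fixed holomorphic maps, so adjoints change algebraically in $u$. The derivatives $\xi_i=\nabla^{1,0}$ change by first derivatives of $u$, so $S_j$ and $K_j$ are at most first order in $u$. Only $H_j$ is second order, with $\delta H_j=\pm\, g^{z\bar z}\partial_z\partial_{\bar z}u_j+(\text{lower order})$.

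The variation of $\chi\mathcal B_j-\eta\mathcal A_j$ in its top-order part is therefore
\[
\chi\,\delta H_j+\tfrac{\eta}{2}\{\delta H_j,S_j\}=\tfrac12\{T_j,\delta H_j\}.
\]
Its symbol is $g^{z\bar z}\zeta_z\bar\zeta_z\,\tfrac12\{T_j,u_j\}$, up to the common sign. The equations decouple at top order vertex by vertex because the arrow terms are of lower order. Strong ellipticity follows from
\[
\operatorname{tr}\bigl(u\{T_j,u\}\bigr)=2\operatorname{tr}(uT_ju)>0
\]
for $u\ne0$, using $T_j>0$.

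The only delicate point is bookkeeping in Step 4, namely confirming that no second derivatives of $u$ enter through $K_j$ or $S_j$. This holds because these terms involve only the arrows, their adjoints and first covariant derivatives.
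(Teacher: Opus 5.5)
Your proposal is correct and follows the paper's proof. You substitute $\mathcal A_j=\chi\mathcal C_j$ and $\mathcal B_j=\eta\mathcal C_j$ into \eqref{geo-AB}, keeping both orders of $H_jS_j$, test the first identity on an eigenvector of $T_j$, and identify $\tfrac12\{T_j,\dot H_j\}$ as the only second-order part of the linearization. The differences are cosmetic: you get $L_j>0$ by rerunning the eigenvector test with $T_j$ and $L_j$ swapped instead of using the integral $L_j=2\int_0^\infty e^{-tT_j}(I+\eta^2K_j)e^{-tT_j}\,dt$, and you check strong ellipticity through $\operatorname{tr}(u\{T_j,u\})=2\operatorname{tr}(uT_ju)>0$ rather than the multipliers $(t_a+t_b)/2$, which the paper uses to also state the converse that the lemma does not require.
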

\begin{proof}
The phase equation gives $\mathcal A_j=\chi\mathcal C_j$ and
$\mathcal B_j=\eta\mathcal C_j$. Substitute these into
\eqref{geo-AB}, retaining both orders of $H_jS_j$; this gives the two
identities in \eqref{geo-two-lyapunov}. Testing the first on an eigenvector
of the Hermitian $T_j$ shows that each eigenvalue is positive because
$\mathcal C_j>0$. The second then gives positivity of $L_j$ by the
inverse Lyapunov formula
\[
 L_j=2\int_0^\infty e^{-tT_j}(I+\eta^2K_j)e^{-tT_j}\,dt.
\]
For a metric variation $h_j(t)(v,w)=h_j(e^{tu_j}v,w)$,
$u_j=u_j^\dagger$, the curvature convention gives
$\dot H_j=-g^{z\bar z}\partial_z\partial_{\bar z}u_j$ plus lower-order
terms. Adjoint and $S_j$ variations are zero order; derivative and
$K_j$ variations are at most first order. Thus the second-order part of
$\chi\mathcal B_j-\eta\mathcal A_j$ is
$\{T_j,\dot H_j\}/2$. If the eigenvalues of $T_j$ are $t_a$, its
matrix-entry multipliers are $(t_a+t_b)/2$. Ellipticity is equivalent
to absence of zero pair sums; positive strong ellipticity is equivalent
to $T_j>0$. The identity at an actual calibrated solution supplies that
positivity. Calibration alone away from the equation need not do so.
This is a symbol statement, rather than a global-kernel or inverse theorem.
\end{proof}

This reduced symbol calculation is specific to the ordered three-block
equation. It does not identify strict matrix calibration with the
formal-derivative \(Z\)-positivity of
\cite[Definition 2.6]{KS2025}.

In particular,
\begin{equation}\label{geo-positive-buffers}
 -I+6P_1>0,\quad5I-6R_1+6P_2>0,\quad11I-6R_2>0,
 \qquad -I+6H_j>0,
\end{equation}
and the nonscalar middle identity is
\[
 \tfrac12\{5I-6R_1+6P_2,-I+6H_1\}=37I+36K_1.
\]
The Chern--Bochner identities with the conventions above
\cite[Ch.~VII, \S1]{Demailly2012} are
\begin{equation}\label{geo-bochner}
\begin{aligned}
 \int_X\operatorname{tr}(H_0P_1-H_1R_1)\omega_X
     &=\int_X|\xi_1|^2\omega_X,\\
 \int_X\operatorname{tr}(H_1P_2-H_2R_2)\omega_X
     &=\int_X|\xi_2|^2\omega_X.
\end{aligned}
\end{equation}
They follow by integration by parts for the holomorphic arrows with
the induced source-target Chern connections. Tracing the top and bottom
equations and using these identities gives
\begin{equation}\label{geo-endpoint-integrals}
\begin{aligned}
 -\frac{18\pi}{\sqrt{37}}
  &=-\int_X\operatorname{tr}(L_1R_1)\omega_X
     -\frac3{\sqrt{37}}\int_X|\xi_1|^2\omega_X,\\
 \frac{6\pi}{\sqrt{37}}
  &=\int_X\operatorname{tr}(L_2R_2)\omega_X
     +\frac3{\sqrt{37}}\int_X|\xi_2|^2\omega_X.
\end{aligned}
\end{equation}
Both are compatible with positive $L_j$. The averaged integrated traces
of $L_j$ are respectively $17/\sqrt{37},22/\sqrt{37},5/\sqrt{37}$,
since $\int\operatorname{tr}H_j=2\pi d_j$ and
$\int\operatorname{tr}I=2\pi r_j$. When $\operatorname{rank}B=2$, the
exact sequence $0\to E_2\xrightarrow{\psi}E_1\to Q_\psi\to0$
defines a quotient line $Q_\psi$ of degree $d_1-d_2=3$.
Equip it with the quotient metric induced by $h_1$, let $D_{Q_\psi}$
be its Chern connection, and put $H_{Q_\psi}=i\Lambda F_{D_{Q_\psi}}$.
Its normalized average is
$ (2\pi)^{-1}\int_X(\chi+\eta H_{Q_\psi})\omega_X=17/\sqrt{37}$.
Independently, when $\operatorname{rank}A=2$, the sequence
$0\to\ker\phi\to E_1\xrightarrow{\phi}E_0\to0$
gives the degree-three upper quotient $Q_\phi=E_1/\ker\phi\simeq E_0$;
its quotient-metric Chern connection has the same normalized average.
These two quotients agree when $\operatorname{im}\psi=\ker\phi$,
but are not identified in general.
When $\operatorname{rank}B=1$, instead
$\operatorname{im}\psi=L(-p)$ with saturation $L\simeq\mathcal O(2)$.
The sheaf $E_1/\operatorname{im}\psi$ has length-one torsion
$L/\operatorname{im}\psi\simeq\mathbb C_p$ and torsion-free quotient
$E_1/L\simeq\mathcal O(2)$, of degree two; its quotient-metric
Chern connection gives normalized average $11/\sqrt{37}$.
These statements concern induced quotient curvature, not the projection
of $L_1$, which retains the second-fundamental-form correction in
\eqref{geo-euler-kinetics} and \eqref{lower-gauss-codazzi}.
The top Chern--Bochner sign is
$\chi d_0-\eta r_0=-9/\sqrt{37}$; the lower normalized endpoint value is
$3/\sqrt{37}$ (integer numerator $12$ in the charge normalization).

For arbitrary metrics there is also the compatible-target identity
\begin{equation}\label{geo-target-compatibility}
 \sum_{j=0}^2\int_X\operatorname{tr}
              (\chi\mathcal B_j-\eta\mathcal A_j)\omega_X=0.
\end{equation}
Indeed the integrated sums of $\operatorname{tr}\mathcal B_j$ and
$\operatorname{tr}\mathcal A_j$ are $24\pi$ and $-4\pi$, respectively,
by Chern--Weil and \eqref{geo-bochner}; the resulting equality is
$24\chi+4\eta=0$. Thus the global linearized target must respect this
codimension-one compatibility. A positive-symbol metric can be chosen
formally by normalizing the upper basis to common nonzero eigenvalue
$P_1,R_1=1/2$ and scaling the bottom metric to make the common nonzero
eigenvalue of $P_2,R_2$ less than $11/6$. The three $T_j$ are then
positive, but this supplies neither a subsolution nor existence.
Equivalently the invariant topological check is
\[
 \int_{\mathcal X}\frac{\operatorname{tr}(\Omega^2)}{2(2\pi)^2}
       =-4+24i=Z(\mathcal E_{A,B}).
\]
No connection-dependent derivative or commutator contribution changes
this Chern character.

\begin{proposition}[Elementary all-metric exclusions]
\label{geo-elementary-exclusions}
There is no smooth strictly calibrated solution if
$\operatorname{rank}A\le1$ or $B=0$, including their intersections.
\end{proposition}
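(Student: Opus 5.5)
The plan is to use only two consequences of Lemma~\ref{geo-lyapunov}: the pointwise positivity $T_j>0$ recorded in \eqref{geo-positive-buffers}, and the Chern--Weil averages $\int_X\operatorname{tr}H_j\,\omega_X=2\pi d_j$. We argue by contradiction. Fix an arbitrary smooth positive metric triple solving \eqref{geo-equation} with strict calibration. Nothing below uses a symmetry or special form of the metric. By Lemma~\ref{geo-metric-covariance} we may also replace $(A,B)$ by any orbit representative, although this will not be needed. Since each of the two cases gives a contradiction on its own, the intersections of the two conditions are covered automatically.

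\emph{Case $\operatorname{rank}A\le1$.} We aim for a pointwise contradiction at the top vertex. By Lemma~\ref{geo-lyapunov}, $T_0=\chi+\eta P_1>0$ everywhere, which is the first buffer $-1+6P_1>0$, i.e.\ $P_1=|\phi|^2_{h}>1/6$ at every point of $X$. If $A=0$ then $\phi\equiv0$. If $\operatorname{rank}A=1$, write $A=u\alpha_c^T$; then $\phi=(\alpha_c^Tz)\,u^T$ vanishes at the simple zero $p_A$ of $\alpha_c^Tz$, as in the inventory preceding \eqref{geo-sheaf-incidences}. In either case there is a point where $P_1=0$, so $T_0=\chi<0$ there, contradicting $T_0>0$. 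The only check needed is that the first identity of \eqref{geo-two-lyapunov} applies at a rank-one vertex with $\phi$ vanishing. It does, because it was derived pointwise from \eqref{geo-AB} with no nonvanishing assumption: at such a point it reads $\chi\mathcal C_0=1+K_0$, which forces $\mathcal C_0<0$ and hence contradicts strict calibration directly.

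\emph{Case $B=0$.} Here the bottom vertex decouples. We have $\psi=0$, so $R_2=0$, $\xi_2=0$, $K_2=0$ and $S_2=2$. The scalar bottom equation $\chi(H_2+2)=\eta(1-2H_2)$ becomes linear in $H_2$, namely $(\chi+2\eta)H_2=\eta-2\chi$. Since $\chi+2\eta=11/\sqrt{37}\ne0$ (this is $T_2>0$), it gives the constant value $H_2\equiv 8/11$. On the other hand, Chern--Weil gives $\int_XH_2\,\omega_X=2\pi d_2=2\pi$, and the base volume is $2\pi$, so the average of $H_2$ must equal $1$. This is a contradiction. Equivalently, the second identity in \eqref{geo-endpoint-integrals} would read $6\pi/\sqrt{37}=0$. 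I would present the direct computation, since it avoids invoking \eqref{geo-bochner} for a zero arrow.

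I do not expect a serious obstacle here. The only delicate point is to confirm that the reduction in Lemma~\ref{geo-square} and the identity \eqref{geo-two-lyapunov} remain valid verbatim when an arrow vanishes identically or at a point. The remarks after Lemma~\ref{geo-square} on zero arrows guarantee this: the shifts $I$ and $2I$ in \eqref{geo-S} are retained, and that is exactly what the constant $8/11$ and the sign of $\chi$ rely on.
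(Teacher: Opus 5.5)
Your proof is correct and follows the paper's argument. In the first case, a common zero of the components of $\phi$ gives $P_1=0$ and $T_0=\chi<0$ there, contradicting Lemma~\ref{geo-lyapunov}; in the second, $B=0$ forces the constant $H_2=8/11$, which is incompatible with the Chern--Weil average $1$ on $E_2=\mathcal O(1)$, and, as the paper also notes, this second contradiction needs only $\chi+2\eta\ne0$ rather than calibration.
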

\begin{proof}
When the upper coefficient rank is at most one, its two linear components
have a common zero, including the identically zero case. At this point
$P_1=S_0=0$ and $T_0=\chi I<0$, contradicting
Lemma~\ref{geo-lyapunov}. If $B=0$, then $R_2=K_2=0$ and $S_2=2I$.
The scalar bottom equation is
$-(H_2+2)=6(1-2H_2)$, so $H_2=8/11$ everywhere. Its integral is
$16\pi/11$, whereas Chern--Weil on $E_2=\mathcal O(1)$ gives $2\pi$.
This second contradiction does not require calibration. Both arguments
apply to every smooth positive metric triple.
\end{proof}

\begin{proposition}[Euler-sequence all-metric exclusion]
\label{geo-euler-exclusion}
No original pair in the stratum $(2,2,0)$ admits a smooth strictly
calibrated solution.
\end{proposition}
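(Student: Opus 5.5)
By Lemma~\ref{geo-metric-covariance} and the normalization following it, any solution for an original pair in the stratum $(2,2,0)$ transports to a smooth strictly calibrated solution for the representative $(A',B')=(I,J)$. It therefore suffices to exclude that chain, but for an \emph{arbitrary} smooth positive metric triple. There $\psi$ is pointwise injective and $\operatorname{im}\psi=\ker\phi\simeq\mathcal O(1)$ is a holomorphic subbundle $N\subset E_1$; it is the middle component of the destabilizer $(0,K_A,E_2)$ in \eqref{geo-destabilizers}, which has $F=-1$. The plan is to turn that algebraic inequality into an integrated identity whose sign is incompatible with Lemma~\ref{geo-lyapunov}.

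\emph{Step 1: compress the middle equation to $N$.} Let $\pi$ be the $h_1$-orthogonal projection onto $N$. Since $\phi\pi=0$, we have $R_1\pi=\pi R_1=0$. Since $P_2=\psi\psi^\dagger$ has range in $N$, we have $\pi P_2=P_2\pi=P_2$. Hence $S_1\pi=\pi S_1=(1+r_2)\pi$, where $r_2=R_2$ is the scalar $|\psi|^2$. Because $(1-\pi)S_1\pi=0$, the ordered anticommutator compresses exactly:
\[
\operatorname{tr}\big(\pi\{H_1,S_1\}\big)=2(1+r_2)\operatorname{tr}(\pi H_1).
\]
This uses no commutation of $H_1$ with $S_1$. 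Applying $\operatorname{tr}(\pi\,\cdot)$ to $\chi\mathcal B_1=\eta\mathcal A_1$ then gives the scalar relation
\[
\chi(h+1+r_2)=\eta\big(1-(1+r_2)h+k\big),\qquad h=\operatorname{tr}(\pi H_1),\quad k=\operatorname{tr}(\pi K_1)\ge0.
\]
The bottom row gives the analogous relation $\chi(H_2+2-r_2)=\eta\big(1-(2-r_2)H_2+K_2\big)$.

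\emph{Step 2: Chern--Weil on the subchain.} Three inputs turn these pointwise relations into integrals. First, the Gauss formula gives $h=H_N+|\beta_N|^2$, with $H_N$ the mean curvature of $N$ in the induced metric. Second, $\psi:E_2\to N$ is a holomorphic isomorphism, so $H_N-H_2=i\Lambda\bar\partial\partial\log r_2$ up to the convention sign; equivalently one can use the second identity in \eqref{geo-bochner} restricted to $N$. Third, $\int H_N=\int H_2=2\pi$. Rewriting each relation as $h\,(\chi+\eta(1+r_2))=\eta(1+k)-\chi(1+r_2)$, with $T=\chi+\eta S>0$ on $N$ and on $E_2$ by Lemma~\ref{geo-lyapunov}, I would integrate a suitable weighted sum. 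The target identity is
\[
-\tfrac{2}{\sqrt{37}}\cdot(\text{positive multiple of }F(0,K_A,E_2))=\int(\text{nonnegative terms}).
\]
Its left side is $-\tfrac{2}{\sqrt{37}}\cdot(-1)$ up to positive normalization; the intended sign comes out opposite, in the same way the endpoint identities \eqref{geo-endpoint-integrals} arise from tracing and \eqref{geo-bochner}. The contradiction would come from the right side being a sum of $k$, $K_2$, $|\beta_N|^2$, $|\xi_2|^2$ and the defect $\tfrac12\{T,\mathcal C\}-I>0$, all nonnegative by \eqref{geo-two-lyapunov}, while the left side, fixed by $F=-1$, has the wrong sign.

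\emph{Main obstacle.} The hard step is the sign bookkeeping of the first-order terms. The second-fundamental-form term $|\beta_N|^2$ and the projected kinetic term $k$ enter with opposite signs to the Chern--Weil contributions, and the logarithmic factor $\log r_2$ couples the two rows nonlinearly. My first attempt would avoid dividing by $T$. Instead I would integrate $\operatorname{tr}\big(\pi(\chi\mathcal B_1-\eta\mathcal A_1)\big)+(\chi\mathcal B_2-\eta\mathcal A_2)$ directly, which is the phase-rotated charge density of the subchain, and check that the terms in $r_2H$ cancel between the two rows via the Bochner identity \eqref{geo-bochner}. The leftover terms $\eta(k+K_2)$ and the Gauss correction must then carry a definite sign. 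If they do not, I would fall back on multiplying by $L$ or $T$ and using the Lyapunov positivity to absorb them.
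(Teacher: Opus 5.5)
Your Step 1 is correct, and your ``first attempt'' is viable. Integrating $\operatorname{tr}(\pi(\chi\mathcal B_1-\eta\mathcal A_1))+(\chi\mathcal B_2-\eta\mathcal A_2)$ over the destabilizing subchain $(0,\ker\phi,E_2)$ is the sub-side mirror of the paper's argument. The paper instead projects the second identity of \eqref{geo-two-lyapunov} onto the quotient direction and combines it with the top endpoint identity in \eqref{geo-endpoint-integrals}. By \eqref{geo-target-compatibility} the two integrated residuals are negatives of each other. But Step 2 stops exactly where the proof has content, and two ingredients are missing.

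First, to evaluate $k$ and $K_2$ you must differentiate $\phi\pi=0$ and $(I-\pi)\psi=0$. This gives $\xi_1\pi=-\phi\beta$ and $(I-\pi)\xi_2=\beta\psi$, hence $|\xi_1\pi|^2=x|\beta|^2$ and $|(I-\pi)\xi_2|^2=y|\beta|^2$, with $x=|\phi|^2$, $y=|\psi|^2$. Combine these with $h=H_N+|\beta|^2$ and with the second identity in \eqref{geo-bochner}, which says $\int_X y(h-H_2)\omega_X=\int_X|\xi_2|^2\omega_X$. One then finds
\begin{gather*}
 \int_X[\operatorname{tr}(\pi\mathcal B_1)+\mathcal B_2]\,\omega_X=5V+\int_X|\beta|^2\omega_X,\\
 \int_X[\operatorname{tr}(\pi\mathcal A_1)+\mathcal A_2]\,\omega_X=-V+\int_X\bigl(\tfrac x2-\tfrac y2-1\bigr)|\beta|^2\omega_X,
\end{gather*}
with $V=2\pi$; the constants $5,-1$ are $B_Z,A_Z$ of the destabilizer. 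The phase equation then gives exactly the paper's identity $2\pi=\int_X(3x-3y-5)|\beta|^2\omega_X$ of \eqref{geo-euler-contradiction}.

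Second, and decisively, this identity does not have the form you predict. The piece $\tfrac12|\xi_1\pi|^2=\tfrac x2|\beta|^2$ of $k$ enters with the unfavorable sign, so the right side is not a sum of nonnegative terms. (As you set it up, ``positive left side $=$ nonnegative right side'' would not be a contradiction anyway.) The argument closes only if $3x-3y-5<0$ wherever $\beta\neq0$, which requires a pointwise upper bound on $|\phi|^2$.

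That bound comes from strict calibration in the complementary direction. Since $R_1=x(I-\pi)$, $T_1$ commutes with $\pi$, and its eigenvalue on $I-\pi$ is $\chi+\eta(1-x)=(5-6x)/\sqrt{37}$. So $T_1>0$ forces $x<5/6$, and the coefficient is below $-5/2$. The positivity you actually invoke, $T>0$ on $N$ and on $E_2$, yields only $(5+6y)/\sqrt{37}>0$ and $y<11/6$, which say nothing about $x$. The Lyapunov defect $\tfrac12\{T,\mathcal C\}-I$ never appears in the integrated identity.

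With the quotient-direction bound added, your subchain version becomes a complete proof. It has the merit of exhibiting the destabilizer $(0,\ker\phi,E_2)$ with $F=-1$ directly. The preliminary reduction to $(I,J)$ is harmless but unnecessary, since the computation is frame-free.
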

\begin{proof}
Suppose such a solution exists. Full pointwise rank and zero composition
give the actual holomorphic Euler sequence
\[
 0\longrightarrow I_b:=\operatorname{im}\psi=\ker\phi\simeq\mathcal O(1)
 \longrightarrow E_1\longrightarrow\mathcal Q\simeq\mathcal O(3)
 \longrightarrow0.
\]
Let $P$ be the actual $h_1$-orthogonal projection onto $I_b$ and
$P^\perp=I-P$. For smooth positive functions $x,y$ the rank-one arrow
products are
\begin{equation}\label{geo-euler-products}
 P_1=xI_{E_0},\quad R_1=xP^\perp,\qquad
 P_2=yP,\quad R_2=yI_{E_2}.
\end{equation}
Thus $S_1=(1+y)P+(1-x)P^\perp$. The quotient eigenvalue of $T_1$ is
$(5-6x)/\sqrt{37}$, so its positivity implies
\begin{equation}\label{geo-euler-x}
 0<x<5/6.
\end{equation}
This is the metric-orthogonal splitting, not a holomorphic splitting or
an assumption that $H_1$ preserves $I_b$.

Let $\beta=P^\perp D_1^{1,0}P|_{I_b}$ be the second fundamental form.
The Chern connection in this smooth splitting is
\[
 D_1=\begin{pmatrix}D_{I_b}&-\beta^\dagger\\
                         \beta&D_{\mathcal Q}\end{pmatrix}.
\]
Gauss--Codazzi in the fixed curvature convention
\cite[Ch.~V, \S14, equations (14.6)--(14.7)]{Demailly2012} gives
$P^\perp H_1P^\perp=H_{\mathcal Q}-\beta\beta^\dagger$, and hence
\begin{equation}\label{geo-euler-gauss}
 \int_X\operatorname{tr}(P^\perp H_1)\omega_X
       =6\pi-\int_X|\beta|^2\omega_X.
\end{equation}
Differentiating $\phi P=0$ and $P^\perp\psi=0$ yields
$\xi_1P=-\phi\beta$ and $P^\perp\xi_2=\beta\psi$. By
\eqref{geo-euler-products},
\begin{equation}\label{geo-euler-derivatives}
 |\xi_1P|^2=x|\beta|^2,\qquad
 |P^\perp\xi_2|^2=y|\beta|^2.
\end{equation}

Set $V=2\pi$, $\ell_Q=\operatorname{tr}(P^\perp L_1)$ and
$k_Q=\operatorname{tr}(P^\perp K_1)$. The upper identity in
\eqref{geo-endpoint-integrals}, with $R_1=xP^\perp$, is
\begin{equation}\label{geo-euler-top}
 \frac{9V}{\sqrt{37}}
       =\int_Xx\ell_Q\omega_X+\frac\eta2\int_X|\xi_1|^2\omega_X.
\end{equation}
Project the second identity of \eqref{geo-two-lyapunov} at vertex one
onto $P^\perp$. Although $L_1$ need not preserve the splitting, $T_1$
does; cyclicity of trace gives
\begin{equation}\label{geo-euler-middle}
 [\chi+\eta(1-x)]\ell_Q=1+\eta^2k_Q,\qquad
 x\ell_Q=\tfrac56\ell_Q-\tfrac1\eta-\eta k_Q.
\end{equation}
Both kinetic halves are retained:
\begin{equation}\label{geo-euler-kinetics}
 k_Q=\tfrac12\bigl(|\xi_1P^\perp|^2+|P^\perp\xi_2|^2\bigr),\qquad
 \int_X\ell_Q\omega_X
       =\frac{17V-6\int_X|\beta|^2\omega_X}{\sqrt{37}},
\end{equation}
where the last identity uses \eqref{geo-euler-gauss} and
$L_1=\chi I+\eta H_1$.

Substitute \eqref{geo-euler-middle} and \eqref{geo-euler-kinetics} into
\eqref{geo-euler-top}, splitting
$|\xi_1|^2=|\xi_1P|^2+|\xi_1P^\perp|^2$. The quotient half of the
upper derivative cancels its actual half in $k_Q$. The remaining equality
is
\[
 9V=8V-5\int_X|\beta|^2\omega_X
           +3\int_X|\xi_1P|^2\omega_X
           -3\int_X|P^\perp\xi_2|^2\omega_X.
\]
Using \eqref{geo-euler-derivatives} gives
\begin{equation}\label{geo-euler-contradiction}
 2\pi=\int_X(3x-3y-5)|\beta|^2\omega_X.
\end{equation}
Since $x<5/6$ and $y>0$, the coefficient is strictly less than $-5/2$.
The right side is nonpositive, also when $\beta=0$, while the left side
is positive. This excludes arbitrary metric triples, without a
representative normalization or base automorphism.
\end{proof}

\section{The correspondence on the original coefficient closure}

\begin{theorem}\label{geo-main}
For every $A,B\in\operatorname{Mat}_2(\mathbb C)$, on the fixed original
chain and background of \eqref{geo-chain}, the following are equivalent:
\begin{align*}
 &\mathrm{Ex}(A,B),\\
 &\mathcal E_{A,B}\text{ is }SU(2)\text{-equivariantly }Z\text{-stable},\\
 &\mathcal E_{A,B}\text{ is }SU(2)\text{-equivariantly }Z\text{-semistable},\\
 &\mathcal E_{A,B}\text{ is }SU(2)\text{-equivariantly }Z\text{-polystable},\\
 &\operatorname{rank}A=2\quad\text{and}\quad Q\not\equiv0.
\end{align*}
Thus all three stability predicates and smooth strictly calibrated
existence hold on $(2,1,1)$ and $(2,2,1)$, and fail on each of the nine
other nonempty triples of Lemma~\ref{geo-domain}. In particular there
are no strictly semistable objects in this fixed family.
\end{theorem}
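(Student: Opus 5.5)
The theorem is an assembly of results already in place, together with the two existence theorems announced in the outline (Theorems~\ref{primary-existence} and \ref{lower-existence}). The plan is to fix an arbitrary pair $(A,B)$ and first reduce to the coefficient triple $(a,b,\epsilon)$ of Lemma~\ref{geo-domain}. That lemma lists the eleven nonempty triples. Among them, the condition ``$\operatorname{rank}A=2$ and $Q\not\equiv0$'' singles out exactly $(2,1,1)$ and $(2,2,1)$. The other $a=2$ triples, $(2,0,0)$ and $(2,2,0)$, have $Q\equiv0$, and every triple with $a\le1$ fails the rank condition. The same two triples are exactly the locus $a=2$, $b>0$, $Q\not\equiv0$ of Proposition~\ref{geo-algebraic-classification}, since $Q\not\equiv0$ already forces $b>0$. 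Thus the last item of the theorem is the same coefficient condition as in the algebraic classification.

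For the algebraic side, Proposition~\ref{geo-algebraic-classification} gives directly that equivariant stability, semistability and polystability are each equivalent to this coefficient condition. It applies to every original pair and uses Lemma~\ref{geo-bridge} to identify $SU(2)$-equivariant subobjects with coherent subchains. So items two through five are mutually equivalent with no further work. The statement that there are no strictly semistable objects follows because semistability coincides with stability on every pair.

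For the analytic side, necessity follows from the exclusion results. If $\mathrm{Ex}(A,B)$ holds, Proposition~\ref{geo-elementary-exclusions} rules out $a\le1$ and $B=0$, which removes every triple except $(2,2,0)$, $(2,1,1)$ and $(2,2,1)$. Proposition~\ref{geo-euler-exclusion} then removes $(2,2,0)$. Both exclusions hold for arbitrary smooth positive metric triples on the original chain, so no transport or normalization is needed here.

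Sufficiency requires existence on $(2,2,1)$ and $(2,1,1)$. I would invoke Theorem~\ref{primary-existence} for the primary family and Theorem~\ref{lower-existence} for the rank-one lower boundary, which includes both $p=q_0$ and $p\neq q_0$. If those theorems are stated for normalized representatives such as $(I,C)$, I would transfer the solutions back to the original pair with Lemma~\ref{geo-metric-covariance}. Its explicit transport $h_j=g_j^*h'_jg_j$ preserves the equation and strict calibration under the chain isomorphism $(1,A^T,1)$ of \eqref{geo-coefficient-action}. This transport is needed only for the pair at hand, not uniformly in $A$.

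The real obstacle is not in this assembly but in the two existence theorems. Their value bound rules out metric escape, and in the $(2,1,1)$ case it must keep the divisor mass at $p+q_0$. For the theorem itself, the only delicate point is bookkeeping: checking that the stratum lists used in the exclusions, in the existence theorems and in the algebraic classification match the eleven-triple exhaustion of Lemma~\ref{geo-domain} exactly, including repeated roots of $Q$ and coincident divisor points.
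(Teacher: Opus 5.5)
Your proposal is correct and follows essentially the same route as the paper. Proposition~\ref{geo-algebraic-classification} handles the three stability predicates, Propositions~\ref{geo-elementary-exclusions} and \ref{geo-euler-exclusion} give necessity, and Theorems~\ref{primary-existence} and \ref{lower-existence} give sufficiency, with the stratum bookkeeping from Lemma~\ref{geo-domain} (the paper uses the factorization $B=v\beta_c^T$ directly to see that rank-two $A$ with rank-one $B$ forces $Q\not\equiv0$, which is the same fact as the absence of $(2,1,0)$ from the list); the transport via Lemma~\ref{geo-metric-covariance} that you hold in reserve is unnecessary, since both existence theorems are already stated for every original coefficient pair in their strata.
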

\begin{proof}
The algebraic result, Proposition~\ref{geo-algebraic-classification}, was
proved solely from the coherent equivalence, the compact-to-complex
bridge, charge, full subobject inventory and phase inequalities.
Since $Q\not\equiv0$ implies $B\ne0$, it identifies all three predicates
with $\operatorname{rank}A=2$, $Q\not\equiv0$.

We establish the analytic criterion on the same original inputs. If
$\operatorname{rank}A=2$, $Q\not\equiv0$, then $B$ has rank one or two.
In rank two this is exactly the full primary family; the original-chain
existence theorem \ref{primary-existence} applies, including repeated
composition roots. In rank one, Lemma~\ref{geo-domain} factors the actual
map as $v\sigma$ with saturation $\mathcal O(2)v$, its lower zero $p$,
and upper restriction divisor $q_0$. Both $p=q_0$ and $p\ne q_0$ belong
to the original-chain existence theorem \ref{lower-existence}. Hence a
smooth strictly calibrated metric triple exists for every pair in
either stratum.

Conversely, if such a metric triple exists,
Proposition~\ref{geo-elementary-exclusions} forces
$\operatorname{rank}A=2$ and $B\ne0$. If $Q$ vanished identically,
rank-one $B$ would be impossible by its actual factorization in
Lemma~\ref{geo-domain}. Rank-two $B$ would give $(2,2,0)$, excluded by
Proposition~\ref{geo-euler-exclusion}. Thus $Q\not\equiv0$.
This proves the analytic equivalence independently of stability; joining
the two same-input equivalences proves the theorem and every boundary
row. The eight low-rank or zero-lower triples are excluded by the first
proposition and the remaining composition-zero triple by the second.
\end{proof}

In the table below, the shared status applies to stability,
semistability, polystability and smooth strictly calibrated existence,
which are equivalent by Theorem~\ref{geo-main}. Each row quantifies over
all coefficient pairs in its stratum. The analytic column names the
existence or exclusion result; Proposition~\ref{geo-algebraic-classification}
supplies the algebraic classification in every row. The digits have no
relation to the subobject rank patterns in \eqref{geo-all-patterns}.
\begin{equation}\label{geo-outcomes}
\begin{array}{c|c|c}
 (a,b,\epsilon)&\text{shared status}&\text{analytic result}\\\hline
 (0,0,0)&\text{no}&\text{Proposition \ref{geo-elementary-exclusions}}\\
 (0,1,0)&\text{no}&\text{Proposition \ref{geo-elementary-exclusions}}\\
 (0,2,0)&\text{no}&\text{Proposition \ref{geo-elementary-exclusions}}\\
 (1,0,0)&\text{no}&\text{Proposition \ref{geo-elementary-exclusions}}\\
 (1,1,0)&\text{no}&\text{Proposition \ref{geo-elementary-exclusions}}\\
 (1,1,1)&\text{no}&\text{Proposition \ref{geo-elementary-exclusions}}\\
 (1,2,1)&\text{no}&\text{Proposition \ref{geo-elementary-exclusions}}\\
 (2,0,0)&\text{no}&\text{Proposition \ref{geo-elementary-exclusions}}\\
 (2,1,1)&\text{yes}&\text{Theorem \ref{lower-existence}}\\
 (2,2,0)&\text{no}&\text{Proposition \ref{geo-euler-exclusion}}\\
 (2,2,1)&\text{yes}&\text{Theorem \ref{primary-existence}}
\end{array}
\end{equation}

Each existence proof treats all six metric fields for one fixed chain
and one finite auxiliary mass. The value bounds proved in the last two
sections allow the common endpoint theorem to produce a smooth strictly
calibrated metric; punctured convergence inside an escape contradiction
does not suffice. These analytic proofs do not assume stability, and
the coherent subobject classification does not assume existence.
The correspondence concerns the prescribed invariant construction and
equivariant category, not ordinary coherent or derived stability or
arbitrary non-invariant product metrics. Its estimates are fixed-chain
estimates: it asserts neither varying-chain uniformity, a common metric
for different chains, uniqueness nor a moduli theorem.

\section{Analytic construction and compactness}
\label{common-section}

The auxiliary parameter supplies a starting solution and permits continuation
on every closed interval of positive parameters. Estimates on such an
interval need not remain bounded at zero. The construction below therefore
has two distinct uses: positive-stage estimates justify continuation,
whereas estimates conditioned on a uniform metric-value bound justify
the strong endpoint limit. The missing value bounds are proved separately
in Theorems~\ref{primary-value-bound} and~\ref{lower-value-bound}, for the
primary family and the rank-one lower boundary respectively.

\subsection{Reference metrics and the six metric variables}

We use two fixed-chain settings separately.  In the primary setting both
component pairs of \(\phi\) and \(\psi\) are bases and
\(\phi\psi\not\equiv0\).  In the lower-boundary setting the upper pair is a
basis and \(\psi=v\sigma\), where \(v\ne0\) is constant and \(\sigma\) is a
nonzero linear section.  Its zero \(p\) and the zero \(q_0\) of
\(\phi|_{\mathcal O(2)v}\) may coincide.  All constants below are for one
fixed chain, one actual background form, and one fixed reference; they need
not be uniform as the chain changes.

Write \(\omega_X=ig\,dz\wedge d\bar z\) and
\(\Delta=g^{-1}\partial_z\partial_{\bar z}\).  Fix the actual line metric
\(\ell\) with \(iF_\ell=\omega_X\), and a fixed positive determinant-one
constant matrix \(G\).  Set
\[
 \alpha=\frac16,\qquad c=\frac56,\qquad d=\frac{11}{6},\qquad
 A_0=\frac{37}{36},\qquad p_* =\frac9{14},\qquad W=\frac{10}{3}.
\]
For an actual metric, the residual endomorphisms are
\[
 F_j(h)=\tfrac12\{S_j-\alpha I,H_j-\alpha I\}-A_0I-K_j.
\]
Expanding this anticommutator and using the fixed phase in
\eqref{geo-phase} shows that \(F_j(h)=0\) is exactly
\eqref{geo-equation}'s phase equation.  Here \(F_j\) is an endomorphism
residual, not the raw curvature two-form \(F_{h_j}\).
In holomorphic frames, \(\ell=L\), and the upper row is denoted by
\(\phi\).  The reference metrics are
\begin{equation}\label{common-reference}
 h_{*,1}=L^2G,\qquad
 h_{*,0}=\frac{p_*L^2}{\phi G^{-1}\phi^*},\qquad h_{*,2}=s\ell.
\end{equation}
The quotient form in the denominator is everywhere positive because the
upper pair is a basis.  These formulas define metrics on the original
bundles; varied metrics will not be required to be quotient metrics.

Before choosing \(s\), compute the actual lower norm and kinetic term with
domain metric \(\ell\):
\[
 \rho_{\rm ref}=|\psi|^2,\qquad k_{\rm ref}=\tfrac12|D\psi|^2,
 \qquad j=\tfrac56\rho_{\rm ref}+k_{\rm ref},\qquad
 s=\max\{1,4\max_X j\}.
\]
The function \(\rho_{\rm ref}\) is positive in the primary setting and
vanishes at \(p\) in the lower-boundary setting.  In both settings it is
smooth, nonnegative and not identically zero.  Constant domain scaling
leaves the lower Chern connection unchanged and divides its actual squared
norms by \(s\).  Thus
\[
 H_{*,1}=2I,\quad H_{*,2}=1,\quad
 r_* =\rho_{\rm ref}/s\le\frac3{10},\quad K_{*,2}=k_{\rm ref}/s.
\]
The induced middle End connection is flat.  With
\(N_\phi=\phi G^{-1}\phi^*\), the quotient calculation gives
\[
 H_{*,0}=2+n_\phi,\quad
 n_\phi=g^{-1}\partial_z\partial_{\bar z}\log N_\phi\ge0,
 \quad K_{*,0}=\frac{p_*}{2}n_\phi.
\]
Indeed the reference derivative is
\(\xi_* =\phi_z-(N_\phi)_z\phi/N_\phi\); its squared norm gives the last
identity, and Cauchy--Schwarz gives \(n_\phi\ge0\).

Use the notation \(R_* =\phi^\dagger\phi\) and
\(P_* =\psi\psi^\dagger\) in reference-unitary frames.  Then
\[
 q_* =p_* -\alpha=\frac{10}{21},\qquad
 \beta_* =d-r_*\ge\frac{23}{15},\qquad
 V_* =cI-R_*+P_*\ge(c-p_*)I=\frac4{21}I.
\]
Only \(R_*\le p_*I\) and \(P_*\ge0\) were used; their commutator may be
nonzero.  The reference residual, in the convention of the reduced
equation, is
\begin{equation}\label{common-reference-force}
 \begin{aligned}
 F_{*,0}&=\frac{13}{84}(n_\phi-1),\\
 F_{*,1}&=dV_* -A_0I-\frac{\xi_*^*\xi_*+y_*y_*^*}{2g},\\
 F_{*,2}&=\frac12-\frac{j}{s}\ge\frac14,
 \end{aligned}
\end{equation}
where \(y_*\) is the actual reference derivative of \(\psi\).  In
particular, for \(0\le\tau\le1\),
\begin{equation}\label{common-force-bounds}
 C_0^\tau:=A_0+\tau F_{*,0}\ge\frac{55}{63}=:c_0,
 \qquad C_2^\tau:=A_0+\tau F_{*,2}\ge A_0.
\end{equation}
The middle force is recomputed at this reference, including its actual
lower derivative; its sign is not assumed on the full parameter interval.

\begin{definition}[Metric variables and auxiliary equation]
\label{common-fields}
After the fixed \(G\)-conjugation, use the real bundle
\(\mathscr H=\mathbb R\oplus\operatorname{Herm}_2\oplus\mathbb R\), with
inner product
\(aa'+\operatorname{tr}(MM')+bb'\).  Every positive metric triple has
unique reference-selfadjoint logarithmic variables \(w=(a,M,b)\):
\[
 h(w)=h_*e^w,\qquad f=e^M,\quad S=e^{M/2}.
\]
Square-root transport defines
\(\widehat F_j(w)=e^{w_j/2}F_j(h(w))e^{-w_j/2}\).  The transported
arrow products and symbols are
\begin{equation}\label{common-symbols}
 \begin{gathered}
 R=e^aS^{-1}R_*S^{-1},\qquad P=e^{-b}SP_*S,
 \qquad p=\operatorname{tr}R,\quad r=\operatorname{tr}P,\\
 q=p-\alpha,\qquad \beta=d-r,\qquad V=cI-R+P,\qquad
 \mathcal L_VH=\tfrac12(VH+HV).
 \end{gathered}
\end{equation}
The legal domain is \(q>0,\ \beta>0,\ V>0\).  Put
\[
 B(w)w=(qa,\mathcal L_VM,\beta b),\qquad
 v(w)=qa+\operatorname{tr}(VM)+\beta b.
\]
For one fixed prospective mass \(\mu_s>0\), the auxiliary equation is
\begin{equation}\label{common-auxiliary}
 \widehat F(w)+\tau\mu_s B(w)w=\tau F_*,
 \qquad 0<\tau\le1.
\end{equation}
Write its auxiliary residual as
\[
 \mathcal E(\tau,w)=\widehat F(w)+\tau\mu_sB(w)w-\tau F_*.
\]
Thus the auxiliary equation is exactly \(\mathcal E(\tau,w)=0\).
With \(t=\tau\mu_s\), its full rows are
\begin{equation}\label{common-positive-rows}
 \begin{aligned}
 H_0&=\alpha-ta+(C_0^\tau+K_0)/q,\\
 \widehat H_1&=\alpha I-tM+\mathcal L_V^{-1}(C_1^\tau+K_1),\\
 H_2&=\alpha-tb+(C_2^\tau+K_2)/\beta,
 \end{aligned}\qquad C_1^\tau=A_0I+\tau F_{*,1}.
\end{equation}
All variables are unrestricted real scalars or full Hermitian matrices.
\end{definition}

The notation \(\mu_s\) anticipates the single mass chosen in
Lemma~\ref{common-anchor}.  Until that choice, estimates stated with
\(\mu>0\) apply to any finite mass in the same auxiliary equation, with
\(t=\tau\mu\).  After the choice, continuation and endpoint arguments
use this one fixed value \(\mu=\mu_s\).

The ordered inverse used here is
\begin{equation}\label{common-lyapunov-inverse}
 \mathcal L_V^{-1}N=2\int_0^\infty e^{-uV}Ne^{-uV}\,du.
\end{equation}
For each positive \(V\) it preserves positivity and is selfadjoint for the
trace pairing.  Its trace is \(\operatorname{tr}(V^{-1}N)\).  This formula
is not multiplication by \(V^{-1}\).

\begin{lemma}[The single constant normalization]
\label{common-normalization}
The total residual integral vanishes for every metric triple.  Every exact
positive-parameter solution satisfies \(N(w):=\int_Xv(w)\omega_X=0\).
There is precisely one common constant to remove in the reference chart.
\end{lemma}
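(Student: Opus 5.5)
The three assertions are proved in the order stated: first the metric-independent vanishing, then the consequence at an exact solution, then the count of normalizations.

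\emph{Vanishing for every metric triple.} We trace and integrate the residual rows $F_j(h)$. Expanding the anticommutator gives
\[
 \operatorname{tr}F_j=\operatorname{tr}(S_jH_j)-\alpha\operatorname{tr}(S_j+H_j)+(\alpha^2-A_0)r_j-\operatorname{tr}K_j .
\]
By the phase expansion already noted after the definition of $F_j$, this is a fixed nonzero multiple of $\operatorname{tr}(\chi\mathcal B_j-\eta\mathcal A_j)$. Summing over $j$ and integrating, the compatible-target identity \eqref{geo-target-compatibility} shows that $\sum_j\int_X\operatorname{tr}F_j\,\omega_X=0$. This uses only Chern--Weil and the Chern--Bochner identities \eqref{geo-bochner}, so it holds for arbitrary positive metric triples. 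Square-root transport conjugates each row, so $\sum_j\int\operatorname{tr}\widehat F_j(w)=0$ for every $w$. Evaluating the same identity at the reference $h_*$ gives $\sum_j\int\operatorname{tr}F_{*,j}=0$.

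\emph{Consequence at an exact solution.} At a solution of \eqref{common-auxiliary} with $\tau>0$, we take the trace of each row, sum and integrate. The $\widehat F$ term vanishes and the $\tau F_*$ term vanishes. What remains is $\tau\mu_s\int_X\operatorname{tr}(B(w)w)\,\omega_X=0$. Here $\operatorname{tr}(qa)=qa$ and $\operatorname{tr}(\beta b)=\beta b$. For the middle block, $\operatorname{tr}\mathcal L_VM=\tfrac12\operatorname{tr}(VM+MV)=\operatorname{tr}(VM)$ by cyclicity. Hence the integrand is exactly $v(w)$, and dividing by $\tau\mu_s>0$ gives $N(w)=0$.

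\emph{Precisely one constant to remove.} The only common constant acting on all three blocks is $w\mapsto w+\lambda(1,I,1)$, which rescales the full metric triple by $e^{\lambda}$. This leaves all arrow products $R,P$ unchanged, since $e^{a}S^{-1}R_*S^{-1}$ and $e^{-b}SP_*S$ pick up canceling factors. It also leaves the curvatures and derivatives unchanged. So $\widehat F$ and the symbols $q,\beta,V$ are unchanged, while $v$ shifts by $\lambda(q+\operatorname{tr}V+\beta)$.

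On the legal domain $q>0$, $\beta>0$ and $V>0$, so $N$ is strictly increasing in $\lambda$. It therefore has exactly one zero on each orbit of common constants. This shows that the condition $N=0$ is exactly one normalization and removes exactly this one-dimensional degeneracy. It is consistent with the codimension-one compatibility noted after \eqref{geo-target-compatibility}.

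To rule out any further constant direction, note that a constant shift changing the ratios among the three blocks changes the arrow norms. In the primary setting, or away from $p$ in the lower setting, this makes $F_j(h)$ differ at some point.

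The expected obstacle is the first step, specifically identifying $\operatorname{tr}F_j$ with the correct constant multiple of the calibrated target for all three $j$ simultaneously. The constants $\alpha$ and $A_0$ must match the phase $\theta_4$ uniformly, and no extra $j$-dependent term may survive. I would verify this by direct expansion using $\alpha=1/6$, $A_0=37/36$ and $(\chi,\eta)=(-1,6)/\sqrt{37}$.
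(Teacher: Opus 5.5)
Your argument is correct, and your second step (integrate the auxiliary equation, use $\operatorname{tr}\mathcal L_VM=\operatorname{tr}(VM)$, divide by $t>0$) is exactly the paper's. Your first step takes a different route from the paper.

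\textbf{First step.} The obstacle you anticipate is a two-line check. With $\alpha=1/6$ and $A_0=37/36$ one has $\alpha^2-A_0=-1$, and expanding gives, as endomorphisms, $F_j=\tfrac{\sqrt{37}}{6}(\chi\mathcal B_j-\eta\mathcal A_j)$ for all three $j$. So you may import the integrated compatibility \eqref{geo-target-compatibility}. The paper instead substitutes the arrow Bochner identities for $\Delta p$ and $\Delta r$ into the traced residual. This gives the pointwise formula \eqref{common-total-trace}, $\operatorname{tr}_{\mathrm{tot}}\widehat F=-\alpha H_0+c\operatorname{tr}\widehat H_1+dH_2-\Delta(p+r)-14/3$, whose integral vanishes by the degrees $(3,4,1)$. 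Both routes rest on Chern--Weil and Bochner. Yours makes transparent that the vanishing is just the charge condition $\operatorname{Im}(e^{-i\theta_4}Z(\mathcal E_{A,B}))=24\chi+4\eta=0$ that defines $\theta_4$. However, it yields only the integral. The paper's pointwise identity is reused in Lemma~\ref{common-trace-comparison} to derive the exact scalar equation \eqref{common-trace-equation}.

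\textbf{Third step.} You overlook that $q+\operatorname{tr}V+\beta\equiv W=10/3$ identically: since $\operatorname{tr}V=2c-p+r$, the arrow norms cancel. Hence $N(w+T(k))=N(w)+kW\operatorname{Vol}(X)$ for every field, legal or not, and your monotonicity argument on the legal domain is valid but unnecessary. The paper also goes past your slice statement. It defines $\mathcal P(u)=u-T(N(u)/(W\operatorname{Vol}(X)))$ as a bijection from $X'=\ker L_0$ onto $\{N=0\}$, with $DN(0)=L_0$ and $D\mathcal P(0)|_{X'}=I$. This is the linearized form of the normalization used later to restrict $J_*+\mu A_*$ to $X'\to Y$.

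\textbf{Your closing paragraph.} The argument about relative constant shifts is not needed: ``precisely one'' here means a single codimension-one condition rather than separate vertex means. As written, it is also only asserted. If you want it, check it at the reference, where $a\mapsto a+k$ changes $F_0$ by $(e^k-1)p_*(11/6+n_\phi/2)\ne0$.
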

\begin{proof}
The actual arrow Bochner identities are
\[
 \Delta p=2K_0-pH_0+\operatorname{tr}(R\widehat H_1),\qquad
 \Delta r=2K_2-\operatorname{tr}(P\widehat H_1)+rH_2,
 \qquad\operatorname{tr}K_1=K_0+K_2.
\]
Substitution into the residual gives
\begin{equation}\label{common-total-trace}
 \operatorname{tr}_{\rm tot}\widehat F
 =-\alpha H_0+c\operatorname{tr}\widehat H_1+dH_2
   -\Delta(p+r)-\frac{14}{3}.
\end{equation}
Here \(q+\operatorname{tr}V+\beta=W\), and
\(\alpha W+4A_0=14/3\).  The degrees \((3,4,1)\) imply that its integral
is \((-3\alpha+4c+d-14/3)\operatorname{Vol}(X)=0\).
Transport preserves trace.  Integrating \eqref{common-auxiliary} gives
\(tN(w)=0\), hence the assertion for \(t>0\).

For \(T(k)=(k,kI,k)\), common constant translation leaves the arrows,
curvature, residual and symbols unchanged and
\(N(w+T(k))=N(w)+kW\operatorname{Vol}(X)\).  Let
\[
 A_*U=(q_*a,\mathcal L_{V_*}M,\beta_*b),\quad
 L_0(U)=\int_X\operatorname{tr}_{\rm tot}(A_*U)\omega_X,
 \quad X'=\ker L_0,
\]
and let \(Y\) be the space of sources of zero total integral.  Then
\[
 \mathcal P(u)=u-T\!\left(\frac{N(u)}{W\operatorname{Vol}(X)}\right)
\]
maps \(X'\) bijectively to \(N=0\), with inverse
\(w\mapsto w-T(L_0(w)/(W\operatorname{Vol}(X)))\).  Moreover
\(D\mathcal P(0)|_{X'}=I\), since \(DN(0)=L_0\).  These are closed
codimension-one spaces.  No nonconstant scalar, relative scalar, or matrix
direction is removed, and no separate vertex mean is imposed.
\end{proof}

\subsection{Estimates which precede continuation}

\begin{lemma}[The radial trace comparison]\label{common-trace-comparison}
For every finite field, not merely for solutions, put
\[
 T=-\alpha a+c\operatorname{tr}M+db+p+r-p_*-r_*(x),\qquad
 v=qa+\operatorname{tr}(VM)+\beta b.
\]
Then
\begin{equation}\label{common-radial-inequalities}
 T\ge q_*a+\operatorname{tr}(V_*M)+\beta_*b,
 \qquad v\ge T.
\end{equation}
Every auxiliary solution satisfies the exact scalar equation
\begin{equation}\label{common-trace-equation}
 \Delta T=(1-\tau)f_*+tv,\qquad
 f_*=\operatorname{tr}_{\rm tot}F_* ,\quad t=\tau\mu.
\end{equation}
These statements allow semidefinite \(P_*\), including \(P_*(x)=0\).
\end{lemma}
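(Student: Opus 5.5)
The statement has three parts, and each reduces to a trace computation. The two inequalities in \eqref{common-radial-inequalities} are pointwise and algebraic. I would prove both from the operator inequality $e^X\ge I+X$ for Hermitian $X$, paired in trace against a positive semidefinite matrix. The trace equation \eqref{common-trace-equation} follows by tracing \eqref{common-auxiliary} with the total-trace formula \eqref{common-total-trace}.

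\emph{First inequality.} I would expand the right side using $q_*=p_*-\alpha$, $V_*=cI-R_*+P_*$ and $\beta_*=d-r_*$. The difference then becomes
\[
 T-\bigl(q_*a+\operatorname{tr}(V_*M)+\beta_*b\bigr)
 =\bigl[p-p_*-\operatorname{tr}(R_*(aI-M))\bigr]
 +\bigl[r-r_*-\operatorname{tr}(P_*(M-bI))\bigr].
\]
By \eqref{common-symbols} and cyclicity, $p=e^a\operatorname{tr}(R_*e^{-M})=\operatorname{tr}(R_*e^{aI-M})$, and similarly $r=\operatorname{tr}(P_*e^{M-bI})$. Since $R_*,P_*\ge0$, the inequality $e^X\ge I+X$ traced against them makes each bracket nonnegative. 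Only semidefiniteness is used, so the case $P_*(x)=0$ at the divisor point is allowed.

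\emph{Second inequality.} Expanding $v-T$ in the same way, using $q=p-\alpha$, $V=cI-R+P$ and $\beta=d-r$, gives
\[
 v-T=\bigl[p_*-p+\operatorname{tr}(R(aI-M))\bigr]+\bigl[r_*-r+\operatorname{tr}(P(M-bI))\bigr].
\]
Inverting the transport formulas gives $p_*=\operatorname{tr}(Re^{-(aI-M)})$ and $r_*=\operatorname{tr}(Pe^{-(M-bI)})$, by cyclicity of $\operatorname{tr}(SRS)=\operatorname{tr}(Re^M)$. Applying the same exponential inequality with $R,P\ge0$ makes both brackets nonnegative. This is just convexity of $w\mapsto\operatorname{tr}(R_*e^{aI-M})+\operatorname{tr}(P_*e^{M-bI})$, read at the two endpoints.

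\emph{Trace equation.} I would use \eqref{common-total-trace} at $w$ and at the reference. The logarithmic variation $h=h_*e^w$ changes the traced curvatures exactly: $H_0=H_{*,0}-\Delta a$, $\operatorname{tr}\widehat H_1=\operatorname{tr}H_{*,1}-\Delta\operatorname{tr}M$ (via $\log\det$), and $H_2=H_{*,2}-\Delta b$. This is the sign recorded in Lemma~\ref{geo-lyapunov}. Since $p_*$ is constant and $r_*$ is the reference function, these give
\[
 \operatorname{tr}_{\rm tot}\widehat F(w)=f_*-\Delta T .
\]
Next, $\operatorname{tr}\mathcal L_VM=\operatorname{tr}(VM)$, so $\operatorname{tr}_{\rm tot}B(w)w=v$. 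Tracing $\mathcal E(\tau,w)=0$ then yields $f_*-\Delta T+tv=\tau f_*$, which is \eqref{common-trace-equation}.

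The step I expect to need the most care is confirming the sign convention for the exact curvature variation. One must check that the lower-order terms cancel in the total trace, including the $\Delta(p+r)$ term, which is already absorbed into $T$. One must also check that square-root transport preserves traces. The algebraic inequalities themselves are routine once the transport identities for $p,r,p_*,r_*$ are written down.
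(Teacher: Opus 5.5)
Your proposal is correct and is essentially the paper's proof. Both inequalities come from convexity of the radial traces $\operatorname{tr}(R_*e^{s(aI-M)})$ and $\operatorname{tr}(P_*e^{s(M-bI)})$. Your form $\operatorname{tr}\bigl(R(e^{-X}-I+X)\bigr)$, with $X=aI-M$, equals the paper's $\operatorname{tr}(R_*k(X))$, $k(y)=ye^y-e^y+1$, after cyclic trace, and the trace equation is obtained exactly as you do, by substituting $\operatorname{tr}\widehat H_1=4-\Delta\operatorname{tr}M$ and the endpoint curvature shifts into \eqref{common-total-trace}.
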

\begin{proof}
Cyclic trace, without commuting the two arrows, gives along the radial field
\[
 p(sw)=\operatorname{tr}(R_*e^{s(aI-M)}),\qquad
 r(sw)=\operatorname{tr}(P_*e^{s(M-bI)}).
\]
The spectral inequality \(e^A\ge I+A\), paired with the fixed positive
semidefinite weights, proves the first inequality.  The scalar function
\(k(y)=ye^y-e^y+1\) is nonnegative: its derivative is \(ye^y\) and its
unique minimum is zero at the origin.  Spectral calculus therefore gives
\[
 v-T=\operatorname{tr}(R_*k(aI-M))+
       \operatorname{tr}(P_*k(M-bI))\ge0.
\]
Neither calculation assumes that intermediate radial fields have positive
symbols.  Substitution of the full trace identity from
Lemma~\ref{common-normalization}, with
\(\operatorname{tr}\widehat H_1=4-\Delta\operatorname{tr}M\), proves
\eqref{common-trace-equation}.  The variable baseline \(r_*(x)\) is kept;
its derivatives are part of \(f_*\).
\end{proof}

\begin{lemma}[Independent positive-parameter size estimate]\label{common-positive-size}
For every actual positive-symbol solution with \(0<\tau\le1\) and
\(C_1^\tau\ge0\),
\begin{equation}\label{common-size-estimate}
 t\max(\|a\|_\infty,\|b\|_\infty,\|M\|_{\infty,\mathrm{op}})
 \le C_{\rm size}.
\end{equation}
The constant depends only on the fixed reference and background, not on
\(\mu,\tau\), a symbol minimum, or the existence of any other solution.
For the primary reference this is the primary estimate; for the boundary
reference it is a separate estimate with recomputed fixed-data constants.
\end{lemma}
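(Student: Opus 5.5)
We prove \eqref{common-size-estimate} by the maximum principle, applied separately at each vertex to the explicit rows \eqref{common-positive-rows}. Each row has the form \(H_j=\alpha-tw_j+(\text{positive source})/(\text{positive symbol})\). The curvature \(H_j\) is the base Laplacian of the logarithmic variable plus a fixed reference term. So at an extremum of the variable the row yields an algebraic inequality, and the damping \(-tw_j\) controls \(tw_j\) once the source term is bounded on the appropriate side. The constants will come only from \(H_{*,j}\), \(F_*\), and the fixed lower bounds \(c_0\) and \(A_0\) in \eqref{common-force-bounds}. No symbol minimum enters, because at each extremum we only use positivity of the source and symbol to fix a sign, never to bound a quotient from above.

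\textbf{Scalar vertices.} At the top vertex, \(H_0=H_{*,0}-\Delta a\). At a maximum of \(a\), \(\Delta a\le0\), so \(H_0\ge H_{*,0}\). The row then gives
\[
ta=\alpha-H_0+(C_0^\tau+K_0)/q .
\]
This is the wrong direction for an upper bound, since the source term is positive and unbounded. So the upper bound must use the opposite extremum together with the coupling through \(p\). At a minimum of \(a\), \(H_0\le H_{*,0}\), so \(ta\ge\alpha-H_{*,0}+(\text{positive})\ge -\max H_{*,0}\). This gives a lower bound on \(ta\), and the same argument at the bottom vertex gives a lower bound on \(tb\). The middle vertex is handled the same way with \(\widehat H_1=2I-\Delta M+(\text{lower order})\): at a point and unit vector where \(M\) attains its minimal eigenvalue, the second-order term has a sign. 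Since \(\mathcal L_V^{-1}\) preserves positivity and \(C_1^\tau\ge0\), we obtain \(t\lambda_{\min}(M)\ge -C\). The first-order terms coming from the square-root transport vanish at the eigenvector extremum, by the standard argument for Hermitian matrix maximum principles.

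\textbf{Upper bounds via the trace equation.} For the reverse sides we use the scalar equation \eqref{common-trace-equation}, \(\Delta T=(1-\tau)f_*+tv\). Integrating, and using \(N(w)=0\) from Lemma~\ref{common-normalization}, shows that \(v\) has zero mean. Since \(v\ge T\) and \(T\) dominates the linear form \(q_*a+\operatorname{tr}(V_*M)+\beta_*b\) with positive weights, the lower bounds already obtained give \(tv\ge -C\) pointwise. A maximum-principle argument on \(T\) then applies: at a maximum of \(T\), \(\Delta T\le0\), so \(tv\le\|f_*\|_\infty\) there. Combining this with zero mean, and with a Green's-function bound \(\operatorname{osc}T\le C\|\Delta T\|_{L^1}\), bounds \(t\cdot\max T\). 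The \(L^1\) norm of \(\Delta T\) is controlled because \(tv\) has zero integral and is bounded below, hence \(\|tv\|_{L^1}\le C\). Since each component of \(tw\) is bounded below and their positively weighted combination is bounded above, each component is bounded above. This gives \(t\|a\|_\infty\), \(t\|b\|_\infty\), and \(t\|M\|_{\infty,\mathrm{op}}\le C_{\rm size}\).

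\textbf{Main obstacle.} The hardest step is passing from a bound on \(\max(tT)\) to an upper bound on \(tv\), and hence on the individual components. The inequality \(v\ge T\) points the wrong way, and the exponential terms \(p,r\) in \(T\) could in principle absorb growth. I would first try to bound \(tp\) and \(tr\) directly. At the top vertex, the Bochner identity \(\Delta p=2K_0-pH_0+\operatorname{tr}(R\widehat H_1)\), combined with the row formulas, gives at a maximum of \(p\) that \(p\) times the damping is bounded by fixed data. This makes \(t\,p\) and \(t\,r\) bounded, and then the linear parts of \(T\) bound \(t a\), \(t b\), and \(t\operatorname{tr}M\) from above. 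All constants come only from the reference quantities. In the boundary setting, where \(P_*\) vanishes at \(p\), the same argument goes through with \(r_*(x)\) kept and its derivatives absorbed into \(f_*\).
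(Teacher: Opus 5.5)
Your architecture matches the paper's. You use the minimum principle for lower bounds at each vertex, then the trace quantity \(T\) with the weighted comparison \(T\ge q_*a+\operatorname{tr}(V_*M)+\beta_*b\). The gap is in the decisive upper-bound step, which is not carried out correctly.

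\textbf{The Green-kernel route fails.} Your argument for \(\max T\) relies on \(\operatorname{osc}T\le C\|\Delta T\|_{L^1}\). This is false in two real dimensions: a smoothed logarithmic potential has Laplacian of bounded \(L^1\) norm but unbounded oscillation. The correct one-sided version uses \(\Delta T\ge-(F_T+C_LW)\) and \(\overline T\le\overline v=0\). It only gives \(T\le C\), with \(C\) independent of \(t\). Fed into the weighted comparison, that yields \(ta\le(tC+C_LW)/\delta\), which is not uniform in \(t=\tau\mu\). Uniformity in \(\mu\) is exactly what the lemma asserts, and it is used later: the anchor lemma needs \(\|w\|_\infty\le C_W/t\to0\) as \(t\to\infty\).

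\textbf{The missing step.} You already wrote down both facts needed; they just have to be evaluated at the same point. At a maximum \(x_0\) of \(T\), \(0\ge\Delta T(x_0)=(1-\tau)f_*(x_0)+tv(x_0)\ge-F_T+tT(x_0)\), since \(v\ge T\). Hence \(T\le F_T/t\) on all of \(X\). Shifting the fields by \(C_L/t\) then gives \(C_{\rm size}=\max\bigl(C_L,(F_T+C_LW)/\delta\bigr)\). No Green kernel or mean is needed.

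\textbf{The ``main obstacle'' is misdiagnosed.} At the maximum, \(v\ge T\) points in exactly the needed direction, so no upper bound on \(tv\) is required. The exponential terms are already absorbed by the first radial inequality. Moreover \(p<c+r<8/3\) and \(r<d\) hold on the legal domain anyway. Your fallback would also fail as stated. Bounds on \(p,r\) plus an upper bound on the bare linear part \(-\alpha a+c\operatorname{tr}M+db\) do not bound \(a\) from above, because its coefficient is negative. You would need an extra coupling such as \(a\le\lambda_{\max}M+\log(112/27)\).

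\textbf{A smaller point at the middle vertex.} The quadratic term does not vanish at the extremum. With the paper's test function \(e^*f^{-1}e\), where \(e\) is a constant unit vector at an eigenline realizing \(\min_X\lambda_{\min}M\), that term is \(g^{-1}e^{-2m}e^*f_zf^{-1}f_{\bar z}e\ge0\). This sign, together with \(\widehat H_1\ge\alpha I-tM\), gives \(tm\ge-d\), including at repeated spectra.
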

\begin{proof}
Set \(C_L=\sup(H_{*0}-\alpha)=d+\max n_\phi\) and
\(F_T=\|f_*\|_\infty\).  At a minimum of \(a\),
\(H_0\le H_{*0}\); the positive top source in
\eqref{common-positive-rows} gives \(a\ge-C_L/t\).  At a minimum of
\(b\), the bottom row similarly gives \(b\ge-c/t\).
Let \(m=\min_X\lambda_{\min}M\), and take a constant reference-unit
vector \(e\) at an attaining eigenline.  The globally defined function
\(e^*f^{-1}e\) has its maximum there, also for repeated spectra.  Direct
ordered differentiation yields at that point
\[
 \Delta(e^*f^{-1}e)
 =e^{-m}(e^*\widehat H_1e-2)
 +g^{-1}e^{-2m}e^*f_zf^{-1}f_{\bar z}e.
\]
The last term is nonnegative, so \(\widehat H_{1,ee}\le2\).
The positive Lyapunov inverse in \eqref{common-lyapunov-inverse} gives
\(\widehat H_1\ge\alpha I-tM\), hence \(m\ge-d/t\).
This argument needs semipositive middle force, not a positive lower-arrow
norm or a uniform inverse bound.

At a maximum of \(T\), \eqref{common-trace-equation} and \(v\ge T\)
give \(T\le F_T/t\).  The shifted fields
\(a+C_L/t,b+C_L/t,M+(C_L/t)I\) are nonnegative.  The reference weights
are at least \(\delta=4/21\), and their sum is \(W\).  Thus
\[
 \delta\bigl(a+C_L/t+\operatorname{tr}(M+(C_L/t)I)+b+C_L/t\bigr)
 \le (F_T+C_LW)/t.
\]
For example \(C_{\rm size}=\max(C_L,(F_T+C_LW)/\delta)\) suffices.
Only semidefiniteness of \(P_*\) was used, so the boundary proof applies
at its zero as well.  No normalization premise was needed: an exact
positive-parameter solution is normalized by its total integral.
\end{proof}

\begin{lemma}[Positive-stage integral estimate]\label{common-positive-energy}
In the primary setting, fix \(\tau_->0\), a finite \(\mu\), and a closed
interval \([\tau_-,T_+]\) with \(C_1^\tau\ge c_1I>0\).
There are fixed-stage constants \(B,E_B\) for all solutions such that
\[
 |a|,|b|,\|M\|_{\mathrm{op}}\le B,\qquad
 \int_X(|DM|^2+K_0+K_2+\operatorname{tr}V^{-1})\,\omega_X\le E_B.
\]
These constants are not asserted uniform as \(\tau_-\downarrow0\).
\end{lemma}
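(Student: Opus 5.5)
The value bound comes first and is essentially free from Lemma~\ref{common-positive-size}. On $[\tau_-,T_+]$ we have $t=\tau\mu\ge\tau_-\mu>0$, and $C_1^\tau\ge c_1I\ge0$. Dividing \eqref{common-size-estimate} by $t$ therefore gives $|a|,|b|,\|M\|_{\mathrm{op}}\le C_{\rm size}/(\tau_-\mu)=:B$. This constant depends only on the reference and on $\tau_-$, and it degenerates as $\tau_-\downarrow0$, as the statement allows. With $a,b,M$ bounded, the transported arrow products $R=e^aS^{-1}R_*S^{-1}$ and $P=e^{-b}SP_*S$ are bounded above, so $p$, $r$ and $v$ are bounded pointwise by a fixed-stage constant. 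The one quantity not controlled by values is $V^{-1}$: positivity of $V$ holds on the legal domain, but it carries no quantitative lower bound. That is why $\operatorname{tr}V^{-1}$ sits inside the integral bound.

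For the energy, I integrate the three rows of \eqref{common-positive-rows} against $\omega_X$. The top and bottom rows give $\int(C_0^\tau+K_0)/q=\int(H_0-\alpha+ta)$. By Chern--Weil $\int H_0=6\pi$, and $a$ is bounded, so the right side is bounded. Since $q\le p$ is bounded above, $1/q$ is bounded below, and with $C_0^\tau\ge c_0$ this yields $\int K_0\le C$. The same argument with $\beta\le d$ gives $\int K_2\le C$.

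For the middle row I take the trace. Using that $\mathcal L_V^{-1}$ has trace $\operatorname{tr}(V^{-1}\cdot)$, I get
\[
\int\operatorname{tr}\bigl(V^{-1}(C_1^\tau+K_1)\bigr)=\int\bigl(\operatorname{tr}\widehat H_1-2\alpha+t\operatorname{tr}M\bigr)\le C,
\]
where $\int\operatorname{tr}\widehat H_1=8\pi$ by the identity $\operatorname{tr}\widehat H_1=4-\Delta\operatorname{tr}M$. Since $C_1^\tau\ge c_1I$ and $K_1\ge0$, the integrand dominates $c_1\operatorname{tr}V^{-1}$, giving $\int\operatorname{tr}V^{-1}\le C/c_1$. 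This step is exactly where the hypothesis $c_1>0$ enters.

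The Dirichlet energy of $M$ is the step I expect to be the main obstacle, because the middle row is a Lyapunov-inverse equation and not a scalar Laplacian. I would first try the ordered identity
\[
\operatorname{tr}\widehat H_1=4-\Delta\operatorname{tr}M+\bigl(\text{first-order terms in }f^{-1}\partial f\bigr)
\]
in the Donaldson form. Concretely, I would pair the middle row with $M$ and use the standard inequality
\[
\int\operatorname{tr}\bigl((\widehat H_1-H_{*,1})M\bigr)\ge\int|\Psi(M)DM|^2,
\]
where $\Psi(x)=(e^x-1)/x$ is applied spectrally. On the bounded range $\|M\|\le B$, $\Psi$ is bounded below by a positive constant, so this gives $\int|DM|^2\lesssim\int\operatorname{tr}((\widehat H_1-2I)M)$. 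The right side is then bounded by writing $\widehat H_1=\alpha I-tM+\mathcal L_V^{-1}(C_1^\tau+K_1)$. The $tM$ and $\alpha$ terms are bounded because $M$ is bounded. For the inverse term, self-adjointness gives $|\operatorname{tr}(\mathcal L_V^{-1}N\,M)|\le\|M\|_{\mathrm{op}}\operatorname{tr}(V^{-1}N)$ when $N\ge0$, and this is controlled by the previous integral since $\operatorname{tr}(V^{-1}K_1)$ was also bounded there.

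If the ordered pairing fails to give the sign cleanly because $R_*$ and $P_*$ do not commute, I would fall back on the maximum-principle function $e^*f^{-1}e$ from Lemma~\ref{common-positive-size}. I would combine it with $\int\operatorname{tr}(f^{-1}\Delta f)$, using the second term $g^{-1}f_zf^{-1}f_{\bar z}$ in its expansion, which controls $|DM|^2$ on the bounded range.
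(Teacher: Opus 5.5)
Your proof is correct. The value bound is obtained exactly as in the paper, by dividing the size estimate by $t\ge\tau_-\mu$; for the integral bound you take a genuinely different route.

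\textbf{The paper's route.} It derives one pointwise inequality from the ordered inverse row \eqref{common-inverse-trace-row}, using trace self-adjointness of $\mathcal L_V^{-1}$, $V\le(c+d)I$ and $f^{-1}\ge e^{-B}I$:
\[
\Delta\operatorname{tr}f^{-1}\ge a'_B\bigl(|DM|^2+K_0+K_2+\operatorname{tr}V^{-1}\bigr)-2e^B(d+\mu B).
\]
It then integrates this inequality.

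\textbf{Your route.} You integrate the three rows against the Chern--Weil degrees. This bounds $\int K_0$, $\int K_2$, $\int\operatorname{tr}V^{-1}$ and $\int\operatorname{tr}(V^{-1}K_1)$. You then control $\int|DM|^2$ by pairing the middle row with $M$. This is the mechanism the paper itself uses later, in \eqref{primary-source-budgets}--\eqref{primary-normalized-energy}.

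\textbf{What each buys.} Your route shows that the three source integrals need only $t|a|,t|b|,t\|M\|_{\mathrm{op}}\le C_{\rm size}$, $\beta\le d$, and $q<5/2$. The last follows from positivity of the symbols, so you may use it in place of the value bound you invoked. These integrals are therefore uniform as $\tau_-\downarrow0$, and only the Dirichlet energy of $M$ depends on $B$. The paper's route instead yields a pointwise subsolution inequality, which is exactly what the log-localization in Theorem~\ref{common-gradient-gaps} reuses.

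\textbf{Small corrections.}
\begin{itemize}
\item The pairing is an identity, not the inequality you wrote:
\[
\int\operatorname{tr}\bigl(M(\widehat H_1-2I)\bigr)\omega_X
=\int g^{-1}\sum_{i,j}k_{\exp}(m_j-m_i)\,|(M_z)_{ij}|^2\,\omega_X,
\qquad k_{\exp}(s)=\frac{e^s-1}{s}.
\]
Your lower bound $\int|\Psi(M)DM|^2$ with squared coefficients fails wherever $k_{\exp}>1$. On $\|M\|_{\mathrm{op}}\le B$, however, $k_{\exp}\ge(1+2B)^{-1}$, and that is all you need.
\item $\operatorname{tr}\widehat H_1=4-\Delta\operatorname{tr}M$ holds exactly, with no first-order terms.
\item Your concern about noncommuting $R_*,P_*$ does not arise. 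The pairing involves only $M$ and $e^M$. The matrix $V$ enters only through $\operatorname{tr}(M\mathcal L_V^{-1}N)\le\|M\|_{\mathrm{op}}\operatorname{tr}(V^{-1}N)$ for $N\ge0$. So the fallback is unnecessary.
\end{itemize}
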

\begin{proof}
The preceding lemma already gives the required \(B\).  The independent
entropy route behind the same stage estimate can also be recorded without
any symbol buffer.  Put \(C=C_L/(\tau_-\mu)\),
\(\lambda=\lambda_{\max}M\), and \(E=\log(112/27)\).
The top norm gives \(a\le\lambda+E\).  If \(m_i\) are the eigenvalues
of \(M\), and \(c_i\) the spectral weights of its reference top unit
vector, let \(d_i=c_ie^{-m_i}/\sum c_je^{-m_j}\), omitting zero
supports.  The relative entropy \(D_- =\sum d_i\log(d_i/c_i)\ge0\)
follows from \(\log x\le x-1\), and expansion gives
\[
 a-\operatorname{tr}(RM)/p=\log(p/p_*)+D_-.
\]
Together with the lower bounds, \(r+\beta=d\), and
\(p\log(p/p_*)\ge-p_*/e\), this implies
\[
 v\ge \tfrac23\lambda_++\beta b_+-K,
 \qquad K=CW+\alpha E+p_*/e.
\]
Normalization gives \(\int\lambda_+\le(3/2)K\operatorname{Vol}X\)
and \(\int\beta b_+\le K\operatorname{Vol}X\).
If \(\beta<d/2\), then
\(r\le r_{*,\max}e^{\lambda-b}\) and \(r>d/2\) imply \(b<\lambda\);
otherwise \(b_+\le(2/d)\beta b_+\).  These facts and
\(\operatorname{tr}M_+\le2\lambda_+\) bound the unweighted
\(L^1\) norm of all six fields.  The trace equation then has
\(\Delta T\ge-D_0\) and \(\int|T|\le A\), with fixed-stage constants.
The actual chordal Green identity \eqref{common-green-kernel} gives
\(T\le m_\omega A/(2\pi)+n_\omega D_0\), where
\(m_\omega=\sup\omega_{\rm FS}/\omega_X\) and
\(n_\omega=\sup\omega_X/\omega_{\rm FS}\).
Finally
\(-\alpha a+c\operatorname{tr}M+db
 \ge(2/3)\lambda-(c+d)C-\alpha E\)
recovers all upper values.  This argument uses no weighted trace with an
assumed \(V\)-gap.

For the integral assertion differentiate \(h=f^{-1}\) in its actual
order.  The middle row is exactly
\begin{equation}\label{common-inverse-trace-row}
 \Delta h=g^{-1}hf_zhf_{\bar z}h-dh-tMh
       +S^{-1}\mathcal L_V^{-1}(C_1^\tau+K_1)S^{-1}.
\end{equation}
On the proved value range the first trace term controls \(|DM|^2\):
the divided differences of \(D\exp_M\) lie between \(e^{-B}\) and
\(e^B\), including repeated spectra.  Moreover \(V\le(c+d)I\), and
trace self-adjointness gives
\[
 \operatorname{tr}(S^{-1}\mathcal L_V^{-1}(K_1)S^{-1})
 \ge \frac{e^{-B}}{c+d}(K_0+K_2),\qquad
 \operatorname{tr}(S^{-1}\mathcal L_V^{-1}(C_1^\tau)S^{-1})
 \ge c_1e^{-B}\operatorname{tr}V^{-1}.
\]
The negative terms are bounded below by \(-2e^B(d+\mu B)\).
Thus, for a fixed \(a'_B>0\),
\(\Delta\operatorname{tr}h\ge a'_B(|DM|^2+K_0+K_2+
\operatorname{tr}V^{-1})-2e^B(d+\mu B)\).
Integration proves the claim.  This is an integral assertion only; the
pointwise gradients and gaps require the following full face argument.
\end{proof}

For later use the actual derivative formulas, with
\(\Theta=f^{-1}f_z\), are
\begin{equation}\label{common-ordered-jets}
 \begin{gathered}
 X=e^{a/2}(\xi_*+a_z\phi-\phi\Theta)S^{-1},\qquad
 Y=e^{-b/2}S(y_*+\Theta\psi-b_z\psi),\\
 K_0=XX^*/(2g),\qquad K_1=(X^*X+YY^*)/(2g),\qquad K_2=Y^*Y/(2g),\\
 H_0=H_{*,0}-\Delta a,\quad H_2=1-\Delta b,\quad
 \widehat H_1=2I-g^{-1}S\partial_{\bar z}(f^{-1}f_z)S^{-1}.
 \end{gathered}
\end{equation}
At a lower-arrow zero \(y_*\) is still retained.  None of these
coefficients contains \(1/r_*,1/\psi\), or a division by the composition.

\subsection{The reference inverse and strong damping}

The local kernel argument starts from the classical Green--Riesz and
Poisson representations \cite[Ch.~I, \S4.B]{Demailly2012}.
We give the estimates used by this system explicitly below.

\begin{lemma}[Local potential estimates]
\label{common-local-potentials}
The component Poisson and complex \(\bar\partial\) estimates used below
hold on nested coordinate disks.  A bounded two-sided Poisson source and
bounded values give compactness in \(C^1\), with gradient modulus
\(Ch(1+|\log h|)\).  A bounded complex function with bounded
\(\bar\partial\) source is locally precompact in \(C^\nu\) for every
\(\nu<1\).  A \(C^\gamma\) Poisson source gives a \(C^{2,\gamma}\)
solution.  A nonnegative entire distributionally superharmonic function
on the plane is constant.
\end{lemma}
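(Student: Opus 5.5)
The lemma bundles five classical local statements, and I would prove each from explicit kernel representations on a fixed coordinate disk \(\mathbb D_R\), shrinking to nested disks \(\mathbb D_{R'}\subset\mathbb D_R\) to absorb boundary terms. The first step is the representation. For a function \(u\) with \(\Delta_0 u=f\), where \(\Delta_0=\partial_z\partial_{\bar z}\) differs from \(\Delta\) by the smooth positive factor \(g\), I would write
\[
 u=\frac{2}{\pi}\int_{\mathbb D_R}\log|z-\zeta|\,f(\zeta)\,d\lambda(\zeta)+\mathcal H,
\]
with \(\mathcal H\) harmonic. The harmonic part is controlled on \(\mathbb D_{R'}\) by the Poisson integral, using only the sup of \(u\) together with the Newtonian part. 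Interior derivative bounds for harmonic functions then take care of \(\mathcal H\) in every norm.

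\textbf{Gradient modulus and \(C^1\) compactness.} Differentiate the log kernel to get the first-order kernel \(1/(z-\zeta)\), which is integrable. The difference \(1/(z_1-\zeta)-1/(z_2-\zeta)\), integrated against a bounded \(f\), is split into the region \(|\zeta-z_1|<2h\), where it contributes \(O(h)\), and its complement, where it contributes \(O\bigl(h\int_{2h}^{R}r^{-2}\,r\,dr\bigr)=O(h(1+|\log h|))\). Here \(h=|z_1-z_2|\). This log-Lipschitz modulus is uniform in the bounds on source and values. Arzelà–Ascoli applied to \(u\) and \(\nabla u\) then gives precompactness in \(C^1\).

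\textbf{The \(\bar\partial\) estimate.} For \(\partial_{\bar z}w=k\), I would use the Cauchy–Pompeiu formula
\[
 w=\frac{1}{2\pi i}\oint\frac{w\,d\zeta}{\zeta-z}-\frac1\pi\int\frac{k}{\zeta-z}\,d\lambda.
\]
The boundary term is holomorphic inside and bounded by \(\sup|w|\), so all its derivatives are bounded on the smaller disk. The area term has the log-Lipschitz modulus computed above, and log-Lipschitz implies \(C^\nu\) for every \(\nu<1\). Precompactness in \(C^\nu\) follows by interpolating between this bound and uniform convergence.

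\textbf{\(C^\gamma\) source gives \(C^{2,\gamma}\).} This is the classical Hölder estimate for the Newtonian potential, via second derivatives as the Calderón–Zygmund kernel \((\zeta-z)^{-2}\) with the usual \(f(\zeta)-f(z)\) subtraction. I would cite it from Demailly, Ch.~I, \S4.B, or Gilbarg–Trudinger, Lemma 4.4, rather than reprove it. The metric factor \(g\) is smooth, so it does not affect the conclusion.

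\textbf{Liouville statement.} Let \(u\ge0\) be superharmonic on \(\mathbb C\). Its disk averages \(\bar u(r)\) are nonincreasing in \(r\) and are concave in \(\log r\); I would check this distributionally by first mollifying \(u\), which preserves superharmonicity. A concave function of \(\log r\) that is bounded below on all of \(\mathbb R\) must be nonincreasing in \(\log r\), and combined with monotonicity this forces \(\bar u\) to be constant. Since \(u\) is superharmonic, the gap between \(\bar u(r)\) and \(u(0)\) is controlled by \(\int\log(r/|\zeta|)\,d\mu\), where \(\mu=-\Delta u\ge0\). This integral therefore vanishes for all \(r\), so \(\mu=0\) and \(u\) is harmonic. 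A nonnegative harmonic function on the plane is constant by Harnack's inequality.

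I expect the main obstacle to be the Liouville step in its distributional form, specifically justifying the mean-value monotonicity and the concavity in \(\log r\) without assuming continuity of \(u\). I would first try to handle it by mollification and the monotone limit of the regularized functions.
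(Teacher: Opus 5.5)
Your proposal is correct and follows essentially the same route as the paper: logarithmic and Cauchy potentials with the near/far split at radius $2h$, the Calder\'on--Zygmund Hessian kernel with the source value subtracted (which the paper writes out rather than cites), and mollification plus circular means in $\log r$ for the Liouville statement. One small fix in that last step: concavity in $\log r$ together with the lower bound as $r\to\infty$ makes the circular mean \emph{nondecreasing}, and this is what you should combine with the superharmonic monotonicity (equivalently, a concave function bounded below on all of $\mathbb R$ is constant), since two ``nonincreasing'' facts together would not force constancy.
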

\begin{proof}
For the Euclidean equation use
\(\Gamma(x)=-(2\pi)^{-1}\log|x|\) and a cutoff source; subtracting its
logarithmic potential leaves a distributionally harmonic function on a
smaller disk.  Mollification and radial mean values show this remainder
is smooth, and the circle formula bounds its interior derivatives by its
local value norm.  The gradient kernel is \(O(|x|^{-1})\).  Splitting the
difference of its integrals into disks of radius \(2h\) and their
complement gives \(Ch+Ch\int_{2h}^{R}dr/r\).  The same calculation with
the Cauchy kernel \(1/z\), and the holomorphic remainder's circle formula,
gives the stated \(\bar\partial\) compactness.

The Hessian kernel is
\[
 H_{ij}(x)=-\frac1{2\pi}
 \left(\frac{\delta_{ij}}{|x|^2}-\frac{2x_ix_j}{|x|^4}\right),
 \qquad
 \partial_i\partial_j\Gamma
 =\operatorname{pv}H_{ij}-\tfrac12\delta_{ij}\delta_0.
\]
Its circular mean is zero.  Subtract the source value in the singular
integral.  The near integrand is bounded by
\(C[F]_\gamma |x|^{\gamma-2}\).  For two centers distance \(h\) apart,
the near contribution is \(O(h^\gamma)\); outside \(2h\), the kernel
difference is \(Ch|x|^{-3}\), giving
\(Ch[F]_\gamma\int_{2h}^Rr^{\gamma-2}dr=O(h^\gamma)\).
The constant-source kernel integral is uniformly bounded by circular
cancellation: a shifted excluded ball changes only an \(O(h)\)-annulus.
Cutoff terms are smooth.  This proves the Hessian estimate without a
logarithmic loss.

For the last assertion convolve the nonnegative superharmonic function
with a nonnegative smooth compactly supported probability kernel.  A
positive mass of its negative Laplacian in one disk makes its circular
mean derivative at most \(-m/(2\pi R)\) on every larger circle.
Integration in \(\log R\) contradicts nonnegativity.  Thus every
convolution is harmonic.  The differentiated Poisson formula bounds the
gradient of a nonnegative entire harmonic function at a center by its
value times \(C/R\).  Letting \(R\to\infty\) makes it constant, and
approximate identities give the distributional assertion.
\end{proof}

\begin{lemma}[Full nonsymmetric reference inverse]
\label{common-reference-inverse}
For each of the two fixed-chain settings above and \(0<\gamma<1\), the
operator \(J_*+\mu A_*\), where \(J_*=D\widehat F(0)\), is an
isomorphism \(C^{2,\gamma}(X;\mathscr H)\to C^{0,\gamma}(X;\mathscr H)\)
for every sufficiently large finite \(\mu\).  It restricts to an
isomorphism \(X'\to Y\).
\end{lemma}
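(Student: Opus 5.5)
The plan is to treat $J_*+\mu A_*$ as a large-mass perturbation of a diagonal positive operator and invert it by a Neumann series. Everything is done at the fixed reference $h_*$ of \eqref{common-reference}, separately in each of the two settings.

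\emph{Principal structure.} First I would compute the linearization $J_*=D\widehat F(0)$ from the residual $F_j(h)=\tfrac12\{S_j-\alpha I,H_j-\alpha I\}-A_0I-K_j$ and the ordered jets \eqref{common-ordered-jets}. The only second-order contributions come from $\dot H_0=-\Delta a$, $\dot H_2=-\Delta b$ and $\dot{\widehat H}_1=-\Delta M+\dots$; adjoint, $S_j$ and square-root transport variations are of order zero, and $K_j$ variations are of order at most one. Since $S_{*,0}-\alpha=q_*$, $S_{*,1}-\alpha I=V_*$ and $S_{*,2}-\alpha=\beta_*$, I expect
\[
 J_*U=-A_*\Delta U+\mathcal R U ,
\]
where $\mathcal R$ is a first-order operator with smooth coefficients. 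It is nonsymmetric and couples all six fields through the $\xi_*,y_*$ terms and the conjugations by $R_*,P_*$. At the lower zero $p$ these coefficients contain $y_*$ and $\psi$ but never $1/r_*$ or $1/\psi$, so they remain smooth there. The operator $A_*$ is a smooth, pointwise positive, invertible bundle map. For the scalar rows this uses $q_*=10/21$ and $\beta_*\ge23/15$. For the middle row it uses $V_*\ge\tfrac4{21}I$ and the integral formula \eqref{common-lyapunov-inverse}, which bounds $\mathcal L_{V_*}^{-1}$ together with its derivatives in $x$. Hence
\[
 J_*+\mu A_*=A_*\bigl[(-\Delta+\mu)+A_*^{-1}\mathcal R\bigr].
\]

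\emph{Large-mass inversion.} The operator $-\Delta+\mu$ acts componentwise on $\mathscr H$ and is invertible $C^{2,\gamma}\to C^{0,\gamma}$ for $\mu>0$. I would prove $\mu$-dependent bounds by combining the maximum principle, which gives $\|u\|_\infty\le\mu^{-1}\|f\|_\infty$, with scaled Schauder estimates on disks of radius $\mu^{-1/2}$ (the Hessian estimate of Lemma~\ref{common-local-potentials}). The target is
\[
 \|u\|_{C^{1,\gamma}}\le C\mu^{-(1-\gamma)/2}\|f\|_{C^{0,\gamma}},\qquad \|u\|_{C^{2,\gamma}}\le C\|f\|_{C^{0,\gamma}},
\]
with $C$ independent of $\mu\ge1$. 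Because $A_*^{-1}\mathcal R$ is first order, it maps $C^{1,\gamma}\to C^{0,\gamma}$ boundedly. Therefore
\[
 \|A_*^{-1}\mathcal R(-\Delta+\mu)^{-1}\|_{C^{0,\gamma}\to C^{0,\gamma}}\le C'\mu^{-(1-\gamma)/2}<\tfrac12
\]
once $\mu$ is large. A Neumann series then inverts $(-\Delta+\mu)+A_*^{-1}\mathcal R$, and composing with $A_*^{-1}$ gives the isomorphism. The fixed-chain constants may depend on the setting, including on whether $p=q_0$.

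\emph{Restriction to the normalized spaces.} Lemma~\ref{common-normalization} says the total residual integral vanishes for every metric. Differentiating that identity at $0$ gives $\int_X\operatorname{tr}_{\rm tot}(J_*U)\omega_X=0$ for all $U$. Hence
\[
 \int_X\operatorname{tr}_{\rm tot}\bigl((J_*+\mu A_*)U\bigr)\omega_X=\mu L_0(U).
\]
This vanishes exactly when $U\in X'$. So the operator maps $X'$ into $Y$. Conversely, if $f\in Y$, its unique preimage $U$ satisfies $\mu L_0(U)=0$, so $U\in X'$. This proves the bijection $X'\to Y$.

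\emph{Main obstacle.} The main difficulty is the bookkeeping of $\mathcal R$ in the middle row. Differentiating $\mathcal L_V^{-1}(C_1+K_1)$ and the conjugation by $S$ produces first-order terms in which $M$ does not commute with $R_*$ or $P_*$. I must check that none of these terms is secretly second order, and that their coefficients are uniformly smooth, including near $p$. Once that is established, the uniform-in-$\mu$ scaled Schauder bound is what I would verify most carefully, localizing with the Green–Riesz kernel as in Lemma~\ref{common-local-potentials}.
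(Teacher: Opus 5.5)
Your proposal is correct, but it inverts the operator by a different mechanism from the paper's.

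\textbf{Shared structure.} The paper computes $J_*=-A_*\Delta+B_*D+C_*$ exactly as you anticipate, and it handles the normalized restriction $X'\to Y$ exactly as you do.

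\textbf{The paper's route.} After that computation it works in $L^2$. Integration by parts with $A_*\ge\frac4{21}I$ and Young's inequality give the G{\aa}rding-type coercivity \eqref{common-coercivity} for $\mu\ge\mu_0$; the mass absorbs the nonsymmetric first-order coupling. A Lax--Milgram argument on $H^1$ then gives a unique weak solution for every $L^2$ source. A local-potential bootstrap ($W^{1,p}$, then $C^{1,\nu}$, then the Hessian estimate) upgrades it to $C^{2,\gamma}$.

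\textbf{Your route.} You stay in H\"older spaces and treat $A_*^{-1}\mathcal R$ as a perturbation of the componentwise scalar operator $-\Delta+\mu$. The only quantitative input you need is that $(-\Delta+\mu)^{-1}:C^{0,\gamma}\to C^{1,\gamma}$ has norm tending to zero. Your maximum-principle-plus-rescaling argument does give this, at rate $O(\mu^{-(1-\gamma)/2})$, and it uses only $\|f\|_\infty$ on the rescaled unit disks. Since $-\Delta+\mu$ is already bijective, the factorization $[I+A_*^{-1}\mathcal R(-\Delta+\mu)^{-1}](-\Delta+\mu)$ yields injectivity and surjectivity at once. You need no energy identity and no weak-solution regularity step.

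\textbf{What each buys.} The paper's route produces the $L^2$ coercivity inequality itself, which it reuses for injectivity in Lemma~\ref{common-mass-inverse}. Your route is closer in spirit to that later lemma, which also ends with a Neumann series. There, however, the series absorbs the coefficient-freezing error of a Gaussian parametrix rather than the first-order part.

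\textbf{A caution on one target.} The $\mu$-uniform $C^{2,\gamma}$ bound you list is not needed here; invertibility of $-\Delta+\mu$ at each fixed $\mu$ suffices. Naive rescaled Schauder estimates would only give $[D^2u]_\gamma\lesssim[f]_\gamma+\mu^{\gamma/2}\|f\|_\infty$. Uniformity in $\mu$ needs an extra difference-quotient/maximum-principle step, or the paper's parametrix.
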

\begin{proof}
The principal part is coercive, but the full linearization also has
coupled first-order terms and need not be symmetric. We keep those terms
in the calculation: a sufficiently large finite mass absorbs them in the
energy estimate, after which local elliptic estimates give the stated
inverse. The normalization is handled by restricting this unrestricted
inverse, rather than by dropping a scalar equation in advance.

The full ordered derivative is recorded explicitly to fix its lower-order
terms.  For \(U=(a,M,b)\), put \(Z=M-aI\), \(Y_0=M-bI\),
\(c_\phi=\phi Z\phi^*\), \(c_\psi=\psi^*Y_0\psi\).  All quantities in
this paragraph are at the fixed reference.  Square-root transport gives
\(J_*U=\dot F[U]+[U,F_*]/2\), including the off-shell commutator.
The changing-adjoint variations are
\[
 \dot q=-c_\phi,\quad\dot\beta=-c_\psi,\quad
 \dot V=\tfrac12(\{Z,R_*\}+\{Y_0,P_*\}),\quad
 \widehat{\dot\xi}=-\phi Z_z-\tfrac12\xi_*Z,\quad
 \widehat{\dot y}=Y_{0,z}\psi+\tfrac12Y_0y_*.
\]
Consequently
\begin{align*}
 \kappa_0&=-g^{-1}\operatorname{Re}(\phi Z_z\xi_*^*)
             -(2g)^{-1}\xi_*Z\xi_*^*,\\
 \kappa_2&=g^{-1}\operatorname{Re}(y_*^*Y_{0,z}\psi)
             +(2g)^{-1}y_*^*Y_0y_*,\\
 \mathcal K_1[U]&=-(2g)^{-1}
 (Z_{\bar z}\phi^*\xi_*+\xi_*^*\phi Z_z)
 -(4g)^{-1}\{Z,\xi_*^*\xi_*\}\\
 &\hspace{1em}+(2g)^{-1}
 (Y_{0,z}\psi y_*^*+y_*\psi^*Y_{0,\bar z})
 +(4g)^{-1}\{Y_0,y_*y_*^*\}.
\end{align*}
The curvature variations are \(-\Delta a,-\Delta M,-\Delta b\), because
the middle End connection is flat and its curvature is central.  The
resulting rows are
\begin{equation}\label{common-reference-linearization}
 \begin{aligned}
 (J_*U)_0&=-q_*\Delta a-(d+n_\phi)c_\phi-\kappa_0,\\
 (J_*U)_1&=-\mathcal L_{V_*}\Delta M
       +\tfrac d2(\{Z,R_*\}+\{Y_0,P_*\})-\mathcal K_1[U],\\
 (J_*U)_2&=-\beta_*\Delta b-c\,c_\psi-\kappa_2.
 \end{aligned}
\end{equation}
Thus \(J_*=-A_*\Delta+B_*D+C_*\), with smooth real coefficient
operators in all six directions.  In particular this is
\(-A_*\Delta\), not \(-\Delta(A_*\,\cdot)\).  The variable \(r_*(x)\)
contributes to \(DA_*\).  At a lower-arrow zero the terms involving
\(D\psi\) remain smooth; no norm reciprocal was used in this calculation.

Normalize the flat gradient by
\(-\int\langle U,\Delta U\rangle=\int|DU|^2\), and put
\[
 \delta=\frac4{21},\quad K=\|DA_*\|_\infty+\|B_*\|_\infty,
 \quad G_0=\|C_*\|_\infty+K^2/(2\delta),\quad
 \mu_0=(G_0+1)/\delta.
\]
Since \(\operatorname{tr}(V_*M^2)\ge\delta\operatorname{tr}(M^2)\),
\(A_*\ge\delta I\).  Integration by parts, including the derivative
of \(A_*\), and Young's inequality give for \(\mu\ge\mu_0\)
\begin{equation}\label{common-coercivity}
 \int_X\langle U,(J_*+\mu A_*)U\rangle\omega_X
 \ge\frac\delta2\|DU\|_2^2+\|U\|_2^2.
\end{equation}
No favorable sign is assigned to the coupled lower-order terms.

For completeness this also proves surjectivity, rather than only a
kernel assertion.  On the real Hilbert space \(H^1\) use the bounded form
\[
 b(U,Z)=\int\langle D(A_*Z),DU\rangle
       +\int\langle Z,B_*DU+C_*U+\mu A_*U\rangle.
\]
Its diagonal is bounded below by
\(c_H\|U\|_{H^1}^2\), \(c_H=\min(\delta/2,1)\).
The representing Hilbert operator is therefore bounded below and has
closed range.  A vector orthogonal to its range obeys \(b(U,Z)=0\) for
all \(U\); setting \(U=Z\) makes it zero.  Its range is also dense, hence
surjective.  Every \(L^2\) source has a unique weak \(H^1\) solution,
with norm at most \(c_H^{-1}\) times the source norm.  Neither this form
nor \(J_*\) is assumed symmetric.

Multiplication after the operator by the smooth \(A_*^{-1}\) gives scalar
principal part \(-\Delta\) with smooth coupled lower-order terms.
Initially the local Euclidean Poisson source is \(L^2\).  A cutoff
logarithmic potential has bounded values, and its gradient kernel is
locally \(L^s\) for \(s<2\).  For any finite \(p>2\), use
\(s=2p/(p+2)\) in convolution to obtain local \(W^{1,p}\).
The now \(L^p\) source gives \(C^{1,\nu}\),
\(\nu=1-2/p\), by the near/far gradient split.  Choose
\(p=4/(1-\gamma)\), so \(\nu=(1+\gamma)/2>\gamma\).  The source becomes
\(C^\gamma\), and the Hessian calculation of
Lemma~\ref{common-local-potentials} gives \(C^{2,\gamma}\).
A finite nested cover, including infinity, gives a norm bound by the
\(C^\gamma\) source and the initial \(H^1\) bound.  This proves the
unrestricted bounded inverse.

Differentiating the universal total integral gives
\(\int\operatorname{tr}_{\rm tot}J_*U=0\).  Hence
\[
 \int\operatorname{tr}_{\rm tot}(J_*+\mu A_*)U=\mu L_0(U).
\]
The full inverse sends \(Y\) into \(X'\), proving exactly the normalized
restriction without adding vertex conditions or an unproved cokernel
assumption.
\end{proof}

\begin{lemma}[A mass-uniform inverse]
\label{common-mass-inverse}
For each fixed reference and \(0<\gamma<1\) there are finite
\(t_0,C_I\), independent of \(t\ge t_0\), for which
\(L_t=J_*+tA_*\) is an unrestricted isomorphism and
\begin{equation}\label{common-weighted-inverse}
 N_t(U):=t\|U\|_\gamma+\sqrt t\|DU\|_\gamma+\|D^2U\|_\gamma
 \le C_I\|L_tU\|_\gamma.
\end{equation}
Here and below \(\|\cdot\|_\gamma=\|\cdot\|_{C^{0,\gamma}}\) in a fixed
finite precompact atlas and the fixed flat real fiber frame.
\end{lemma}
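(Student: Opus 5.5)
We prove the mass-weighted estimate \eqref{common-weighted-inverse} first, in two layers, and deduce invertibility from it together with Lemma~\ref{common-reference-inverse}. Write $L_t=-A_*\Delta+B_*D+(C_*+tA_*)$, with $A_*\ge\delta I$ smooth and $\delta=4/21$. Since $A_*$ is smooth and invertible, multiplying by $A_*^{-1}$ changes $\|L_tU\|_\gamma$ by at most a fixed factor. So it suffices to treat
\[
 \widetilde L_t=-\Delta+\widetilde B D+\widetilde C+t,\qquad
 \widetilde B=A_*^{-1}B_*,\quad \widetilde C=A_*^{-1}C_*,
\]
a scalar-principal system with a large constant mass added. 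For $t\ge\mu_0$, Lemma~\ref{common-reference-inverse} already gives unrestricted invertibility on $C^{2,\gamma}\to C^{0,\gamma}$, because that lemma holds for every sufficiently large finite mass. Only the $t$-uniform bound needs new work.

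\emph{Step 1: the constant-coefficient model.} On $\mathbb R^2$ we will estimate $-\Delta+t$ with the weighted norm $N_t$, by rescaling. Put $x=y/\sqrt t$; then $-\Delta+t$ becomes $t(-\Delta_y+1)$. In unit-scale variables we use the standard Schauder estimate for $-\Delta_y+1$, from the Bessel-type kernel built on the Poisson and Hessian estimates of Lemma~\ref{common-local-potentials}. On unit $y$-balls this estimate is translation invariant and uniform. Scaling back, each $x$-derivative costs $\sqrt t$ and the H\"older seminorm costs $t^{\gamma/2}$. Collecting powers gives
\[
 t|U|_\infty+\sqrt t|DU|_\infty+|D^2U|_\infty+t^{\gamma/2}[D^2U]_\gamma\lesssim |F|_\infty+t^{-\gamma/2}[F]_\gamma .
\]
We then convert this to the unweighted $C^{0,\gamma}$ norms in \eqref{common-weighted-inverse}. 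This uses interpolation: $[w]_\gamma\le C(\epsilon^{1-\gamma}|Dw|_\infty+\epsilon^{-\gamma}|w|_\infty)$ with $\epsilon=t^{-1/2}$, applied to $w=U$ and $w=DU$. Each weighted term $t^{k/2}\|D^{2-k}U\|_\gamma$ is then controlled by the scale-invariant quantities above, and these are bounded by $\|F\|_\gamma$ for $t\ge1$. This is the step I expect to be the main obstacle. The H\"older bookkeeping is where a careless argument loses a factor $t^{\gamma/2}$, and one must check that the stated norm $N_t$ is matched exactly rather than up to such a loss.

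\emph{Step 2: frozen coefficients, perturbation and absorption.} Cover $X$, including the point at infinity, by the fixed finite atlas. Localize with cutoffs $\chi_k$ and freeze the principal coefficient; after the $A_*^{-1}$ reduction it is already the flat Laplacian up to the smooth metric factor $g^{-1}$. Apply Step~1 to each $\chi_kU$. The commutator terms $[\widetilde L_t,\chi_k]U$ and the lower-order terms $\widetilde BDU+\widetilde CU$ have $C^\gamma$ norms at most
\[
 C(\|DU\|_\gamma+\|U\|_\gamma)\le C\bigl(t^{-1/2}+t^{-1}\bigr)N_t(U).
\]
The variation of $g^{-1}$ across a chart is handled the same way. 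The resulting estimate reads $N_t(U)\le C\|L_tU\|_\gamma+C t^{-1/2}N_t(U)$. Choosing $t_0\ge\mu_0$ with $Ct_0^{-1/2}\le1/2$ absorbs the error term and yields \eqref{common-weighted-inverse} with $C_I$ independent of $t\ge t_0$. No symmetry of $J_*$ is used anywhere.

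Combining the uniform bound with the isomorphism from Lemma~\ref{common-reference-inverse} at every such $t$ proves the lemma. Alternatively, the bound gives injectivity and closed range; surjectivity then follows by continuity of the index in $t$, or from the $H^1$ form argument of that lemma with $\mu=t$, so that lemma need not be invoked for each $t$ separately.
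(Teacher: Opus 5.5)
\textbf{Verdict: genuine gaps.} Your overall frame is sound, but both steps fail exactly where the $t$-uniformity is decided.

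\textbf{Comparison of routes.} You derive a $t$-uniform a priori bound from a frozen-coefficient model and take surjectivity from Lemma~\ref{common-reference-inverse} at each $t\ge\mu_0$. The paper instead builds an explicit right inverse $S_t(I+E_t)^{-1}A_*^{-1}$ by a Neumann series and gets injectivity from coercivity. Your logical frame is acceptable.

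\textbf{Step~1 gap.} The rescaling gives $t^{-\gamma/2}[D^2U]_\gamma$ on the left, not $t^{\gamma/2}[D^2U]_\gamma$, because $[D_y^2V]_\gamma=t^{-1-\gamma/2}[D_x^2U]_\gamma$ and you then multiply by $t$. With $t^{+\gamma/2}$ the display is false: $\Delta U=tU-F$ with $tU\in C^1$, so a fixed cusp-type $F$ keeps $[D^2U]_\gamma$ bounded below uniformly in $t$. The correct display only gives $[D^2U]_\gamma\lesssim t^{\gamma/2}|F|_\infty+[F]_\gamma$. Interpolating with $\epsilon=t^{-1/2}$ likewise gives only $t[U]_\gamma+\sqrt t[DU]_\gamma\lesssim t^{\gamma/2}\|F\|_\gamma$. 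This is precisely the loss you flagged, and no interpolation of sup bounds controlled by $|F|_\infty$ removes it.

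The missing ingredient is translation invariance of the model, so that $[F]_\gamma$ rather than $|F|_\infty$ controls the Hölder seminorms:
\begin{itemize}
\item $tR_t$ and $\sqrt t\,DR_t$ are convolutions with kernels of $t$-uniform $L^1$ norm, hence bounded on $C^{0,\gamma}$ with $t$-independent constants.
\item The derivative kernels have integral zero. Subtracting $F(x)$ therefore gives $\|D^2R_tF\|_\infty\le Ct^{-\gamma/2}[F]_\gamma$.
\item Splitting the time integral at $u=h^2$ gives $[D^2R_tF]_\gamma\le C[F]_\gamma$.
\end{itemize}
These are exactly the paper's Gaussian-resolvent estimates.

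\textbf{Step~2 gap.} The variation of $g^{-1}$ cannot be ``handled the same way'' as the lower-order terms. On a chart of the fixed atlas, the frozen error $(a(x_k)-a(x))\Delta(\chi_kU)$ is of principal order, with an $O(1)$ coefficient relative to $\|D^2U\|_\gamma$. No power of $t$ absorbs it, so $N_t(U)\le C\|L_tU\|_\gamma+Ct^{-1/2}N_t(U)$ does not follow.

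The fix is to use patches of a small radius $r$, fixed before $t$. The frozen error is then at most $Cr^{1-\gamma}N_t(\chi_kU)$ with $C$ independent of $r$, and choosing $r$ small absorbs it. Only afterwards are the commutator and lower-order terms, now carrying constants $C_r(t^{-1/2}+t^{-1})$, absorbed by taking $t\ge t_0(r)$. This is the paper's order of choices in \eqref{common-parametrix-error}.

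\textbf{After both repairs.} Your a priori route closes. Since each frozen error is absorbed patch by patch before summing, the patch count multiplies only terms that large $t$ then kills. You therefore avoid the overlap bookkeeping the paper needs for its parametrix sum.
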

\begin{proof}
Multiply after \(L_t\) by \(A_*^{-1}\).  The resulting operator is
\(-\Delta+\widetilde B D+\widetilde C+tI\), with smooth full coupled
lower coefficients.  In a holomorphic chart its scalar principal
coefficient is \(a(x)=1/(4g(x))\).  The fixed atlas gives a positive
compact range for \(a\); its coordinate weighted norms and \(N_t\) are
uniformly equivalent for \(t\ge1\).

For constant \(a>0\) in that range use the explicit Gaussian resolvent
\[
 H_{au}(x)=(4\pi au)^{-1}e^{-|x|^2/(4au)},\qquad
 R_t^aF=\int_0^\infty e^{-tu}H_{au}*F\,du.
\]
Differentiation and \(x=\sqrt u\,z\) give
\(\|D^kH_{au}\|_1\le C_ku^{-k/2}\) and, for \(k\ge1\),
\(\int|D^kH_{au}(x)||x|^\gamma dx\le C u^{(\gamma-k)/2}\).
Derivative kernels have integral zero.  Consequently
\[
 t\|R_t^aF\|_\gamma+\sqrt t\|DR_t^aF\|_\gamma
   +\|D^2R_t^aF\|_\gamma\le C\|F\|_\gamma,
 \qquad \|D^2R_t^aF\|_\infty\le C t^{-\gamma/2}[F]_\gamma.
\]
For the Hessian seminorm use cancellation for both two and three Gaussian
derivatives.  The difference at distance \(h\) is bounded by
\(C[F]_\gamma\min(u^{\gamma/2-1},h u^{(\gamma-3)/2})\).
Integrating separately over \((0,h^2)\) and \((h^2,\infty)\) gives
\(C[F]_\gamma h^\gamma\); both integrals converge for \(0<\gamma<1\).
The truncated resolvent satisfies
\((t-a\Delta_{\mathbb R^2})R_{t,\varepsilon}^aF
=e^{-t\varepsilon}H_{a\varepsilon}*F\).
As \(\varepsilon\downarrow0\), derivatives through order two converge
uniformly, the omitted Hessian integral being \(O(\varepsilon^{\gamma/2})\).
This proves the strong equation on big Hölder spaces.  No density of smooth
functions in their Hölder norm is asserted.

Refine the atlas by grids of a radius \(r\), with a partition \(\chi_j\),
cutoffs \(\psi_j=1\) on their supports, and centers \(x_j\).
The \(\psi_j\) supports have a fixed overlap bound, independent of \(r\),
and derivatives \(O(r^{-k})\).  With \(a_j=a(x_j)\), extend
\(\chi_jF\) by zero in its coordinate plane and put
\[
 S_tF=\sum_j\psi_jR_t^{a_j}(\chi_jF)_{\rm ext}.
\]
The cutoff product bound is
\(\|(\chi_jF)_{\rm ext}\|_\gamma\le Cr^{-\gamma}\|F\|_\gamma\).
For fixed \(r\), \(N_t(S_tF)\le K_r\|F\|_\gamma\).
Writing the full local operator as
\(-a\Delta_{\mathbb R^2}+b^k\partial_k+c_0+t\), its error on the
\(j\)-th summand is exactly
\begin{align*}
 E_{t,j}F={}&\psi_j(a_j-a)\Delta u_j
 +\psi_jb^k\partial_ku_j+\psi_jc_0u_j
 -2a\partial_k\psi_j\partial_ku_j\\
 &-a(\Delta\psi_j)u_j+(b^k\partial_k\psi_j)u_j,
 \qquad u_j=R_t^{a_j}(\chi_jF)_{\rm ext}.
\end{align*}
All but the first term are bounded by
\(C_r(t^{-1/2}+t^{-1})\|F\|_\gamma\).
For \(k_j=\psi_j(a_j-a)\),
\(\|k_j\|_\infty\le Cr\) and \([k_j]_\gamma\le Cr^{1-\gamma}\).
The split Hölder product estimate and the refined Hessian supremum bound
therefore give
\[
 \|k_j\Delta u_j\|_\gamma
 \le C\bigl(r^{1-\gamma}+r^{1-2\gamma}t^{-\gamma/2}\bigr)\|F\|_\gamma.
\]
At any pair of points only the supports containing one of the two points
contribute, at most twice the overlap bound.  The total patch count thus
does not multiply this estimate.  Hence
\begin{equation}\label{common-parametrix-error}
 \|E_t\|\le C_0r^{1-\gamma}
      +C_r(t^{-\gamma/2}+t^{-1/2}+t^{-1}).
\end{equation}
Choose \(r\) first so that the first term is at most \(1/4\); keep it
fixed, and choose \(t_0\ge\max(1,\mu_0)\) so the remaining terms are at
most \(1/4\).  The series \((I+E_t)^{-1}=\sum_{m\ge0}(-E_t)^m\)
then has norm at most two.  It gives a strong six-field right inverse
\(S_t(I+E_t)^{-1}A_*^{-1}\), with the stated weighted bound.
The coercivity calculation \eqref{common-coercivity}, with \(\mu=t\),
proves injectivity.  This proves \eqref{common-weighted-inverse} with a
constant independent of \(t\).  The normalization restriction follows
as before from \(\int\operatorname{tr}_{\rm tot}L_tU=tL_0(U)\).
\end{proof}

\subsection{Value-conditioned gradients and all symbol gaps}

\begin{theorem}[The full gradient and gap estimate]\label{common-gradient-gaps}
Fix either the primary reference or the separate lower-boundary reference,
a finite mass \(\mu\), and independently given \(B,c_1>0\).
In the primary setting the parameter range is \(0\le\tau\le1\).
In the lower-boundary setting it is the specified positive-force slab
\(0<\tau\le\tau_b\le T_+\), at the same fixed mass.
Every actual normalized solution in its stated range, with positive symbols,
\(|a|,|b|,\|M\|_{\mathrm{op}}\le B\), and \(C_1^\tau\ge c_1I\)
satisfies
\[
 \|Dw\|_\infty\le G_*,\qquad
 q\ge\delta_q>0,\quad\beta\ge\delta_\beta>0,\quad V\ge\delta_VI>0.
\]
The constants are uniform over this solution set, but depend on the fixed
setting, reference, background, \(B,c_1,\mu\).  No existence or value
estimate is part of the conclusion.
\end{theorem}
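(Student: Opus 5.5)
The plan is to prove the gradient bound and the three symbol gaps together by contradiction and blow-up, using the ordered derivative formulas \eqref{common-ordered-jets} and the rows \eqref{common-positive-rows}. Suppose there is a sequence of normalized solutions in the stated range with $|a|,|b|,\|M\|_{\rm op}\le B$ and $C_1^\tau\ge c_1I$, but with either $\|Dw_n\|_\infty\to\infty$ or one of $\min q_n$, $\min\beta_n$, $\min\lambda_{\min}V_n$ tending to zero. The value bound gives uniform two-sided control of $R,P,p,r$ and of $e^{\pm w}$. Hence $q,\beta,V$ are bounded above and continuous in $w$. Two cases then arise.

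\textbf{Case 1: the gradient blows up.} Rescale at points where $|Dw_n|$ is nearly maximal, by $\lambda_n=|Dw_n|(x_n)\to\infty$ in holomorphic coordinates. In the rescaled variables the curvature rows $H_0=H_{*,0}-\Delta a$, $H_2=1-\Delta b$, $\widehat H_1=2I-g^{-1}S\partial_{\bar z}(f^{-1}f_z)S^{-1}$ have sources of the form $\lambda_n^{-2}\cdot(\text{row})$. By \eqref{common-positive-rows}, $K_j$ enters with the positive sign and is divided by the symbols. The divided $K_j$ are quadratic in $X,Y$, which are first order in $w$ and hence of size $\lambda_n^2$ after scaling. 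I will therefore not treat them as bounded. Instead, I will use the sign structure. The top row gives $-\Delta a\ge -C$ plus a nonnegative term, so $a$ is rescaled-superharmonic up to an $O(\lambda_n^{-2})$ error. The same holds for $b$, and for $e^*f^{-1}e$ via the identity from Lemma~\ref{common-positive-size}, using positivity of $\mathcal L_V^{-1}$ from \eqref{common-lyapunov-inverse}. The limits are bounded entire functions with one-signed Laplacian. Lemma~\ref{common-local-potentials} (nonnegative entire superharmonic functions are constant) then forces them to be constant, contradicting unit gradient at the origin. Local $C^1$ compactness, with the modulus $Ch(1+|\log h|)$ of Lemma~\ref{common-local-potentials}, is what lets the unit gradient pass to the limit. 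It requires a bounded two-sided source, and I expect this to be \emph{the main obstacle}. To obtain it, I would integrate the middle inverse-trace identity \eqref{common-inverse-trace-row} against cutoffs, as in Lemma~\ref{common-positive-energy}. This gives local $L^1$ control of $K_0+K_2+|DM|^2$ on rescaled disks, hence a measure bound on the Laplacians. The first attempt is to show concentration of this measure is impossible, because the scalar $T$ from \eqref{common-trace-equation} has $\Delta T=(1-\tau)f_*+tv$ bounded. This pins the total-trace curvature, and the ordered matrix pieces are then controlled through the Lyapunov identity with $V\le(c+d)I$.

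\textbf{Case 2: a symbol gap degenerates.} With $\|Dw\|_\infty\le G_*$ established, $K_j\le C$ holds pointwise. The symbols are Lipschitz with a fixed constant. If $q$ or $\beta$ or $\lambda_{\min}V$ became small at a point, the row in \eqref{common-positive-rows} would make $H_0$, $H_2$ or $\widehat H_1$ of size $\ge C_0^\tau/q\ge c_0/q$, respectively $\ge c_1/\lambda_{\min}V$ on the corresponding eigendirection, on a disk of fixed radius comparable to the gap. Integrating $H_0=H_{*,0}-\Delta a$ over a small disk against a cutoff, with $|Da|\le G_*$, bounds $\int H_0$ by $C\rho$ on a disk of radius $\rho$. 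By contrast, $c_0/q\gtrsim c_0/(\varepsilon+L\rho)$ integrates to $\gtrsim\rho^2/(\varepsilon+L\rho)$. Choosing $\rho\sim\varepsilon$ gives a contradiction for small $\varepsilon$. The middle vertex is handled with the moving eigenvector test $e^*f^{-1}e$ and the trace positivity of $\mathcal L_V^{-1}$. For the bottom vertex, which is only relevant in the lower-boundary slab where $\tau\le\tau_b$ and $C_2^\tau\ge A_0$ by \eqref{common-force-bounds}, the same disk integration works.

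All constants come from compactness arguments over the fixed data $B,c_1,\mu$, so they are uniform over the solution set but depend on the setting, as stated.
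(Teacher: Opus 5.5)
\textbf{There is a genuine gap in the gradient step (Case~1), and the gap argument (Case~2) also fails as written.}

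\textbf{Case 1.} Your one-sided bounds from \eqref{common-positive-rows} do give constant value limits after rescaling. In the strict interior, where $q$, $\beta$ and $\lambda_{\min}V$ stay bounded below at the blow-up centers, the rescaled sources are two-sided bounded, so Poisson $C^1$ compactness carries the unit gradient to the limit and gives the contradiction. But nothing prevents the blow-up centers from approaching a face where $q$, $\beta$ or $\lambda_{\min}V$ tends to zero. There the rescaled sources $G_n^{-2}(C_0+K_0)/q$, $G_n^{-2}(C_2+K_2)/\beta$ and $G_n^{-2}\mathcal L_V^{-1}(C_1+K_1)$ are unbounded. Since you prove the gaps only afterwards, in Case~2, the argument is circular. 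Your proposed repair does not close this:
\begin{itemize}
\item An $L^1$ or measure bound on the Laplacians does not give $C^1$ compactness; Laplacian mass concentrating at a point produces an unbounded logarithmic gradient.
\item The trace equation \eqref{common-trace-equation} controls only the single combination $T$, while the gradient witness may lie in any of the other five directions.
\item $V\le(c+d)I$ bounds $\mathcal L_V^{-1}$ from below, not from above.
\end{itemize}
The missing idea is a face-by-face preservation of the derivative witness.
\begin{itemize}
\item \emph{Scalar face.} Transforms such as $F_q(q)=\int_0^q\sqrt s/(\alpha+s)\,ds$ cancel the singular parallel kinetics. An amplitude rescaling at scale $s(z_n)/G_n$, with Liouville applied to the nonnegative entire superharmonic limit of $Q^{3/2}$, forces $Q\equiv1$. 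After that the exact symbol row has two-sided bounded source.
\item \emph{Matrix face.} Five moving covectors annihilating the singular image $-mN_f$ have projected derivatives with bounded $\bar\partial$ source, so they vanish in the limit. This forces the witness into $\lambda_z$, and the $\lambda^{3/2}$ transform finishes.
\item \emph{Mixed face.} You need the combined normals $\lambda+A\beta$ for two values of $A$, the invertible normal frame \eqref{common-normal-conditioning}, and a separate treatment of the three limiting ratios of $\lambda$ to $\beta$.
\end{itemize}

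\textbf{Case 2.} Comparing $\int_{B_\rho}H_0\le C\rho$ with $\int_{B_\rho}c_0/q\gtrsim\rho^2/(\varepsilon+L\rho)$ is scale-invariant. At $\rho\sim\varepsilon$ both sides are of order $\varepsilon$, so you get a contradiction only if $c_0/(1+L)$ happens to exceed $CG_*$, which nothing guarantees. This is a real obstruction, not a bookkeeping issue: $1/q$ with $q$ vanishing linearly is integrable in two dimensions, so the $a$-row alone cannot detect a zero of $q$.

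The fix is the symbol's own row \eqref{common-q-row}. There $(2-p/q)K_0$ has the favorable sign, and the same-point buffer $V\ge(2/3-q)I$ bounds $\mathcal L_V^{-1}R$. This gives $0\le\Delta q\le C-\alpha c_0/q$ at an actual minimum of $q$, hence the $q$-gap.

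For $V$, your test $e^*f^{-1}e$ controls $M$, not $\lambda_{\min}V$. Moreover the $\lambda$-row \eqref{common-lambda-row} contains the positive singular term $\pi_1(C_2+K_2)/\beta$. So you must first bound the minimum of $\lambda+\tfrac32\beta$ via the combined row, and only then obtain the separate $\lambda$- and $\beta$-gaps at their own minima. Note also that the bottom gap is needed in the primary setting as well, and $C_2^\tau\ge A_0$ holds for every $\tau\in[0,1]$, not only on the slab.
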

\begin{proof}
The proof separates the gradient bound from the symbol gaps. A bounded
scalar potential first localizes the energy near a putative gradient
blow-up. On each boundary face, rescaling then forces a constant value
limit. To contradict blow-up, however, the argument must also preserve a
nonzero derivative at the rescaling center; the exact projected equations
and scalar transforms below provide that step, including both extreme
mixed ratios. Only after gradients are bounded do we estimate the gaps
at their separate global minima.

On bounded values, \((a,M,b)\) and \((a,f,b)\) have uniformly
equivalent first-derivative norms.  Set
\[
 \rho=\operatorname{tr}f^{-1},\quad U=\operatorname{tr}V^{-1},\quad
 W_1=\operatorname{tr}(V^{-1}K_1),\quad G_M=2\|M_z\|^2/g.
\]
The ordered identity \eqref{common-inverse-trace-row} gives
\[
 \Delta\rho\ge a_MG_M+e^{-B}W_1+c_1e^{-B}U-D_M,
 \qquad a_M=e^{-5B}/2.
\]
Write \(\Delta a=A_a-(C_0+K_0)/q\) and
\(\Delta b=A_b-(C_2+K_2)/\beta\), with bounded \(A_a,A_b\).
In the primary setting \(r\ge e^{-2B}\min r_*>0\).  The identities
\[
 \begin{gathered}
 x^\sharp=e^{a/2}\xi_*S^{-1},\qquad
 y^\sharp=e^{-b/2}Sy_*,\\
 X=x^\sharp+a_zu^*-u^*Z,\qquad Y=y^\sharp+Zv-b_zv,\\
 Z=S^{-1}f_zS^{-1},\qquad
 u=e^{a/2}S^{-1}\phi^*,\qquad v=e^{-b/2}S\psi.
 \end{gathered}
\]
then reconstruct both endpoint derivatives from the arrow kinetics and
\(DM\).  Here \(\xi_*,y_*\) remain the fixed reference derivatives,
whereas \(x^\sharp,y^\sharp\) depend on the current metric variables.
Set
\[
 A_x=\sup_X|\xi_*|^2/g,\qquad A_y=\sup_X|y_*|^2/g,
\]
the fixed intrinsic reference contractions.
For example
\(|\nabla a|^2\le12K_0/\alpha+6e^{2B}A_x/\alpha+3e^{4B}G_M\).
Writing \(r_{\min}=e^{-2B}\min r_*\), the same three-term square
inequality in \(b_zv=y^\sharp+Zv-Y\) gives explicitly
\[
 |\nabla b|^2\le12K_2/r_{\min}
                  +6e^{2B}A_y/r_{\min}+3e^{4B}G_M.
\]
The factors use \(\|Z\|\le e^{2B}\|M_z\|\),
\(p>\alpha\), and \(r\ge r_{\min}\).
Thus \(\Psi=\rho-a-b\) satisfies \(\Delta\Psi\ge aF-D\), where
\[
 F=|\nabla a|^2+G_M+|\nabla b|^2+U+W_1
      +1/q+K_0/q+1/\beta+K_2/\beta.
\]
Indeed subtracting the two endpoint rows gives
\(\Delta\Psi\ge a_MG_M+e^{-B}W_1+c_1e^{-B}U+c_0/q+K_0/q
 +A_0/\beta+K_2/\beta-D\).
Put \(C_g=1+6e^{4B}\), \(C_k=\max(12/\alpha,12/r_{\min})\),
and \(C_{\rm ref}=6e^{2B}(A_x/\alpha+A_y/r_{\min})\).
Since \(W_1\ge(K_0+K_2)/(c+d)\), the full Dirichlet density is at most
\(C_gG_M+C_k(c+d)W_1+C_{\rm ref}\).
The positive minimum of
\(a_M/C_g,e^{-B}/(C_k(c+d)+1),c_1e^{-B},c_0,A_0,1\)
therefore supplies \(a>0\), enlarging \(D\) by \(aC_{\rm ref}\).

In the boundary setting that bottom reconstruction is unavailable.
Instead use the genuine scalar potential \(\Psi_B=\rho-a+e^{-b}\).
Its exact bottom contribution is
\[
 \Delta e^{-b}=e^{-b}\left[-A_b+(C_2+K_2)/\beta+|b_z|^2/g\right].
\]
Consequently
\[
 \begin{aligned}
 \Delta\Psi_B\ge{}& a_MG_M+e^{-B}W_1+c_1e^{-B}U
 +(c_0+K_0)/q\\
 &{}+e^{-B}(A_0+K_2)/\beta
 +(e^{-B}/2)|\nabla b|^2-D,
 \quad c_0=55/63.
 \end{aligned}
\]
Only the top reconstruction is needed, and this again bounds the entire
\(F\) with a positive coefficient.  The oscillation of \(\Psi_B\) is
at most \(3(e^B-e^{-B})+2B\).  No lower norm is inserted.

Here and below the exact kernel device is
\begin{equation}\label{common-green-kernel}
 G_0(x,y)=\frac1{2\pi}\log
 \frac{|x_0y_1-x_1y_0|^2}{|x|^2|y|^2}\le0,
 \qquad \Delta_{{\rm FS},y}G_0\,\omega_{\rm FS}
 =\delta_x-\omega_{\rm FS}/(2\pi).
\end{equation}
Off the pole this follows by differentiation, and the logarithmic flux
gives its delta mass.  The smooth actual density converts the identity to
\(u(x)-\overline u_{\rm FS}=\int G_0(x,y)\Delta_Xu(y)\omega_X(y)\).
Also \(\int(-G_0)\omega_{\rm FS}=1\).
For the appropriate potential \(\Phi\),
\(m=\Delta\Phi+D\ge aF\ge0\) has
\(\int(-G_0)m=\bar\Phi_{\rm FS}-\Phi(x)+D\int(-G_0)\omega_X\).
The bounded oscillation (in the primary case at most
\(2(e^B-e^{-B})+4B\)) therefore gives,
uniformly for every center,
\begin{equation}\label{common-log-localization}
 \int(-G_0(x,y))F(y)\omega_X(y)\le H,
 \qquad \int_{B_s(x)}F\,\omega_X\le\frac{\pi H}{\log(1/s)}.
\end{equation}
The endpoints immediately give their two-sided component-source bounds.
For the matrix use the full ordered equation
\[
 \Delta f=g^{-1}f_{\bar z}f^{-1}f_z+df+tfM
                         -S\mathcal L_V^{-1}(C_1+K_1)S.
\]
Its first term has norm at most \(e^{3B}G_M/2\); the last term is
positive Hermitian before its minus sign and has norm at most
\(e^B(\|C_1\|_\infty U+W_1)\), by positivity and trace
self-adjointness.  Thus the full two-sided component-source bound is
\(|\Delta a|+|\Delta b|+\|\Delta f\|\le C(1+F)\).

Suppose the full gradient maxima \(G_n\) tend to infinity, attained at
\(z_n\).  A fixed chart and the scale \(y=G_n(z-z_n)\) give bounded
values and first derivatives on disks exhausting the plane.  By
\eqref{common-log-localization}, the Dirichlet energy on every such disk
tends to zero.  Every locally uniform value limit is therefore constant.
This is only a value assertion.  Derivative convergence below will always
come from a two-sided Poisson bound or a bounded \(\bar\partial\) source,
using Lemma~\ref{common-local-potentials}.

The same-point rank-one facts
\[
 V\ge(c-p)I=(2/3-q)I,\qquad\lambda_{\max}V\ge c
\]
give an exhaustive list: strict interior; strict \(V\) with one or both
scalar losses; matrix loss with strict \(\beta\); mixed matrix/bottom
loss.  A \(q\)-face cannot meet a matrix face.  At the boundary lower
zero, bounded values give \(P\to0\), hence \(\beta\to d\); that center
cannot be a bottom or mixed face.

\paragraph{Scalar faces with strict matrix symbol.}
Let \(T_R=\mathcal L_V^{-1}R,T_P=\mathcal L_V^{-1}P\).
The entire ordered scalar rows are
\begin{align}
 \Delta q={}&(2-p/q)K_0+\operatorname{tr}(T_RK_1)
 +\operatorname{tr}(T_RC_1)+t[pa-\operatorname{tr}(RM)]-pC_0/q,
 \label{common-q-row}\\
 \Delta\beta={}&-(2+r/\beta)K_2+\operatorname{tr}(T_PK_1)
 +\operatorname{tr}(T_PC_1)+t[rb-\operatorname{tr}(PM)]-rC_2/\beta.
 \label{common-beta-row}
\end{align}
The second row follows from
\(\Delta r=2K_2+rH_2-\operatorname{tr}(P\widehat H_1)\), with
trace self-adjointness, and has no \(q\) denominator.
Moreover \(q_z=Xu\) and \(\beta_z=-v^*Y\).
Strict \(V\) gives bounded two-sided first-scale sources for \(f\),
so its derivatives tend to zero.  Every strict endpoint does likewise.
One collapsing endpoint therefore retains a derivative witness:
\(q_z/G_n=pa_z/G_n+o(1)\), or
\(\beta_z/G_n=rb_z/G_n+o(1)\).
At a bottom face \(r\to d>0\); this local fact suffices even in the
boundary setting and at simultaneous scalar loss.

For \(q\le1/18\), \(V\ge11I/18\), \(p\le2/9\), and
\(0\le T_R\le p/(c-p)I\).  Decompose
\(K_0=|q_z|^2/(2gp)+K_{\perp}\).
For
\[
 F_q(q)=\int_0^q\frac{\sqrt s}{\alpha+s}\,ds,\qquad
 F_b(\beta)=\int_0^\beta\frac{\sqrt s}{d-s}\,ds,
\]
the logarithmic derivative ratios are
\(F_q''/F_q'=1/(2q)-1/p\) and
\(F_b''/F_b'=1/(2\beta)+1/r\).
Their chain terms cancel the singular parallel kinetics in the respective
rows.  In the top row the remaining upper perpendicular coefficient is
at most \(2-p/q+p/(c-p)\le-18/11\); in the bottom row the remaining
bottom perpendicular coefficient and bare bottom force are nonpositive.
The bounded other kinetic terms yield
\[
 \Delta F_q\le C\sqrt q(1+G_n^2),\qquad
 \Delta F_b\le C\sqrt\beta(1+G_n^2)
\]
in their respective neighborhoods.  If the selected scalar is \(s\),
put \(\epsilon_n=s(z_n)\) and scale by \(\epsilon_n/G_n\).
Then \(Q_n=s/\epsilon_n\) is positive, locally Lipschitz and centered
at one.  The divided transforms converge to positive multiples of
\(Q^{3/2}\); their upper sources tend to zero on every fixed disk,
including possible zero points.  The nonnegative entire superharmonic
limit is constant by Lemma~\ref{common-local-potentials}; hence \(Q=1\).
On the resulting disks \(Q_n\ge1/2\), the exact scalar row has bounded
two-sided normalized source.  The other scalar denominator is absent,
and \(V\) is buffered.  Poisson \(C^1\) compactness contradicts the
preserved derivative.  This excludes both single and simultaneous scalar
losses.  Strict interior is the direct componentwise Poisson case.

\paragraph{The full separated matrix row.}
Put \(\lambda=\lambda_{\min}V,\nu=\lambda_{\max}V\).
If \(\lambda<c/4\), then \(q\ge11/24,\nu\ge c\), and
\(\nu-\lambda\ge3c/4\).  In a separated eigenframe \(e,e'\), write
\[
 \rho_1=R_{ee},\quad\pi_1=P_{ee},\quad
 \zeta=R_{ee'}=P_{ee'},\quad s_1=\rho_1+\pi_1=c-\lambda+2\pi_1,
 \quad D=C_1+K_1.
\]
With \(X=(x_1,x_2),Y=(y_1,y_2)^T\), the eigenvalue concavity term is
\[
 C_{\rm eig}=
 \frac{|u_1x_2-y_1\bar v_2|^2+|u_2x_1-y_2\bar v_1|^2}
 {g(\nu-\lambda)}\ge0.
\]
Both squares are retained.  The ordered covariant row
\(\Delta_\nabla V=(YY^*-X^*X)/g+H_0R+H_2P-
\mathcal L_{R+P}\widehat H_1\) gives the all-point identity
\begin{align}
 \Delta\lambda={}&2(|y_1|^2-|x_1|^2)/(2g)
 +\rho_1(C_0+K_0)/q+\pi_1(C_2+K_2)/\beta-s_1D_{ee}/\lambda
 \notag\\[-2pt]
 &-4\operatorname{Re}(\zeta D_{e'e})/(\lambda+\nu)
 +t[\mathcal L_{R+P}M-aR-bP]_{ee}-C_{\rm eig},
 \qquad\lambda_z=-u_1x_1+y_1\bar v_1.
 \label{common-lambda-row}
\end{align}
It is not restricted to a minimum or to \(\lambda_z=0\).

For pure matrix loss, in the six real variables \(\mathbf w=(a,f,b)\)
the exact singular image is
\[
 \Delta\mathbf w=-mN_f+E,\qquad
 m=D_{ee}/\lambda,\quad N_f=(0,S\Pi_eS,0),\quad
 |E|\le C(1+G_n^2).
\]
Choose five smooth uniformly conditioned covectors annihilating \(N_f\)
at every current state and basepoint.  Their projected complex derivatives
have bounded \(\bar\partial\) source, as verified below for the
two-normal case.  Constant value limits imply weak limit zero; Cauchy
compactness then makes the projections uniformly zero in the limit.
Square-root transport gives \(d\lambda(N_f)=s_1\ge c-\lambda\).
Thus a nonzero full-gradient witness forces
\(|\lambda_z(z_n)|/G_n\ge k>0\).
The singular kinetic in \eqref{common-lambda-row} is
\(-s_1(|x_1|^2+|y_1|^2)/(2g\lambda)\), and
\(|\lambda_z|^2\le s_1(|x_1|^2+|y_1|^2)\).
All other coefficients are bounded, apart from nonpositive concavity.
Hence
\[
 \Delta\lambda\le-|\lambda_z|^2/(2g\lambda)+C(1+G_n^2).
\]
The transform \((2/3)\lambda^{3/2}\) cancels this derivative term.
The amplitude scale \(\lambda(z_n)/G_n\), entire superharmonic limit,
and exact two-sided row on \(Q_n\ge1/2\) contradict the witness exactly
as above.  Since \(s_1\ge c-\lambda\) even when \(P=0\), this includes
the boundary lower zero.

\paragraph{Moving projections at the mixed face.}
On \(\lambda<c/4,\beta<d/2\), the full equation is
\begin{equation}\label{common-mixed-image}
 \Delta\mathbf w=-mN_f-nN_b+E,
 \quad n=(C_2+K_2)/\beta,\quad N_b=(0,0,1),
 \quad |E|\le C(1+|D\mathbf w|^2).
\end{equation}
The whole kinetic singular image is included before projection.
Four smooth conditioned covectors \(\theta^j\) annihilating both normals
are obtained by applying
\(I-N(N^TN)^{-1}N^T\) to a constant four-frame with an invertible
limiting minor.  A smaller neighborhood bounds its Gram inverse and minor.
These forms need not be exact, and no common scalar first integrals are
asserted.  For \(W_n(y)=\mathbf w_n(z_n+y/G_n)\), put
\(j_n^j=\theta^j(W_n,z_n+y/G_n)\partial_yW_n\).
Direct differentiation, using \(\Delta=g^{-1}\partial_z\partial_{\bar z}\), gives
\begin{equation}\label{common-moving-annihilator}
 \partial_{\bar y}j_n^j
 =\theta^j g_nE_n/G_n^2
 +(D_w\theta^j)[\partial_{\bar y}W_n,\partial_yW_n]
 +(D_x\theta^j)[1/G_n,\partial_yW_n].
\end{equation}
The singular sources vanish exactly at moving states, not at a frozen
center.  The nonexact quadratic curl terms remain bounded.  Thus both real
divergence and curl are controlled through the complex source.  Cauchy
compactness and the distributionally zero derivatives of the constant
value limit make all four \(j_n^j\) locally uniformly tend to zero.
There is no error subsequently divided by \(\lambda\) or \(\beta\).

For completeness the differential conditioning retains transport.  If
\(\eta=S^{-1}\delta fS^{-1},T=\delta SS^{-1}\), and
\(\Omega=(T-T^*)/2\), then \(T+T^*=\eta\) and
\[
 \delta R=-\mathcal L_\eta R+[\Omega,R]+R\delta a,
 \qquad\delta P=\mathcal L_\eta P+[\Omega,P]-P\delta b.
\]
Pairing \(\delta V\) with \(\Pi_e\) kills only its commutator.
The normal derivative matrix is exactly
\begin{equation}\label{common-normal-conditioning}
 \begin{pmatrix}d\lambda(N_f)&d\lambda(N_b)\\
 d\beta(N_f)&d\beta(N_b)\end{pmatrix}
 =\begin{pmatrix}s_1&-\pi_1\\-\pi_1&r\end{pmatrix},
 \quad s_1r-\pi_1^2\ge(3c/4)(d/2)>0.
\end{equation}
Indeed the determinant is \((c-\lambda)r+2\pi_1r-\pi_1^2\).
The four annihilators together with \(d\lambda,d\beta\) form a bounded
invertible six-frame.  No integrability or bracket direction is lost.

\paragraph{Positive combined normals and all three ratios.}
For \(1<A<2\), set \(t_A=\lambda+A\beta\),
\(a_N=s_1-A\pi_1\), and \(b_N=Ar-\pi_1\).
Trace self-adjointness in \eqref{common-beta-row} gives the entire row
\begin{align}
 \Delta t_A={}&-a_N(C_1)_{ee}/\lambda-b_NC_2/\beta
 -a_N(K_1)_{ee}/\lambda-b_NK_2/\beta
 \notag\\
 &+2(|y_1|^2-|x_1|^2)/(2g)-2AK_2+\rho_1(C_0+K_0)/q
 +AP_{e'e'}D_{e'e'}/\nu
 \notag\\
 &+4(A-1)\operatorname{Re}(\zeta D_{e'e})/(\lambda+\nu)
 +t\{[\mathcal L_{R+P}M-aR-bP]_{ee}
       +A[rb-\operatorname{tr}(PM)]\}-C_{\rm eig}.
 \label{common-combined-row}
\end{align}
Here \(a_N\ge3c/4=5/8\) and \(b_N\ge(A-1)d/2\).
Both singular forces and kinetics are negative; the mixed entries are
bounded above, not assigned a false sign.  Since
\((t_A)_z=-u_1x_1+(1-A)\bar v_1y_1-A\bar v_2y_2\),
the three-term square inequality gives
\[
\begin{gathered}
 |(t_A)_z|^2\le K_At_A
 \left[a_N(|x_1|^2+|y_1|^2)/\lambda
       +b_N(|y_1|^2+|y_2|^2)/\beta\right],\\
 K_A=3\max\left(\frac{64}{15},\frac{(A-1)^2d}{5/8},
                              \frac{2A}{A-1}\right).
\end{gathered}
\]
Thus \(\Delta t_A\le-|(t_A)_z|^2/(2gK_At_A)+C(1+G_n^2)\).
For \(A_-=5/4,A_+=7/4\), a common \(K=30\) suffices.
The power \(F_A=t_A^{61/60}\) has logarithmic derivative ratio
\(1/(60t_A)\), and therefore
\begin{equation}\label{common-combined-power}
 \Delta F_A\le Ct_A^{1/60}(1+G_n^2).
\end{equation}

Let \(\delta_n=\lambda(z_n)+\beta(z_n)\) and use the shared scale
\(\ell_n=\delta_n/G_n\).  The functions
\(Q_\lambda=\lambda/\delta_n,Q_\beta=\beta/\delta_n\) are locally
uniformly Lipschitz.  Dividing \eqref{common-combined-power} by
\(\delta_n^{61/60}\) makes its upper rescaled source at most
\(C\delta_n(Q_\lambda+AQ_\beta)^{1/60}(1+G_n^{-2})\), which tends
to zero.  The two nonnegative entire transformed limits are constant.
The two distinct \(A\)'s force
\(Q_\lambda\to L,Q_\beta\to B_0\), with \(L,B_0\ge0,L+B_0=1\).
Also
\(\widetilde W_n=(\mathbf w_n(z_n+\ell_ny)-\mathbf w_n(z_n))/\delta_n\)
is Lipschitz and zero at the origin.  The four projected derivatives
vanish on these nested disks, and the two scalar gradients vanish
distributionally.  Coefficients freeze since state changes are
\(O(\delta_n)\).  The invertible frame
\eqref{common-normal-conditioning} forces every weak derivative of a
limit of \(\widetilde W_n\) to vanish; hence its value limit is zero.
Derivative preservation still requires the following three separate bridges.

If \(L,B_0>0\), both relative denominators are buffered.  The source
factor in the equations for \(\widetilde W_n\) is \(\delta_n/G_n^2\).
All six component Laplacians are bounded two-sided, so Poisson \(C^1\)
convergence contradicts the full-gradient witness at the origin.

If \(L=0,B_0=1\), choose five covectors annihilating only \(N_f\).
For \(J=\vartheta(\mathbf w,x)\mathbf w_z/G_n\) on the shared scale,
\[
 \partial_{\bar y}J=(\delta_n/G_n^2)\vartheta g_n\Delta\mathbf w
       +\delta_n O(1)+\ell_nO(1).
\]
The whole middle singular image cancels exactly, while the bottom source
is bounded because \(\delta_n/\beta\le2\) on fixed disks.
The weak zero limit supplied by \(\widetilde W_n\to0\), followed by
Cauchy compactness, gives \(J\to0\) uniformly.  The witness is therefore
in \(N_f\), and \(|\lambda_z(z_n)|/G_n\ge k>0\).
Now use the smaller scale \(\epsilon_n/G_n\),
\(\epsilon_n=\lambda(z_n)\).  The bottom symbol remains comparable
to \(\delta_n\), because its variation is \(O(\epsilon_n)\) and
\(\epsilon_n/\delta_n\to0\).  The exact matrix row yields
\[
 \Delta\lambda\le-|\lambda_z|^2/(2g\lambda)
          +C(1+G_n^2)(1+1/\beta).
\]
The transformed amplitude source is
\(C\epsilon_n\sqrt Q(1+G_n^{-2})(1+1/\beta)\to0\).
Entire Liouville gives \(Q=1\).  On \(Q_n\ge1/2\), the exact row has
bounded two-sided source; its bottom terms carry
\(\epsilon_n/\beta\to0\).  Poisson compactness contradicts the witness.
No unspecified \(o(1)/\lambda\) error is used.

If \(B_0=0,L=1\), the five components \((a,f)\) have bounded
two-sided shared-scale sources: they contain no bottom singularity and
\(\lambda/\delta_n\ge1/2\).  Their derivative limits are zero.
The witness is in \(b\), and
\(\beta_z=rb_z-v^*Zv-v^*y^\sharp\), with \(r\to d\), preserves a nonzero
\(\beta_z/G_n\).  On the smaller scale
\(\epsilon_n/G_n,\epsilon_n=\beta(z_n)\), the matrix symbol remains
comparable to \(\delta_n\).  The exact bottom transform gives
\(\Delta F_b\le C\sqrt\beta(1+G_n^2)(1+1/\lambda)\).
Its amplitude source tends to zero through
\(\epsilon_n/\lambda\to0\); Liouville gives \(Q=1\), and the exact
two-sided bottom row on \(Q_n\ge1/2\) contradicts the witness.
These three ratios exhaust the mixed face.  At a boundary mixed center
\(r\ge d/2\) locally, so the lower zero is already excluded without a
global lower norm.  The gradient bound is proved in both settings.

\paragraph{Separate actual minima.}
All kinetics are now bounded.  At an actual small global \(q\)-minimum,
the same-point \(V\) buffer and \eqref{common-q-row} give
\(0\le\Delta q\le C-\alpha c_0/q\), proving the top gap.
First minimize \(\lambda+(3/2)\beta\).  A minimum tending to zero lies
in a separated mixed chart.  The exact combined row gives there
\[
 0\le\Delta(\lambda+(3/2)\beta)
 \le-(5/8)c_1/\lambda-(d/4)A_0/\beta+C,
\]
a contradiction.  Hence this total normal is at least \(\delta_t>0\).
At an actual \(\lambda\)-minimum
\(\epsilon<\min(c/4,\delta_t/2)\), its same-point bottom symbol is
at least \(\delta_t/3\).  The full matrix row gives
\(0\le\Delta\lambda\le-s_1c_1/\epsilon+C\), so \(V\) has a gap.
At an actual \(\beta\)-minimum below
\(\min(d/2,\delta_t/3)\), its same-point matrix symbol is at least
\(\delta_t/2\); the ordered probe is bounded and
\(0\le\Delta\beta\le-(d/2)A_0/\beta+C\).  This gives the bottom gap.
The minimizing points were never synchronized.  Repeated eigenvalues
cause no differentiability issue because every small matrix-normal minimum
has \(\nu-\lambda\ge3c/4\).  At a lower zero \(\beta=d\).
\end{proof}

\subsection{A single strong mass, the all-solution anchor, and degree}

The exact scalar-principal form is useful both near the reference and on
a buffered legal domain.  Define
\[
 E_M(H)=S^{-1}D\exp_M(H)S^{-1},\qquad
 Q_M=g^{-1}S^{-1}\bigl[D^2\exp_M(M_{\bar z},M_z)
                         -f_{\bar z}f^{-1}f_z\bigr]S^{-1}.
\]
The integral exponential formula makes these smooth without choosing
spectral frames.  In an eigenframe the multipliers of \(E_M\) are
\(\sinh((m_i-m_j)/2)/((m_i-m_j)/2)\), with value one at equality.
Thus it is invertible, including repeated spectra, and
\(E_M(I)=I,E_M(M)=M\).  Direct differentiation gives
\(\widehat H_1=2I-E_M(\Delta M)-Q_M\).
The exact six equations are consequently
\begin{equation}\label{common-scalar-principal}
 \begin{aligned}
 (-\Delta+t)a&=(C_0^\tau+K_0)/q-(H_{*0}-\alpha),\\
 (-\Delta+t)M&=-dI+E_M^{-1}\mathcal L_V^{-1}(C_1^\tau+K_1)
                         +E_M^{-1}Q_M,\\
 (-\Delta+t)b&=(C_2^\tau+K_2)/\beta-c.
 \end{aligned}
\end{equation}
The Lyapunov inverse acts first, \(E_M^{-1}\) second.  On bounded
buffered values the right side is a smooth function
\(G(x,w,Dw,\tau)=H(x,w,\tau)+B_1(x,w)[Dw]+Q_1(x,w)[Dw,Dw]\)
with no explicit \(t\)-dependence.  Actual reference-arrow derivatives,
including \(D\psi\) at its zero in the boundary setting, remain in these
coefficients.  There is no \(1/r\), normalized lower arrow, or composition
division.

\begin{lemma}[Signed upper branch and all-solution anchor]\label{common-anchor}
For each of the two fixed settings, there is one finite mass \(\mu_s\),
chosen independently of every later lower cutoff \(\tau_-\), with a smooth
\(C^{2,\gamma}\)-valued auxiliary solution curve on \([\tau_b,1]\),
through \((1,0)\).  At \(\tau_b>0\) its solution is unique among all
positive-symbol solutions and has an invertible unrestricted derivative.
Its normalized derivative is also an isomorphism.
\end{lemma}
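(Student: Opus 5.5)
The plan is to start from the trivial solution at \(\tau=1\), \(w=0\), which solves \eqref{common-auxiliary} because \(\widehat F(0)=F_*\) and \(B(0)0=0\). I then continue it downward in \(\tau\) by the implicit function theorem, using a large mass. Write \(\mathcal E(\tau,w)=\widehat F(w)+\tau\mu B(w)w-\tau F_*\). Its unrestricted derivative at \((1,0)\) is \(J_*+\mu A_*\), since \(D(B(w)w)|_{w=0}=A_*\). By Lemma~\ref{common-reference-inverse} this is an isomorphism \(C^{2,\gamma}\to C^{0,\gamma}\) once \(\mu\ge\mu_0\), and it restricts to \(X'\to Y\).

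\textbf{Choosing the mass and building the branch.} Rather than continue step by step, I rescale. Set \(t=\tau\mu\) and write \(w=t^{-1}U\). The equation becomes \(L_tU=t\tau F_*-\widetilde N_t(U)\), where \(\widetilde N_t\) collects the quadratic and higher terms. Since \(\widehat F(0)-\tau F_*=(1-\tau)F_*\), I would actually treat \(\tau\) in \([\tau_b,1]\) with \(\tau_b=1/2\), say. I then choose \(\mu_s\ge t_0/\tau_b\) from Lemma~\ref{common-mass-inverse}. The weighted bound \eqref{common-weighted-inverse} gives \(\|w\|_{C^{2,\gamma}}\lesssim \|L_t^{-1}\|\,(1-\tau)\|F_*\|_\gamma+\text{quadratic}\). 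Because \(L_t^{-1}\) has norm \(O(1/t)\) on values and \(O(1/\sqrt t)\) on gradients, a contraction in the ball \(N_t(w)\le \kappa\) closes for large \(\mu_s\). This gives a unique small solution \(w(\tau)\) for every \(\tau\in[\tau_b,1]\), with values and gradients of size \(O(1/\mu_s)\). Smoothness of the curve in \(\tau\) follows from the implicit function theorem, because \(D_w\mathcal E(\tau,w(\tau))\) is a perturbation of \(L_t\) of norm less than \(1/(2C_I)\). The same perturbation argument keeps it invertible, so the unrestricted derivative is invertible on the whole curve. Invertibility of the normalized derivative follows from \(\int\operatorname{tr}_{\rm tot}D_w\mathcal E=tL_0\) plus a small correction, exactly as in Lemma~\ref{common-normalization}. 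The symbols \(q,\beta,V\) stay within \(O(1/\mu_s)\) of \(q_*,\beta_*,V_*\ge 4/21\), so the solutions are legal. The mass \(\mu_s\) depends only on fixed reference data, so it is independent of any later \(\tau_-\).

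\textbf{Uniqueness among all positive-symbol solutions at \(\tau_b\).} This is the main obstacle, since the contraction only gives uniqueness in a small ball. I would use Lemma~\ref{common-positive-size}, which holds for any positive-symbol solution independently of \(\mu\). It gives \(\|w\|_\infty\le C_{\rm size}/(\tau_b\mu_s)\), which is small once \(\mu_s\) is enlarged. The hypothesis \(C_1^{\tau}\ge0\) is required there; this is exactly why \(\tau_b\) is chosen so that \(\tau F_{*,1}\ge -A_0I/2\), possible because \(F_{*,1}\) is a fixed bounded function. With a uniform value bound and \(C_1^\tau\ge c_1\), Theorem~\ref{common-gradient-gaps} gives gradient bounds and symbol gaps. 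The scalar-principal form \eqref{common-scalar-principal}, together with the Gaussian resolvent estimate, then upgrades this to \(N_t(w)\) small. The required observation is that the right side is bounded and \(t\) is large, so \(t\|w\|_\gamma\) and \(\sqrt t\|Dw\|_\gamma\) are controlled. Making these bounds compatible with the contraction radius is the delicate point. If the gradient constant depends on \(\mu\), I would instead bootstrap directly from \eqref{common-scalar-principal}: values of size \(O(1/t)\) first yield \(\|Dw\|\) bounds via the resolvent with the quadratic term absorbed. Once every solution at \(\tau_b\) lies in the contraction ball, uniqueness follows from the contraction.
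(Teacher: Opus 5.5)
\paragraph{Where the proposal agrees with the paper.} Your branch construction is the paper's: a contraction for $L_tw=-(1-\tau)F_*-\mathcal R_t(w)$ in the weighted norm $N_t$ of Lemma~\ref{common-mass-inverse}. For uniqueness you then use the size lemma to return every solution to that ball. Three small corrections:
\begin{itemize}
\item The ball radius must be $R\sim C_I(\|F_*\|_\gamma+1)$, not a small $\kappa$. The forcing $(1-\tau)F_*$ is $O(1)$, so values are $O(R/t)$ and gradients only $O(R/\sqrt t)$, not $O(1/\mu_s)$.
\item $\tau_b$ cannot be $1/2$. It must lie where $C_1^{\tau_b}\ge0$, for example $T_+/2$.
\item The normalized claim needs no ``small correction''. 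Since $\int\operatorname{tr}_{\rm tot}\mathcal E(\tau,w)=tN(w)$ identically, one has $\int\operatorname{tr}_{\rm tot}(D_w\mathcal E\,U)=t\,DN(w)U$ exactly.
\end{itemize}

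\paragraph{The gap: a $\mu$-uniform gradient bound.} The genuine gap is the step you yourself call delicate. To place an arbitrary positive-symbol solution at $\tau_b$ in the contraction ball, you need $\|Dw\|_\infty\le C_G/\sqrt t$ with $C_G$ independent of $\mu$.

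Theorem~\ref{common-gradient-gaps} does not give this. Its constant comes from a contradiction argument at fixed mass and depends on $\mu$, so choosing $\mu_s$ from it is circular.

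Your fallback, absorbing the quadratic term through the resolvent, does not close either. Applied to \eqref{common-scalar-principal}, a linear resolvent estimate yields only $g\le Ct^{-1/2}(1+g+g^2)$ for $g=\|Dw\|_\infty$; H\"older or interpolation variants are similarly superlinear. Such an inequality has a large branch $g\sim\sqrt t$. Nothing excludes that branch for a solution not known to lie on your curve, so the quadratic term cannot be absorbed without a priori smallness.

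\paragraph{The missing idea.} The paper uses a gradient-scale blow-up:
\begin{itemize}
\item Set $W=tw$, which is bounded by Lemma~\ref{common-positive-size}, and $y=\sqrt t(x-x_0)$.
\item Rescale once more by $G=\sqrt t\|Dw\|_\infty$ at a maximizing point. The resulting $U$ is bounded, has $|DU|\le1$ with equality at the center, and satisfies $|\Delta U|\lesssim G^{-2}+G^{-1}t^{-1/2}+t^{-1}$.
\item The key point is that the quadratic term now carries the factor $t^{-1}$, whatever $G$ is.
\item The interior estimate $|DU(0)|\le C_P(\|U\|_\infty/R+R\|\Delta U\|_\infty)$ on a fixed large disk then bounds $G$ uniformly in $t$. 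Equivalently, one can use Liouville for the bounded harmonic limit.
\end{itemize}

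\paragraph{Completing your route.} With this bound in hand, your resolvent upgrade can be finished. It gives $N_t(w)\le C$ independent of $t$; this is bounded, not small. The remainder estimate together with \eqref{common-weighted-inverse} then puts $w$ inside the radius-$R$ ball for large $t$, and uniqueness follows from the contraction. This is a legitimate alternative to the paper's route, which instead uses lossy H\"older scalings and the bound $\|\mathcal R_t(w)\|_\gamma\le C_Rt^{-(1-\gamma)}$.
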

\begin{proof}
There are two tasks at the anchor parameter: construct a nearby
solution with invertible derivative, and show that no other
positive-symbol solution occurs there. The latter is needed to identify
the degree of the whole bounded solution domain with the nonzero local
index. Neither task asserts uniqueness below the anchor.

Let \(L=\|F_{*1}\|_{\infty,\mathrm{op}}\), let
\(T_+=1\) if \(L=0\) and
\(T_+=\min(1,A_0/(2L))\) otherwise, and set \(\tau_b=T_+/2\).
Then \(C_1^\tau\ge A_0I/2\) on \([0,T_+]\); no such sign is needed
on the whole upper interval.  Write the entire remainder as
\[
 \mathcal R_t(w)=\widehat F(w)-F_*-J_*w+t(B(w)-A_*)w.
\]
The equation is
\(L_tw=-(1-\tau)F_*-\mathcal R_t(w)\).
On a fixed small value neighborhood all finite coefficient derivatives
of the exponential, inverse, square root, arrows, and transported curvature
are bounded.  The remainder consists of: coefficient variation times
\(D^2w\); quadratic first derivatives; variations multiplying a single
first derivative; the zeroth Taylor remainder; and mass variation.
Hölder multiplication on a weighted radius \(R\), with \(R/t\le r_0\),
gives term by term
\[
 \|\mathcal R_t(w)\|_\gamma\le C_NR^2/t,
 \qquad
 \|\mathcal R_t(w)-\mathcal R_t(z)\|_\gamma
 \le C_NR/t\,\|w-z\|_{t,\gamma}.
\]
For instance the first product is
\(O(\|w\|_\gamma\|D^2w\|_\gamma)\), the second
\(O(\|Dw\|_\gamma^2)\), and the mass product
\(O(t\|w\|_\gamma^2)\); each is \(O(R^2/t)\).
Difference products give the second inequality in the same weighted norm.
This does not require small unweighted second derivatives.

Take \(C_I\ge1\),
\[
 R=2C_I(\|F_*\|_\gamma+1),\qquad
 t_{\rm req}=\max(t_0,1,R/r_0,4C_IC_NR).
\]
For \(t\ge t_{\rm req}\),
\(-L_t^{-1}[(1-\tau)F_*+\mathcal R_t(w)]\) maps the complete
radius-\(R\) ball to itself and contracts by at most \(1/4\).
Geometric iteration constructs its unique fixed point.  At \(\tau=1\)
that point is zero.  Value closeness gives
\(q\ge5/21,\beta\ge23/30,V\ge2I/21\) after enlarging the threshold.
For a fixed finite mass the weighted norms are uniformly equivalent on
\([\tau_b,1]\).  The resolvent identity and uniform contraction give
continuity, and the factorization
\[
 D_w\mathcal E=L_t[I+L_t^{-1}D\mathcal R_t(w_\tau)]
\]
has a norm-convergent inverse.  Difference quotients and repeated
differentiation give the smooth curve, with one-sided endpoint derivatives.
Exact total integration gives \(tN(w)=0\) after construction.

It remains to place every solution at the anchor in that ball.
On the positive-force slab Lemma~\ref{common-positive-size} gives
\(\|w\|_\infty\le C_W/t\), independently of any solution selection.
For large \(t\) this buffers all symbols near the reference.
In a chart set \(y=\sqrt t(x-x_0),W=tw\).  The complete normalized
equation \eqref{common-scalar-principal} is
\[
 \Delta_yW=4g(x)\{W-H(x,W/t,\tau)
       -t^{-1/2}B_1(x,W/t)[D_yW]
       -t^{-1}Q_1(x,W/t)[D_yW,D_yW]\}.
\]
If \(G_n=\sqrt{t_n}\|Dw_n\|_\infty\to\infty\), rescale again
by \(\sqrt{t_n}G_n\) at an attaining point.  The rescaled values are
bounded, all gradients are bounded with a nonzero central witness, and
the full Laplacian is
\[
 4g_n\{(\widetilde W_n-H_n)/G_n^2
       -B_{1,n}[D\widetilde W_n]/(\sqrt{t_n}G_n)
       -Q_{1,n}[D\widetilde W_n,D\widetilde W_n]/t_n\}\longrightarrow0.
\]
The domains exhaust the plane.  Local Poisson compactness preserves the
gradient witness in \(C^1\), whereas the bounded harmonic six-vector
limit is constant: its derivative is at most \(C\|W\|_\infty/R\)
on arbitrary disks.  This is a contradiction.  A finite threshold follows
also directly from
\(|DU(0)|\le C_P(\|U\|_\infty/R+R\|\Delta U\|_\infty)\):
first choose a fixed \(R\) to make the value term less than one quarter
of the witness, then large \(G,t\) to make each of the three source terms
less than one quarter.  Thus \(\|Dw\|_\infty\le C_G/\sqrt t\).

On first-scale disks bounded values, gradients and Laplacians give
\(C^{1,\nu}\) bounds for any \(\gamma<\nu<1\), and then
\(C^{2,\gamma}\) bounds by the Hessian cancellation estimate.
Rescaling and the finite atlas give
\[
 \|w\|_\gamma\le C_St^{-1+\gamma/2},\quad
 \|Dw\|_\gamma\le C_St^{-1/2+\gamma/2},\quad
 \|D^2w\|_\gamma\le C_St^{\gamma/2}.
\]
Farther point pairs are controlled by the corresponding supremum divided
by the first-scale radius to power \(\gamma\).  The complete original
remainder consequently satisfies
\(\|\mathcal R_t(w)\|_\gamma\le C_Rt^{-(1-\gamma)}\):
the coefficient--Hessian, gradient-square and mass terms have precisely
this order, and linear-gradient and zeroth remainders decay faster.
The mass-uniform inverse then puts every solution in the same contraction
ball for a finite \(T_{\rm all}\).  Choose once
\[
 \mu_s\ge\max(t_{\rm req},T_{\rm all})/\tau_b.
\]
At the anchor all positive-symbol solutions therefore coincide with the
constructed one.  Differentiating the exact total identity there gives
\(\int\operatorname{tr}_{\rm tot}(D_w\mathcal E\,U)=tDN(w)U\).
The unrestricted inverse restricts to an isomorphism
\(\ker DN(w)\to Y\).  No simplicity premise or kernel classification
has been used.  For the boundary setting all constants are recomputed
from its smooth zero-safe coefficients; no primary uniformity across
rank loss is asserted.
\end{proof}

\begin{theorem}[Positive-parameter continuation at one fixed mass]
\label{common-positive-continuation}
For each fixed primary setting, and separately for each fixed
lower-boundary setting, the mass chosen in Lemma~\ref{common-anchor}
has a connected subset of smooth normalized positive-symbol auxiliary
solutions containing \((1,0)\) and projecting onto all \((0,1]\).
There is no assertion of a single-valued curve below the anchor,
uniqueness there, a nested choice of components, or compactness at zero.
\end{theorem}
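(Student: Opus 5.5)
The plan is a Leray--Schauder degree argument on each closed stage \([\tau_-,1]\), followed by a standard continuum-intersection argument as \(\tau_-\downarrow0\). Fix the mass \(\mu_s\) of Lemma~\ref{common-anchor} and work in the normalized space \(X'\cap C^{2,\gamma}\), with target \(Y\). Rewrite the auxiliary equation \(\mathcal E(\tau,w)=0\) as a fixed-point problem
\[
 w=\Phi(\tau,w):=-L_t^{-1}\bigl[(1-\tau)F_*+\mathcal R_t(w)\bigr],\qquad t=\tau\mu_s .
\]
By Lemma~\ref{common-mass-inverse}, \(L_t\) is invertible, and since \(\mathcal R_t\) contains at most second derivatives composed with the bounded inverse, the map is compact from \(C^{2,\gamma'}\) to itself for \(\gamma'<\gamma\). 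Alternatively, one can use the scalar-principal form \eqref{common-scalar-principal}, inverting \(-\Delta+t\) against a source depending only on \(w,Dw\); this gives compactness directly. Lemma~\ref{common-normalization} shows that every exact solution lies in \(N=0\), so restricting via \(\mathcal P\) loses nothing.

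\textbf{Stage estimates.} For a fixed \(\tau_->0\), I define the open set \(\mathcal O\) of fields with positive symbols, buffered by small margins, and with \(C^{2,\gamma}\) norm below a large bound. On \([\tau_-,\min(1,T_+)]\), where \(C_1^\tau\ge A_0I/2\), I use three inputs. Lemma~\ref{common-positive-size} gives value bounds \(C_{\rm size}/(\tau_-\mu_s)\), and Lemma~\ref{common-positive-energy} gives integral bounds. Theorem~\ref{common-gradient-gaps} (which allows any fixed \(B,c_1\)) gives gradient bounds and symbol gaps. Bootstrapping through the local potential estimates of Lemma~\ref{common-local-potentials} then gives \(C^{2,\gamma}\) bounds. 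Hence no solution touches \(\partial\mathcal O\).

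On the upper interval \([\tau_b,1]\), I avoid needing a force sign by using the explicit curve of Lemma~\ref{common-anchor}. There I pick a tubular neighbourhood of the curve, which is solution-isolated by the contraction uniqueness. I then glue the degree at \(\tau_b\), where the all-solution anchor statement says that the unique solution has invertible normalized derivative. Its local Leray--Schauder index is therefore \(\pm1\), so \(\deg(I-\Phi(\tau_b,\cdot),\mathcal O,0)\neq0\).

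\textbf{Continuation and connectedness.} Homotopy invariance on \([\tau_-,\tau_b]\) makes the degree nonzero at every \(\tau\) in that interval. The Leray--Schauder continuation theorem \cite{LS1934}, in its connected-component form, then yields a connected set \(\Sigma_{\tau_-}\) of solutions in \([\tau_-,\tau_b]\times\overline{\mathcal O}\) that meets both \(\{\tau_-\}\times\mathcal O\) and the anchor. Adjoining the upper curve gives a connected set reaching \((1,0)\). Smoothness of every solution follows from the bounded symbols and elliptic bootstrapping.

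Finally I let \(\tau_-=1/k\). The components \(\Sigma_k\) containing \((1,0)\) need not be nested, since no nested choice is asserted. I therefore take \(\Sigma=\bigcup_k\Sigma_k'\), where \(\Sigma_k'\) is the component of \((1,0)\) in the full solution set over \([1/k,1]\). Each \(\Sigma_k'\) contains \(\Sigma_k\) and hence projects onto \([1/k,1]\). Since \(\Sigma_k'\subset\Sigma_{k+1}'\) and all contain \((1,0)\), the union is connected and projects onto \((0,1]\).

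\textbf{Main obstacle.} The hardest point is the a priori bound on each stage, uniform in the solution, for parameters in \([\tau_b,T_+]\) versus \([T_+,1]\). The gradient–gap theorem needs \(C_1^\tau\ge c_1I\), which is only guaranteed on \([0,T_+]\). The anchor construction is what covers the remainder, so the degree must be computed at \(\tau_b<T_+\) rather than at \(\tau=1\). A second delicate point is making sure the degree-theoretic compactness holds on the normalized slice, where \(\mathcal P\) has identity derivative. I would handle this by solving in the unrestricted space and using \(tN(w)=0\) a posteriori.
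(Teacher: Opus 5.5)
Your route is the paper's: stage bounds on $[\tau_-,\tau_b]$ from Lemma~\ref{common-positive-size} and Theorem~\ref{common-gradient-gaps} with $c_1=A_0/2$, a bounded buffered domain with no boundary zeros, index $\pm1$ at the all-solution-unique anchor, a continuum reaching the $\tau_-$-slice, and a union with the upper curve of Lemma~\ref{common-anchor} through the shared anchor.

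\textbf{The failing step.} The map $\Phi(\tau,w)=-L_t^{-1}[(1-\tau)F_*+\mathcal R_t(w)]$ is not compact on $C^{2,\gamma'}$. The remainder $\mathcal R_t$ is quasilinear: it contains terms such as $-(q(w)-q_*)\Delta a$ and $\mathcal L_{V_*}\Delta M-\mathcal L_VE_M(\Delta M)$. So for $w\in C^{2,\gamma'}$ the source is only $C^{0,\gamma'}$, and $L_t^{-1}$ returns it to $C^{2,\gamma'}$ with no derivative gain. That formulation serves only as the contraction in Lemma~\ref{common-anchor}; contraction does not need compactness.

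\textbf{The repair.} Use your alternative, the scalar-principal form \eqref{common-scalar-principal}, whose right side depends only on $(x,w,Dw,\tau)$. This is exactly the paper's map $K_\tau(w)=S_\kappa[(\kappa-\tau\mu_s)w+G(x,w,Dw,\tau)]$ on $E=C^{1,\gamma}$. Your $(-\Delta+t)^{-1}$ also works, since $t\ge\tau_-\mu_s>0$ there. Two consequences follow.
\begin{itemize}
\item The domain $\mathcal O$ must be open in the working space. Bound its $C^{1,\gamma}$ norm above the zero-set bound, rather than its $C^{2,\gamma}$ norm as you wrote.
\item The index step needs a transfer from the PDE to the fixed-point map, which the paper makes explicit. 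A kernel element of $I-DK$ at the anchor gains regularity and solves the full linearization, so it vanishes by Lemma~\ref{common-anchor}. The Fredholm alternative for $I-(\text{compact})$ then gives invertibility. Only after this is the local index $\pm1$, and by uniqueness and excision it equals the degree on $\mathcal O$ at $\tau_b$.
\end{itemize}

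\textbf{Harmless differences.} Citing the Leray--Schauder continuum theorem replaces the paper's argument by small-step clopen chains and moving-slice degree. Uniqueness at $\tau_b$ makes ``meets the $\tau_b$-slice'' equivalent to ``contains the anchor.'' Your final union uses components $\Sigma'_k$ of the full solution sets over $[1/k,1]$, which are automatically nested. That is a slightly tidier way to get connectedness than the paper's union of possibly non-nested components sharing one point; both are valid.

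\textbf{Smaller points.}
\begin{itemize}
\item To state connectedness in the $C^{2,\gamma}$ topology rather than $C^{1,\gamma}$, note as the paper does that the two topologies coincide on the compact zero set over $[\tau_-,\tau_b]$.
\item Confine the stage estimates to $[\tau_-,\tau_b]$, not $[\tau_-,\min(1,T_+)]$. In the lower-boundary setting Theorem~\ref{common-gradient-gaps} is stated only on $(0,\tau_b]$.
\item Lemma~\ref{common-positive-energy} is primary-only and is not needed.
\item The tubular neighbourhood of the curve on $[\tau_b,1]$ is unnecessary, since the curve is simply adjoined.
\end{itemize}
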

\begin{proof}
First fix a positive lower cutoff and use degree invariance to obtain a
solution on every parameter slice. Slice-wise existence alone does not
give the connected set in the statement. A compact zero-set argument
then shows that the component through the anchor reaches the lower
cutoff. Finally, the components for different cutoffs can be joined
through that same anchor, without requiring them to be nested or
single-valued.

Fix \(0<\tau_-<\tau_b\), without changing \(\mu_s\).
The independent estimate \(B_{\tau_-}=C_{\rm size}/(\tau_-\mu_s)\) applies
to every solution on this interval, even before existence is known.
Theorem~\ref{common-gradient-gaps} with \(c_1=A_0/2\) gives all
stage gradients and gaps.  The zero-safe boundary localization, rather
than the primary positive-lower-norm step, supplies this inference in
the boundary setting.

Let \(E=C^{1,\gamma}(X;\mathbb R\oplus\operatorname{Herm}(2)
\oplus\mathbb R)\), and choose one fixed finite \(\kappa\ge1\)
large enough for the scalar Gaussian/localization construction above.
Applied componentwise, that construction gives the bounded scalar inverse
\(S_\kappa=(-\Delta+\kappa)^{-1}\) on all six fields.  Define
\[
 K_\tau(w)=S_\kappa[(\kappa-\tau\mu_s)w+G(x,w,Dw,\tau)].
\]
Its fixed points are exactly the full equations
\eqref{common-scalar-principal}; a \(C^{1,\gamma}\) fixed point gains
\(C^{2,\gamma}\), and normalization follows afterwards.  Stage values,
gradients and gaps give uniform \(C^{2,\nu}\) zero-set bounds for every
fixed \(\gamma<\nu<1\): the bounded component source first gives
\(C^{1,\sigma}\), \(\nu<\sigma<1\), then the complete smooth source
is \(C^{0,\nu}\), and the local Hessian estimate gives \(C^{2,\nu}\).
Choose \(R_{\tau_-}\) strictly above the zero-set \(C^{1,\gamma}\) bound and
\(0<\epsilon_{\tau_-}<\frac12\min(\delta_{q,\tau_-},
\delta_{\beta,\tau_-},\delta_{V,\tau_-})\).  The bounded open set
\[
 \Omega_{\tau_-}=\{\|w\|_{C^{1,\gamma}}<R_{\tau_-},
                q>\epsilon_{\tau_-},\ \beta>\epsilon_{\tau_-},
                V>\epsilon_{\tau_-} I\}
\]
has legal closure and no fixed point on its boundary.
On this bounded buffered domain the images of \(K_\tau\) have a uniform
\(C^{2,\gamma}\) bound, hence are jointly precompact in \(E\) by
Ascoli and lower-exponent interpolation.  The forcing is affine in
\(\tau\), so parameter dependence is uniformly Lipschitz there.
The exact zero set is compact even in
\([\tau_-,\tau_b]\times C^{2,\gamma}\).  Convexity and an invariant-domain
property are unnecessary.

We use the compact-map degree and local index of Leray and
Schauder~\cite{LS1934}, in their cited 1934 journal edition.
Sections 4--8, 10--11, and 12--16 supply the degree for
\(I-K\) on bounded open real Banach domains with compact maps and no
boundary zeros, its existence, excision, additivity and compact-homotopy
invariance, and the nondegenerate local index \(\pm1\).
The application here is to the bounded legal domains just checked, not
to a global map on the whole Banach space; neither the weak-compactness
discussion in section 9 nor the global-map discussion in section 17 is
an additional premise.  These source degree/index properties, including
their finite-dimensional degree background, are the external input;
all analytic hypotheses and the component application are proved here.

At the unique anchor, \(DK\) is compact.  A kernel element of
\(I-DK\) gains \(C^{2,\gamma}\) and solves the actual full linearization,
so Lemma~\ref{common-anchor} makes it zero.  The compact Fredholm
alternative required here has a direct proof.  If \(A=I-C\), with
\(C\) compact, is injective but not bounded below, a unit sequence with
\(Ax_n\to0\) has a convergent subsequence through \(Cx_n\), contradicting
injectivity.  Thus its range and all \(R_n=A^nE\) are closed.  If it is
not onto, every inclusion \(R_{n+1}\subset R_n\) is strict: equality
at one stage, canceled using injectivity of \(A^n\), would give \(AE=E\).
Choose unit \(x_n\in R_n\) at distance greater than \(1/2\) from
\(R_{n+1}\): for \(y\) outside that proper closed subspace, put
\(d_0=\operatorname{dist}(y,R_{n+1})>0\), take
\(z\in R_{n+1}\) with \(\|y-z\|<2d_0\), and normalize \(y-z\).
For \(n>m\), both \(Cx_n\) and \(Ax_m\) belong to \(R_{m+1}\), so
\(\|Cx_m-Cx_n\|>1/2\), contradicting compactness.  Hence \(A\) is
onto with bounded inverse, without an approximation-property premise.
The local linear homotopy is zero-free on a sufficiently small sphere:
if the nonlinear error is at most
\(\|h\|/(2\|A^{-1}\|)\), it cannot cancel \(Ah\).
Thus the external local index result gives \(\pm1\).
Anchor uniqueness and excision make the degree on \(\Omega_{\tau_-}\)
nonzero.  The checked compact zero-free homotopy preserves it, yielding
solutions on every stage slice.

For the stronger component conclusion, let \(\mathcal S\) be this compact
zero set and \(p=(\tau_b,w_b)\).  The equivalence class \(P_n\) of
\(p\) under finite chains of steps less than \(1/n\) is clopen in
\(\mathcal S\), compact, and nested.  Their intersection is the component
of \(p\): it is connected, since a separation of the compact intersection
into two disjoint compact neighborhoods with positive distance would,
by nested compactness, eventually contain \(P_n\) in their union, whereas
a sufficiently small-step chain cannot cross that distance.  Conversely
every connected set through \(p\) lies in every clopen \(P_n\).
If this component misses the \(\tau_-\)-slice, nested compactness gives
some \(P_n\) missing that slice.  A small relative product neighborhood
\(\mathcal O\) isolates exactly this selected compact clopen zero set,
with no zero on its relative boundary.

The degree of the moving slice \(\mathcal O_\tau\) is locally constant.
Indeed at any nonempty selected slice choose a bounded open \(U\) whose
closure lies in \(\mathcal O_{\tau_0}\) and contains precisely its compact
selected zeros.  For nearby parameters its closure remains in
\(\mathcal O_\tau\), all selected zeros remain in \(U\), and there are no
boundary zeros; otherwise compactness produces a contradicting limiting
zero.  Excision reduces the moving-slice degree to that on \(U\), and
fixed-domain homotopy invariance proves constancy.  Empty selected slices
stay empty nearby by the same compactness argument.  This degree would
be zero at \(\tau_-\) but \(\pm1\) at \(\tau_b\), a contradiction.
Thus the anchor component \(C_{\tau_-}\) meets \(\tau_-\), and its connected
projection contains \([\tau_-,\tau_b]\).
On the compact zero set the identity between the \(C^{2,\gamma}\) and
\(C^{1,\gamma}\) topologies is a compact-to-Hausdorff homeomorphism,
so this is also a \(C^{2,\gamma}\) component.

Join every \(C_{\tau_-}\) to the already constructed upper curve.  All share
the same anchor at the same mass.  A union of connected sets sharing one
point is connected, and its projection is \((0,1]\).  No nesting,
coherent graph, limit exchange, or compact global union is used.
The smooth scalar-principal bootstrap makes every fixed point smooth.
\end{proof}

\subsection{Scalar propagation and the zero-parameter limit}

\begin{lemma}[Uniform upper radial trace]\label{common-trace-bound}
Every positive-parameter auxiliary solution in either fixed setting has
\(T\le C_T\), independently of mass, parameter, force sign, values,
derivatives and symbol minima.  The same bound holds for an actual
normalized zero-parameter solution, if one exists.
\end{lemma}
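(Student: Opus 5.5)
The plan is to treat $T$ as a scalar subsolution and bound its positive part through the chordal Green identity \eqref{common-green-kernel}. The inputs are the trace equation \eqref{common-trace-equation}, the pointwise comparison $v\ge T$ of \eqref{common-radial-inequalities}, and the normalization $\int_Xv\,\omega_X=0$ of Lemma~\ref{common-normalization}. None of these uses the force sign, symbol minima or derivative bounds, which is what the statement demands. The only data constant is $F_T=\|f_*\|_\infty$, fixed by the reference. For a zero-parameter normalized solution, \eqref{common-trace-equation} holds with $t=0$ and $\tau=0$, so it becomes $\Delta T=f_*$. The argument below then applies verbatim with the $tv$ terms absent.

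\emph{Step 1: a one-sided inequality.} From $t>0$ and $v\ge T$ I get
\[
\Delta T\ge -F_T+tT,
\]
so $\Delta T\ge -F_T$ on $\{T>0\}$. By Kato's inequality, or by smoothing $\max(T,0)$ with $\log(1+e^{kT})/k$, the function $T_+$ satisfies $\Delta T_+\ge -F_T$ distributionally on all of $X$, with a constant independent of $t$ and $\tau$. Integrating \eqref{common-trace-equation} over $X$ gives $t\int v\,\omega_X=-(1-\tau)\int f_*\,\omega_X$. Since $f_*$ has zero total integral, $v\ge T$ then gives $\int_XT\,\omega_X\le0$.

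\emph{Step 2: Green representation at a maximum.} Apply \eqref{common-green-kernel} to $T_+$ at a point $x$ where $T$ is maximal. Using $G_0\le0$, $\int(-G_0)\omega_{\rm FS}=1$ and the subsolution inequality, this yields
\[
T(x)\le \overline{(T_+)}_{\rm FS}+n_\omega F_T ,
\]
exactly as in the entropy route of Lemma~\ref{common-positive-energy}. The statement is thereby reduced to a mass-independent bound on $\int_XT_+\,\omega_X$.

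\emph{Step 3: the $L^1$ bound on $T_+$, which I expect to be the main obstacle.} Two inequalities are available. First, $\int T_+=\int T+\int T_-\le\int T_-$. Second, integrating $\Delta T$ over the superlevel set $\{T>0\}$, whose boundary flux is nonpositive, gives $t\int T_+\le t\int_{\{T>0\}}v\le F_T\operatorname{Vol}(X)$. That settles $t\ge1$. For $t<1$ I would instead control $\int T_-$ via the explicit radial structure. Namely,
\[
T=-\alpha a+c\operatorname{tr}M+db+p+r-p_*-r_*,
\]
and $p,r$ grow exponentially in $a-\lambda_{\min}M$ and $\lambda_{\max}M-b$. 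I would then use the entropy identity $a-\operatorname{tr}(RM)/p=\log(p/p_*)+D_-$ of Lemma~\ref{common-positive-energy}, together with $\int v=0$, to show that large negative $T$ forces large positive $v$ elsewhere with comparable mass. This should give $\int T_-\le C(1+\int v_+)$. Combining with $\int v_+=\int v_-\le\int T_-$ is circular as it stands. To break the circle I would first try weighting by the convex function $k(y)=ye^y-e^y+1$: the excess $v-T$ dominates a positive multiple of $T_-$ minus a constant, through the exponential weights on the fixed semidefinite $R_*,P_*$. If that succeeds, Step 2 gives the uniform constant $C_T$.
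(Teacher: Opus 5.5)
Your Steps 1 and 2 are fine, but Step 3 is a genuine gap, and it carries the whole content of the lemma. The only $t$-independent-looking bound you actually derive is the superlevel-set flux estimate $t\int_X T_+\,\omega_X\le F_T\operatorname{Vol}(X)$. That bound is $O(1/t)$, so it only reproduces the estimate $T\le F_T/t$ already contained in Lemma~\ref{common-positive-size}. The lemma is needed precisely as $t\to0$, and at $t=0$ even this bound is unavailable, so your zero-parameter case is also unproved.

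Your proposed repair through $\int T_-$ does not close the gap. First, no value bound is available at this stage: the value bounds of Theorems~\ref{primary-value-bound} and~\ref{lower-value-bound} themselves use $T\le C_T$. Second, the suggested pointwise domination of $T_-$ by $v-T$ is false. Take the common translation $w=(-\Lambda,-\Lambda I,-\Lambda)$. Then $p=p_*$ and $r=r_*$, so $v=T=-W\Lambda$. Hence $v-T=0$ while $T_-=W\Lambda$.

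The missing idea is that you should not replace the source $(1-\tau)f_*$ by its crude lower bound $-F_T$ in Step 1. That replacement is what forces the $1/t$ loss and the $L^1$ detour. Because $f_*$ is a fixed smooth function of zero total integral, the kernel \eqref{common-green-kernel} gives an exact mean-zero potential $z$ with $\Delta_Xz=f_*$ and $\|z\|_\infty\le2n_\omega\|f_*\|_\infty$.

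Put $D=T-(1-\tau)z$. Then \eqref{common-trace-equation} becomes $\Delta D=tv$, with no remaining source. The comparison $v\ge T$ of \eqref{common-radial-inequalities} then gives $\Delta D\ge t\bigl(D+(1-\tau)z\bigr)$. At a global maximum of $D$ you have $\Delta D\le0$, so $t>0$ yields $D\le-(1-\tau)z\le\|z\|_\infty$, and therefore $T\le2\|z\|_\infty$. The factor $t$ appears on both sides and cancels; this is exactly why no $1/t$ appears and no $L^1$ control of $T_+$ is needed.

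At $t=0$ the same substitution makes $T-z$ harmonic, hence equal to a constant $C$. Then $\int_Xv\,\omega_X=0$, $v\ge T$ and $\int_Xz\,\omega_X=0$ force $C\le0$, which gives $T\le z\le\|z\|_\infty$.
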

\begin{proof}
The total source mean is zero.  Define
\(z_{\rm raw}(x)=\int G_0(x,y)f_*(y)\omega_X(y)\) and subtract its
actual mean to obtain \(z\).  The constant term in the kernel identity
vanishes against the total source mean, so \(\Delta_Xz=f_*\).
Local logarithmic potentials for its smooth source, and their harmonic
remainders, show successively that \(z\) is smooth.
With \(n_\omega=\sup\omega_X/\omega_{\rm FS}\), kernel integrability
gives \(\|z\|_\infty\le2n_\omega\|f_*\|_\infty\).
Set \(Z=(1-\tau)z,D=T-Z\).  The exact trace equation gives
\(\Delta D=tv\ge tT=t(D+Z)\).  At a global maximum of \(D\),
\(D\le\|Z\|_\infty\); hence \(T\le2\|z\|_\infty=:C_T\).
The resulting bound contains no factor \(1/t\).  At zero, if an actual normalized solution
exists, \(T-z\) is a constant \(C\); since
\(\int v=0\), \(v\ge T\), and \(\int z=0\), one has \(C\le0\).
This proves only the stated trace interface, not a full value bound.
\end{proof}

\begin{lemma}[Negative-potential propagation]\label{common-negative-propagation}
For every positive-symbol solution on \(0<\tau\le\tau_b\), at the same
fixed mass in either setting, set \(u=\log\operatorname{tr}(e^{-M})\)
and let bars denote actual \(\omega_X\)-means.  Fixed constants give
\[
 \sup u\le\bar u+C_u,\qquad b\ge\bar b-C_b,
 \qquad \sup(-a)\le\overline{-a}+C_a.
\]
They do not assume any of these means bounded and do not require a
global positive lower-arrow norm.
\end{lemma}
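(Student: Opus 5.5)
The plan is to reduce all three inequalities to a single one-sided Laplacian bound
\[
 \Delta\varphi\ge -D_\varphi\qquad(\varphi\in\{u,\,-b,\,-a\}),
\]
with a fixed constant $D_\varphi$. A Green-function argument on $X$ then turns each such bound into a sup-over-mean estimate.

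\textbf{Step 1: the Green-function step.} From the chordal kernel $G_0$ of \eqref{common-green-kernel}, I would build an actual $\omega_X$-Green function $G_X=G_0+\psi(x)+\psi'(y)+\text{const}$. The smooth corrections solve $\Delta_X\psi=$ the density of $\omega_{\rm FS}$ against $\omega_X$, minus its mean. This gives
\[
 \varphi(x)-\bar\varphi=\int_X G_X(x,y)\Delta\varphi(y)\,\omega_X(y),\qquad G_X\le C_G,\qquad \int_X|G_X(x,\cdot)|\,\omega_X\le C_G'.
\]
Writing $\Delta\varphi=(\Delta\varphi+D_\varphi)-D_\varphi$, where the first bracket is nonnegative, I get
\[
 \varphi(x)-\bar\varphi\le C_G\int_X(\Delta\varphi+D_\varphi)\,\omega_X+D_\varphi C_G'=D_\varphi\bigl(C_G\operatorname{Vol}X+C_G'\bigr).
\]
This is exactly the shape $\sup\varphi\le\bar\varphi+C$ claimed, and it uses no information about the means themselves.

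\textbf{Step 2: the three Laplacian bounds.} These come from the exact rows.

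For $-a$: the top row \eqref{common-positive-rows} with $H_0=H_{*0}-\Delta a$ gives
\[
 \Delta a=H_{*0}-\alpha+ta-(C_0^\tau+K_0)/q.
\]
Since $C_0^\tau\ge c_0>0$ by \eqref{common-force-bounds}, $K_0\ge0$ and $q>0$, this yields $\Delta(-a)\ge-\sup(H_{*0}-\alpha)-ta$.

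For $-b$: similarly $H_2=1-\Delta b$ gives $\Delta(-b)\ge-(1-\alpha)-tb$. This uses $C_2^\tau\ge A_0$, and needs no lower-arrow norm, so it works at the boundary zero.

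For $u=\log\rho$, $\rho=\operatorname{tr}e^{-M}=\operatorname{tr}h$: I would use the ordered row \eqref{common-inverse-trace-row}. On $\tau\le\tau_b$ one has $C_1^\tau\ge A_0I/2\ge0$, so the Lyapunov term is positive by the positivity of $\mathcal L_V^{-1}$ and of $S^{-1}(\cdot)S^{-1}$. Hence
\[
 \Delta\rho\ge g^{-1}\operatorname{tr}(hf_zhf_{\bar z}h)-d\rho-t\operatorname{tr}(Mh).
\]
The Cauchy--Schwarz inequality $|\operatorname{tr}(hf_zh)|^2\le\rho\,\operatorname{tr}(hf_zhf_{\bar z}h)$ absorbs $|\partial\rho|^2/(g\rho)$. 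It applies as the usual $\log\operatorname{tr}$ argument: split $hf_zh=(h^{1/2})(h^{1/2}f_zh)$ and use the trace pairing. The result is
\[
 \Delta u\ge-d-t\,\frac{\operatorname{tr}(Me^{-M})}{\operatorname{tr}e^{-M}}.
\]

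\textbf{Step 3: the zeroth-order terms $ta$, $tb$ and $t\cdot(\text{weighted mean of }M)$.} This is where I expect the only real issue: none of these is sign-controlled, and the bound must not depend on the means. My first attempt is to invoke Lemma~\ref{common-positive-size}. For every positive-symbol solution with $0<\tau\le1$ and $C_1^\tau\ge0$, it gives
\[
 t\max\bigl(\|a\|_\infty,\|b\|_\infty,\|M\|_{\infty,\mathrm{op}}\bigr)\le C_{\rm size},
\]
independently of $\tau$ and of any symbol minimum. That lemma uses only semidefinite $P_*$, so it is valid in both settings. The weighted eigenvalue mean in the $u$-row is bounded by $\|M\|_{\mathrm{op}}$, so all three zeroth-order terms are bounded by $C_{\rm size}$. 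Therefore $D_{-a}=\sup(H_{*0}-\alpha)+C_{\rm size}$, $D_{-b}=1-\alpha+C_{\rm size}$ and $D_u=d+C_{\rm size}$ work, and Step~1 finishes the proof.

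If a version without the size lemma were wanted, I would instead use the entropy identity: the weighted mean equals $H(w)-u\le\log2-u$. The $u$-row then becomes $(-\Delta+t)u\le D$, which has the wrong sign for a mass-free Harnack bound. That is why I would route through the uniform $t\|w\|_\infty$ control.
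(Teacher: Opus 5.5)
Your proposal is correct and follows the paper's proof essentially step for step: the same one-sided bounds $\Delta(-a)\ge-(\|H_{*0}-\alpha\|_\infty+C_{\rm size})$, $\Delta(-b)\ge-(c+C_{\rm size})$ and $\Delta u\ge-(d+C_{\rm size})$, obtained from positivity of the forces and Lemma~\ref{common-positive-size}, followed by mean-adjusted Green propagation, where your corrected kernel $G_X$ coincides with the paper's $H_X(x,y)=G_0(x,y)-\overline{G_0(\cdot,y)}$ up to a function of $x$ alone and a constant, both annihilated by the zero-mean source (with that choice you recover the paper's constant $3Ln_\omega$). One small correction: to bound $|\operatorname{tr}(hf_zh)|^2$ by the term $\operatorname{tr}(hf_zhf_{\bar z}h)$ that actually occurs in the row, write $\operatorname{tr}(hf_zh)=\operatorname{tr}\bigl(h^{1/2}\cdot hf_zh^{1/2}\bigr)$ (equivalently, use the paper's $h$-weighted pairing with $B_2=h^{1/2}f_zh^{1/2}$), because your literal split $h^{1/2}\cdot h^{1/2}f_zh$ instead yields the mirrored quantity $\operatorname{tr}(hf_{\bar z}hf_zh)$, which can differ when $f_z$ does not commute with $h$.
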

\begin{proof}
The independent size numerator and the positive endpoint rows give
\[
 \Delta(-a)\ge-L_0,\quad\Delta(-b)\ge-L_2,
 \quad L_0=\|H_{*0}-\alpha\|_\infty+C_{\rm size},
 \quad L_2=c+C_{\rm size}.
\]
For \(h=e^{-M},\rho=\operatorname{tr}h\), the ordered inverse row and
positive middle forcing imply
\[
 \Delta\rho\ge g^{-1}\operatorname{tr}(hf_zhf_{\bar z}h)
                       -(d+C_{\rm size})\rho.
\]
Put \(B_2=h^{1/2}f_zh^{1/2}\).  Then
\(\rho_z=-\operatorname{tr}(hB_2)\) and the positive trace term is
\(\operatorname{tr}(hB_2B_2^*)\).  Weighted Hilbert--Schmidt
Cauchy--Schwarz, for \((A,D)_h=\operatorname{tr}(hAD^*)\), gives
\(|\operatorname{tr}(hB_2)|^2\le\rho\operatorname{tr}(hB_2B_2^*)\).
Thus \(\Delta u\ge-L_1\), \(L_1=d+C_{\rm size}\), without dropping
any moving eigendirection or assigning a sign to an unordered product.

For any real \(U\) with \(\Delta U=s\ge-L\), its source has total
integral zero and \(\|s\|_{L^1}\le2L\operatorname{Vol}X\).
The exact kernel identity gives
\(U(x)-\bar U_{\rm FS}\le Ln_\omega\).
Let \(J(y)=\overline{G_0(\cdot,y)}\), with the actual mean.
Then \(|J(y)|\le n_\omega/(2\pi)\), and averaging the exact identity,
not merely its inequality, gives
\(|\bar U-\bar U_{\rm FS}|\le2Ln_\omega\).
Consequently \(\sup U\le\bar U+3Ln_\omega\).
Apply this to \(-a,-b,u\).  The argument uses only fixed reference
coefficients and positivity, including at a lower-arrow zero.
\end{proof}

\begin{lemma}[Finite scalar fluctuation bounds]\label{common-scalar-fluctuations}
For every fixed finite \(p\ge1\) in either setting,
\[
 \|a-\bar a\|_{L^p}+\|b-\bar b\|_{L^p}+\|u-\bar u\|_{L^p}\le C_p.
\]
In particular \(\int|b-\bar b|\omega_X\le K_b\), with a fixed
parameter-uniform constant.  No \(L^\infty\) fluctuation conclusion is
inferred from this kernel mapping.
\end{lemma}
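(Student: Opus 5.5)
Each of the three scalars $U\in\{a,b,u\}$ satisfies a one-sided Laplacian bound $\Delta U\ge -L$ by Lemma~\ref{common-negative-propagation}. The plan is to convert that one-sided bound into two-sided $L^1$ control of $\Delta U$, and then let the Green kernel \eqref{common-green-kernel} carry this $L^1$ source into every finite $L^p$. Throughout, $L=\max(L_0,L_1,L_2)$; here the constants come from the size numerator $C_{\rm size}$ and the fixed reference. They do not depend on $\tau$, on the mass through $t$, or on a lower-arrow norm.

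\emph{Step 1: $L^1$ source control.} Write $s=\Delta U$. Because $X$ is closed, $\int_X s\,\omega_X=0$. Then $s_-\le L$ gives $\int s_+=\int s_-\le L\operatorname{Vol}X$, and hence $\|s\|_{L^1(\omega_X)}\le 2L\operatorname{Vol}X$. The same observation already appears in the proof of Lemma~\ref{common-negative-propagation}.

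\emph{Step 2: kernel representation.} The identity $U(x)-\bar U_{\rm FS}=\int G_0(x,y)s(y)\omega_X(y)$ holds, and averaging it against $\omega_X$ gives $U-\bar U=\int (G_0(x,y)-J(y))s(y)\omega_X(y)$, with $J$ bounded by $n_\omega/(2\pi)$. The remaining task is to show that $s\mapsto\int G_0(\cdot,y)s(y)\omega_X(y)$ maps $L^1$ to $L^p$ boundedly for every finite $p$. For this I would apply Minkowski's integral inequality,
\[
\Bigl\|\int G_0(\cdot,y)s(y)\omega_X(y)\Bigr\|_{L^p}\le \sup_y\|G_0(\cdot,y)\|_{L^p(\omega_X)}\,\|s\|_{L^1}.
\]
The sup is finite and independent of $y$: $G_0(x,y)=\pi^{-1}\log(\text{chordal distance})$, and $\int|\log d(x,y)|^p\omega_{\rm FS}(x)$ takes the same value for every $y$ by $SU(2)$-homogeneity. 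The factor $n_\omega$ then converts $\omega_{\rm FS}$ to $\omega_X$. Combining this with the bounded $J$ term yields $\|U-\bar U\|_{L^p}\le C_p$, where $C_p=2L\operatorname{Vol}X\,(\sup_y\|G_0(\cdot,y)\|_p+n_\omega/(2\pi)\operatorname{Vol}X^{1/p})$. The special case $p=1$ applied to $b$ gives $K_b$.

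\emph{Main obstacle.} The only delicate point is making sure Step 1 is legitimate for all three functions at once with parameter-uniform $L$. For $u$ this needs the ordered Cauchy--Schwarz inequality behind $\Delta u\ge -L_1$, and that inequality is already proved in Lemma~\ref{common-negative-propagation}. No new analytic input is needed beyond it. I would stress that the kernel lies only in $\bigcap_{p<\infty}L^p$ and not in $L^\infty$. This is why the statement stops at finite $p$ and why no $L^\infty$ fluctuation bound is inferred: an atomic concentration of $s$ would produce a logarithmic spike. That is exactly the divisor-mass phenomenon the later lower-boundary argument has to keep.
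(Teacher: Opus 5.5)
Your proposal is correct and follows essentially the paper's route: the zero-mean one-sided Laplacian bounds of Lemma~\ref{common-negative-propagation} give $\|\Delta U\|_{L^1}\le 2L\operatorname{Vol}X$. The mean-adjusted Green representation, Minkowski, and a $y$-uniform kernel moment then give the $L^p$ bound; the paper computes that moment exactly as $\int|G_0(x,y)|^k\omega_{\rm FS}(x)=(2\pi)^{1-k}k!$ for integer $k$ and passes to real $p$ by H\"older. One small slip is that the one-sided bounds hold for $-a,-b,u$, not $a,b,u$; this is harmless, because both $\|\Delta U\|_{L^1}$ and $\|U-\bar U\|_{L^p}$ are unchanged under $U\mapsto-U$.
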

\begin{proof}
For every integer \(k\ge1\), the exact kernel moment is
\(\int|G_0(x,y)|^k\omega_{\rm FS}(x)=(2\pi)^{1-k}k!\), uniformly
in \(y\).  Indeed move \(y\) to \([1:0]\) for this kernel computation;
the squared chordal distance \(s=|z|^2/(1+|z|^2)\) has angularly
integrated measure \(2\pi\,ds\), and
\(\int_0^1(-\log s)^kds=k!\) by \(k\) integrations by parts after
\(s=e^{-v}\).  This does not change the actual source, mean or norm.
For \(H_X(x,y)=G_0(x,y)-\overline{G_0(\cdot,y)}\), Hölder's inequality
for the mean and \(\omega_X\le n_\omega\omega_{\rm FS}\) yield
\[
 \sup_y\|H_X(\cdot,y)\|_{L^k(\omega_X)}
 \le2n_\omega^{1/k}(2\pi)^{1/k-1}(k!)^{1/k}=:C_k'.
\]
The mean-adjusted Green representation and Minkowski give
\(\|U-\bar U\|_{L^k}\le C_k'\|\Delta U\|_{L^1}\).
The source budgets proved in the preceding lemma apply to \(-a,-b,u\).
Every finite real \(p\) follows from an integer \(k\ge p\) and finite
volume Hölder.  In particular one can take
\(K_b=C_1'\,2L_2\operatorname{Vol}X\).
Logarithmic integrability and the distributional kernel identity justify
the same calculation for \(C^{2,\gamma}\) solutions.
\end{proof}

\begin{theorem}[Strong original endpoint and strict calibration]
\label{common-original-endpoint}
In either fixed setting suppose that the positive-parameter solutions at
the one fixed finite mass have uniform full values, full first derivatives,
and positive \(q,\beta,V\) gaps on \(0<\tau\le\tau_b\).
Every sequence of such actual solutions with \(\tau_n\downarrow0\) has
a subsequence converging strongly in \(C^{2,\gamma}\) to smooth positive
metrics on the original bundles, with the original maps fixed, solving
the undamped equation, the single normalization, and strict calibration
in all blocks.  Existence at each positive parameter is separately needed
to deduce existence at zero from this assertion.
For the primary application the value input is
Theorem~\ref{primary-value-bound}; for the lower-boundary application it is
Theorem~\ref{lower-value-bound}.  Theorem~\ref{common-gradient-gaps}
then supplies the other uniform bounds, and
Theorem~\ref{common-positive-continuation} independently supplies positive
parameter existence.
\end{theorem}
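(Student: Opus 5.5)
The plan is a standard compactness argument in the six metric fields \(w=(a,M,b)\), run through the scalar-principal form \eqref{common-scalar-principal}. Fix a sequence \(\tau_n\downarrow0\) of actual normalized positive-symbol solutions \(w_n\) at the fixed mass \(\mu_s\). By hypothesis they satisfy \(|a_n|,|b_n|,\|M_n\|_{\rm op}\le B\) and \(\|Dw_n\|_\infty\le G_*\), together with the gaps \(q_n\ge\delta_q\), \(\beta_n\ge\delta_\beta\) and \(V_n\ge\delta_VI\).

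\textbf{Step 1: uniform Hölder bounds.} On this buffered compact value range, the right side \(G(x,w,Dw,\tau)\) of \eqref{common-scalar-principal} is a smooth function of its arguments. This uses the Lyapunov inverse \eqref{common-lyapunov-inverse} with \(V\ge\delta_VI\), the invertible \(E_M^{-1}\), and the denominators \(q,\beta\) bounded below. The reference derivatives \(\xi_*,y_*\) enter smoothly, including at a lower zero, and no \(1/r\) appears. The term \(t_nw_n\) is bounded because \(t_n=\tau_n\mu_s\le\tau_b\mu_s\). Hence \(\Delta w_n\) is uniformly bounded in \(L^\infty\). Lemma~\ref{common-local-potentials} then gives a uniform \(C^{1,\sigma}\) bound on nested disks for every \(\sigma<1\), via the near/far gradient split used in Lemma~\ref{common-reference-inverse}. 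The source is therefore uniformly \(C^{0,\nu}\), and the Hessian estimate gives uniform \(C^{2,\nu}\) bounds for some \(\nu\in(\gamma,1)\) over a finite atlas including infinity.

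\textbf{Step 2: extraction and passage to the limit.} Arzelà–Ascoli and interpolation between \(C^{2,\nu}\) and \(C^{0}\) yield a subsequence converging strongly in \(C^{2,\gamma}\) to some \(w_\infty\). The limit inherits the value bound and the gaps \(q\ge\delta_q\), \(\beta\ge\delta_\beta\), \(V\ge\delta_VI\), so it lies in the legal domain. Every term of \eqref{common-scalar-principal}, and equivalently of \eqref{common-auxiliary}, is continuous in \(C^{2,\gamma}\). Moreover \(t_n\to0\) and \(\tau_nF_*\to0\), with \(C_j^{\tau}\to A_0I\). So \(w_\infty\) solves \(\widehat F(w_\infty)=0\), that is \(F_j(h(w_\infty))=0\) for \(j=0,1,2\).

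\textbf{Step 3: identification and normalization.} The original maps are never varied: \(h=h_*e^{w}\) is a smooth positive metric triple on the original \(E_j\), and the arrows \(\phi,\psi\) are fixed. By the expansion noted after the definition of \(F_j\), the equation \(F_j=0\) is exactly the phase equation in \eqref{geo-equation}. Normalization \(N(w_n)=0\) holds for each \(n\) by Lemma~\ref{common-normalization}, since \(t_n>0\). It passes to the limit because \(v\) is continuous in \(w\). Smoothness of \(w_\infty\) follows by bootstrapping the scalar-principal system, since the source is smooth in \((x,w,Dw)\).

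\textbf{Step 4: strict calibration (main point).} Positivity of \(h\) alone is not enough; what has to be shown is \(\mathcal C_j>0\). At the limit, \(F_j=0\) rearranges to
\[
\tfrac12\{S_j-\alpha I,\,H_j-\alpha I\}=A_0I+K_j\ge A_0I>0 .
\]
The left side is, up to the positive factor \(1/\eta^2\), \(\tfrac12\{T_j,L_j\}\), because \(S_j-\alpha I=T_j/\eta\) and \(H_j-\alpha I=L_j/\eta\) with \(\alpha=-\chi/\eta=1/6\). The symbol gaps say exactly that \(T_j>0\) at every vertex, since \(q,V,\beta\) are the transported \(S_j-\alpha\). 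The inverse Lyapunov formula from Lemma~\ref{geo-lyapunov} then gives \(L_j>0\).

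The key step is to express \(\mathcal C_j\) from the phase relation, \(\mathcal A_j=\chi\mathcal C_j\) and \(\mathcal B_j=\eta\mathcal C_j\). This gives \(\mathcal C_j=\mathcal B_j/\eta=(H_j+S_j)/\eta\), and hence
\[
\eta\,\mathcal C_j=\eta H_j+\eta S_j=L_j+T_j-2\chi I .
\]
Since \(\chi<0\), this is a sum of positive matrices, so \(\mathcal C_j>0\) at each vertex.

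The main obstacle is Step 1: checking that every coefficient of \(G\) is uniformly smooth on the buffered set at a lower-arrow zero. That check has essentially been done in the remarks following \eqref{common-scalar-principal}.
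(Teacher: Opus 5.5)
Your proposal is correct and follows essentially the paper's route: two-sided Laplacian bounds from the scalar-principal form give $C^{1,\sigma}$ and then $C^{2,\nu}$ bounds, Ascoli with interpolation gives strong $C^{2,\gamma}$ convergence in which the damping and forcing vanish, and calibration follows from $S_j-\alpha I>0$ (the gaps) together with $H_j-\alpha I>0$ (the positive Lyapunov inverse in the undamped rows); these are exactly your $T_j>0$ and $L_j>0$, since $T_j=\eta(S_j-\alpha I)$ and $L_j=\eta(H_j-\alpha I)$. One harmless slip: $\mathcal B_j=\eta\mathcal C_j$ gives $\eta\mathcal C_j=H_j+S_j$, so your identity should read $\eta^2\mathcal C_j=L_j+T_j-2\chi I$, and this still yields $\mathcal C_j>0$ because $\eta>0$.
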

\begin{proof}
Let \(\nu=(1+\gamma)/2\) and \(\sigma=(1+\nu)/2\).
In \eqref{common-scalar-principal}, scalar reciprocals are bounded by the
given gaps.  The Lyapunov eigenmultipliers \((v_i+v_j)/2\) are at least
\(\delta_V\), and \(E_M\) and its inverse are smooth with bounded finite
derivatives on the compact bounded value range.  All ordered sources are
smooth in basepoint, values and parameter, and at most quadratic in the
full first derivatives.  The mass is one fixed finite number.  Thus all
six component Laplacians have a uniform two-sided bound, without an
assumed Hölder gradient norm.  Local logarithmic potentials have gradient
modulus \(Ch(1+|\log h|)\); their bounded harmonic remainders have
controlled interior derivatives.  A finite nested atlas gives uniform
\(C^{1,\sigma}\) bounds.  The full smooth source is now uniformly
\(C^{0,\sigma}\), hence \(C^{0,\nu}\).  The zero-circle-mean Hessian
kernel, with the source value subtracted, gives a uniform
\(C^{2,\nu}\) bound as proved in Lemma~\ref{common-local-potentials}.

Ascoli gives a common subsequence converging uniformly through second
derivatives; Hölder interpolation at the strictly lower exponent
\(\gamma\) makes the convergence strong \(C^{2,\gamma}\).
Compactness in the same \(C^{2,\nu}\) norm is not claimed.
The reference chart reconstructs
\(h_0=h_{*0}e^a,h_1=h_{*1}e^M,h_2=h_{*2}e^b\).
Bounded values give uniformly positive upper and lower metric bounds on
the original bundles.  Smooth symbol dependence preserves their gaps.
There is no rank loss or punctured-gauge replacement: the holomorphic maps
and bundle structures have been fixed throughout.

Strong \(C^{2,\gamma}\) passes every curvature, full arrow derivative,
transported adjoint and ordered product.  Bounded values and fixed mass
give \(\tau_n\mu_sB(w_n)w_n\to0\), while \(\tau_nF_*\to0\), so
\(\widehat F(w_0)=0\).  The single integral
\(\int[qa+\operatorname{tr}(VM)+\beta b]\omega_X=0\) passes by uniform
value convergence.  No separate vertex mean is inserted.
In the boundary setting every coefficient remains smooth at the section
zero and at coincidence of the two divisors: \(D\psi\) is retained and
no coefficient divides by \(\psi,r\), the composition, or a normalized
lower arrow.  This is the zero-safe regularity check, not a transfer of
the primary lower-norm argument.

At zero the scalar-principal equation is smooth.  Starting with
\(C^{2,\gamma}\), its source is \(C^{1,\gamma}\); differentiated local
Poisson/cutoff estimates give \(C^{3,\gamma}\).  Repeating at each finite
order makes all fields and metrics smooth.
At the zero-parameter endpoint write \(q^{(0)}=q(w_0)\).
The exact undamped rows are
\[
 q^{(0)}(H_0-\alpha)=A_0+K_0,\quad
 \mathcal L_{V_0}(\widehat H_1-\alpha I)=A_0I+K_1,\quad
 \beta_0(H_2-\alpha)=A_0+K_2.
\]
The endpoint rows imply \(H_0,H_2>\alpha\).  The ordered positive
Lyapunov inverse has \(\mathcal L_{V_0}^{-1}(A_0I)=A_0V_0^{-1}>0\),
so \(\widehat H_1>\alpha I\) without commutation.
The frozen vertical coefficients are
\(S_0=q^{(0)}+\alpha,S_1=V_0+\alpha I,S_2=\beta_0+\alpha\).
Thus the phase coefficient \(\mathcal B_j=H_j+S_j\) satisfies
\(\mathcal B_j>2\alpha I=I/3\) in every block.
For the frozen \(\theta_4=\pi-\arctan6\),
\(\sin\theta_4=6/\sqrt{37}>0\), and the exact phase equation is
\(\cos\theta_4\mathcal B_j=\sin\theta_4\mathcal A_j\).
Consequently
\[
 \cos\theta_4\mathcal A_j+\sin\theta_4\mathcal B_j
 =\mathcal B_j/\sin\theta_4>\sqrt{37}\,I/18.
\]
This proves strict calibration at the smooth undamped endpoint,
not at a forced auxiliary solution.
\end{proof}

\section{Uniform values and existence on the primary family}
\label{primary-section}

Fix an original primary chain
\[
 \mathcal O(1)\xrightarrow{\psi}\mathcal O(2)^{\oplus2}
 \xrightarrow{\phi}\mathcal O(3).
\]
Thus each component pair is a basis of $H^0(\mathcal O(1))$ and
$\phi\psi$ is not identically zero. We use the reference metrics and the
full Hermitian chart of \ref{common-fields}. All constants in this section
refer to this fixed chain, its reference, and the fixed background
$\omega_X$, whose volume is $\mathcal V=2\pi$. Write
\[
 \alpha=\frac16,\qquad c=\frac56,\qquad d=\frac{11}{6},
 \qquad A_0=\frac{37}{36},\qquad W=\frac{10}{3}.
\]
Choose the one finite mass $\mu_s$ of
\ref{common-positive-continuation} before varying $\tau$, and put
$t=\tau\mu_s$. The positive-force interval is $0<\tau\le\tau_b$.
No constant below is asserted to be uniform when the original chain varies.

\begin{theorem}[Fixed-chain value bound]\label{primary-value-bound}
For every normalized $C^{2,\gamma}$ solution in the full chart
$w=(a,M,b)$, where $M$ is a Hermitian $2\times2$ field, with
$0<\tau\le\tau_b$ and $q,\beta,V>0$, there is a bound
\[
 \|a\|_\infty+\|b\|_\infty+\|M\|_{\mathrm{op},\infty}\le B.
\]
Here $B$ depends only on the fixed original primary, its reference,
background, mass, and interval, and is independent of $\tau$, the
solution, and its symbol minima.
\end{theorem}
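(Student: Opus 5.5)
The plan is a contradiction argument organized around the scalar means. Lemmas~\ref{common-negative-propagation} and~\ref{common-scalar-fluctuations} give one-sided sup bounds for \(-a\), \(-b\) and \(u=\log\operatorname{tr}e^{-M}\) in terms of their \(\omega_X\)-means, together with \(L^p\) fluctuation bounds for all three. Lemma~\ref{common-trace-bound} gives the uniform upper bound \(T\le C_T\), and Lemma~\ref{common-normalization} gives \(\int v\,\omega_X=0\). It therefore suffices to bound \(\bar a,\bar b,\bar u\) and then to convert the one-sided sup bounds into two-sided ones.

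\textbf{Step 1: pointwise legality inequalities.} The first step collects the pointwise inequalities forced by the legal domain \(q,\beta,V>0\). We have
\[
 p=e^a\operatorname{tr}(R_*e^{-M})>\alpha,\qquad
 r=e^{-b}\operatorname{tr}(P_*e^{M})<d .
\]
The condition \(V>0\) gives \(R<cI+P\), hence \(p<2c+d\). In the primary setting \(P_*\ge r_{\rm min}\Pi\) for the rank-one lower direction, and \(R_*\) is rank one with an \(x\)-dependent line. These bounds give:
\begin{itemize}
 \item \(a\ge\lambda_{\min}(M)-C\);
 \item \(b\ge\log(e^{*}_{\psi}e^{M}e_{\psi})-C\), where \(e_\psi\) is the moving unit image line of \(\psi\);
 \item an upper bound for \(a\) against the \(M\)-eigenvalue seen by the moving upper line, namely \(a\le-\log(e_\phi^{*}e^{-M}e_\phi)+C\).
\end{itemize}

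The decisive new input is the composition. Its norm in the actual metrics depends only on the endpoint variables:
\[
 |\phi\psi|^2_h=e^{a-b}|\phi\psi|^2_*\le p\,r\le(2c+d)d .
\]
Since \(|\phi\psi|^2_*=|Q|^2/(\text{positive})\) has only finitely many zeros, \(\log|\phi\psi|_*^2\) lies in every \(L^p\). This yields
\[
 a-b\le C-\log|\phi\psi|^2_*,
\]
an integrable one-sided relation between the endpoints. Such a relation is unavailable on the excluded strata.

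\textbf{Step 2: bounding the means.} Suppose a sequence of solutions has \(|\bar a|+|\bar b|+|\bar u|\to\infty\). By the fluctuation lemma, the fields \(a-\bar a\), \(b-\bar b\), \(u-\bar u\) remain bounded in \(L^p\), so on sets of almost full measure the fields track their means. Integrate the Step~1 inequalities, using the \(L^p\) control of \(\log|\phi\psi|_*\). This bounds \(\bar a-\bar b\) above, bounds \(\bar a\) below by the lower \(M\)-eigenvalue mean, and bounds \(\bar b\) below by the moving lower-line mean.

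Then insert these bounds into \(T=-\alpha a+c\operatorname{tr}M+db+p+r-p_*-r_*\le C_T\) and into \(\int v=0\) with \(v\ge T\). The only direction left open is a common drift of \((a,M,b)\) toward the Euler-type configuration, in which \(M\) splits its eigenvalues between \(e_\phi\) and \(e_\psi\) while \(a\) and \(b\) follow them.

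I expect this step to be the main obstacle. The two moving lines \(e_\phi\) (the kernel direction) and \(e_\psi\) do not commute and rotate with \(x\), so a single frozen eigenframe does not capture the escape. What I would try first is a moving spectral test: test the middle row~\eqref{common-inverse-trace-row} against the projection onto the limiting dominant eigendirection of \(M\), retaining both upper and lower kinetic terms \(K_1\). Then use the nonvanishing of \(Q\) (that is, \(e_\psi\not\subset\ker\phi\) generically) to show that the limiting direction cannot simultaneously be killed by \(\phi\) and contain \(\operatorname{im}\psi\). Combined with the integrated endpoint identities~\eqref{geo-endpoint-integrals} in their auxiliary form, this should contradict the sign of the normalization, in the same way that~\eqref{geo-euler-contradiction} did for \(Q\equiv0\).

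\textbf{Step 3: from means to sup norms.} Once the means are bounded, Lemma~\ref{common-negative-propagation} bounds \(\sup(-a)\), \(\sup(-b)\) and \(\sup u\), which gives lower bounds for \(a\), \(b\) and \(\lambda_{\min}M\). For the upper bounds:
\begin{itemize}
 \item \(T\le C_T\) together with these lower bounds controls \(c\operatorname{tr}M+db\) from above, hence bounds \(\lambda_{\max}M\) and \(b\);
 \item the inequality \(a\le-\log(e_\phi^*e^{-M}e_\phi)+C\) then bounds \(a\).
\end{itemize}
All constants depend only on the fixed chain, reference, background, mass and interval. In particular they do not depend on \(\tau\) or on the symbol minima, as required in Theorem~\ref{primary-value-bound}.
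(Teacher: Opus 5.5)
\textbf{There is a genuine gap: your Step 2 carries the whole content of the theorem, and what it contains is a hope rather than an argument.}

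Your Steps 1 and 3 are essentially sound. The identity $e^{a-b}|\phi\psi|_*^2=\operatorname{tr}(RP)\le pr$ is correct. Two small corrections:
\begin{itemize}
\item In Step 3, $T$ contains $-\alpha a$, so a lower bound on $a$ does not help directly. You must first insert $a\le\lambda_{\max}M+C$ to get $(c-\alpha)\lambda_{\max}M+c\lambda_{\min}M+db\le C$. Alternatively, shift all fields by a common constant and use the weighted comparison $T\ge q_*a+\operatorname{tr}(V_*M)+\beta_*b$.
\item The line carried by $R_*=\phi^*\phi$ is the $\phi^*$ direction, orthogonal to $\ker\phi$, not the kernel direction.
\end{itemize}

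The guiding claim of Step 2 is false: your mean constraints do not leave only an Euler-type drift. Take the isotropic profile $M\approx-\Lambda I$, $a\approx-\Lambda$, $b\approx\tfrac9{11}\Lambda$. Your composition bound only gives $A\le B$ for the normalized limits $A=\lim a/\Lambda$, $B=\lim b/\Lambda$. The normalized trace $-\alpha A+c(\vartheta-1)+dB$ vanishes at $A=\vartheta=-1$, $B=9/11$, so this profile is consistent with $T\le C_T$ and $\int v=0$. Here $M$ has no split spectrum, so no spectral test can detect it.

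The paper excludes this case by a separate mechanism:
\begin{itemize}
\item $T\le C_T$ and propagation force $\lambda_{\max}M\le-\Lambda+o(\Lambda)$.
\item Hence $r\le Ce^{-(20/11+o(1))\Lambda}$ uniformly.
\item The lower Bochner identity and the current-curvature $L^1$ budgets then give $\int K_2\to0$.
\item Integrating $\beta U_2$ yields $t\,\overline b\to(A_0-dc)/d=-3/11$, contradicting $\overline b>0$.
\end{itemize}

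In the anisotropic regime, nonvanishing of $Q$ does far less than you need. Even to speak of a limiting low eigenline $\Pi$, you need an $H^1$ bound on $M/\Lambda$. The paper gets it from the logarithmic energy identity $\int g^{-1}\operatorname{tr}(M_{\bar z}f^{-1}f_z)=\int\operatorname{tr}(M(\widehat H_1-2I))\le C\|M\|_\infty$. Nonvanishing of $\phi\psi$ then only says that the two alignments $A>-1\Rightarrow\phi\Pi=0$ and $B<\vartheta\Rightarrow(I-\Pi)\psi=0$ cannot both hold. By itself it gives no contradiction.

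When one alignment holds, $\Pi$ is a holomorphic degree-one line. The paper then sets the moving-test bound $\limsup\overline{\operatorname{tr}(E_n\widehat H_{1,n})}\le1$ against the source lower bound $\alpha+\rho+A_0/c=7/5+\rho$. The first bound needs weak holomorphicity of $\Pi$ and lower semicontinuity of the $21$ energy. The second requires $B>-1$.

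Two alternatives survive:
\begin{itemize}
\item $A=B=-1$ with $\Pi=\operatorname{im}\psi$;
\item $A=-1$, $B\ge\vartheta$, with $\Pi$ an unidentified weak projection.
\end{itemize}
Neither carries a holomorphic splitting, and the damping $t\Lambda\to\rho$ persists. So there is no analogue of the Euler-sequence Gauss--Codazzi identity behind \eqref{geo-euler-contradiction}, and your proposed route through the endpoint identities has nothing to act on.

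The paper closes both cases with reciprocal Cauchy--Schwarz allocations, $(1+\overline{\operatorname{tr}E})^2\le\overline{s+\operatorname{tr}(EV)}\;\overline{s^{-1}+\operatorname{tr}(EV^{-1})}$ with $s=q$ or $\beta$. These are played against the finite source budgets and the exact value identities for $q_n+\operatorname{tr}(E_nV_n)$ (and $q_n+\beta_n+\operatorname{tr}(E_nV_n)$). They give $37/6>14/3$ in one case and $481/170>5/2$ in the other. Your proposal has no substitute for the isotropic exclusion, the degree-one spectral exclusion, or these allocations, so the value bound is not established.
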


The issue is the possible drift of the negative middle eigenvalue as
$\tau$ tends to zero. One-sided Laplacian estimates first separate that
drift from spatial fluctuations. The matrix equation then restricts every
normalized escape to two alternatives. A scalar--matrix reciprocal
inequality excludes both alternatives using the original positive source
budgets.

\subsection{Ordered rows and total source budgets}

In the reference Hermitian frame put
\begin{align*}
 f&=e^M,& S&=f^{1/2},\\
 R&=e^aS^{-1}R_*S^{-1},&P&=e^{-b}SP_*S,\\
 p&=\operatorname{tr}R,&r&=\operatorname{tr}P,\\
 q&=p-\alpha,&\beta&=d-r,&V&=cI-R+P.
\end{align*}
The reference arrow matrices are positive of rank one, with
\[
 p_*=\operatorname{tr}R_*=\frac9{14},\qquad
 0<r_{*,\min}\le r_*(x)=\operatorname{tr}P_*(x)\le\frac3{10}.
\]
Their reference symbols satisfy
\[
 q_*=\frac{10}{21},\qquad \beta_*\ge\frac{23}{15},
 \qquad V_*\ge\delta I,\qquad\delta=\frac4{21},
 \qquad q_*+\operatorname{tr}V_*+\beta_*=W.
\]
In particular, $r_{*,\min}>0$ follows from the pointwise nonvanishing of
the original lower arrow, with no division by its composition with $\phi$.

The actual rows are
\begin{align}
 U_0&:=H_0-\alpha+ta=\frac{C_0^\tau+K_0}{q},\nonumber\\
 D&:=\widehat H_1-\alpha I+tM
      =\mathcal L_V^{-1}(C_1^\tau+K_1),\label{primary-rows}\\
 U_2&:=H_2-\alpha+tb=\frac{C_2^\tau+K_2}{\beta},\nonumber
\end{align}
where $\widehat H_1=SH_1S^{-1}$,
$\mathcal L_V(A)=(VA+AV)/2$, and
\[
 C_j^\tau=A_0I_j+\tau F_{*,j},\qquad
 C_0^\tau\ge\frac{55}{63},\qquad
 C_2^\tau\ge A_0,\qquad C_1^\tau\ge\frac{A_0}{2}I.
\]
Every $F_{*,j}$ is the fixed smooth reference residual. The actual
ordered arrow derivatives $X,Y$ of \ref{common-fields} give
\[
 K_0=\frac{XX^\dagger}{2g},\qquad
 K_1=\frac{X^\dagger X+YY^\dagger}{2g},\qquad
 K_2=\frac{Y^\dagger Y}{2g},\qquad
 \operatorname{tr}K_1=K_0+K_2.
\]
For each individual $V>0$ the inverse has the ordered representation
\begin{equation}\label{primary-lyapunov}
 \mathcal L_V^{-1}(A)
   =2\int_0^\infty e^{-sV}Ae^{-sV}\,ds,
 \qquad \mathcal L_V^{-1}(I)=V^{-1}.
\end{equation}
It preserves positive semidefiniteness. A uniform inverse norm is not
needed here.

The size estimate of \ref{common-positive-size} supplies
\begin{equation}\label{primary-size}
 t|a|,\quad t|b|,\quad t\|M\|_{\mathrm{op}}
       \le\mathcal C,\qquad\mathcal C=C_{\mathrm{size}}.
\end{equation}
Positive symbols and rank-one arrow values imply
\begin{equation}\label{primary-symbol-upper}
 \alpha<p<c+r<\frac83,\quad 0<r<d,\quad
 0<q<\frac52,\quad0<\beta<d,\quad
 0<V\le\frac83I,\quad q+\operatorname{tr}V+\beta=W.
\end{equation}
The exact trace comparison and upper bound of
\ref{common-trace-comparison} and \ref{common-trace-bound} are
\begin{align}
 T&=-\alpha a+c\operatorname{tr}M+db+p+r-p_*-r_*(x),\nonumber\\
 T&\le C_T,\qquad
 T\ge q_*a+\operatorname{tr}(V_*M)+\beta_*b.\label{primary-trace}
\end{align}
The normalization is $\int_Xv\,\omega_X=0$, where
$v=qa+\operatorname{tr}(VM)+\beta b$.

All unmarked integrals below use $\omega_X$.
The degree identities in the same curvature convention are
\[
 \int_X H_0=3\mathcal V,\qquad
 \int_X\operatorname{tr}\widehat H_1=4\mathcal V,\qquad
 \int_XH_2=\mathcal V.
\]
Integrating the positive rows and using \eqref{primary-size} gives
\begin{align}
 \int_X U_0&\le J_0=\left(\frac{17}{6}+\mathcal C\right)\mathcal V,
 &\int_X U_2&\le J_2=(c+\mathcal C)\mathcal V,\nonumber\\
 \int_X\operatorname{tr}D&\le
 J_1=\left(\frac{11}{3}+2\mathcal C\right)\mathcal V.\label{primary-source-budgets}
\end{align}
In particular,
\begin{equation}\label{primary-endpoint-reciprocals}
 \int_X\left[\frac{55/63}{q}+\frac{K_0}{q}\right]\le J_0,
 \qquad
 \int_X\left[\frac{A_0}{\beta}+\frac{K_2}{\beta}\right]\le J_2.
\end{equation}
For the middle row, cyclic trace makes $\mathcal L_V$ self-adjoint for
the real trace pairing. Since $\mathcal L_V(V^{-1})=I$,
\[
 \operatorname{tr}\mathcal L_V^{-1}(A)=\operatorname{tr}(V^{-1}A).
\]
This is a trace identity; the factors in the matrix inverse remain ordered.
Thus
\begin{equation}\label{primary-middle-reciprocals}
 \int_X\left[\frac{A_0}{2}\operatorname{tr}(V^{-1})
                 +\operatorname{tr}(V^{-1}K_1)\right]\le J_1.
\end{equation}
The estimates bound all bare reciprocal symbols and all arrow kinetics:
\[
 \int_XK_0\le\frac52J_0,\quad \int_XK_2\le dJ_2,\quad
 \int_X\operatorname{tr}K_1\le\frac83J_1.
\]

Put $C_H=(\mathcal C-\alpha)_+$. The rows imply that each actual
metric-unitary curvature is bounded below by $-C_HI$. For a Hermitian
matrix $H$ of rank $k$ with this lower bound,
$\operatorname{tr}|H|\le\operatorname{tr}H+2kC_H$. Consequently
\begin{equation}\label{primary-curvature-budgets}
 \int_X|H_0|\le(3+2C_H)\mathcal V,\quad
 \int_X\operatorname{tr}|\widehat H_1|\le(4+4C_H)\mathcal V,\quad
 \int_X|H_2|\le(1+2C_H)\mathcal V.
\end{equation}
These are total-mass estimates in the current metric. They do not assert
uniform integrability of a curvature multiplied by an arrow coefficient.

\subsection{Negative potentials and the remaining mean}

Set
\[
 \nu_0=-a,\qquad\nu_2=-b,\qquad
 \nu_1=\log\operatorname{tr}(e^{-M}).
\]
Apply Lemmas~\ref{common-negative-propagation} and
\ref{common-scalar-fluctuations} at this fixed reference, mass and
positive-force slab, with $u=\nu_1$ and $\mathcal C=C_{\mathrm{size}}$.
Their endpoint curvature and ordered inverse-metric calculations give
\begin{equation}\label{primary-one-sided-endpoints}
 \Delta\nu_0\ge-L_0,\quad
 L_0=\|H_{*,0}-\alpha\|_\infty+\mathcal C,\qquad
 \Delta\nu_2\ge-L_2,\quad L_2=c+\mathcal C.
\end{equation}
For the middle field, let $h=f^{-1}$. The ordered inverse-metric identity is
\begin{equation}\label{primary-inverse-metric}
 \Delta h=g^{-1}hf_zhf_{\bar z}h-dh-tMh
       +S^{-1}\mathcal L_V^{-1}(C_1^\tau+K_1)S^{-1}.
\end{equation}
The last term is positive. The weighted Hilbert--Schmidt inequality
proved in Lemma~\ref{common-negative-propagation}, with
$\rho_h=\operatorname{tr}h$, controls the full derivative term.
Together with \eqref{primary-size}, it gives
\begin{equation}\label{primary-one-sided-middle}
 \Delta\log\rho_h
 =\frac{\Delta\rho_h}{\rho_h}
   -g^{-1}\frac{|(\rho_h)_z|^2}{\rho_h^2}
 \ge-L_1,\qquad L_1=d+\mathcal C.
\end{equation}
This uses commutation only between $M$ and its exponential.

For a real scalar $u$, write
$\overline u=\mathcal V^{-1}\int_Xu\,\omega_X$.
Use the chordal kernel $G_0$ of \eqref{common-green-kernel}.
Put $n_\omega=\max_X(\omega_X/\omega_{\rm FS})$ and
\[
 H_X(x,y)=G_0(x,y)-\frac1{\mathcal V}
                       \int_XG_0(z,y)\,\omega_X(z).
\]
For $s=\Delta_Xu$, its mean-adjusted Green identity on the actual
background is
\begin{equation}\label{primary-green-identity}
 u(x)-\overline u
    =\int_XH_X(x,y)s(y)\,\omega_X(y).
\end{equation}
Lemma~\ref{common-negative-propagation} therefore yields, for
$\Delta_Xu\ge-L$,
\begin{equation}\label{primary-propagation}
 \sup_Xu\le\overline u+3Ln_\omega.
\end{equation}
Both the equation and the mean use $\omega_X$, not the Fubini--Study
background used to express the kernel.

The kernel-moment calculation in Lemma~\ref{common-scalar-fluctuations}
also gives, for each integer $k\ge1$,
\[
 \sup_y\|H_X(\cdot,y)\|_{L^k(\omega_X)}
 \le C_k=2n_\omega^{1/k}(2\pi)^{1/k-1}(k!)^{1/k}.
\]
Its Green mapping estimate reads
\begin{equation}\label{primary-green-lp}
 \|u-\overline u\|_{L^k(\omega_X)}
       \le C_k\|\Delta_Xu\|_{L^1(\omega_X)}.
\end{equation}
Every fixed finite real $p\ge1$ follows by H\"older's inequality with
an integer $k\ge p$.

Equations \eqref{primary-one-sided-endpoints} and
\eqref{primary-one-sided-middle} bound the Laplacian total variations of
$a,b,\nu_1$. The flat reference also gives
\[
 \Delta\operatorname{tr}M=4-\operatorname{tr}\widehat H_1,\qquad
 \|\Delta\operatorname{tr}M\|_1\le(8+4C_H)\mathcal V.
\]
Applying the same Green mapping to this additional scalar identity,
each of $a,b,\operatorname{tr}M,\nu_1$ has a uniformly bounded
$L^p$ fluctuation about its actual mean, for every fixed finite $p$.

One mean controls all negative values. At a point let
$m=\lambda_{\min}(M)$. Functional calculus for the single matrix $M$
and cyclic trace give
\[
 p=e^a\operatorname{tr}(R_*e^{-M})\le p_*e^{a-m},\qquad
 r=e^{-b}\operatorname{tr}(P_*e^M)\ge r_{*,\min}e^{m-b}.
\]
Since $p>\alpha$ and $r<d$,
\begin{equation}\label{primary-endpoint-lower}
 a>m-\log(p_*/\alpha),\qquad
 b>m-\log(d/r_{*,\min}).
\end{equation}
If $\overline{\nu_1}\le B_1$, then
\eqref{primary-propagation} gives
$m\ge-D_1$, where $D_1=B_1+3L_1n_\omega$.
For
\[
 C_-=\max\bigl(0,D_1+\max(0,\log(p_*/\alpha),
                                  \log(d/r_{*,\min}))\bigr)
\]
we have $a,b\ge-C_-$ and $M\ge-C_-I$. Shift all three fields by
this same constant. The reference comparison \eqref{primary-trace} gives
\begin{equation}\label{primary-value-reconstruction}
 \delta\bigl[(a+C_-)+\operatorname{tr}(M+C_-I)+(b+C_-)\bigr]
       \le C_T+C_-W.
\end{equation}
All summands and eigenvalues on the left are nonnegative, so this bounds
every upper value as well. For example,
$C_-+(C_T+C_-W)/\delta$ is a sufficient bound for each absolute value.
Conversely $\|M\|_{\mathrm{op},\infty}\le B$ implies
$\nu_1\le B+\log2$. We have therefore proved, on this actual solution
family,
\begin{equation}\label{primary-one-mean}
 \text{uniform full values}\quad\Longleftrightarrow\quad
          \sup\overline{\log\operatorname{tr}(e^{-M})}<\infty.
\end{equation}

\subsection{Normalized escape and its matrix projection}

Suppose that the value bound fails. By \eqref{primary-one-mean}, choose
actual solutions with
$\Lambda_n=\overline{\nu_{1,n}}\longrightarrow+\infty$.
Propagation, \eqref{primary-endpoint-lower}, and
\eqref{primary-value-reconstruction} imply
\begin{equation}\label{primary-normalized-values}
 \|a_n\|_\infty+\|b_n\|_\infty+\|M_n\|_{\mathrm{op},\infty}
            \le C(1+\Lambda_n).
\end{equation}
The size estimate gives
$t_n(\Lambda_n-\log2)\le\mathcal C$. It follows that
$t_n\to0$ and $\tau_n\to0$, since the mass is fixed and finite.
After a subsequence,
\begin{equation}\label{primary-mass-limit}
 t_n\Lambda_n\longrightarrow\rho,\qquad0\le\rho\le\mathcal C.
\end{equation}

Extract limits of the three bounded real mean ratios for $a_n,b_n$ and
$\operatorname{tr}M_n$. Their finite-$L^p$ fluctuations, together with
\[
 \nu_{1,n}=-m_n+\epsilon_n^{\rm eig},\qquad
 m_n=\lambda_{\min}M_n,\quad
 \ell_n=\lambda_{\max}M_n,\quad
 0\le\epsilon_n^{\rm eig}=\log(1+e^{-(\ell_n-m_n)})\le\log2,
\]
give constants $A,B,\vartheta$ such that
\begin{equation}\label{primary-scalar-limits}
 \frac{a_n}{\Lambda_n}\to A,\quad
 \frac{b_n}{\Lambda_n}\to B,\quad
 \frac{m_n}{\Lambda_n}\to-1,\quad
 \frac{\ell_n}{\Lambda_n}\to\vartheta
\end{equation}
strongly in every finite $L^p$ and almost everywhere after a further
subsequence. The same subsequence works for all finite $p$: only finitely
many real means were extracted, and the fluctuations are bounded for
every integer exponent. In addition to \eqref{primary-endpoint-lower},
\[
 p\ge p_*e^{a-\lambda_{\max}M},\qquad p<\frac83
\]
gives $a\le\lambda_{\max}M+\log(112/27)$. Hence
\begin{equation}\label{primary-scalar-range}
 -1\le A\le\vartheta,\qquad B\ge-1,\qquad\vartheta\ge-1.
\end{equation}

The eigenvalues alone do not control the middle eigendirections. The
flat reference End connection yields
\[
 H_1=2I-g^{-1}\partial_{\bar z}(f^{-1}f_z),\qquad
 \widehat H_1=SH_1S^{-1}.
\]
Integration by parts, retaining $M_z$ as a full complex matrix, gives
\begin{equation}\label{primary-log-energy}
 \mathcal E(M):=\int_Xg^{-1}\operatorname{tr}(M_{\bar z}f^{-1}f_z)
       =\int_X\operatorname{tr}\bigl(M(\widehat H_1-2I)\bigr).
\end{equation}
At a point diagonalize $M$ alone, with eigenvalues $m_i$. Then
\begin{equation}\label{primary-energy-coefficients}
 \operatorname{tr}(M_{\bar z}f^{-1}f_z)
    =\sum_{i,j}k_{\exp}(m_j-m_i)|(M_z)_{ij}|^2,\qquad
 k_{\exp}(s)=\frac{e^s-1}{s},\quad k_{\exp}(0)=1.
\end{equation}
All coefficients are positive, including repeated spectra, and
$k_{\exp}(s)\ge(1+|s|)^{-1}$. The row implies
$\widehat H_1-2I\ge-(d+\mathcal C)I$, while its trace has total integral
zero. Therefore
\[
 \int_X\operatorname{tr}|\widehat H_1-2I|
       \le4(d+\mathcal C)\mathcal V.
\]
Writing $\Gamma=\|M\|_{\mathrm{op},\infty}$ in
\eqref{primary-log-energy}, we obtain
\begin{equation}\label{primary-normalized-energy}
 0\le\mathcal E(M)\le C\Gamma,\qquad
 \int_X|DM|^2\le C\Gamma(1+2\Gamma).
\end{equation}
It follows from \eqref{primary-normalized-values} that
$M_n/\Lambda_n$ is bounded in $H^1$.

For clarity, the compactness needed here can be seen directly in fixed
coordinate cutoffs. Local mollification gives
$\|u-\eta_\varepsilon*u\|_2\le C\varepsilon\|Du\|_2$.
For fixed $\varepsilon$ the convolutions have bounded derivatives, and
finite-grid extraction gives a uniformly convergent subsequence. A
diagonal choice as $\varepsilon\downarrow0$ gives strong $L^2$
compactness. Consequently, after a subsequence,
\begin{equation}\label{primary-matrix-limit}
 M_n/\Lambda_n\longrightarrow U_\infty
       \quad\text{strongly in }L^2\text{ and weakly in }H^1.
\end{equation}
Its eigenvalues are the constants $-1,\vartheta$ of
\eqref{primary-scalar-limits}.

\subsection{Excluding isotropic escape}

First suppose $\vartheta=-1$. Then $U_\infty=-I$ and
$A=-1$ by \eqref{primary-scalar-range}. The exact radial identity is
\begin{align}
 Q:=v-T
  &=\operatorname{tr}\bigl(R_*k(aI-M)\bigr)
     +\operatorname{tr}\bigl(P_*k(M-bI)\bigr)\ge0,\nonumber\\
 k(y)&=ye^y-e^y+1.\label{primary-entropy}
\end{align}
In this regime $Q_n/\Lambda_n\to0$ almost everywhere. For each upper
spectral term, $(a_n-m_{i,n})/\Lambda_n\to0$ and its exponential sum
is $p_n<8/3$. For each lower term the normalized argument tends either
to zero or to a strictly negative constant, and its exponential sum is
$r_n<d$. These facts make every reference-weighted spectral summand in the
displayed entropy traces, divided by $\Lambda_n$, tend to zero.
Equation \eqref{primary-normalized-values} and the bounded arrow values
also bound $Q_n/\Lambda_n$ uniformly, so its mean tends to zero.
Using $\overline v_n=0$ and \eqref{primary-trace}, we get
\[
 0=\lim_n\overline{T_n}/\Lambda_n=\alpha-2c+dB,\qquad
 B=\frac9{11}>0.
\]

This forces uniform collapse of the lower arrow. The pointwise upper
trace estimate and $a_n\le\ell_n+\log(112/27)$ give
\begin{equation}\label{primary-isotropic-trace}
 (c-\alpha)\ell_n+cm_n+db_n\le C.
\end{equation}
Propagation gives $m_n\ge-\Lambda_n-C$ and
$b_n\ge\overline b_n-C$. Since
$\overline b_n=(9/11)\Lambda_n+o(\Lambda_n)$,
\eqref{primary-isotropic-trace} implies uniformly
$\ell_n\le-\Lambda_n+o(\Lambda_n)$. Therefore
\begin{equation}\label{primary-isotropic-arrow-collapse}
 r_n\le r_{*,\max}e^{\ell_n-b_n}
 \le C\exp\left[-\left(\frac{20}{11}+o(1)\right)\Lambda_n\right]
       \longrightarrow0\quad\text{uniformly}.
\end{equation}
The exact lower Bochner identity is
\[
 2\int_XK_{2,n}
       =\int_X\operatorname{tr}(P_n\widehat H_{1,n})
           -\int_Xr_nH_{2,n}.
\]
The current-metric curvature budgets
\eqref{primary-curvature-budgets}, together with the uniform $r_n$
bound in \eqref{primary-isotropic-arrow-collapse}, show that
$\int_XK_{2,n}\to0$.
Integrate the original bottom row after multiplying it by
$\beta_n=d-r_n$. The $r_n$ terms tend to zero by the same curvature
budgets and $t_n|b_n|\le\mathcal C$, and
$C_2^{\tau_n}\to A_0$. Thus
\[
 dc\mathcal V+d\,t_n\int_Xb_n=A_0\mathcal V+o(1),\qquad
 t_n\overline b_n\longrightarrow\frac{A_0-dc}{d}=-\frac3{11}.
\]
But $\overline b_n>0$ eventually and $t_n>0$. This contradiction
excludes isotropic escape. Hence
\begin{equation}\label{primary-anisotropic}
 \vartheta>-1.
\end{equation}

\subsection{The moving spectral test and exhaustive alternatives}

The limiting eigenspace identifies the possible escaping line, but the
curvature estimate must still be tested against the current matrix.
Testing first and passing to the limit afterwards retains the kinetic
terms that would be lost by replacing a product with the product of its
weak limits. The smooth spectral cutoff also avoids choosing an
eigenvector at repeated eigenvalues.

Write $U_\infty=-\Pi+\vartheta(I-\Pi)$. Then $\Pi$ is a rank-one
Hermitian $H^1$ projection almost everywhere. Choose one smooth
nonincreasing cutoff $\chi$, with $0\le\chi\le1$, equal to $1$ near
$-1$ and $0$ near $\vartheta$, in disjoint neighborhoods, and set
\[
 E_n=\chi(M_n/\Lambda_n).
\]
Divided differences bound $|DE_n|$ by
$C|D(M_n/\Lambda_n)|$. Thus $E_n$ is $H^1$ bounded and converges
strongly in $L^2$ to $\Pi$. It is globally defined at repeated spectra,
$0\le E_n\le I$, and commutes with $M_n$ and $S_n$.

Let the good set be where the normalized eigenvalues lie in the two
flat cutoff neighborhoods. There $E_n$ is the exact low projection and
$\Delta_n=\ell_n-m_n\ge\kappa\Lambda_n$ for a fixed $\kappa>0$.
The $12$ term of \eqref{primary-energy-coefficients} and
\eqref{primary-normalized-energy} give
\[
 \int_{\rm good}g^{-1}|(E_n)_z{}_{12}|^2
       \le\frac{C}{\kappa(e^{\kappa\Lambda_n}-1)}\longrightarrow0.
\]
Hermiticity turns this into vanishing of $(I-E_n)(E_n)_{\bar z}$ on
the good set. The bad set has measure tending to zero; its $L^1$
contribution is bounded by its square-root measure times the uniform
$H^1$ bound. Strong $L^2$ convergence of $E_n$ and weak convergence of
its derivative then give
\begin{equation}\label{primary-weak-holomorphic}
 (I-\Pi)\bar\partial\Pi=0
       \quad\text{distributionally}.
\end{equation}

The arrow values impose two further constraints:
\begin{equation}\label{primary-arrow-alignment}
 A>-1\ \Longrightarrow\ \phi\Pi=0,\qquad
 B<\vartheta\ \Longrightarrow\ (I-\Pi)\psi=0.
\end{equation}
For example, on the good set,
\[
 \|\phi E_n\|_*^2
       \le p_n e^{-(a_n-m_n)},\qquad
 \|(I-E_n)\psi\|_*^2
       \le r_n e^{-(\ell_n-b_n)}.
\]
The appropriate exponent has a strictly positive normalized limit in
each implication. Passing along the almost-everywhere limits proves
\eqref{primary-arrow-alignment}. Both implications cannot hold at once:
they would give $\phi\psi=0$ almost everywhere and hence identically.
This uses the nonzero holomorphic composition, including its possible
pointwise zeros.
If $A>-1$, we therefore have $B\ge\vartheta$ and $\Pi$ is the smooth
reference-orthogonal projection onto $\ker\phi$. If $B<\vartheta$,
$\Pi$ is the smooth projection onto $\operatorname{im}\psi$.
Both line bundles have degree one: the first is the kernel of the
surjection $\mathcal O(2)^2\to\mathcal O(3)$ and the second is the
injected $\mathcal O(1)$.

The curvature must be tested with the moving $E_n$. Its commutation with
$S_n$ gives the exact integration-by-parts identity
\begin{equation}\label{primary-spectral-test}
 \int_X\operatorname{tr}\bigl(E_n(\widehat H_{1,n}-2I)\bigr)
    =\int_Xg^{-1}\operatorname{tr}\bigl((E_n)_{\bar z}
                                         f_n^{-1}(f_n)_z\bigr).
\end{equation}
In an $M_n$ eigenbasis the coefficient of $|(M_n)_z{}_{ij}|^2$ is
the divided difference of $x\mapsto\chi(x/\Lambda_n)$ multiplied by
$k_{\exp}(m_j-m_i)$. It is nonpositive. Repeated spectra use its
continuous diagonal value. In particular, for every $n$,
\begin{equation}\label{primary-spectral-coarse}
 \int_X\operatorname{tr}(E_n\widehat H_{1,n})
       \le2\int_X\operatorname{tr}E_n.
\end{equation}
On the good set the $21$ term can be retained to give
\[
 \int_X\operatorname{tr}\bigl(E_n(\widehat H_{1,n}-2I)\bigr)
 \le-\int_{\rm good}g^{-1}(1-e^{-\Delta_n})|(E_n)_z{}_{21}|^2.
\]
The omitted bad-set terms have the same nonpositive sign. To justify
the lower semicontinuity with moving good sets, let
$Z_n=(I-E_n)(E_n)_zE_n$. This is bounded in $L^2$. Strong $L^2$
convergence of the bounded factors and weak convergence of the
derivatives give $Z_n\rightharpoonup(I-\Pi)\Pi_z\Pi$. Multiplication
by the good-set indicator preserves this weak limit: its bad-set
pairing with a fixed $L^2$ test is bounded by the uniform $L^2$ norm
times the test's $L^2$ norm on a set of vanishing measure. On the good
set $Z_n$ is exactly the $21$ derivative component, and
$1-e^{-\Delta_n}\ge1-e^{-\kappa\Lambda_n}\to1$. Weak $L^2$ lower
semicontinuity therefore yields
\begin{equation}\label{primary-spectral-energy}
 \limsup_n\int_X\operatorname{tr}(E_n\widehat H_{1,n})
       \le2\mathcal V-\mathfrak E_\Pi,\qquad
 \mathfrak E_\Pi=\int_Xg^{-1}|(I-\Pi)\Pi_z\Pi|^2.
\end{equation}
For either smooth degree-one line above, the projected reference
curvature formula is
$H_L=2-g^{-1}|(I-\Pi)\Pi_z\Pi|^2$.
Its degree integral gives $\mathfrak E_\Pi=\mathcal V$, so
\begin{equation}\label{primary-spectral-degree-one}
 \limsup_n\overline{\operatorname{tr}(E_n\widehat H_{1,n})}\le1
       \quad\text{for these two line projections}.
\end{equation}
No regularity or degree quantization for a general weak projection is
needed in this step.

Suppose now $B>-1$. On the good set,
$\operatorname{tr}(E_nP_n)\le r_{*,\max}e^{m_n-b_n}\to0$
almost everywhere; on the bad set use $0\le\operatorname{tr}(E_nP_n)<d$.
Dominated convergence gives
$\int_X\operatorname{tr}(E_nP_n)\to0$.
Since $V_n\le cI+P_n$, inverse order and the rank-one inverse formula give
\[
 V_n^{-1}\ge(cI+P_n)^{-1}
       =\frac1cI-\frac{P_n}{c(c+r_n)},\qquad
 \liminf_n\int_X\operatorname{tr}(E_nV_n^{-1})\ge\mathcal V/c.
\]
The fixed force obeys
\[
 \mathcal L_{V_n}^{-1}(C_1^{\tau_n})
  =A_0V_n^{-1}+\tau_n\mathcal L_{V_n}^{-1}(F_{*,1}).
\]
By order preservation the absolute value of the integrated $E_n$-tested
error is at most
\[
 \tau_n\|F_{*,1}\|_\infty\int_X\operatorname{tr}(V_n^{-1}),
\]
which tends to zero by \eqref{primary-middle-reciprocals}.
The kinetic inverse is positive and can be discarded in a lower bound.
Finally,
\[
 \overline{\operatorname{tr}E_n}\to1,\qquad
 \overline{\operatorname{tr}(E_nM_n/\Lambda_n)}\to-1.
\]
The row and \eqref{primary-mass-limit} consequently imply
\begin{equation}\label{primary-spectral-source-lower}
 \liminf_n\overline{\operatorname{tr}(E_n\widehat H_{1,n})}
       \ge\alpha+\rho+A_0/c=\frac75+\rho.
\end{equation}
This contradicts \eqref{primary-spectral-degree-one} whenever $\Pi$
is either of those degree-one projections.
In particular $A>-1$ is impossible, since then
$B\ge\vartheta>-1$ and $\Pi=\ker\phi$. Likewise
$-1<B<\vartheta$ is impossible, since then
$\Pi=\operatorname{im}\psi$.
Together with \eqref{primary-scalar-range} and
\eqref{primary-anisotropic}, these exclusions leave exactly
\begin{equation}\label{primary-residual-alternatives}
 \vartheta>-1,\quad A=-1,\qquad
 \begin{cases}
  B=-1,\quad\Pi=\operatorname{im}\psi,&\text{case I},\\
  B\ge\vartheta,&\text{case II}.
 \end{cases}
\end{equation}

\subsection{Reciprocal allocations exclude both alternatives}

The two remaining alternatives differ in the limiting position of the
lower arrow. Each must nevertheless satisfy the same total source
budgets. The following scalar and matrix Cauchy inequalities compare
those budgets with the available symbol means, producing a contradiction
in each case without discarding the moving projection.

We continue to test the actual moving $E_n$. Set
\[
 \varepsilon_n=\tau_n\max_j\|F_{*,j}\|_{\infty,\mathrm{op}}\to0.
\]
For all sufficiently large $n$, $\varepsilon_n<A_0$, and the forces
can be bounded before integration. From
\eqref{primary-rows} and \eqref{primary-lyapunov},
\begin{equation}\label{primary-common-source-lower}
 U_{0,n}\ge\frac{A_0-\varepsilon_n}{q_n},\qquad
 D_n\ge(A_0-\varepsilon_n)V_n^{-1},\qquad
 U_{2,n}\ge\frac{A_0-\varepsilon_n}{\beta_n}.
\end{equation}
The exact endpoint degree identities give
\[
 \overline{U_{0,n}}=\frac{17}{6}+t_n\overline a_n
        \longrightarrow\frac{17}{6}-\rho,\qquad
 \overline{U_{2,n}}=c+t_n\overline b_n\longrightarrow c+\rho B.
\]
The moving curvature estimates
\eqref{primary-spectral-coarse} and
\eqref{primary-spectral-degree-one} give respectively
\begin{equation}\label{primary-moving-source-upper}
 \limsup_n\overline{\operatorname{tr}(E_nD_n)}
 \le\begin{cases}c-\rho,&\text{case I},\\d-\rho,&\text{case II}.
 \end{cases}
\end{equation}
These follow directly from
$D_n=\widehat H_{1,n}-\alpha I+t_nM_n$ and the preceding bounded
value limits. They never replace a moving curvature probe by its weak
limit multiplied by curvature.

Since $A=-1<\vartheta$, on the good set
\[
 \operatorname{tr}((I-E_n)R_n)
       \le p_*e^{a_n-\ell_n}\longrightarrow0
       \quad\text{almost everywhere}.
\]
Its global bound is $p_n<8/3$, and the bad set has vanishing measure.
Thus
\begin{equation}\label{primary-upper-high-value}
 \overline{\operatorname{tr}((I-E_n)R_n)}\longrightarrow0.
\end{equation}
In case II, $B\ge\vartheta>-1$, so the same argument with
$e^{m_n-b_n}$ and $r_n<d$ gives
\begin{equation}\label{primary-lower-low-value}
 \overline{\operatorname{tr}(E_nP_n)}\longrightarrow0
       \quad\text{in case II}.
\end{equation}
In case I the high component of $P_n$ is retained, with its sign,
in the allocation below.

For a positive scalar field $s$, a positive Hermitian matrix field $V$,
and a positive semidefinite field $E$, integrated Cauchy--Schwarz gives
\begin{equation}\label{primary-reciprocal-allocation}
 (1+\overline{\operatorname{tr}E})^2
 \le\overline{s+\operatorname{tr}(EV)}\,
       \overline{s^{-1}+\operatorname{tr}(EV^{-1})}.
\end{equation}
To see the matrix order explicitly, use the direct sum of scalar and
matrix $L^2$ spaces with
\[
 \mathcal A=(\sqrt s,V^{1/2}E^{1/2}),\qquad
 \mathcal B=(s^{-1/2},V^{-1/2}E^{1/2}).
\]
Their inner product has integrand $1+\operatorname{tr}E$, and their
squared norms have integrands
$s+\operatorname{tr}(EV)$ and
$s^{-1}+\operatorname{tr}(EV^{-1})$. After division by $\mathcal V^2$
this is \eqref{primary-reciprocal-allocation}.
The matrices $E,V$ need not commute and $E$ need not be a projection.
Scalar Cauchy--Schwarz also gives
\begin{equation}\label{primary-scalar-reciprocal}
 1\le\overline s\;\overline{s^{-1}}.
\end{equation}
All integrals used here are finite by the original budgets
\eqref{primary-endpoint-reciprocals} and
\eqref{primary-middle-reciprocals}.

In case II the exact value identity is
\[
 q_n+\operatorname{tr}(E_nV_n)
   =c\operatorname{tr}E_n-\alpha
      +\operatorname{tr}((I-E_n)R_n)+\operatorname{tr}(E_nP_n).
\]
Equations \eqref{primary-upper-high-value} and
\eqref{primary-lower-low-value} imply
\[
 \overline{q_n+\operatorname{tr}(E_nV_n)}\longrightarrow
                 c-\alpha=\frac23.
\]
Apply \eqref{primary-reciprocal-allocation} with $s=q_n$ and then
\eqref{primary-common-source-lower}. We obtain
\[
 \overline{U_{0,n}+\operatorname{tr}(E_nD_n)}
 \ge(A_0-\varepsilon_n)
       \frac{(1+\overline{\operatorname{tr}E_n})^2}
       {\overline{q_n+\operatorname{tr}(E_nV_n)}}.
\]
Its lower limit is at least $4A_0/(c-\alpha)=37/6$.
The actual degree and moving-source upper bounds instead give
\[
 \limsup_n\overline{U_{0,n}+\operatorname{tr}(E_nD_n)}
       \le\frac{17}{6}+d-2\rho
       =\frac{14}{3}-2\rho\le\frac{28}{6}<\frac{37}{6}.
\]
This excludes case II without assigning a degree to its weak projection.

In case I the combined value identity is
\[
 q_n+\beta_n+\operatorname{tr}(E_nV_n)
  =c\operatorname{tr}E_n+d-\alpha
    +\operatorname{tr}((I-E_n)R_n)-\operatorname{tr}((I-E_n)P_n).
\]
The last trace is nonnegative since $I-E_n,P_n\ge0$, whether or not
they commute. Thus
\begin{equation}\label{primary-case-i-allocation-upper}
 \limsup_n\overline{q_n+\beta_n+\operatorname{tr}(E_nV_n)}
       \le c+d-\alpha=\frac52.
\end{equation}
Before dividing, note that $\beta_n<d$ and
\eqref{primary-common-source-lower} imply
\[
 c-\rho=\lim_n\overline{U_{2,n}}
       \ge A_0/d>0.
\]
Hence $0\le\rho<c$. The actual source upper bounds now imply
\[
 \limsup_n\overline{q_n^{-1}}\le\frac{17/6-\rho}{A_0},\qquad
 \limsup_n\overline{\beta_n^{-1}+\operatorname{tr}(E_nV_n^{-1})}
       \le\frac{2(c-\rho)}{A_0}.
\]
The common coefficient $A_0-\varepsilon_n$ in
\eqref{primary-common-source-lower} controls the force error before
these limits are taken. Both denominators are positive.
Apply \eqref{primary-scalar-reciprocal} to $q_n$ and
\eqref{primary-reciprocal-allocation} to $\beta_n$ to obtain
\begin{align*}
 \liminf_n\overline q_n&\ge\frac{A_0}{17/6-\rho},\\
 \liminf_n\overline{\beta_n+\operatorname{tr}(E_nV_n)}
        &\ge\frac{2A_0}{c-\rho}.
\end{align*}
Both right sides increase with $\rho\in[0,c)$. Already at $\rho=0$
their sum is
\begin{equation}\label{primary-case-i-contradiction}
 \frac{A_0}{17/6}+\frac{2A_0}{c}
   =\frac{37}{102}+\frac{37}{15}
   =\frac{481}{170}=\frac52+\frac{28}{85}>\frac52.
\end{equation}
This contradicts \eqref{primary-case-i-allocation-upper}.
The proof uses only bounded value limits and finite source budgets;
the high-$P$ term has its original favorable sign throughout.

\begin{proof}[Conclusion of Theorem \ref{primary-value-bound}]
Every unbounded sequence would admit the normalized extraction above.
Isotropic escape is excluded by the bottom row, the two degree-one
spectral exclusions reduce anisotropic escape exactly to
\eqref{primary-residual-alternatives}, and the reciprocal allocations
exclude both alternatives. Thus the middle mean in
\eqref{primary-one-mean} is bounded uniformly. The reconstruction
\eqref{primary-value-reconstruction} bounds both endpoints and both
middle eigenvalues, proving the full field bound.
\end{proof}

\subsection{The zero-parameter limit for each primary chain}

\begin{theorem}[Primary existence]\label{primary-existence}
Every original primary chain admits a smooth positive Hermitian metric
triple solving the frozen reduced dHYM equation at
$\theta_4=\pi-\arctan6$ and satisfying its strict calibration inequality
in all three blocks.
\end{theorem}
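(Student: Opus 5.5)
The plan is to assemble the common machinery of Section~\ref{common-section} with the value bound just proved. Fix an original primary chain, its reference \eqref{common-reference} and the single finite mass $\mu_s$ of Lemma~\ref{common-anchor}. Theorem~\ref{common-positive-continuation} supplies a connected set of smooth normalized positive-symbol auxiliary solutions through $(1,0)$ whose projection is all of $(0,1]$. In particular there is an actual solution $w_n$ at every $\tau_n\downarrow0$. I will restrict attention to $\tau_n\le\tau_b$, where the forces satisfy $C_0^\tau\ge c_0$, $C_2^\tau\ge A_0$ and $C_1^\tau\ge A_0I/2$.

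First, uniform values. Theorem~\ref{primary-value-bound} applies to every such normalized $C^{2,\gamma}$ solution with positive symbols. It gives one bound $B$, independent of $\tau$ and of the symbol minima, on $\|a\|_\infty+\|b\|_\infty+\|M\|_{\mathrm{op},\infty}$.

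Second, gradients and gaps. Take $c_1=A_0/2$ and the fixed mass $\mu_s$, and apply Theorem~\ref{common-gradient-gaps} in the primary setting on $0<\tau\le\tau_b$. This gives $\|Dw\|_\infty\le G_*$ together with $q\ge\delta_q$, $\beta\ge\delta_\beta$ and $V\ge\delta_VI$, uniformly along the sequence. Here I only need to check its hypotheses: positive symbols hold by construction, normalization holds by Lemma~\ref{common-normalization}, the value bound holds by step one, and the force bound holds by the choice of $\tau_b$.

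Third, the endpoint. With uniform values, first derivatives and gaps in hand, Theorem~\ref{common-original-endpoint} yields a subsequence converging strongly in $C^{2,\gamma}$ to $w_0$. Reconstruct $h_0=h_{*0}e^{a}$, $h_1=h_{*1}e^{M}$, $h_2=h_{*2}e^{b}$ as smooth positive metrics on the original bundles, with the arrows $\phi,\psi$ unchanged. These solve $\widehat F(w_0)=0$, which is exactly the phase equation in \eqref{geo-equation}. That theorem also gives $\mathcal B_j>I/3$ in each block, and hence $\mathcal C_j=\mathcal B_j/\sin\theta_4>0$, which is strict calibration. Square-root transport by $e^{w_j/2}$ is a conjugation, so the untransported residual $F_j(h)$ also vanishes. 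Smoothness follows from the bootstrap in the same theorem.

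The substantive obstacle, the value bound, was already handled in Theorem~\ref{primary-value-bound}. What remains delicate here is bookkeeping: the mass, the interval $\tau_b$ and the reference used in the value bound, the gradient theorem and the continuation must all be the same fixed data. Since $\mu_s$ was chosen once, before any cutoff $\tau_-$, this consistency holds.
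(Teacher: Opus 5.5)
Your proposal is correct and follows essentially the same route as the paper. Continuation at the fixed mass $\mu_s$ gives solutions on $(0,1]$; Theorem~\ref{primary-value-bound} bounds the values on $(0,\tau_b]$; Theorem~\ref{common-gradient-gaps} with $c_1=A_0/2$ gives gradients and gaps; and Theorem~\ref{common-original-endpoint} gives the smooth, strictly calibrated undamped limit. The one point the paper states explicitly and you leave implicit is that the primary gradient argument needs the positive lower reference norm $r_*\ge r_{*,\min}>0$, which comes from pointwise nonvanishing of the original lower arrow rather than from the composition.
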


\begin{proof}
Fix the chain. Keep its same reference, one exponent $\gamma\in(0,1)$,
and the single prospectively chosen finite strong mass $\mu_s$.
The construction of \ref{common-positive-continuation} gives actual
normalized smooth positive-symbol solutions at every $\tau\in(0,1]$.
Theorem \ref{primary-value-bound} bounds every such solution on
$(0,\tau_b]$. On precisely this interval the middle force satisfies
$C_1^\tau\ge A_0I/2$. Therefore the values-conditioned estimate
\ref{common-gradient-gaps}, with $B$ from
Theorem \ref{primary-value-bound} and $c_1=A_0/2$, gives a uniform full
reference gradient bound and positive constants
\[
 q\ge\delta_q,\qquad\beta\ge\delta_\beta,\qquad V\ge\delta_VI
\]
for these same solutions. All its fields, reference residuals, and
normalization agree with \eqref{primary-rows}. The lower reference norm
is positive because the original lower arrow is nonvanishing; a
pointwise nonzero composition is not required.

Choose any sequence $\tau_n\downarrow0$ in $(0,\tau_b]$ and a solution
at each parameter. At the same fixed reference and finite mass, the
preceding values, gradients and symbol gaps satisfy
Theorem~\ref{common-original-endpoint}. It gives a subsequence
converging strongly in $C^{2,\gamma}$ in the original chart,
reconstructs smooth positive metrics on the original bundles with their
maps fixed, and preserves the single normalization. The damping and
reference-forcing terms vanish because the mass is fixed and the values
are bounded. The theorem's ordered endpoint rows and phase calculation
then give the undamped equation and strict calibration in all three
blocks.
\end{proof}

The construction is performed separately for every original primary
coefficient pair. It requires neither a common metric witness for
different chains nor a discriminant or orbit restriction.

\section{The rank-one lower arrow}
\label{lower-setup}

Fix an original chain for which the upper coefficient matrix has rank two,
the lower coefficient matrix has rank one, and the composition is nonzero.
Write the arrows in vertex order as
\[
 \mathcal O(1)\xrightarrow{\psi}\mathcal O(2)^2
 \xrightarrow{\phi}\mathcal O(3).
\]
There is a nonzero constant vector $v$ and a nonzero section
$\sigma\in H^0(\mathcal O(1))$ such that $\psi=v\sigma$.
Its saturated image is the line $L=\mathcal O(2)v$.
Let $p$ be the simple zero of $\sigma$, and let $q_0$ be the simple
zero of $\phi|_L$.  Thus the composition has divisor $p+q_0$.
The arguments below allow $p=q_0$; neither separation nor a change of
the original coefficients is used.  We use $q_0$ for this point to
distinguish it from the positive scalar symbol $q$ and its endpoint
value $q^{(0)}$.

The aim is to bound all six metric logarithms as $\tau\downarrow0$ on
the positive-force slab.  Lemma~\ref{lower-mean-transfer} reduces this
to a bound for the single mean
$\overline{\log\operatorname{tr}(e^{-M})}$.  If it escapes,
Lemma~\ref{lower-residual-extraction} leaves a fixed-line residual along
$L$. The original restriction divisor forces atomic source mass;
subtracting that mass from the source budgets before estimating their
diffuse parts gives
$k^\circ+\rho\le88/233$, where $k^\circ$ is the normalized punctured
upper-kernel kinetic mass and $\rho\ge0$ is the limiting damping magnitude.
The punctured derivative calculation, original angular flux and full
lower-arrow derivative instead give $k^\circ\ge2/3$; see
\eqref{lower-kernel-allocation-bound} and
\eqref{lower-final-incompatibility}.  These incompatible bounds exclude
escape in both divisor configurations.  The resulting global value bound,
not merely a punctured limit, permits the original smooth endpoint.

\subsection{A reference metric that is regular at the lower zero}
\label{lower-reference}

Use the lower-boundary instance of the construction
\eqref{common-reference}, computed for this fixed chain, and write
$h'_j=h_{*,j}$ and $F'_j=F_{*,j}$. The upper components form a basis,
so the quotient denominator is positive everywhere. The scale $s$ uses
the lower norm $\rho_{\mathrm{ref}}=|\psi|^2$ and covariant derivative
$k_{\mathrm{ref}}=|D\psi|^2/2$ in the preliminary metric
$h_{2,\mathrm{base}}=\ell$, including $D\psi(p)$.
The reference has middle curvature $2I$, lower curvature $1$, and flat
endomorphism connection. The symbol bounds of the common reference construction
specialize to
\[
 p_*:=\operatorname{tr}R_* =\frac9{14},\qquad
 r_*:=\operatorname{tr}P_* =\frac{\rho_{\mathrm{ref}}}{s},\qquad
 0\le r_*\le\frac3{10},\qquad r_*(p)=0.
\]
In particular,
\[
 q_*:=p_*-\frac16=\frac{10}{21},\qquad
 \beta_*:=\frac{11}{6}-r_*\ge\frac{23}{15},\qquad
 V_*:=\frac56I-R_*+P_*\ge\frac4{21}I.
\]
For $A_0=37/36$, the lower residual of this reference is
\[
 F'_2=\left(\frac{11}{6}-r_*\right)\frac56
          -\frac{37}{36}-\frac{k_{\mathrm{ref}}}{s}
       =\frac12-
          \frac{(5/6)\rho_{\mathrm{ref}}+k_{\mathrm{ref}}}{s}
       \ge\frac14.
\]
The upper reference calculation gives $C_0^\tau\ge55/63$, and
$C_2^\tau\ge A_0$ for $0\le\tau\le1$.
If $F'_1=0$, put $T_+=1$; otherwise put
\[
 T_+=\min\{1,A_0/(2\|F'_1\|_{\infty})\}.
\]
Then $C_1^\tau=A_0I+\tau F'_1\ge A_0I/2$ on this slab.
Fix $\tau_b=T_+/2$ and choose the single finite mass $\mu_s$
in the order required by the common analytic construction, before
choosing any symbol floors.  All estimates below concern this fixed
chain, reference, mass and slab $0<\tau\le\tau_b$.
There is no positive global lower bound for $r_*$.

\subsection{Recovering all values from one scalar mean}
\label{lower-mean-reduction}

For a normalized positive-symbol solution, let
\[
 f=e^M,\quad S=f^{1/2},\quad
 R=e^aS^{-1}R_*S^{-1},\quad P=e^{-b}SP_*S,
\]
\[
 p_1=\operatorname{tr}R,\quad r=\operatorname{tr}P,\quad
 q=p_1-\alpha,\quad \beta=d-r,\quad V=cI-R+P,
 \qquad \alpha=\frac16,\quad c=\frac56,\quad d=\frac{11}{6}.
\]
In particular $p_1>\alpha$ and $r<d$, while $r(p)=0$.
For a scalar function $g$ write
$\overline g=(2\pi)^{-1}\int_Xg\,\omega_X$.
The parameter-uniform Green and radial estimates used here are
\begin{align}
 u&=\log\operatorname{tr}(e^{-M}),\qquad L_0=\overline u,
       &\sup_Xu&\le L_0+C_u,\label{lower-green-u}\\
 b&\ge\overline b-C_b,
       &\int_X|b-\overline b|\,\omega_X&\le K_b,
          \label{lower-green-b}\\
 q_*a+\operatorname{tr}(V_*M)+\beta_*b&\le T,
       &T&\le C_T.\label{lower-radial-upper}
\end{align}
For solutions of the forced equations,
Lemmas~\ref{common-negative-propagation} and
\ref{common-scalar-fluctuations} give \eqref{lower-green-u} and
\eqref{lower-green-b}; Lemmas~\ref{common-trace-comparison} and
\ref{common-trace-bound} give
\eqref{lower-radial-upper}. Their inputs are the same fixed reference,
mass, positive-force slab and normalization. Neither scalar lemma
divides by $\psi$ or requires a positive minimum of $r_*$.
Here $L_0$ denotes the mean of $u$, not a Laplacian-bound constant.
In particular, the bottom row gives
$\Delta(-b)\ge-(c+C_{\mathrm{size}})$.
Since its Laplacian has integral zero, the mean-adjusted Green
kernel gives both assertions in \eqref{lower-green-b}, not just
the pointwise lower bound.

\begin{lemma}[Positive-area transfer of the lower mean]
\label{lower-mean-transfer}
Under \eqref{lower-green-u}--\eqref{lower-radial-upper}, all six
real field values are bounded uniformly on the fixed positive
parameter slab if and only if $L_0$ is bounded above there.
More precisely, there is a fixed $C_-$ such that
\[
 a,b\ge-L_0-C_-,\qquad M\ge-(L_0+C_-)I.
\]
If $L_0$ tends to $+\infty$ along a sequence, then
\[
 \|a\|_\infty+\|b\|_\infty+\|M\|_{\mathrm{op},\infty}
       \le C(1+L_0)
\]
along its tail.  Constants may depend on the fixed chain and
reference; no uniformity across varying chains is asserted.
\end{lemma}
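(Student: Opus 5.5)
The plan follows the primary reduction behind \eqref{primary-one-mean}. The one new point is the lower bound for $b$. In the primary case that bound came pointwise from $r\ge r_{*,\min}e^{m-b}$, and here $r_*(p)=0$, so that pointwise estimate is unavailable. I will instead obtain it on a fixed set of positive area and spread it to all of $X$ with the $L^1$ fluctuation bound in \eqref{lower-green-b}. Throughout, write $m=\lambda_{\min}M$.

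\emph{Middle and top lower bounds.} Since $\operatorname{tr}(e^{-M})\ge e^{-m}$, we have $u\ge-m$ pointwise. Then \eqref{lower-green-u} gives $m\ge-\sup_Xu\ge-L_0-C_u$ everywhere, which is the bound $M\ge-(L_0+C_u)I$. For the top field I use functional calculus for the single matrix $M$ and cyclic trace: $p_1=e^a\operatorname{tr}(R_*e^{-M})\le p_*e^{a-m}$. Combined with $p_1>\alpha$ this gives $a>m-\log(p_*/\alpha)\ge-L_0-C_u-\log(p_*/\alpha)$. Neither step involves $\psi$.

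\emph{Bottom lower bound by positive-area transfer.} Fix the set $\Omega=\{x:r_*(x)\ge\tfrac12\max_Xr_*\}$. It is closed, has positive $\omega_X$-area, and depends only on the fixed reference, because $\rho_{\rm ref}\not\equiv0$ is smooth. On $\Omega$ I use $r=e^{-b}\operatorname{tr}(P_*e^M)\ge r_*e^{m-b}$ and $r<d$ to get
\[
 b>m-\log\bigl(2d/\max_Xr_*\bigr)\ge-L_0-C'
\]
on $\Omega$. Integrating $\overline b\ge b-|b-\overline b|$ over $\Omega$ and using the $L^1$ fluctuation bound in \eqref{lower-green-b} gives
\[
 |\Omega|\,\overline b\ge\int_\Omega b-K_b,
\]
so $\overline b\ge-L_0-C'-K_b/|\Omega|$. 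The pointwise propagation $b\ge\overline b-C_b$ in \eqref{lower-green-b} then gives $b\ge-L_0-C''$ on all of $X$, including at $p$. Taking $C_-$ to be the maximum of the three constants proves the displayed lower bounds. I expect this to be the main point requiring care. What makes it work is that the kernel estimates of Lemmas~\ref{common-negative-propagation} and~\ref{common-scalar-fluctuations} control $b$ through its mean with fixed constants and never divide by $r_*$.

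\emph{Upper bounds and the equivalence.} Put $\Lambda=\max(L_0+C_-,0)$ and shift all three fields by $\Lambda$. The shifted scalars $a+\Lambda$, $b+\Lambda$ and the eigenvalues of $M+\Lambda I$ are then nonnegative. The reference weights in \eqref{lower-radial-upper} satisfy $q_*,\beta_*\ge\delta$ and $V_*\ge\delta I$ with $\delta=4/21$, and they sum to $W$. Hence
\[
 \delta\bigl[(a+\Lambda)+\operatorname{tr}(M+\Lambda I)+(b+\Lambda)\bigr]\le C_T+\Lambda W,
\]
exactly as in \eqref{primary-value-reconstruction}. This bounds every upper value by $\Lambda+(C_T+\Lambda W)/\delta$, which is $\le C(1+L_0)$ once $L_0$ is large, and gives the stated growth along any escaping sequence. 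Conversely, uniformly bounded values give $L_0\le\|M\|_{\mathrm{op},\infty}+\log2$. This proves the equivalence. All constants depend only on the fixed chain, reference, mass and slab.
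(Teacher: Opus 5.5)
Your proposal is correct and follows the paper's proof essentially step for step: the same bounds $m\ge -L_0-C_u$ and $a\ge m-\log(p_*/\alpha)$, the same positive-area set where $r_*$ exceeds half its maximum (combined with the $L^1$ fluctuation bound and the pointwise propagation of $b$), and the same shifted reference-weight comparison with $\delta=4/21$ and $W=10/3$. The only differences are cosmetic: you use a closed rather than an open $\Omega$, and a single per-solution shift $\max(L_0+C_-,0)$ where the paper uses separate shifts for the bounded and escaping cases.
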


\begin{proof}
Let $m=\lambda_{\min}(M)$.  Since $u\ge-m$,
\eqref{lower-green-u} gives $m\ge-L_0-C_u$.
Functional calculus and the positive trace pairing give
\[
 p_1\le p_*e^{a-m}.
\]
Using $p_1>\alpha$ we obtain
\[
 a\ge-L_0-C_u-\log(p_*/\alpha).
\]

The reference lower arrow is not identically zero.  Consequently
\[
 r_\Omega=\frac12\max_Xr_*>0,\qquad
 \Omega=\{r_*>r_\Omega\},\qquad
 m_\Omega=\int_\Omega\omega_X>0.
\]
This set is fixed independently of the solution and parameter.
On $\Omega$,
\[
 r=e^{-b}\operatorname{tr}(P_*e^M)
       \ge r_\Omega e^{m-b}.
\]
The inequality $r<d$ therefore implies
\[
 b\ge-L_0-C_\Omega\quad\hbox{on }\Omega,
 \qquad C_\Omega=C_u+\log(d/r_\Omega).
\]
Integrating over $\Omega$ and using the fluctuation estimate gives
\begin{align*}
 m_\Omega\overline b
 &=\int_\Omega b\,\omega_X
       -\int_\Omega(b-\overline b)\,\omega_X\\
 &\ge m_\Omega(-L_0-C_\Omega)-K_b.
\end{align*}
It follows from \eqref{lower-green-b} that
\[
 b\ge-L_0-C_\Omega-K_b/m_\Omega-C_b
       \quad\hbox{everywhere on }X.
\]
Enlarge a nonnegative $C_-$ to cover all three lower bounds.
No estimate has divided by the lower arrow at $p$.

Suppose $L_0\le B_1$ and put $D=\max\{0,B_1+C_-\}$.
The shifted scalars and matrix are nonnegative.  The reference
weights satisfy
\[
 q_*,\beta_*\ge\delta,\quad V_*\ge\delta I,
 \quad \delta=\frac4{21},\qquad
 q_*+\operatorname{tr}V_*+\beta_*=W=\frac{10}{3}.
\]
Thus \eqref{lower-radial-upper} implies
\[
 \delta\bigl((a+D)+\operatorname{tr}(M+DI)+(b+D)\bigr)
       \le C_T+DW.
\]
Each nonnegative summand, including each shifted eigenvalue of
$M$, is bounded by this sum.  This proves the full value bound.
Conversely, $\|M\|_{\mathrm{op},\infty}\le B$ gives
$u\le B+\log2$ and hence $L_0\le B+\log2$.
Finally use $D=L_0+C_-$ on a tail where it is nonnegative in the
same calculation to obtain the asserted linear escape bound.
\end{proof}

\subsection{Budgets on original solutions and exhaustive escape extraction}
\label{lower-original-budgets}

Write $\mathcal V=\int_X\omega_X=2\pi$, $t=\tau\mu_s$, and
$A_0=37/36$.  All unmarked integrals below use $\omega_X$.
The actual positive rows are
\begin{equation}\label{lower-rows}
\begin{aligned}
 U_0&=H_0-\alpha+ta=(C_0^\tau+K_0)/q,\\
 D&=\widehat H_1-\alpha I+tM
       =\mathcal L_V^{-1}(C_1^\tau+K_1),\\
 U_2&=H_2-\alpha+tb=(C_2^\tau+K_2)/\beta,
\end{aligned}
\qquad C_j^\tau=A_0I_j+\tau F'_{*,j}.
\end{equation}
Here $X,Y$ are the actual covariant arrow derivatives in current
unitary frames, with the same middle Chern connection, and
\[
 K_0=XX^*/(2g),\qquad K_1=(X^*X+YY^*)/(2g),\qquad
 K_2=Y^*Y/(2g),\qquad \operatorname{tr}K_1=K_0+K_2.
\]
The reference derivatives belong to $X,Y$, including $D\psi(p)$.
Positive symbols imply the global, zero-safe inequalities
\begin{equation}\label{lower-symbol-upper}
 \alpha<p_1<\frac83,\quad 0\le r<d,\quad
 0<q<\frac52,\quad 0<\beta\le d,\quad
 0<V\le\frac83I,\qquad q+\operatorname{tr}V+\beta=W=\frac{10}{3}.
\end{equation}
In particular $\beta(p)=d$ is allowed.

Lemma~\ref{common-positive-size} gives
$t\max(|a|,|b|,\|M\|_{\rm op})\le\mathcal C$, with
$\mathcal C=C_{\rm size}$ for this boundary reference.
Integrating \eqref{lower-rows}, using the degree integrals
$\int H_0=3\mathcal V$, $\int\operatorname{tr}\widehat H_1=4\mathcal V$,
$\int H_2=\mathcal V$, yields
\begin{equation}\label{lower-source-budgets}
\begin{aligned}
 \int U_0&\le J_0=(17/6+\mathcal C)\mathcal V,\\
 \int U_2&\le J_2=(c+\mathcal C)\mathcal V,\\
 \int\operatorname{tr}D&\le J_1=(11/3+2\mathcal C)\mathcal V.
\end{aligned}
\end{equation}
The ordered inverse is
\[
 \mathcal L_V^{-1}N=2\int_0^\infty e^{-sV}Ne^{-sV}\,ds,
 \qquad \operatorname{tr}(\mathcal L_V^{-1}N)
            =\operatorname{tr}(V^{-1}N).
\]
Its positivity and the force bounds give
\begin{equation}\label{lower-reciprocal-budgets}
\begin{aligned}
 \int[(55/63)/q+K_0/q]&\le J_0,\\
 \int[A_0/\beta+K_2/\beta]&\le J_2,\\
 \int[(A_0/2)\operatorname{tr}V^{-1}
                  +\operatorname{tr}(V^{-1}K_1)]&\le J_1.
\end{aligned}
\end{equation}
Consequently $\int K_0\le(5/2)J_0$, $\int K_2\le dJ_2$.
All current curvatures are bounded below by $-C_HI$, where
$C_H=(\mathcal C-\alpha)_+$.  For a rank-$k$ Hermitian matrix with
this lower bound, $\operatorname{tr}|H|\le\operatorname{tr}H+2kC_H$.
Thus
\begin{equation}\label{lower-curvature-budgets}
 \int|H_0|\le(3+2C_H)\mathcal V,\quad
 \int\operatorname{tr}|\widehat H_1|\le(4+4C_H)\mathcal V,\quad
 \int|H_2|\le(1+2C_H)\mathcal V.
\end{equation}
These are total-mass bounds; no uniform integrability of moving
projection--curvature products is asserted.

The Green estimates of Lemmas~\ref{common-negative-propagation} and
\ref{common-scalar-fluctuations} give all finite $L^s$ fluctuations of
$a,b,u$.  The identity
$\Delta\operatorname{tr}M=4-\operatorname{tr}\widehat H_1$ and
\eqref{lower-curvature-budgets} give the same bound for
$\operatorname{tr}M$.  For the logarithmic eigenvalues $m\le n$ of $M$,
\[
 m=-u+\epsilon,\qquad
 0\le\epsilon=\log(1+e^{-(n-m)})\le\log2,\qquad
 n=\operatorname{tr}M-m.
\]
They therefore also have bounded finite $L^s$ fluctuations.

\begin{lemma}[The only possible normalized escape]\label{lower-residual-extraction}
If the original solution family has unbounded full values, a common
subsequence has $L_n=\overline u_n\to\infty$, $t_n\to0$,
$\tau_n\to0$, and $t_nL_n\to\rho\ge0$, with
\begin{equation}\label{lower-residual-profiles}
 a_n/L_n,\ b_n/L_n,\ m_n/L_n\to-1,\qquad
 n_n/L_n\to\vartheta>-1
\end{equation}
strongly in every finite $L^s$ and almost everywhere.  Moreover
$\|w_n\|_\infty\le C(1+L_n)$, and smooth nonincreasing low spectral
probes $E_n=\chi(M_n/L_n)$ converge strongly in $L^2$, weakly in $H^1$,
to the constant reference projection $\Pi_L$ onto $L$.
The escape parameter $\vartheta$ is distinct from the phase $\theta_4$.
\end{lemma}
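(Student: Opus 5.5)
The plan follows the primary escape analysis of Section~\ref{primary-section} step by step. Each step that used the positive lower minimum $r_{*,\min}$ is replaced either by the fixed positive-area set $\Omega$ of Lemma~\ref{lower-mean-transfer} or by an argument involving only upper bounds on $r$.

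\emph{Extraction.} Suppose the values are unbounded. Lemma~\ref{lower-mean-transfer} then gives a sequence with $L_n=\overline u_n\to\infty$ and $\|w_n\|_\infty\le C(1+L_n)$. Since $-m_n\ge u_n-\log2$, the size estimate gives $t_n(L_n-\log 2)\le\mathcal C$. Hence $t_n\to0$, and so $\tau_n\to0$ because $\mu_s$ is fixed, and after a subsequence $t_nL_n\to\rho\in[0,\mathcal C]$. The fluctuation bounds for $a,b,\operatorname{tr}M,u$ recorded just before the lemma let me extract finitely many mean ratios. This gives constants $A,B,\vartheta$ with
\[
 a_n/L_n\to A,\quad b_n/L_n\to B,\quad m_n/L_n\to-1,\quad n_n/L_n\to\vartheta
\]
in every $L^s$ and almost everywhere. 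The bounds of Lemma~\ref{lower-mean-transfer}, together with $p_1<8/3$ (which gives $a\le n+\log(112/27)$), yield $-1\le A\le\vartheta$, $B\ge-1$ and $\vartheta\ge-1$. The energy identity \eqref{primary-log-energy} uses only the flat middle reference and the middle row, so it transfers verbatim. It gives an $H^1$ bound for $M_n/L_n$, a strong $L^2$ limit $U_\infty$ with eigenvalues $-1,\vartheta$, and, when $\vartheta>-1$, spectral probes $E_n=\chi(M_n/L_n)\to\Pi$ with $(I-\Pi)\bar\partial\Pi=0$, exactly as in \eqref{primary-weak-holomorphic}.

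\emph{Excluding isotropy, $\phi\Pi=0$, and case II.} For $\vartheta=-1$ I repeat the primary argument.
\begin{itemize}
\item The entropy $Q=v-T$ normalized by $L_n$ tends to zero. Together with $\overline v_n=0$ this forces $B=9/11$.
\item The bound $r\le r_{*,\max}e^{n-b}$ needs no lower bound on $r_*$, so $r_n\to0$ uniformly.
\item The lower Bochner identity and \eqref{lower-curvature-budgets} then give $\int K_{2,n}\to0$.
\item The $\beta$-weighted bottom row gives $t_n\overline b_n\to-3/11$, contradicting $\overline b_n>0$.
\end{itemize}

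So $\vartheta>-1$. The alignment \eqref{primary-arrow-alignment} holds as before. Since $\psi=v\sigma$ with $\sigma\neq0$ almost everywhere, the implication $B<\vartheta$ gives $\Pi=\Pi_L$, which is constant. Both implications together are impossible, because $\phi|_L\not\equiv0$.

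If $A>-1$, then $\Pi$ is the projection onto $\ker\phi$, which has degree one, and $B\ge\vartheta>-1$. The source lower bound \eqref{primary-spectral-source-lower} holds there because it only needs $\int\operatorname{tr}(E_nP_n)\to0$, and that follows from $r\le r_{*,\max}e^{m-b}$. It contradicts the degree-one bound \eqref{primary-spectral-degree-one}. Hence $A=-1$.

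Case II ($B\ge\vartheta$) is excluded by the reciprocal allocation with $s=q_n$: that argument never touches $r_{*,\min}$, giving $37/6>14/3-2\rho$.

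\emph{Main obstacle.} What remains is $A=-1$ and $-1\le B<\vartheta$, with $\Pi=\Pi_L$. The lemma asserts that in fact $B=-1$, so I must exclude $-1<B<\vartheta$. The primary argument is unavailable here: $L$ is a constant line of degree two with vanishing second fundamental form in the flat reference, so the spectral test only gives $\limsup\overline{\operatorname{tr}(E_n\widehat H_{1,n})}\le2$. Against \eqref{primary-spectral-source-lower} this yields only $\rho\le3/5$.

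My first attempt is to use the lower arrow as in the isotropic step. On the good set, $B>-1$ gives $\operatorname{tr}(E_nP_n)\to0$, and $B<\vartheta$ gives $\operatorname{tr}((I-E_n)P_n)\le r_{*,\max}e^{n_n-b_n}\cdot e^{-(n_n-b_n)}$-type decay. Hence $r_n\to0$ in $L^1$ and almost everywhere. Combined with the curvature budgets, the $\beta$-weighted bottom row should again give $\rho B=(A_0-dc)/d=-3/11$, which forces $B<0$ and $\rho>0$.

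In parallel I will compute the limit of $\overline{T_n}/L_n$ and of the entropy mean, using the normalization $\overline v_n=0$ and the trace comparison \eqref{lower-radial-upper}. This produces a second linear relation between $B$ and $\vartheta$. I will also integrate the top row, which gives $\overline{U_0}\to17/6-\rho$, and apply the scalar reciprocal inequality to $q_n$ and $\beta_n$.

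I expect these three relations, together with $\rho\le3/5$, to be inconsistent for $B\in(-1,0)$. Checking that this is so, without dividing by $\psi$ near $p$, is the step I expect to be hardest. If it fails, the fallback is to show that the only surviving configuration is the fixed-line residual with $B=-1$ recorded in \eqref{lower-residual-profiles}.
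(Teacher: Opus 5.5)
Up to the exclusion of $A>-1$, your route is the paper's. The final step, however, has a genuine gap, and it is one you do not need.

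You treat $-1<B<\vartheta$ as a new obstacle and leave it as an expectation with a fallback, so as written the proposal never proves $B=-1$. The first attempt you sketch also fails. From $B<\vartheta$ you get no decay of $\operatorname{tr}((I-E_n)P_n)$. On the good set this high component equals $e^{n_n-b_n}\|(I-E_n)\psi\|_*^2$, and its weight $e^{n_n-b_n}$ tends to infinity. The bound $r_n<d$ only forces the reference coefficient $(I-\Pi)\psi$ to vanish in the limit; that is the alignment \eqref{primary-arrow-alignment}. The current high component itself stays merely bounded by $d$. So $r_n\to0$ in $L^1$ is unjustified, and the bottom-row relation $\rho B=-3/11$ does not follow.

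The missing observation is that your case~II reciprocal argument never used $B\ge\vartheta$. Its inputs are as follows:
\begin{itemize}
\item Since $A=-1<\vartheta$, you have $\overline{\operatorname{tr}((I-E_n)R_n)}\to0$.
\item Any $B>-1$ gives $\overline{\operatorname{tr}(E_nP_n)}\to0$, from $\operatorname{tr}(E_nP_n)\le r_{*,\max}e^{m_n-b_n}$ on the good set together with the vanishing bad set.
\item Together these give $\overline{q_n+\operatorname{tr}(E_nV_n)}\to c-\alpha=2/3$.
\item The upper bound $\limsup\overline{\operatorname{tr}(E_nD_n)}\le d-\rho$ comes from the coarse moving test \eqref{primary-spectral-coarse}. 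That test holds for every $n$ whatever the limiting projection is, including the constant degree-two $\Pi_L$, and needs no degree-one Gauss--Codazzi identity.
\end{itemize}
The Cauchy inequality \eqref{lower-hilbert-cauchy} then yields $37/6\le 14/3-2\rho$. This is a contradiction for every $B>-1$ at once, including $-1<B<\vartheta$.

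This is exactly how the paper closes the lemma. It excludes all $B>-1$ in one step, concludes $B=-1<\vartheta$, and then uses the lower alignment with $\sigma\ne0$ off $p$ to identify $\Pi=\Pi_L$.
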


\begin{proof}
Lemma~\ref{lower-mean-transfer} gives $L_n\to\infty$ and the full
linear value bound after selecting solutions with increasing values.
The size estimate gives $t_n(L_n-\log2)\le\mathcal C$; hence
$t_n\to0$ and $\tau_n=t_n/\mu_s\to0$ because the mass is fixed.
Extract $t_nL_n\to\rho$ and the finitely many normalized means.
The fluctuation bounds give
\[
 a_n/L_n\to A,\quad b_n/L_n\to B,\quad
 m_n/L_n\to-1,\quad n_n/L_n\to\vartheta\ge-1.
\]
All convergence is strong in finite $L^s$ and almost everywhere.
The lower bounds give $A,B\ge-1$.  Since
$p_1\ge p_*e^{a-n}$ and $p_1<8/3$,
$a\le n+\log(112/27)$, so $A\le\vartheta$.

The flat reference gives the ordered energy identity
\begin{equation}\label{lower-logarithmic-energy}
 \int g^{-1}\operatorname{tr}(M_{\bar z}f^{-1}f_z)
    =\int\operatorname{tr}[M(\widehat H_1-2I)]\ge0.
\end{equation}
In an $M$-eigenbasis its integrand has coefficients
$k_{\exp}(s)=(e^s-1)/s$, with value $1$ at zero and
$k_{\exp}(s)\ge(1+|s|)^{-1}$.  The curvature budget bounds the
energy by $C\|M\|_\infty$, whence
\[
 \int|DM|^2\le C\|M\|_\infty(1+2\|M\|_\infty).
\]
Thus $M_n/L_n$ is $H^1$ bounded.  Local mollification has error
$C\varepsilon\|D(M_n/L_n)\|_2$; fixed mollifications have a
finite-grid convergent subsequence.  A diagonal choice as
$\varepsilon\downarrow0$ gives strong $L^2$, weak $H^1$ compactness.

If $\vartheta=-1$, then $A=-1$.  The normalized radial entropy
\[
 (v-T)/L_n=L_n^{-1}\{
 \operatorname{tr}[R_*k(aI-M)]+
 \operatorname{tr}[P_*k(M-bI)]\},\quad
 k(y)=ye^y-e^y+1,
\]
tends to zero almost everywhere.  The upper normalized arguments
tend to zero, and the lower ones tend to $-1-B\le0$.
The bounded exponential sums $p_1<8/3$, $r<d$, together with the
normalized value bound, dominate these terms, also where $P_*=0$.
Since $\overline v=0$, the limiting trace equation yields
$0=\alpha-2c+dB$, or $B=9/11$.
The upper bound for $T$ and $a\le n+\log(112/27)$ give
$(c-\alpha)n+cm+db\le C$.  The global lower bounds for $m$ and
$b\ge\overline b-C_b$ imply $n\le-L_n+o(L_n)$ uniformly.
Therefore
\[
 r_n\le\max r_*\,e^{n_n-b_n}
       \le C e^{-(20/11+o(1))L_n}\to0
\]
uniformly.  The actual lower Bochner identity
\[
 2\int K_2=\int\operatorname{tr}(P\widehat H_1)-\int rH_2
\]
and the current-curvature budgets give $\int K_2\to0$.
Integrating the undivided bottom row gives
$dc\mathcal V+dt_n\int b_n=A_0\mathcal V+o(1)$, hence
$t_n\overline b_n\to-3/11$.  This contradicts $B=9/11$ and $t_n>0$.

Now $\vartheta>-1$.  The matrix limit is
$-\Pi+\vartheta(I-\Pi)$ for a rank-one Hermitian $H^1$ projection.
Choose a fixed smooth nonincreasing $\chi$ equal to $1$ near $-1$
and $0$ near $\vartheta$.  Smooth divided differences at repeated
spectra make $E_n=\chi(M_n/L_n)$ $H^1$ bounded and strongly $L^2$
convergent to $\Pi$.  The exact moving test is
\begin{equation}\label{lower-spectral-test}
 \int\operatorname{tr}(E_n\widehat H_{1,n})
 =2\int\operatorname{tr}E_n+
      \int g^{-1}\operatorname{tr}[(E_n)_{\bar z}f_n^{-1}(f_n)_z].
\end{equation}
Every coefficient in the last integrand is a nonpositive divided
difference of $\chi$, multiplied by a positive exponential divided
difference.  On the good set where the normalized eigenvalues lie
in their separated plateaus, retain its $21$ term:
\[
 \int g^{-1}\operatorname{tr}[(E_n)_{\bar z}f_n^{-1}(f_n)_z]
 \le-\int_{\rm good}g^{-1}(1-e^{-(n_n-m_n)})
                         |(E_n)_z{}_{21}|^2.
\]
The bad set has vanishing measure; its terms are discarded with
their nonpositive sign.  The $L^2$-bounded fields
$(I-E_n)(E_n)_zE_n$ converge weakly to $(I-\Pi)\Pi_z\Pi$.
Multiplication by the good-set indicator preserves this weak limit:
pair a fixed $L^2$ test with the bad set and use its vanishing
$L^2$ norm there.  Weak lower semicontinuity therefore gives
\begin{equation}\label{lower-spectral-degree}
 \limsup\overline{\operatorname{tr}(E_n\widehat H_{1,n})}
       \le2-\mathfrak E_\Pi/\mathcal V,
 \qquad \mathfrak E_\Pi=\int g^{-1}|(I-\Pi)\Pi_z\Pi|^2.
\end{equation}

Bounded arrow values on the good set give the implications
\[
 A>-1\Longrightarrow\phi\Pi=0,\qquad
 B<\vartheta\Longrightarrow(I-\Pi)\psi=0.
\]
For example a strict normalized exponential gap forces the corresponding
reference arrow coefficient to zero; strong $L^2$ probe convergence
then passes that bounded coefficient.  Both constraints cannot hold,
because $\phi\psi$ is nonzero.  If $A>-1$, then
$\Pi=\Pi_{\ker\phi}$ and $B\ge\vartheta>-1$.
The kernel is the smooth holomorphic $\mathcal O(1)$ line; its
reference Gauss--Codazzi degree identity gives $\mathfrak E_\Pi=\mathcal V$.
Thus \eqref{lower-spectral-degree} has upper bound $1$.
On the other hand its low-$P$ values tend to zero in $L^1$, and
\[
 V^{-1}\ge(cI+P)^{-1}=c^{-1}I-P/[c(c+r)].
\]
The positive bare force and reciprocal budget give the lower bound
$\alpha+\rho+A_0/c=7/5+\rho$, a contradiction.  Therefore $A=-1$.

If $B>-1$, bounded high-$R$ and low-$P$ values tend to zero in $L^1$.
Hence $\overline{q+\operatorname{tr}(E_nV)}\to c-\alpha=2/3$.
The spectral test and the top degree give
\[
 \limsup\overline{\operatorname{tr}(E_nD_n)}\le d-\rho,
 \qquad\overline U_{0,n}\to17/6-\rho.
\]
For any positive $E$, scalar--matrix Hilbert Cauchy gives
\begin{equation}\label{lower-hilbert-cauchy}
 (1+\overline{\operatorname{tr}E})^2
 \le\overline{q+\operatorname{tr}(EV)}\,
       \overline{q^{-1}+\operatorname{tr}(EV^{-1})}.
\end{equation}
Use the Hilbert vectors $E^{1/2}V^{1/2}$ and $E^{1/2}V^{-1/2}$
and cyclic trace, together with $q^{1/2},q^{-1/2}$ in the scalar
slot.  No $EV=VE$ is required.  Since $C_j^\tau\to A_0I_j$,
the reciprocal budgets absorb the force errors.  The lower
top-plus-middle source bound is $4A_0/(2/3)=37/6$, whereas its
upper bound is $17/6+d-2\rho\le28/6$.  This contradiction excludes
every $B>-1$.  We have $B=-1<\vartheta$, so the lower-arrow
constraint identifies $\Pi=\Pi_L$ off $p$, hence almost everywhere
with the constant reference projection onto the saturated $\mathcal O(2)$
line.  This proves the stated exhaustive residual.
\end{proof}

\subsection{Physical line potentials and mandatory divisor mass}
\label{lower-physical-potentials}

Assume henceforth the residual of Lemma~\ref{lower-residual-extraction}.
Choose a constant reference-unitary basis with the fixed lower coefficient
vector $v=e_1$, absorbing its fixed length in $\sigma$.  This basis
choice imposes no restriction on the full metric.  Let $\Pi$ denote
the actual metric-orthogonal projection onto $L$, in current unitary
coordinates, and put $Q_L=I-\Pi$, $J=R/p_1$.
The projection $J$ is the upper quotient projection; none of these
physical projections is identified with the reference probe $E_n$.
Set
\[
 \ell_L=\log f_{11},\qquad \ell_Q=\operatorname{tr}M-\ell_L,
 \qquad H_L=2-\Delta\ell_L,\qquad H_Q=2-\Delta\ell_Q.
\]
Both physical line bundles have degree two.  With the current
second-fundamental energy $\kappa_L\ge0$, Gauss--Codazzi gives
\begin{equation}\label{lower-gauss-codazzi}
 \operatorname{tr}(\Pi\widehat H_1)=H_L+\kappa_L,
 \quad \operatorname{tr}(Q_L\widehat H_1)=H_Q-\kappa_L,
 \quad 2K_{2,\perp}=r\kappa_L.
\end{equation}
The last identity is valid at $p$ without division by $r$.
The lower curvature bound and $\int H_Q=2\mathcal V$ imply
$\int\kappa_L\le(d+\mathcal C)\mathcal V$.
For the actual holomorphic upper kernel,
$2K_{0,\perp}=p_1\kappa_K$; $p_1\ge\alpha$ bounds $\int\kappa_K$.

The fixed rank-one inverse potential retains the full connection.
Let $h=f^{-1}$ and $s=e_2^*he_2=f_{11}/\det f$, so
$\log s=-\ell_Q$ and $\ell_L=\operatorname{tr}M+\log s$.
The exact inverse row is
\[
 \Delta h=g^{-1}hf_zhf_{\bar z}h-dh-tMh+S^{-1}DS^{-1}.
\]
For $k=he_2$, $s_z=-k^*f_zk$, and ordered Cauchy gives
\[
 |k^*f_zk|^2\le(k^*h^{-1}k)(k^*f_zhf_{\bar z}k)
                =s\,e_2^*hf_zhf_{\bar z}he_2.
\]
Thus $\Delta\log s\ge-(d+\mathcal C)$.  The same mean-adjusted
Green kernel supplies $\ell_Q\ge\overline{\ell_Q}-C$ and all its
finite $L^s$ fluctuations.  It also supplies fluctuations of $\ell_L$.
The exact lower norm relation
\begin{equation}\label{lower-physical-lower-norm}
 r=r_*e^{\ell_L-b}
\end{equation}
and $\ell_L\ge m$ show
$\ell_{L,n}/L_n\to-1$, $\ell_{Q,n}/L_n\to\vartheta$ strongly
in finite $L^s$ and almost everywhere; $r_*>0$ is used only off $p$.

Take a common subsequence of the bounded scalar symbol means and write
\[
 s_\ell=\lim\overline{\operatorname{tr}(E_nP_n)},\qquad
 s_h=\lim\overline{\operatorname{tr}((I-E_n)P_n)}.
\]
The high-$R$ values tend to zero in $L^1$, and the original degree
and spectral identities yield
\begin{equation}\label{lower-residual-source-totals}
 \overline U_{0,n}\to17/6-\rho,\qquad
 \overline U_{2,n}\to c-\rho,\qquad
 \limsup\overline{\operatorname{tr}(E_nD_n)}\le d-\rho.
\end{equation}
Equation~\eqref{lower-hilbert-cauchy} therefore gives
\begin{equation}\label{lower-positive-low-mass}
 s_\ell\ge\frac{4A_0}{14/3-2\rho}-\frac23\ge\frac3{14}.
\end{equation}
The denominator is positive by the finite reciprocal-source budget;
the bottom row independently gives $0\le\rho\le3/11$.

These positive low values prevent a relative-mean escape.  For the actual
low spectral projection $J_{\rm low}$,
\[
 P_{\rm low}
  =r\frac{e^m|J_{\rm low}e_1|^2}{e_1^*e^Me_1}
       \le d e^{m-\ell_L}.
\]
If $\overline{\ell_L-m}\to\infty$, its bounded $L^1$ fluctuation
would make $P_{\rm low}\to0$ in measure, then in $L^1$ by boundedness,
contrary to \eqref{lower-positive-low-mass}.  Thus
$0\le\overline{\ell_L-m}\le C$.
The integrable inequality $\ell_L-b\le\log(d/r_*)$ off $p$ bounds
its mean above.  A mean tending to $-\infty$ would make $r\to0$
in measure and in $L^1$, the same contradiction; hence
$|\overline{\ell_L-b}|\le C$.
Finally $p_1\le p_*e^{a-m}$ bounds $a-m$ below.
On a fixed positive-area region with $\phi|_L\ne0$, low-projection
convergence supplies a subset of uniformly positive area on which
$a-m\le C$; its bounded $L^1$ fluctuation bounds its mean above.
Consequently
\begin{equation}\label{lower-relative-means}
 |\overline{a-m}|+|\overline{b-m}|
             +|\overline{\ell_L-m}|\le C,
 \qquad \overline{n-\ell_Q}=\overline{\ell_L-m}.
\end{equation}

Let $u_*=|\phi|_L|_*^2$.  Its sole simple-section zero is $q_0$.
The actual restriction norm is $U=e^{a-\ell_L}u_*\le p_1<8/3$.
The trace quantity obeys
\[
 \Delta T=(1-\tau)f_*+tv,\qquad |tv|\le W\mathcal C,
\]
so its Green representation gives $\operatorname{osc}T\le C$.
The norm terms in $T$ are bounded; hence
\[
 | -\alpha(a-\bar a)+c(\ell_L-\bar\ell_L)
       +c(\ell_Q-\bar\ell_Q)+d(b-\bar b)|\le C.
\]
The centered lower bounds for $a,b,m,\ell_Q$ and
\eqref{lower-relative-means} also bound $\ell_L$ below.
Insert $a-\bar a\le\ell_L-\bar\ell_L+C-\log u_*$ in this
bounded linear combination.  Since $c-\alpha=2/3$, the resulting
upper bounds are
\begin{equation}\label{lower-logarithmic-values}
\begin{aligned}
 a-\bar a&\le C-\tfrac54\log u_*,&
 \ell_L-\bar\ell_L&\le C-\tfrac14\log u_*,\\
 \ell_Q-\bar\ell_Q&\le C-\tfrac15\log u_*,&
 b-\bar b&\le C-\tfrac1{11}\log u_*.
\end{aligned}
\end{equation}
They give full relative value bounds on each compact subset of
$X^\circ=X\setminus\{q_0\}$.  If $p\ne q_0$, this domain includes
the lower zero with its actual derivative.

In the next lemma and its proof, a subscript infinity on a logarithmic
metric potential denotes its centered subsequential limit: each
potential's own mean is subtracted before taking the subsequence.
This convention includes the minimum eigenvalue of the middle logarithm.
The bounded differences of means are extracted separately.

The original divisor masses use the curve form of Poincar\'e--Lelong
\cite[Ch.~V, \S13, equation (13.2)]{Demailly2012}: with our Laplacian
normalization, a simple zero contributes mass \(2\pi\).
The subsequent coupled atomic comparisons are proved here, not supplied
by that identity.

\begin{lemma}[Mandatory original divisor atoms]\label{lower-divisor-atoms}
On a common measure subsequence, let
\[
 \mathfrak a_0,\quad\mathfrak a_L,\quad
 \mathfrak a_Q,\quad\mathfrak a_2
\]
be the scalar curvature atoms at $q_0$, and let $\mathfrak a_m$ be the
atom of the distribution $2-\Delta m_\infty$.  Then
\begin{equation}\label{lower-atom-laws}
\begin{gathered}
 \mathfrak a_m=\mathfrak a_0-\mathcal V\ge0,\qquad
 \mathfrak a_L\ge\mathfrak a_m,\qquad
 \mathfrak a_L,\mathfrak a_Q,\mathfrak a_2\ge0,\\
 \alpha\mathfrak a_0=c(\mathfrak a_L+\mathfrak a_Q)
                                      +d\mathfrak a_2,\qquad
 \mathcal V\le\mathfrak a_0\le\tfrac54\mathcal V.
\end{gathered}
\end{equation}
For $p\ne q_0$, also $\mathfrak a_2\ge\mathfrak a_L$ and
$\mathfrak a_0\le(16/15)\mathcal V$.
For $p=q_0$, the valid comparison is
$\mathfrak a_L\le\mathfrak a_2+\mathcal V$.
\end{lemma}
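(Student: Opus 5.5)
The plan is to read off every atom from pointwise inequalities between centered logarithmic potentials near $q_0$. The underlying principle is the local Green--Riesz representation of Lemma~\ref{common-local-potentials}. If $w_\infty$ is a distributional limit with $\Delta w_\infty$ a signed measure carrying an atom $\kappa\delta_{q_0}$, then $w_\infty=(\kappa/2\pi)\log|z|^2+(\text{less singular})$ near $q_0$. Consequently:
\begin{itemize}
\item a bound $w_\infty\le C$ near $q_0$ forces $\kappa\ge0$;
\item a bound $|w_\infty|\le C$ forces $\kappa=0$;
\item a bound $w_\infty\le C-\lambda\log u_*$ forces $\kappa\ge-2\pi\lambda=-\lambda\mathcal V$, since $u_*$ has a simple zero at $q_0$.
\end{itemize}
The curvature budgets \eqref{lower-curvature-budgets} make $H_{0,n}$, $\operatorname{tr}\widehat H_{1,n}$ and $H_{2,n}$ uniformly bounded in mass. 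I therefore first extract one common subsequence along which these measures, the Gauss--Codazzi pieces $H_L,H_Q$ of \eqref{lower-gauss-codazzi}, and $2-\Delta m_n$ converge weakly. For $2-\Delta m_n$ I use the one-sided bound $\Delta(-m)\ge-L_1$ that follows from $\Delta u\ge-L_1$ and $m=-u+\epsilon$ with bounded $\epsilon$. The finite-$L^s$ fluctuation bounds then identify these limits with $2-\Delta$ of the centered potentials $a_\infty,\ell_{L,\infty},\ell_{Q,\infty},b_\infty,m_\infty$. The atoms $\mathfrak a_\bullet$ are the masses at $q_0$ of these limit measures. Nonnegativity of $\mathfrak a_L,\mathfrak a_Q,\mathfrak a_2$ comes from the uniform lower curvature bound $-C_H$, which has no atomic part.

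Next come the linear laws. The bounded centered trace combination, obtained from $\operatorname{osc}T\le C$, has limit with zero atom, so $-\alpha\mathfrak a_0+c(\mathfrak a_L+\mathfrak a_Q)+d\mathfrak a_2$ vanishes after accounting for the sign convention $H=2-\Delta(\cdot)$. This is exactly $\alpha\mathfrak a_0=c(\mathfrak a_L+\mathfrak a_Q)+d\mathfrak a_2$.

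For the restriction norm $U=e^{a-\ell_L}u_*\le 8/3$, the function $a_\infty-\ell_{L,\infty}+\log u_*$ is bounded above. Poincar\'e--Lelong \cite[Ch.~V, \S13]{Demailly2012} contributes mass $\mathcal V=2\pi$ from $\log u_*$, which yields an inequality between $\mathfrak a_0$, $\mathfrak a_L$ and $\mathcal V$.

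The equality $\mathfrak a_m=\mathfrak a_0-\mathcal V$ is where I will pair two comparisons:
\begin{itemize}
\item the upper bound $p_1\le p_*e^{a-m}$, whose bounded mean \eqref{lower-relative-means} with $p_1>\alpha$ bounds $a-m$ below;
\item the restriction identity, which bounds $a-m$ above up to $-\log u_*$, because along the residual $\Pi\to\Pi_L$ and $m\le\ell_L$ with bounded relative mean.
\end{itemize}
Together these show $a_\infty-m_\infty+\log u_*$ is bounded near $q_0$ in both directions, which gives equality of atoms shifted by $\mathcal V$. Then $\mathfrak a_m\ge0$ from the one-sided bound on $-m$ yields $\mathfrak a_0\ge\mathcal V$, and $\ell_L\ge m$ with bounded mean difference yields $\mathfrak a_L\ge\mathfrak a_m$. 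The upper bound $\mathfrak a_0\le\tfrac54\mathcal V$ is read off the coefficient $\tfrac54$ in \eqref{lower-logarithmic-values}, applied to $a-\bar a\le C-\tfrac54\log u_*$.

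The divisor cases use the exact lower relation \eqref{lower-physical-lower-norm}, $r=r_*e^{\ell_L-b}<d$, so $\ell_{L,\infty}-b_\infty+\log r_*$ is bounded above.
\begin{itemize}
\item If $p\ne q_0$, then $r_*(q_0)>0$ and $\log r_*$ is smooth at $q_0$, giving $\mathfrak a_2\ge\mathfrak a_L$. Substituting this and $\mathfrak a_Q\ge0$, $\mathfrak a_L\ge\mathfrak a_0-\mathcal V$ into the trace law should give $\alpha\mathfrak a_0\ge(c+d)(\mathfrak a_0-\mathcal V)$. Rearranged, this is $\mathfrak a_0\le\tfrac{c+d}{c+d-\alpha}\mathcal V=\tfrac{16}{15}\mathcal V$.
\item If $p=q_0$, then $\log r_*$ has its own simple zero there, contributing mass $\mathcal V$, and the same argument only gives $\mathfrak a_L\le\mathfrak a_2+\mathcal V$.
\end{itemize}

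The main obstacle is the eigenvalue potential $m$. It is only Lipschitz in $M$ and obeys a one-sided Laplacian inequality, not an equation. Its atom must be compared with $\mathfrak a_0$ with equality, not merely an inequality. I would first try to replace $m$ by the smooth inverse-potential quantities $\ell_L$ and $\log s=-\ell_Q$, which satisfy the exact one-sided inequality $\Delta\log s\ge-(d+\mathcal C)$. I would then transfer back using $m\le\ell_L$, $m\ge-u$, and the bounded relative means, controlling the discrepancy by its $L^1$ fluctuation. This needs care so that no atom is created or lost in the limit; I will localize with a fixed small disk around $q_0$ and test against a cutoff of $\log|z|^2$.
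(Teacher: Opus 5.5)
\textbf{There is a genuine gap at the central equality $\mathfrak a_m=\mathfrak a_0-\mathcal V$.} Without it you do not get the mandatory bound $\mathfrak a_0\ge\mathcal V$, which is the point of the lemma and is used later as $A_{\rm at}\ge1$.

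The rest of your plan is sound. This covers the linear atom law from the atomless trace combination, $\mathfrak a_L\ge\mathfrak a_m$ from $\ell_L\ge m$, $\mathfrak a_L\ge\mathfrak a_0-\mathcal V$ from $e^{a-\ell_L}u_*\le 8/3$, and both divisor configurations. Reading $\mathfrak a_0\le\frac54\mathcal V$ directly off \eqref{lower-logarithmic-values} is a legitimate shortcut for the paper's use of the linear law.

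Your two comparisons do not give two-sided boundedness of $a_\infty-m_\infty+\log u_*$:
\begin{itemize}
\item The first, $\alpha<p_1\le p_*e^{a-m}$, bounds $a-m$ below only by a constant. The crude inequality discards exactly the factor $u_*$ that carries the divisor. So it gives $\mathfrak a_m\le\mathfrak a_0$, and no lower bound on $a_\infty-m_\infty+\log u_*$.
\item The second controls $a-\ell_L+\log u_*$, not $a-m+\log u_*$. The inequality $m\le\ell_L$ points the wrong way for an upper bound on $a-m$. A bounded mean of $\ell_L-m$ gives no pointwise control near $q_0$: only $\ell_L-m\le C-\frac14\log u_*$ is available, since $e^{\ell_L}=e^m|E_{\rm low}e_1|^2+e^n|(I-E_{\rm low})e_1|^2$ with $n-m\to\infty$.
\end{itemize}
Even after repairing the upper side you would only get $\mathfrak a_0-\mathcal V\le\mathfrak a_m\le\mathfrak a_0$. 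That does not force $\mathfrak a_0\ge\mathcal V$.

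\textbf{The missing step} is to pass to the limit in the exact spectral splitting
\[
 p_1=e^{a-m}\operatorname{tr}(R_*E_{\rm low})
      +e^{a-n}\operatorname{tr}\bigl(R_*(I-E_{\rm low})\bigr).
\]
By Lemma~\ref{lower-residual-extraction}, $A=-1<\vartheta$, so the high term tends to zero almost everywhere. The low projection tends to $\Pi_L$, so $\operatorname{tr}(R_*E_{\rm low})\to\operatorname{tr}(R_*\Pi_L)=u_*$. Hence $p_{1,\infty}=e^{a_\infty-m_\infty+\delta_a}u_*$ with $\alpha\le p_{1,\infty}\le 8/3$. Equivalently $m_\infty=a_\infty+\delta_a+\log u_*-\log p_{1,\infty}$, where $\log p_{1,\infty}$ is bounded and therefore atomless. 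Poincar\'e--Lelong applied to $\log u_*$ then gives the equality. It is the lower bound $p_{1,\infty}\ge\alpha$ at a zero of $u_*$ that forces the atom.

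\textbf{Two smaller corrections.}
\begin{itemize}
\item A merely bounded $\epsilon=\log(1+e^{-(n-m)})$ gives no one-sided bound on $\Delta m_n$. What is true is that $\epsilon_n\to0$ in every finite $L^s$ by the normalized spectral separation. So $m_\infty=-u_\infty$, and $2-\Delta m_\infty=2+\Delta u_\infty\ge 2-L_1$ is a measure. This removes the obstacle you anticipate, with no detour through $\ell_L$ or $\log s$.
\item By \eqref{lower-gauss-codazzi}, $H_L=\operatorname{tr}(\Pi\widehat H_1)-\kappa_L$ is not bounded below. So $\mathfrak a_L\ge0$ must come from the centered lower bound on $\ell_L$, or from $\mathfrak a_L\ge\mathfrak a_m\ge0$, not from the curvature floor $-C_H$.
\end{itemize}
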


\begin{proof}
The uniform scalar Laplacian total variations give strong compactness of
centered potentials in every finite $L^s$.  To see this for the actual
Green kernel, truncate the chordal logarithm to a continuous bounded
kernel.  Its omitted $L^s$ tail tends to zero uniformly in the source
point, because
$\int|G_0|^k\omega_{\rm FS}=(2\pi)^{1-k}k!$.
Minkowski controls this omitted potential for bounded-variation measures;
weak measure convergence gives uniform convergence for the truncated
kernel.  Subtracting the actual means preserves the argument.
For $m=-u+\epsilon$, the normalized spectral separation makes
$\epsilon_n\to0$ in finite $L^s$; thus the same compactness applies
to $m$.  Extract the bounded relative-mean differences as well.

The bounded high-$R$ values disappear.  The limiting upper norm is
therefore
\[
 p_{1,\infty}=e^{a_\infty-m_\infty+\delta_a}u_*,\qquad
 \alpha\le p_{1,\infty}\le8/3,
\]
where $\delta_a=\lim(\bar a-\bar m)$.
Its exact logarithmic Bochner identity is
\begin{equation}\label{lower-upper-log-bochner}
 \Delta\log p_1=\operatorname{tr}(J\widehat H_1)-H_0+\kappa_K.
\end{equation}
The curvature and kinetic budgets bound its Laplacian variation.
Its bounded limiting logarithm has no Laplacian atom.  Indeed a
finite-measure Laplacian atom of mass $\lambda$ produces circular
average $(\lambda/2\pi)\log r^2+o(|\log r|)$ by the logarithmic
kernel; dominated convergence handles the non-atomic remainder.
A bounded potential therefore has no atom, and a potential bounded
below has nonpositive Laplacian atoms, or nonnegative curvature atoms.

The identity
$m_\infty=a_\infty+\delta_a+\log u_*-\log p_{1,\infty}$
has the original simple divisor $\mathcal V\delta_{q_0}$ and gives
$\mathfrak a_m=\mathfrak a_0-\mathcal V$.
The centered lower bounds make all displayed curvature atoms
nonnegative.  Since $\ell_L\ge m$ with bounded relative means,
the same circular-average comparison gives
$\mathfrak a_L\ge\mathfrak a_m$.
The bounded trace combination has no atom, giving the linear atom law.
It implies $(c-\alpha)\mathfrak a_0\le c\mathcal V$, hence the
upper bound $5\mathcal V/4$.

If $p\ne q_0$, $r_*>0$ near $q_0$, and
$b-\ell_L\ge\log(r_*/d)$ there.  Thus
$\mathfrak a_2\ge\mathfrak a_L\ge\mathfrak a_0-\mathcal V$;
the linear law gives
$(c+d-\alpha)\mathfrak a_0\le(c+d)\mathcal V$,
or $\mathfrak a_0\le16\mathcal V/15$.
If $p=q_0$, the simple lower divisor instead permits
$\mathfrak a_L\le\mathfrak a_2+\mathcal V$.
The estimates \eqref{lower-logarithmic-values} give bounded centered
potentials off $q_0$, so the positive scalar-source atoms are localized
there.  This does not yet exclude non-atomic concentration.

For later comparisons retain the actual product atom
$\mathfrak b_R$ of $\operatorname{tr}(J\widehat H_1)\omega_X$.
Positivity of $D$ and bounded damping imply
$0\le\mathfrak b_R\le\mathfrak a_L+\mathfrak a_Q$.
Taking atoms in \eqref{lower-upper-log-bochner} gives
\begin{equation}\label{lower-kernel-atom}
 \mathfrak k_K=\mathfrak a_0-\mathfrak b_R
 \ge\tfrac45\mathfrak a_0+\tfrac{11}{5}\mathfrak a_2
 \ge\tfrac45\mathcal V.
\end{equation}
This is a moving-product measure identity, not a product of limits.
\end{proof}

\subsection{Two-scale matrix normalization and the transported coefficients}
\label{lower-two-scale}

The two eigenvalue scales need separate constant rescalings. These
rescalings give bounded metric values on compact subsets of the
punctured curve while retaining the full middle matrix. The transformed
equations must retain the damping as well: bounded transformed values do
not imply that the original mass terms tend to zero.

Put $\mathcal B_n=\overline{\ell_{Q,n}}$ and define the constant gauges
\[
 G_n=\operatorname{diag}(e^{L_n/2},e^{-\mathcal B_n/2}),\quad
 g_n=G_n^{-1},\quad F_n=G_nf_nG_n,\quad
 A_n=a_n+L_n,\quad B_n=b_n+L_n.
\]
Since $\bar m_n+L_n=\bar\epsilon_n$, the relative-mean bounds give
$|\bar a_n+L_n|+|\bar b_n+L_n|+|\bar\ell_{L,n}+L_n|\le C$.
On every fixed compact buffer $K\Subset X^\circ$, the relative
value bounds imply
\[
 |A_n|+|B_n|+|\ell_{L,n}+L_n|
                    +|\ell_{Q,n}-\mathcal B_n|\le C_K,
 \quad 0\le\ell_{L,n}-m_n\le C_K.
\]
The two diagonal entries of $F_n$ are therefore bounded above:
$(F_n)_{11}=e^{L_n+\ell_{L,n}}\le C_K$ and
$(F_n)_{22}\le e^{n_n-\mathcal B_n}\le C_K$.
Moreover
\[
 \det F_n=e^{L_n+\ell_{L,n}+\ell_{Q,n}-\mathcal B_n}\ge c_K>0.
\]
For a positive $2\times2$ matrix, $\lambda_{\max}\le\operatorname{tr}F$
and $\lambda_{\min}=\det F/\lambda_{\max}$.  Hence
\begin{equation}\label{lower-punctured-values}
 c_KI\le F_n\le C_KI,\qquad |A_n|+|B_n|\le C_K.
\end{equation}
All off-diagonal entries remain.  The global bounds also retain
\begin{equation}\label{lower-punctured-growth}
 (F_n)_{11}\le Cu_*^{-1/4},\quad
 (F_n)_{22}\le Cu_*^{-9/20},\quad\det F_n\ge c,
 \quad\|F_n\|+\|F_n^{-1}\|\le Cu_*^{-9/20}.
\end{equation}
Indeed $\ell_L-m\le C-\tfrac14\log u_*$ and
$n=\ell_Q+\ell_L-m$ give the exponent $1/5+1/4=9/20$.
The estimates explicitly degenerate at $q_0$.

With endpoint gauges $g_{0,n}=g_{2,n}=e^{-L_n/2}$, the transformed
arrows are exactly
\begin{equation}\label{lower-transformed-arrows}
 \phi_n=(\phi_L,\varepsilon_n\phi_Q),\qquad
 \psi_n=(\sigma,0)^T,\qquad
 \varepsilon_n=e^{-(L_n+\mathcal B_n)/2}\to0.
\end{equation}
Every finite transformed chain is isomorphic over $\operatorname{id}_X$
to the original chain.  Its limiting rank drop is a device for excluding
escape, not a replacement theorem.  The transformed damping is
$N_n=g_nM_ng_n^{-1}$, which is $F_n$-selfadjoint and is not $\log F_n$.

We give its coefficient formula before applying spatial derivatives.
For $F=\left(\begin{smallmatrix}a_F&b_F\\\bar b_F&d_F\end{smallmatrix}\right)$
in a compact positive-metric value set, let
$A_x=\operatorname{diag}(x,1)F$, $x=\varepsilon_n^2$.
Its positive eigenvalues satisfy
\[
 \lambda_+=\frac{d_F+xa_F+
       \sqrt{(d_F+xa_F)^2-4x\det F}}2,\qquad
 \lambda_-=x\det F/\lambda_+.
\]
The discriminant and gap stay positive on a fixed neighborhood including
$x=0$.  Thus the low idempotent
\[
 J_x(F)=\frac{\lambda_+I-A_x}{\lambda_+-\lambda_-}
\]
extends smoothly to
\begin{equation}\label{lower-upper-projection-coefficient}
 J(F)=J_0(F)=
 \begin{pmatrix}1&0\\-\bar b_F/d_F&0\end{pmatrix}.
\end{equation}
It is $F$-orthogonal, not generally standard-orthogonal, and is the
upper quotient projection rather than the lower-line projection.
Similarity gives $g_ne^{M_n}g_n^{-1}=e^{\mathcal B_n}A_x$.
The exact logarithm is consequently
\begin{equation}\label{lower-damping-coefficient}
\begin{aligned}
 \Psi_n(F):=t_nN_n={}&-t_nL_nJ_x+t_n\mathcal B_n(I-J_x)\\
 &+t_n\{\log(\det F/\lambda_+)J_x
                         +\log\lambda_+(I-J_x)\}.
\end{aligned}
\end{equation}
Integrate the smooth $x$ derivative, also after any two $F$ derivatives.
This gives
\[
 \|J_x-J\|_{C^2}+\|\log\lambda_+-\log d_F\|_{C^2}\le C_Kx,
\]
and
\begin{equation}\label{lower-damping-limit}
 \|\Psi_n-[-\rho J+\rho\vartheta(I-J)]\|_{C^2}
 \le C_K\{|t_nL_n-\rho|+|t_n\mathcal B_n-\rho\vartheta|
       +t_n+t_n(L_n+|\mathcal B_n|)\varepsilon_n^2\}\to0.
\end{equation}
These are derivatives in metric-value variables, not spatial derivatives
of an as-yet unbounded field.  The exact actual upper quotient is
\[
 \bar J_n(F,z)=F^{-1}\phi_n^*\phi_n/(\phi_nF^{-1}\phi_n^*).
\]
On a fixed buffer divide its direction by the nonzero $\phi_L$ and use
$(1,\varepsilon_n\phi_Q/\phi_L)$.  Its denominator is uniformly
positive and its joint coordinate/value $C^2$ difference from $J(F)$
is $O_K(\varepsilon_n)$.  The scalar dampings
$t_n(A-L_n),t_n(B-L_n)$ tend as coefficient functions to $-\rho$.

The transported low probe also converges uniformly on the fixed buffer
to $J(F_n)$, by its exact spectral formula and the separated plateaus
$m_n=-L_n+O_K(1)$, $n_n=\mathcal B_n+O_K(1)$.
Thus
\begin{equation}\label{lower-buffered-projection-comparison}
 \sup_K\|\bar E_n-\bar J_n\|_{F_n}\to0,
 \quad
 \int\zeta\operatorname{tr}[(\bar E_n-\bar J_n)\bar D_n]\to0
\end{equation}
for every fixed bounded cutoff $\zeta$ supported in $K$; the second
claim follows by multiplying the uniform coefficient error by
$\int\operatorname{tr}\bar D_n\le J_1$.
The same argument uses the current-curvature $L^1$ budget for curvature
pairings. Thus, on fixed punctured buffers, the transported spectral
probe and the upper quotient approach the same coefficient projection
$J(F_n)$. Neither is identified with the physical lower-line projection
$\Pi$. The comparison uses no source uniform integrability and does
not extend to $q_0$.

The original force also has to be transported.  With $S_F=F^{1/2}$,
the polar unitary relating the old and new current-unitary frames is
\begin{equation}\label{lower-polar-force}
 U_\varepsilon(F)=
 \frac{S_F\operatorname{diag}(\varepsilon,1)
       +\sqrt{\det F}\,S_F^{-1}\operatorname{diag}(1,\varepsilon)}
 {\sqrt{\varepsilon^2F_{11}+F_{22}+2\varepsilon\sqrt{\det F}}},
 \qquad
 \widetilde C_{1,n}=U_{\varepsilon_n}(F)
       [A_0I+\tau_nF'_{*,1}]U_{\varepsilon_n}(F)^*.
\end{equation}
The product order in the numerator matters.  The formula extends smoothly
to $\varepsilon=0$ on a compact positive value set; the denominator
is bounded below.  It proves uniform joint coefficient derivatives and
$\widetilde C_{1,n}\ge A_0I/2$.
The rotated residual is the original residual, rather than a reference
residual reconstructed from the limiting arrows.

\subsection{Subtracting divisor mass from the source budgets}
\label{lower-source-allocation}

The divisor mass is part of each relevant total source budget, not an
additional contribution available for the punctured estimate. We first
derive a preliminary bound from the uncut budgets, then use cutoffs to
subtract the atom before applying Cauchy inequalities to the remaining
sources. Applying the two reciprocal inequalities separately gives the
sharper bound needed in the final contradiction.

Normalize the original curvature atoms by
$A_{\rm at}=\mathfrak a_0/\mathcal V$,
$A_L=\mathfrak a_L/\mathcal V$, $A_Q=\mathfrak a_Q/\mathcal V$,
$A_2=\mathfrak a_2/\mathcal V$.
Dividing the single normalization by $L_n$ passes only bounded symbols
against strongly convergent fields.  Since high-$R$ values vanish,
\begin{equation}\label{lower-high-normalization}
 h:=c+s_h=\frac W{\vartheta+1}.
\end{equation}
Indeed the high-$V$ mean is $c+s_h=h$ and the other three slots have
mean $W-h$, so $0=-(W-h)+\vartheta h$.
Scalar bottom Cauchy and the actual source total give
\begin{equation}\label{lower-bottom-allocation}
 c-\rho\ge\frac{A_0}{d-s_\ell-s_h}>0,
 \qquad s_\ell+s_h\le d-A_0/c=\frac35.
\end{equation}
Before subtracting the atom, the uncut two-slot Cauchy inequality and
the full top/low-middle totals give the weaker, useful check
\begin{equation}\label{lower-uncut-allocation}
 s_\ell\ge\frac{4A_0}{14/3-2\rho}-\frac23\ge\frac3{14},
 \qquad
 \frac W{\vartheta+1}+\frac{A_0}{c-\rho}
                      +\frac{4A_0}{14/3-2\rho}\le W.
\end{equation}
The first denominator is positive by the actual reciprocal budget;
independently the bottom row gives $\rho\le3/11$.
Thus $s_h\le27/70$, $h\le128/105$ and
$d-s_\ell\le34/21$.  Equations
\eqref{lower-high-normalization}--\eqref{lower-bottom-allocation}
already imply the preliminary necessary region
$111/64\le\vartheta\le3$, $0\le\rho\le27/136$.
These are necessary inequalities for an extracted escape sequence;
satisfying them alone does not construct such a sequence.

Take smooth cutoffs $0\le\chi_\delta\le1$ vanishing near $q_0$
and tending to $1$ off $q_0$ as $\delta\downarrow0$.
Apply \eqref{lower-hilbert-cauchy} with this fixed cutoff, including
it in both Hilbert slots.  The top source has mass outside the atom
at most $(17/6-\rho-A_{\rm at})\mathcal V$, while the tested middle
source has total mass at most $(d-\rho)\mathcal V$.
Take $n\to\infty$ first, then $\delta\downarrow0$.
The omitted bounded-symbol integrals are at most a fixed constant
times the shrinking disk volume.  Therefore
\[
 4A_0\le(2/3+s_\ell)(14/3-2\rho-A_{\rm at})
           \le(2/3+s_\ell)(11/3-2\rho).
\]
Its denominators are positive as consequences of this inequality.
In conjunction with \eqref{lower-bottom-allocation}, it gives
\begin{equation}\label{lower-escape-parameter-region}
 s_\ell\ge\frac5{11},\quad s_h\le\frac8{55},\quad
 \frac{777}{323}\le\vartheta\le3,\qquad
 0\le\rho\le\frac8{91}.
\end{equation}
Here $h\le c+8/55=323/330$ gives the lower $\vartheta$ bound;
$d-s_\ell\le91/66$ gives $c-\rho\ge407/546$.
The coupled inequality is also retained:
\[
 \frac W{\vartheta+1}+\frac{A_0}{c-\rho}
                    +\frac{4A_0}{11/3-2\rho}\le W.
\]

For the sharper separate allocation, substitute the actual rows in
\eqref{lower-upper-log-bochner}:
\begin{equation}\label{lower-upper-source-accounting}
 \Delta\log p_1=\operatorname{tr}(JD)-U_0
                 +t(a-\operatorname{tr}(JM))+\kappa_K.
\end{equation}
The bounded high-$R$ weight and $p_1\ge\alpha$ give
$\operatorname{tr}(J_nM_n)/L_n\to-1$ in $L^1$.
The damping difference in \eqref{lower-upper-source-accounting}
therefore tends to zero in $L^1$.
Let $\mu_0,\mu_J,\mu_E,\mu_K$ be the common weak limits of
$U_{0,n}\omega_X$, $\operatorname{tr}(J_nD_n)\omega_X$,
$\operatorname{tr}(E_nD_n)\omega_X$, $\kappa_{K,n}\omega_X$.
The bounded upper logarithm has no Laplacian atom.  The total and atom
identities are
\[
 \mu_J(X)+\mu_K(X)=(17/6-\rho)\mathcal V,
 \qquad \mu_J(\{q_0\})+\mu_K(\{q_0\})=\mathfrak a_0.
\]
The buffered comparison \eqref{lower-buffered-projection-comparison}
gives $\mu_E|_{X^\circ}=\mu_J|_{X^\circ}$.
Writing
\[
 k^\circ=\mu_K(X^\circ)/\mathcal V,
 \qquad T_0=17/6-\rho-A_{\rm at},
\]
we obtain the exact remaining masses
\begin{equation}\label{lower-punctured-masses}
 \mu_0(X^\circ)/\mathcal V=T_0,\qquad
 \mu_E(X^\circ)/\mathcal V=T_0-k^\circ.
\end{equation}
Applying the combined two-slot inequality on these same punctured
tests now subtracts the top atom from both source slots:
\begin{equation}\label{lower-double-precharge}
\begin{gathered}
 4A_0\le(2/3+s_\ell)
             (17/3-2\rho-2A_{\rm at}-k^\circ),\\
 \frac W{\vartheta+1}+\frac{A_0}{c-\rho}
       +\frac{4A_0}{17/3-2\rho-2A_{\rm at}-k^\circ}\le W.
\end{gathered}
\end{equation}
The denominator is positive as a conclusion.  Since
$2/3+s_\ell\le19/15$, $A_{\rm at}\ge1$ and $\rho\ge0$,
$4A_0\le(19/15)(11/3-k^\circ)$, giving $k^\circ\le8/19$.
We retain this combined bound, then improve it without combining
the two reciprocal slots.
Separate scalar and matrix Cauchy inequalities with the same fixed
cutoffs now give, with $x=\lim\bar q_n$ and
$y=\lim\overline{\operatorname{tr}(E_nV_n)}$,
\begin{equation}\label{lower-separate-allocation}
 x\ge A_0/T_0,\qquad y\ge A_0/(T_0-k^\circ),\qquad
 h+\frac{A_0}{c-\rho}+\frac{A_0}{T_0}
                      +\frac{A_0}{T_0-k^\circ}\le W.
\end{equation}
Both denominators are proved positive by the finite source budgets.
Since $x+y=2/3+s_\ell\le19/15$,
$x\ge A_0/d=37/66$, and $T_0+\rho\le d$, we have
$y\le233/330$, $T_0-k^\circ\ge2035/1398$.  Hence
\begin{equation}\label{lower-kernel-allocation-bound}
 \boxed{k^\circ+\rho\le\frac{88}{233}.}
\end{equation}
The atom was subtracted before either source allocation.  No reciprocal
force mass is counted again at that atom.

\subsection{Further identities for line, quotient and concentration measures}
\label{lower-retained-correlations}

The preceding source allocation concerns one common escaping sequence.
We now record further identities for its physical line and quotient,
bounded tests at the divisor, and covariant mixed terms. The diagonal
middle rows contain a common off-diagonal term with no prescribed sign,
so they give no direct sign contradiction. The identities describe
additional constraints on concentration; the final contradiction uses
the full lower-arrow derivative estimate in
Section~\ref{lower-covariant-payment}.

Here $\Pi$ projects onto the physical lower image line $L$,
$Q_L=I-\Pi$ onto its physical quotient, and $J=R/p_1$ onto the upper
quotient. Their overlap is $\delta=\operatorname{tr}(\Pi J)$.
The spectral tests $E_n=\chi(M_n/L_n)$ are the probes used in the escape
extraction; they are not the physical lower-line projections.

\paragraph{Line and quotient curvature.}

First, the saturated lower section obeys the smooth Chern--Bochner
identity and divisor formulas
\[
 \int r(H_L-H_2)=\int|\nabla\sigma|^2,
 \quad i\partial\bar\partial\log r+iF_{L\otimes E_2^{-1}}
             =2\pi[p],
 \quad i\partial\bar\partial\log U+iF_{E_0\otimes L^{-1}}
             =2\pi[q_0].
\]
No divisor is inserted into the smooth unlogged identity, and the
logarithmic identities are distributional.  In a $\Pi$-adapted frame
write $R=\left(\begin{smallmatrix}u_R&z_R\\\bar z_R&w_R\end{smallmatrix}\right)$,
$P=\operatorname{diag}(r,0)$.  The exact diagonal middle rows are
\[
 (c-u_R+r)D_{LL}-\operatorname{Re}(z_RD_{QL})=(C_1)_{LL}+(K_1)_{LL},
\]
\[
 (c-w_R)D_{QQ}-\operatorname{Re}(z_RD_{QL})=(C_1)_{QQ}+(K_1)_{QQ}.
\]
The common mixed term has no sign.  The coupled divisor row can instead
be tested with $\mathcal L_V^{-1}(\Pi-J)$; its kinetic contribution is
\[
 \operatorname{tr}(\mathcal L_V^{-1}(\Pi-J)K_1)
   =[X\mathcal L_V^{-1}(\Pi-J)X^*
                    +Y^*\mathcal L_V^{-1}(\Pi-J)Y]/(2g).
\]
It is bounded in total variation by
$\operatorname{tr}(V^{-1}K_1)$, because
$-V^{-1}\le\mathcal L_V^{-1}(\Pi-J)\le V^{-1}$.
Bounded measure size does not make this term vanish.
For an undamped calibrated solution, positivity in this same splitting
gives $w_R<5/6$ and
$25-30p_1+30r-36rw_R>0$.  At $p$ or $q_0$ the nonzero upper
component can still lie in the compatible interval $(1/6,5/6)$.
The bottom row at the lower zero is exactly
$H_2(p)=(8+6K_2(p))/11$; it is not a pointwise contradiction.
The endpoint Chern--Bochner identities
\eqref{geo-endpoint-integrals} retain respectively the factors
$-18\pi/\sqrt{37}$ and $6\pi/\sqrt{37}$ and their
$3/\sqrt{37}$ kinetic terms.  Their signs do not remove the displayed
off-diagonal term or contradict these saturated-line divisor identities.

The physical quotient retains its complete nonnegative excess
\begin{equation}\label{lower-quotient-excess}
\begin{aligned}
 \mathfrak Q={}&\kappa_L
  +\operatorname{tr}(Q_L\mathcal L_V^{-1}K_1)
  +A_0[\operatorname{tr}(Q_LV^{-1})-1/c]\\
 ={}&H_Q-7/5+t\operatorname{tr}(Q_LM)
               -\tau\operatorname{tr}(Q_L\mathcal L_V^{-1}F'_{*,1}).
\end{aligned}
\end{equation}
Indeed $\operatorname{tr}(Q_LV)=c-\operatorname{tr}(Q_LR)\le c$,
and vector Cauchy gives $\operatorname{tr}(Q_LV^{-1})\ge1/c$.
The force error tends to zero in variation by the reciprocal budget.
Its weak limit $\nu_Q$ has atom $\mathfrak a_Q$ at $q_0$, and
$\nu_Q(X)/\mathcal V=3/5+\sigma_Q$, where
$-\rho\le\sigma_Q\le\rho\vartheta$.
The atom laws give $A_Q\le1-(4/5)A_{\rm at}-(11/5)A_2\le1/5$.
Thus
\[
 \nu_Q(X^\circ)/\mathcal V\ge2/5-\rho\ge142/455,
 \qquad \nu_Q(X^\circ)/\mathcal V-k^\circ\ge26/1165.
\]
This cost includes the full parallel component of the lower-arrow derivative $Y$.

\paragraph{Moment, composition and restriction tests.}

The positive upper and signed lower moment identities are
\begin{equation}\label{lower-moment-identities}
\begin{aligned}
 K_0+\operatorname{tr}(RD)
  &=C_0^\tau+\alpha U_0
        +t[\operatorname{tr}(RM)-p_1a]+\Delta p_1,\\
 \operatorname{tr}(PD)-K_2
  &=dU_2-C_2^\tau
        +t[\operatorname{tr}(PM)-rb]-\Delta r.
\end{aligned}
\end{equation}
The first positive measure has total limiting mean $3/2-\rho/6$,
atom $\alpha\mathfrak a_0$, and punctured mean
$3/2-\rho/6-A_{\rm at}/6\le4/3-\rho/6$.
The second has total limiting mean
$1/2-d\rho+\rho(1+\vartheta)s_h$ and atom $d\mathfrak a_2$.
Its sign is not assumed favorable.

For the composition and restriction identities, write the upper arrow
as $u^*$ and the lower arrow as $v$ in current unitary coordinates.
Then $R=uu^*$, $P=vv^*$, and $X,Y$ are their full covariant derivatives
in the same middle Chern connection. The probes retain the cross energies.
For $C=|\phi\psi|^2=\operatorname{tr}(RP)$,
$\Xi=Xv+u^*Y$, $K_C=|\Xi|^2/(2g)$, one has
\[
 2K_C+CU_2=CU_0+tC(b-a)+\Delta C,
 \qquad K_C\le2(rK_0+p_1K_2).
\]
The bounded relative means and scalar Green compactness imply that
$a-b$ converges almost everywhere and strongly in every finite $L^p$.
The uniformly bounded $C_n$ therefore converge to $C_\infty$ in those
senses.
The damping $tC(b-a)$ tends to zero in $L^1$; $\Delta C_\infty$
is a finite signed measure and has no point atom.
Extract the actual positive moving-product measures
\[
\begin{gathered}
 \nu_{C0}=\lim C_nU_{0,n}\omega_X,\qquad
 \nu_{C2}=\lim C_nU_{2,n}\omega_X,\\
 \nu_{KC}=\lim K_{C,n}\omega_X.
\end{gathered}
\]
Then
\begin{equation}\label{lower-composition-measures}
 2\nu_{KC}+\nu_{C2}=\nu_{C0}+(\Delta C_\infty)\omega_X,
 \qquad
 2\nu_{KC}(\{x\})+\nu_{C2}(\{x\})=\nu_{C0}(\{x\}).
\end{equation}
Their global masses satisfy the same equality.  These measures are
not products of $C_\infty$ with weak source limits.
For the upper restriction $U=p_1\delta=e^{a-\ell_L}u_*$, its
induced derivative is $\Xi_L=Xe+u^*\beta_L(e)$ and
$K_\gamma=|\Xi_L|^2/(2g)$, where $e$ spans $L$.
Then
\begin{equation}\label{lower-restriction-probe}
\begin{gathered}
 2K_\gamma\le4K_0+2(p_1-U)\kappa_L,
 \qquad |\partial U|^2/g=2UK_\gamma,\\
 2K_\gamma+U\operatorname{tr}(\Pi D)
   =UU_0+U\kappa_L-tU[a-\operatorname{tr}(\Pi M)]+\Delta U.
\end{gathered}
\end{equation}
The cross in $\Xi_L$ is kept.  In the coincident case the composition
has the uniform bound $C_n\le C u_*^{3/4}$, since
$C_*=u_*r_*$, $r_*\le Cu_*$, and the relative mean and value
bounds apply to $a-b$.  Thus its weighted source atoms vanish at
$q_0$, but this says nothing about unweighted upper-kernel atoms.
For distinct zeros this bound does not assert vanishing at $q_0$.
At a composition root with both arrows nonzero the root-jet identity
is $\nabla(\phi\psi)=\phi(\beta_L-\beta_K)\psi$.
The lower zero is instead treated by the undivided Leibniz identity;
no angle condition is inferred there.

\paragraph{Bounded tests at the divisor.}
The test value can remain correlated with a concentrating source even
when both have separate weak limits. We therefore retain their joint
distribution before taking its fiber over the divisor point.

For each $F_0\in C^2([0,8/3])$, the bounded restriction chain rule is
\[
 \Delta F_0(U)=UF_0'(U)
  [\operatorname{tr}(\Pi D)-U_0-\kappa_L
       +t(a-\operatorname{tr}(\Pi M))]
       +2[F_0'(U)+UF_0''(U)]K_\gamma.
\]
Lift the four positive actual measures $U_0$, $\operatorname{tr}(\Pi D)$,
$\kappa_L$, $K_\gamma$ by $x\mapsto(x,U_n(x))$ before restricting
their weak limits to the fiber over $q_0$.
Call the fiber value measures $\eta_0,\eta_D,\eta_L,\eta_G$.
Bounded potentials have atomless finite-measure Laplacians, and the
damping has bounded base density.  Consequently the full bounded-test
relation is
\begin{equation}\label{lower-restriction-lifts}
 2\int[F_0'(u)+uF_0''(u)]\,d\eta_G
     =\int uF_0'(u)\,d(\eta_0+\eta_L-\eta_D).
\end{equation}
These value measures preserve moving source correlations; they are not
obtained by factoring a source limit against $U_\infty$.

Similarly decompose $K_0=K_{0,\parallel}+K_{0,\perp}$ relative to $J$.
For bounded $F_0\in C^2([\alpha,8/3])$ the upper-norm chain rule is
\[
\begin{aligned}
 \Delta F_0(p_1)
 &=p_1F_0'(p_1)\operatorname{tr}(JD)
   +(p_1-2\alpha)F_0'(p_1)U_0\\
 &\quad+2p_1F_0''(p_1)K_{0,\parallel}
   -2F_0'(p_1)C_0^\tau\\
 &\quad+t p_1F_0'(p_1)[a-\operatorname{tr}(JM)].
\end{aligned}
\]
Lift its positive measures by $x\mapsto(x,p_{1,n}(x))$.
At every point their fiber measures obey
\begin{equation}\label{lower-upper-norm-lifts}
\begin{gathered}
 \int sF_0'(s)d\eta_J+\int(s-2\alpha)F_0'(s)d\eta_0
       +2\int sF_0''(s)d\eta_\parallel=0,\\
 (s-\alpha)\eta_0=\eta_{K_0}
       =\eta_\parallel+(s/2)\eta_K.
\end{gathered}
\end{equation}
The convex test $F_0(s)=(s-2\alpha)_+^3$ makes every integrand
nonnegative; all these lifted atom measures are therefore supported
in $[\alpha,2\alpha]=[1/6,1/3]$.
This does not impose a pointwise upper norm bound on the original solution.

\paragraph{Covariant mixed terms.}
Finally the same actual connection gives a covariant correlation
identity.  Set $A_{\rm com}=[\Pi,J]$,
$B_{\rm com}=[\Pi,[\Pi,J]]$, $\delta=\operatorname{tr}(\Pi J)$.
For real $h\in C^1([0,1])$ and a smooth cutoff $\chi$,
\begin{equation}\label{lower-weighted-commutator}
\begin{aligned}
 \int\chi h(\delta)\operatorname{tr}(B_{\rm com}\widehat H_1)\omega_X
 ={}&-i\int h(\delta)d\chi\wedge\operatorname{tr}(A_{\rm com}\nabla\Pi)\\
 &-i\int\chi h(\delta)\operatorname{tr}(\nabla A_{\rm com}\wedge\nabla\Pi)\\
 &-i\int\chi h'(\delta)d\delta\wedge
                           \operatorname{tr}(A_{\rm com}\nabla\Pi).
\end{aligned}
\end{equation}
This follows from $\nabla^2\Pi=[F_1,\Pi]$ and trace cyclicity.
Finite-rank Cauchy bounds each bulk variation on $U$ by
$C[\int_U\kappa_L\int_U(\kappa_L+\kappa_K)]^{1/2}$;
the cutoff term is at most
$C\|h\|_\infty\|d\chi\|_2(\int_U\kappa_L)^{1/2}$.
Logarithmic point cutoffs, with $n$ first and radius second, give
\[
 |\operatorname{atom}_x\operatorname{tr}(B_{\rm com}\widehat H_1)|
 \le C\sqrt{\mathfrak k_L(x)[\mathfrak k_L(x)+\mathfrak k_K(x)]}.
\]
More fully, after lifting by the angle, the fiber measures satisfy
$\beta_{q_0}(h)=\theta_{0,q_0}(h)+\theta_{1,q_0}(h')$.
The $h'$ term is retained.  In an adapted interior-angle frame with
$(\nabla'_z\Pi)=b e_\perp e^*$ and $(\nabla'_zJ)=w jk^*$,
their densities are
\[
 \Theta_0/\omega_X=2(1-2\delta)\kappa_L
               -2(1-\delta)\operatorname{Re}(b\bar w)/g,
\]
\[
 \Theta_1/\omega_X=2\delta(1-\delta)
                  [\kappa_L-\operatorname{Re}(b\bar w)/g].
\]
The coordinate-free identity covers endpoint angles without division.
Positivity of the actual quotient-tested source also bounds
$\mathfrak k_L(q_0)\le\mathfrak a_Q$;
$\mathfrak k_K(q_0)\le\mathfrak a_0$ follows from upper source accounting.

For this particular traceless mixed probe,
$c<\operatorname{tr}V<W$ and
$\operatorname{tr}(VB_{\rm com})=-2p_1\delta(1-\delta)$ give
\begin{equation}\label{lower-mixed-lyapunov}
 \operatorname{tr}(B_{\rm com}D)=\frac2{\operatorname{tr}V}
  [\operatorname{tr}(B_{\rm com}K_1)
      +\tau\operatorname{tr}(B_{\rm com}F'_{*,1})
      +p_1\delta(1-\delta)\operatorname{tr}D].
\end{equation}
The full raw kinetic cross in a $\Pi$-adapted frame is
$[\operatorname{Re}(x_L\zeta\bar x_Q)
+\operatorname{Re}(\bar y_L\zeta y_Q)]/g$, with
$|\zeta|^2=\delta(1-\delta)$.
Its lower part is bounded by
$\sqrt{2d\delta(1-\delta)K_2\kappa_L}$ using
$|y_Q|^2/g=r\kappa_L$, including $r=0$.
In a $J$-adapted frame write $X=(x_J,x_K)$ and let $\xi$ be the
off-diagonal entry of $\Pi$, $|\xi|^2=\delta(1-\delta)$.
Direct projection algebra gives the complete upper part
\[
 \frac{XB_{\rm com}X^*}{2g}
  =2\delta(1-\delta)(K_{0,\parallel}-K_{0,\perp})
       +(1-2\delta)\operatorname{Re}(x_J\xi\bar x_K)/g,
 \qquad K_{0,\perp}=(p_1/2)\kappa_K.
\]
It has no asserted sign.  Since $t\operatorname{tr}(B_{\rm com}M)$
is a bounded density, common weak extraction in
\eqref{lower-mixed-lyapunov} also identifies the mixed-curvature atom
with the atom of its entire bounded-coefficient source expression.
No angle coefficient is factored out of a concentrating product.
No component is dropped when $\kappa_L$ is positive.
These identities retain both concentrated and remaining correlations.
The final contradiction below uses a bound on the full covariant
derivative, not a classification of all possible lifted atom distributions.

\subsection{Local gradients, symbol gaps and the punctured limit}
\label{lower-punctured-derivatives}

The two-scale gauges give bounded values on compact subsets of $X^\circ$.
To obtain a strong PDE limit, we next prove local gradients and symbol
gaps for the transported equations. The coefficient bounds of
Section~\ref{lower-two-scale} include the possibly persistent damping.
Fixed interior buffers support the derivative argument; circular means
then exclude small symbols at their separate centers. This is a local
estimate, not a smooth extension across $q_0$. All buffers are fixed
before $n$ varies.
Write $S_n=F_n^{1/2}$; in new current-unitary frames let
$u^*$ be the upper row and $v$ the lower column.  The exact
common-connection derivatives are
\begin{equation}\label{lower-transformed-jets}
 X=x_n+(A_n)_zu^*-u^*Z,\qquad
 Y=y_n+Zv-(B_n)_zv,\qquad Z=S_n^{-1}(F_n)_zS_n^{-1}.
\end{equation}
The smooth $x_n,y_n$ retain the reference derivatives and are uniformly
bounded, including at $p$.  Define
$a_n^d=t_n(A_n-L_n)$, $b_n^d=t_n(B_n-L_n)$,
$\widehat\Psi_n=S_n\Psi_n(F_n)S_n^{-1}$.
The transformed equations are
\begin{equation}\label{lower-transformed-equations}
\begin{aligned}
 \Delta A_n&=H'_{*0}-\alpha+a_n^d-(C_0^{\tau_n}+K_0)/q,\\
 \Delta B_n&=c+b_n^d-(C_2^{\tau_n}+K_2)/\beta,\\
 \Delta F_n&=g^{-1}(F_n)_{\bar z}F_n^{-1}(F_n)_z+dF_n
      +F_n\Psi_n(F_n)
      -S_n\mathcal L_V^{-1}(\widetilde C_{1,n}+K_1)S_n.
\end{aligned}
\end{equation}
The product $F_n\Psi_n$ is kept in this order.  The coefficient
bounds in \eqref{lower-damping-limit}--\eqref{lower-polar-force}
show that all regular coordinate/value derivatives through order two
are uniform on the compact metric value set.

\begin{lemma}[Full local first derivatives and all symbol gaps]
\label{lower-local-gradient-gaps}
For every $K\Subset X^\circ$, the fields of
\eqref{lower-transformed-equations} satisfy
\[
 |\nabla A_n|+\|\nabla F_n\|+|\nabla B_n|\le C_K,
 \qquad q_n\ge\delta_{q,K}>0,\quad
 \beta_n\ge\delta_{\beta,K}>0,\quad V_n\ge\delta_{V,K}I.
\]
Constants depend on the fixed data and nested buffers, but not on the
escaping means or the original symbol minima.
\end{lemma}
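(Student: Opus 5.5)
The plan is to transplant the proof of Theorem~\ref{common-gradient-gaps} to the transformed system \eqref{lower-transformed-equations} on nested buffers $K\Subset K'\Subset K''\Subset X^\circ$. The transformed system has the same ordered structure: scalar rows for $A_n,B_n$ with positive endpoint sources divided by $q,\beta$, and a matrix row for $F_n$ with the ordered Lyapunov inverse. The coefficients of this system are now controlled on $K''$ by \eqref{lower-punctured-values}, \eqref{lower-damping-limit} and \eqref{lower-polar-force}, uniformly in $n$. The damping is not small, but \eqref{lower-damping-limit} makes $\Psi_n(F_n)$ a uniformly bounded smooth coefficient function of the value, with bounded value derivatives. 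So it enters only as a bounded zeroth-order term and as a bounded factor in the differentiated rows \eqref{common-q-row}, \eqref{common-beta-row}, \eqref{common-lambda-row} and \eqref{common-combined-row}. The same holds for the scalar dampings $a_n^d,b_n^d\to-\rho$. The transported force satisfies $\widetilde C_{1,n}\ge A_0I/2$, which supplies the constant $c_1$.

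First I localize the energy. On $K''$ the potential $\Psi_B=\operatorname{tr}F_n^{-1}-A_n+e^{-B_n}$ has bounded oscillation by \eqref{lower-punctured-values}. Its Laplacian dominates $aF-D$ exactly as in the boundary-setting computation of Theorem~\ref{common-gradient-gaps}, using only the top reconstruction of $\nabla A_n$ from $X$ and $\nabla F_n$ via \eqref{lower-transformed-jets}. The bottom reconstruction is not needed, so the lower zero $p$, which may lie in $K''$ when $p\ne q_0$, costs nothing. Multiplying by a fixed cutoff equal to one on $K'$ replaces the global chordal kernel argument with a local Green representation on a disk. This gives the logarithmic energy bound \eqref{common-log-localization} for centers in $K'$ and radii below the buffer distance; the cutoff error is bounded because $\Psi_B$ and $D$ are bounded on $K''$.

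With that localization in hand, I run the blow-up argument at gradient maxima of $\zeta|\nabla w_n|$ for a cutoff $\zeta$. The rescaled disks stay inside $K''$, so value limits are constant. I then go through the same exhaustive face list: strict interior, scalar $q$ or $\beta$ loss, matrix loss, and mixed matrix/bottom loss with the three ratios. I use the transforms $F_q$, $F_b$, $\lambda^{3/2}$ and $t_A^{61/60}$, the moving annihilators \eqref{common-moving-annihilator}, and the conditioning matrix \eqref{common-normal-conditioning}. The step I expect to need the most care is the mixed face near $p$. There $r\to0$, but then $\beta\to d$, so it is not a bottom face, and the bounded-value fact $r\ge d/2$ at a genuine mixed center still holds locally. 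I also have to check that every extra term produced by differentiating $\Psi_n$ and $U_{\varepsilon_n}$ is $O(1+G_n^2)$, so that it is absorbed in $E$ exactly as the bounded damping terms were.

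Once gradients are bounded on $K'$, I prove the gaps at local minima rather than global ones. Take a smooth cutoff and minimize $q+\eta(1-\zeta)$, or the analogous combinations $\lambda+\tfrac32\beta$, $\lambda$ and $\beta$, so that each minimum lies in the interior buffer. The rows then give $0\le\Delta q\le C-\alpha c_0/q$, and likewise for the other combinations with $c_1=A_0/2$. The cutoff derivatives are bounded by the gradient bound on $K'$, so this proves the gaps on $K$. None of these constants depends on $L_n$, $\mathcal B_n$ or the original symbol minima, because they enter only through the uniformly bounded coefficient functions.
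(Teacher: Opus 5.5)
Your proposal follows the paper's proof closely. Both use the same bounded potential $\operatorname{tr}F_n^{-1}-A_n+e^{-B_n}$ with only the top reconstruction, a local Green localization giving the logarithmic energy bound for every center in a buffer, and a weighted gradient-maximum blow-up. The face analysis is also the same: the transforms $F_q,F_b,\lambda^{3/2},s_j^{61/60}$, the moving annihilators, the conditioning determinant, and the observations $r\ge d/2$ at mixed centers and $\beta=d$ at $p$.

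The genuine difference is the gap step.
\begin{itemize}
\item The paper avoids minimizing over a region. It uses a circular-mean lemma: the Lipschitz bound keeps a small symbol small on a fixed disk, and the singular $-c/s$ source is then integrated against the logarithmic kernel.
\item You minimize the penalized function $s+\eta(1-\zeta)$ instead; its minimum is interior once $\min_K s<\eta$. This is the standard barrier trick and is slightly simpler. At that point you get $\Delta s\ge\eta\Delta\zeta\ge-\eta\|\Delta\zeta\|_\infty$ (not $\Delta s\ge0$ as you wrote), and no Lipschitz control of $s$ itself is needed. The gradient bound enters only through the bounded kinetic terms in $C$.
\end{itemize}

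Two bookkeeping points need repair.
\begin{itemize}
\item In your cutoff localization, the cross term $2\nabla\zeta\cdot\nabla\Psi_B$ is not pointwise bounded before gradients are known, so ``bounded because $\Psi_B$ and $D$ are bounded'' is not enough as stated. You must integrate that term by parts against the kernel. This works for centers at positive distance from $\operatorname{supp}\nabla\zeta$; the paper's Dirichlet Green function on fixed disks sidesteps the issue.
\item The penalized $\lambda$- and $\beta$-minima can lie anywhere in the cutoff's support. So the combined gap for $\lambda+\tfrac32\beta$ must first be proved on that larger set, which needs gradient bounds one level further out. This is why the paper uses $K\Subset K_1\Subset K_2\Subset K^+$ rather than two buffers.
\end{itemize}
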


\begin{proof}
Choose $K\Subset K^+\Subset X^\circ$ and on $K^+$ let
$c_FI\le F_n\le C_FI$, $|A_n|+|B_n|\le B_F$.
Put $h_n=F_n^{-1}$, $\mathcal U=\operatorname{tr}V^{-1}$,
$\mathcal W_1=\operatorname{tr}(V^{-1}K_1)$.
Ordered inverse differentiation gives
\[
 \Delta h_n=g^{-1}h_n(F_n)_zh_n(F_n)_{\bar z}h_n
       -dh_n-\Psi_nh_n
       +S_n^{-1}\mathcal L_V^{-1}(\widetilde C_{1,n}+K_1)S_n^{-1}.
\]
Its trace obeys
\[
 \Delta\operatorname{tr}h_n\ge
       a_F|\nabla F_n|^2+C_F^{-1}\mathcal W_1
           +(A_0/2)C_F^{-1}\mathcal U-C.
\]
For $|\nabla F|^2=2\|F_z\|_{\rm HS}^2/g$, take
$a_F=1/(2C_F^3)$.
The bounded scalar potential
$\Phi_n=\operatorname{tr}F_n^{-1}-A_n+e^{-B_n}$ controls every
gradient.  Indeed
\[
 \Delta e^{-B_n}=e^{-B_n}
 \{ -c-b_n^d+(C_2^{\tau_n}+K_2)/\beta+|(B_n)_z|^2/g\},
\]
and \eqref{lower-transformed-jets}, with $p_1>\alpha$, gives
$|\nabla A_n|^2\le C(K_0+|\nabla F_n|^2+1)$.
Since $V\le(c+d)I$,
$\mathcal W_1\ge(K_0+K_2)/(c+d)$.
The top row and these inequalities yield
\begin{equation}\label{lower-local-density}
 \Delta\Phi_n\ge a\mathcal F_n-C,\quad a>0,\qquad
 \mathcal F_n=|\nabla A_n|^2+|\nabla F_n|^2+|\nabla B_n|^2
     +\mathcal U+\mathcal W_1+q^{-1}+K_0/q+\beta^{-1}+K_2/\beta.
\end{equation}
The full bottom gradient is controlled even when $v=0$.
Dirichlet Green on the same fixed-radius disk $D_{r_0}(z)$ for every
center in a smaller buffer gives
\begin{equation}\label{lower-all-center-log-bound}
 \int_{D_{r_0}(z)}\log\frac{r_0}{|y-z|}\mathcal F_n\,\omega_X\le H,
 \qquad \int_{D_s(z)}\mathcal F_n\,\omega_X
                   \le\frac H{\log(r_0/s)}\to0.
\end{equation}
The full component rows also give
$|\Delta A_n|+|\Delta B_n|+\|\Delta F_n\|\le C(1+\mathcal F_n)$.

Suppose gradients fail to be bounded.  In a fixed interior disk maximize
\[
 \operatorname{dist}(z,\partial D)
       |\nabla(A_n,F_n,B_n)(z)|.
\]
At the chosen point $z_n$ let the gradient be $G_n^\nabla\to\infty$.
Then the distance times $G_n^\nabla$ tends to infinity, and gradients
are at most $2G_n^\nabla$ on the half-distance disk.
The first-scale fields at $z_n+y/G_n^\nabla$ have bounded values and
Lipschitz constants on expanding disks.  Equation
\eqref{lower-all-center-log-bound} makes their energy on every fixed
disk tend to zero; every locally uniform limit is constant.
Derivative convergence still needs the complete face equations.

Set $T_R=\mathcal L_V^{-1}R$, $T_P=\mathcal L_V^{-1}P$, and
$N=\widetilde C_1+K_1$.  The scalar symbol rows are
\begin{equation}\label{lower-local-scalar-rows}
\begin{aligned}
 \Delta q&=(2-p_1/q)K_0+\operatorname{tr}(T_RN)-p_1C_0/q
                       +\mathcal R_q,\\
 \Delta\beta&=-(2+r/\beta)K_2+\operatorname{tr}(T_PN)-rC_2/\beta
                       +\mathcal R_\beta,\\
 \mathcal R_q&=p_1a^d-\operatorname{tr}(R\widehat\Psi),\qquad
 \mathcal R_\beta=rb^d-\operatorname{tr}(P\widehat\Psi).
\end{aligned}
\end{equation}
All remainders are uniformly bounded coefficient functions.
At every point $V\ge(2/3-q)I$ and $\lambda_{\max}V\ge c$.
This leaves precisely strict interior, strict-$V$ scalar faces, pure
matrix face, or mixed matrix/bottom face.

In strict interior all component first-scale Laplacians are two-sided
bounded.  The local Poisson compactness of
Lemma~\ref{common-local-potentials} gives $C^1$ convergence to the
constant value limit, contradicting the unit gradient witness.
If $V$ is strict, the middle derivatives first vanish; strict endpoints
also have vanishing normalized derivative.  A collapsing scalar therefore
preserves $q_z/G_n^\nabla=p_1(A_n)_z/G_n^\nabla+o(1)$ or
$\beta_z/G_n^\nabla=r(B_n)_z/G_n^\nabla+o(1)$.
For a beta face $r=d-\beta\to d$, so no global lower norm is needed.
For $q\le1/18$, $V\ge11I/18$.  The transforms with
\[
 F_q'(q)=\frac{\sqrt q}{\alpha+q},\qquad
 F_b'(\beta)=\frac{\sqrt\beta}{d-\beta}
\]
cancel the singular parallel kinetics in
\eqref{lower-local-scalar-rows}.  The upper perpendicular coefficient
is at most $-18/11$; the beta perpendicular terms also have the
required nonpositive sign.  Thus
\[
 \Delta F_q\le C\sqrt q(1+(G_n^\nabla)^2),\qquad
 \Delta F_b\le C\sqrt\beta(1+(G_n^\nabla)^2).
\]
For the collapsing symbol $s$, use its amplitude scale
$\ell_n=s(z_n)/G_n^\nabla$.  The positive Lipschitz ratio
$Q_n(y)=s(z_n+\ell_ny)/s(z_n)$ has $Q_n(0)=1$.
The normalized transform converges to a positive multiple of
$Q^{3/2}$ with nonpositive distributional Laplacian on the plane.
A nonnegative entire superharmonic function is constant: convolve,
use the circular mean to exclude positive $-\Delta$ mass, and then
use differentiated Poisson estimates on arbitrarily large disks.
Hence $Q=1$.  On the derived disk $Q_n\ge1/2$ the exact row has
bounded two-sided normalized Laplacians, so Poisson $C^1$ compactness
contradicts the preserved symbol derivative.  Simultaneous scalar
collapse is covered by the same argument for either witness.

For the matrix face let $\lambda=\lambda_{\min}V<c/4$,
$\nu=\lambda_{\max}V$.  Then $q\ge11/24$,
$\nu\ge c$, $\nu-\lambda\ge3c/4$.
In its separated eigenframe write
$\rho_1=R_{ee}$, $\pi_1=P_{ee}$,
$\zeta_1=R_{ef}=P_{ef}$, $s_1=\rho_1+\pi_1$.
The full eigenvalue row is
\begin{equation}\label{lower-local-matrix-row}
\begin{aligned}
 \Delta\lambda={}&(|y_1|^2-|x_1|^2)/g
       +\rho_1(C_0+K_0)/q+\pi_1(C_2+K_2)/\beta\\
 &-s_1N_{ee}/\lambda
       -4\operatorname{Re}(\zeta_1N_{fe})/(\lambda+\nu)
       +\mathcal R_\lambda-\mathcal C_{\rm eig},\\
 \mathcal R_\lambda={}&
 [\mathcal L_{R+P}\widehat\Psi-a^dR-b^dP]_{ee},\\
 \mathcal C_{\rm eig}={}&
 \frac{|u_1x_2-y_1\bar v_2|^2+|u_2x_1-y_2\bar v_1|^2}
                  {g(\nu-\lambda)}\ge0.
\end{aligned}
\end{equation}
Both eigenvalue-concavity squares and the common-connection mixed
term are retained.
When $\beta$ is strict the whole singular image of the six-component
row is $-m_fN_f$, with
$m_f=N_{ee}/\lambda$ and $N_f=(0,S_n\Pi_eS_n,0)$.
Five smooth covectors annihilate this moving image exactly.
Differentiating their pairings with $w_z$ kills the singular source
before it is estimated; their coefficient derivatives give bounded
quadratics on the first scale.  This is the moving-annihilator
calculation of \eqref{common-moving-annihilator}, with the bounded
transformed remainders just displayed.  Cauchy-kernel compactness makes
all five projected derivatives tend uniformly to zero, since the
first-scale value limit is constant.  As
$d\lambda(N_f)=s_1\ge c-\lambda$, a nonzero lambda derivative
witness remains.  Equation~\eqref{lower-local-matrix-row} gives
\[
 \Delta\lambda\le-\frac{|\lambda_z|^2}{2g\lambda}
                        +C(1+(G_n^\nabla)^2).
\]
The $\lambda^{3/2}$ transform at scale $\lambda(z_n)/G_n^\nabla$
again gives the constant amplitude limit; the exact two-sided row
contradicts the witness.  Nothing here requires $P>0$ at $p$.

On the mixed face the two singular images are
$-m_fN_f-m_bN_b$, where $m_b=(C_2+K_2)/\beta$,
$N_b=(0,0,1)$.  Four smooth covectors kill both images at every
moving state; the bounded non-exact quadratic terms remain in their
$\bar\partial$ sources.  Their projected derivatives tend to zero.
The remaining differential frame is
\[
 \begin{pmatrix}d\lambda(N_f)&d\lambda(N_b)\\
                 d\beta(N_f)&d\beta(N_b)\end{pmatrix}
 =\begin{pmatrix}s_1&-\pi_1\\-\pi_1&r\end{pmatrix},
 \qquad\det\ge(3c/4)(d/2)=55/96.
\]
For $1<j<2$ put $s_j=\lambda+j\beta$.
The full combined row is
\begin{equation}\label{lower-local-combined-row}
\begin{aligned}
 \Delta s_j={}&-(s_1-j\pi_1)N_{ee}/\lambda
             -(jr-\pi_1)(C_2+K_2)/\beta\\
 &+(|y_1|^2-|x_1|^2)/g-2jK_2+\rho_1(C_0+K_0)/q\\
 &+jP_{ff}N_{ff}/\nu
       +4(j-1)\operatorname{Re}(\zeta_1N_{fe})/(\lambda+\nu)\\
 &+\mathcal R_\lambda+j\mathcal R_\beta-\mathcal C_{\rm eig}.
\end{aligned}
\end{equation}
For $j=5/4,7/4$, $s_1-j\pi_1\ge5/8$ and
$jr-\pi_1\ge(j-1)d/2$.  The derivative formula and three-term
square inequality therefore give the common sufficient constant
\[
 \Delta s_j\le-\frac{|(s_j)_z|^2}{60g s_j}
                       +C(1+(G_n^\nabla)^2),\qquad
 \Delta s_j^{61/60}\le C s_j^{1/60}(1+(G_n^\nabla)^2).
\]
Use the shared scale
$\delta_n=\lambda(z_n)+\beta(z_n)$,
$\ell_n=\delta_n/G_n^\nabla$.  Entire superharmonic Liouville makes
both transformed amplitudes constant, hence
$\lambda/\delta_n\to L$, $\beta/\delta_n\to B$, $L+B=1$.
The four annihilator projections and the invertible frame also give
$(w(z_n+\ell_ny)-w(z_n))/\delta_n\to0$ locally uniformly.
If $L,B>0$, both relative denominators are buffered and two-sided
Poisson compactness contradicts the gradient witness.
If $L=0,B=1$, five covectors killing $N_f$ preserve a lambda witness.
On its smaller scale $\lambda(z_n)/G_n^\nabla$, beta is comparable
to $\delta_n$ and the transformed lambda error carries
$\lambda(z_n)/\beta\to0$; its exact two-sided row gives the contradiction.
If $B=0,L=1$, the $A,F$ derivatives vanish by their bounded sources;
the witness is in $B_n$, hence $\beta_z/G_n^\nabla$ because $r\to d$.
On the smaller beta scale lambda is comparable to $\delta_n$, and the
error carries $\beta(z_n)/\lambda\to0$.  The beta row gives the
contradiction.  No unproved relative-error rate is divided by a vanishing
symbol.  This exhausts the faces and proves the full interior gradient bound.

To obtain gaps without a global minimizing center, use the elementary
circular-mean lemma.  If positive $C^2$ functions on a fixed disk have
$|\nabla s_n|\le L\ge1$, $\Delta_{\mathbb R^2}s_n\le C-c/s_n$,
and $s_n(0)=\epsilon_n\to0$, then $s_n(y)\le\epsilon_n+L|y|$.
Their circular means satisfy
\[
 m_n(r)\le\epsilon_n+Cr^2/4
       -c\int_0^r\log(r/u)\frac u{\epsilon_n+Lu}\,du.
\]
For fixed $r$ its upper limit is $Cr^2/4-cr/L<0$ when $r$ is
sufficiently small, contradicting positivity.  The fixed conformal
factor converts the $\Delta_X$ inequalities to this form.
Insert nested buffers $K\Subset K_1\Subset K_2\Subset K^+$.
The proved gradient bound on $K_2$ bounds every kinetic and all symbol
Lipschitz constants.  For $q\le1/18$ the scalar row gives
$\Delta q\le C-\alpha(55/63)/q$, proving a q-gap on $K_1$.
First prove the same-point combined gap for $s=\lambda+\tfrac32\beta$:
on its small separated chart the combined row gives
\[
 \Delta s\le C-\frac{(5/8)(A_0/2)}\lambda
                -\frac{(d/4)A_0}\beta\le C-c_s/s.
\]
Thus $s\ge\delta_s>0$ on $K_1$.
A small lambda at a center in $K$ stays small on a fixed Lipschitz
disk in $K_1$; there $\beta\ge\delta_s/3$, and
$\Delta\lambda\le C-(3c/4)(A_0/2)/\lambda$ gives the matrix gap.
A small beta center instead has $\lambda\ge\delta_s/2$ and
$r\ge d/2$ on its disk; the beta row gives
$\Delta\beta\le C-(d/2)A_0/\beta$.
Different centers have never been synchronized; repeated matrix spectra
are avoided only on the small-lambda separated chart where they cannot
occur.  All three local gaps follow.
\end{proof}

\begin{lemma}[Strong actual punctured PDE limit]\label{lower-strong-punctured-limit}
A common subsequence converges strongly in $C^{2,\gamma'}$ on every
compact subset of $X^\circ$, for each fixed $0<\gamma'<1$, to full
positive Hermitian $F$ and scalar fields $A,B$.
The limiting arrows are $(\phi_L,0)$ and $(\sigma,0)^T$ and their
actual covariant derivatives are retained.  In current unitary frames
the limit satisfies
\begin{equation}\label{lower-punctured-rows}
\begin{gathered}
 R=p_1J,\quad P=vv^*=r\Pi,\quad q=p_1-\alpha,\quad
 \beta=d-r,\quad V=cI-R+P,\quad K=I-J,\\
 U_0=(A_0+K_0)/q,\quad U_2=(A_0+K_2)/\beta,\quad
 D=\mathcal L_V^{-1}(A_0I+K_1),\\
 H_0=\alpha+\rho+U_0,\quad H_2=\alpha+\rho+U_2,\quad
 \widehat H_1=\alpha I+D+\rho J-\rho\vartheta K.
\end{gathered}
\end{equation}
No extension of the metric at $q_0$ is asserted.
\end{lemma}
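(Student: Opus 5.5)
The plan is a local Schauder bootstrap on a fixed exhaustion of $X^\circ$, followed by a diagonal extraction and identification of the limiting coefficients. Fix compacts $K\Subset K_1\Subset K_2\Subset K^+\Subset X^\circ$. On $K^+$, \eqref{lower-punctured-values} supplies two-sided value bounds for $(A_n,F_n,B_n)$. Lemma~\ref{lower-local-gradient-gaps} supplies uniform first derivatives and the gaps $q_n,\beta_n\ge\delta$ and $V_n\ge\delta I$ on $K_2$. In \eqref{lower-transformed-equations} I would then bound every right-hand side uniformly on $K_2$. The kinetic terms $K_j$ are quadratic in $X,Y$, and \eqref{lower-transformed-jets} expresses these through bounded reference jets $x_n,y_n$ and the first derivatives. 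The reciprocals $1/q$ and $1/\beta$ and the Lyapunov multipliers are controlled by the gaps. The damping coefficients $\Psi_n$, $a^d_n$ and $b^d_n$, and the rotated force $\widetilde C_{1,n}$, are uniformly $C^2$ in the value variables by \eqref{lower-damping-limit} and \eqref{lower-polar-force}. This gives two-sided Laplacian bounds. Lemma~\ref{common-local-potentials} then gives $C^{1,\sigma}$ bounds on $K_1$ for every $\sigma<1$. With those, the full sources are uniformly $C^{0,\sigma}$, since they are smooth in $(x,w,Dw)$ and their coefficients converge in $C^2$. The Hessian cancellation estimate then gives uniform $C^{2,\nu}$ bounds on $K$ for any fixed $\gamma'<\nu<1$.

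Next, Ascoli and interpolation give strong $C^{2,\gamma'}$ convergence on each compact set. A diagonal choice over an exhaustion by nested compacts yields one common subsequence for all buffers. The limit is positive Hermitian $F$ together with scalars $A,B$; positivity and the gaps pass to the limit from the uniform two-sided bounds. To identify the limiting equation I would pass to the limit in each coefficient separately:

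\begin{itemize}
\item By \eqref{lower-transformed-arrows}, the arrows converge in $C^\infty$ on $K$ to $(\phi_L,0)$ and $(\sigma,0)^T$, since $\varepsilon_n\to0$. Hence $R=p_1J$ with $J$ given by \eqref{lower-upper-projection-coefficient}, and $P=r\Pi$.
\item The covariant derivatives $X,Y$ converge by \eqref{lower-transformed-jets}, keeping $D\sigma$ at $p$ when $p\in X^\circ$.
\item The force satisfies $\widetilde C_{1,n}\to A_0I$ because $\tau_n\to0$, and likewise $C^{\tau_n}_0,C^{\tau_n}_2\to A_0$.
\item By \eqref{lower-damping-limit}, $\Psi_n\to-\rho J+\rho\vartheta(I-J)$ in $C^2$ of the values, and $a^d_n,b^d_n\to-\rho$. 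Substituting these into the rows produces exactly \eqref{lower-punctured-rows}, with the sign conventions of \eqref{lower-rows}.
\end{itemize}

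For the curvature, I would reconstruct it in the limiting current-unitary frame via $\widehat H_1=2I-E_M(\Delta M)-Q_M$ in its transported form. Only the ordered product $F_n\Psi_n$ should be converted: it becomes $\rho J-\rho\vartheta K$ after conjugation by $S$.

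The main obstacle is uniformity of the coefficients. Every constant must be independent of the escaping means $L_n$ and $\mathcal B_n$. The damping $t_nN_n$ does not vanish, and it contains the large factors $t_nL_n$ and $t_n\mathcal B_n$. Uniformity holds only because \eqref{lower-damping-coefficient} shows that these enter through the bounded combinations $t_nL_n$ and $t_n\mathcal B_n$, multiplied by projections that are smooth up to $x=0$. I would check this carefully in the $C^{0,\sigma}$ step, applying chain rules to $J_x(F_n)$ and $\log\lambda_+(F_n)$ composed with $C^{1,\sigma}$ fields. I would also verify that the off-diagonal entries of $F_n$, which are retained in full, do not spoil the limit $J(F)$, using the fact that its denominator $d_F$ stays positive by the lower value bound.
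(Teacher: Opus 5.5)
Your proposal is correct and follows the paper's proof. The steps match: uniform two-sided Laplacian bounds on nested punctured buffers from Lemma~\ref{lower-local-gradient-gaps} and the value-coefficient estimates, then the logarithmic-potential $C^{1,\sigma}$ step and the Hessian cancellation estimate, then Ascoli with interpolation and a diagonal exhaustion, and finally identification of the rows through \eqref{lower-damping-limit} and the vanishing of the rotated residual in \eqref{lower-polar-force}. The one cosmetic difference is that the paper reads the curvature row directly off the limiting $\Delta F$ equation, using $d=2-\alpha$ and $S^{-1}(F\Psi)S^{-1}=\widehat\Psi$, rather than through $E_M$; this avoids the unbounded $M_n$, which your ``transported form'' would in any case have to replace by $F_n$.
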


\begin{proof}
On the compact value/gradient/gap set established above, square root,
matrix inverse and scalar reciprocals have uniformly bounded finite
derivatives.  The Lyapunov eigenmultipliers are
$(v_i+v_j)/2\ge\delta_{V,K}$; its coefficient derivative is
$D(\mathcal L_V^{-1})[\dot V]=
-\mathcal L_V^{-1}\mathcal L_{\dot V}\mathcal L_V^{-1}$.
Thus the complete six real component equations have scalar principal
part and bounded right sides, smooth and uniformly Lipschitz in
coordinate/value/first-derivative arguments.
The local logarithmic potential of a bounded source has gradient
modulus $Ch(1+|\log h|)$: split the $1/r$ kernel at distance $2h$,
then integrate the far $h/r^2$ difference.  Bounded harmonic remainders
have interior differentiated Poisson estimates.  This gives uniform
$C^{1,\gamma}$ bounds for every $\gamma<1$.
The full right sides are now uniformly $C^{0,\gamma}$.  In their
Hessian potentials subtract the source value from the zero-angular-mean
$r^{-2}$ kernel.  The near integral is bounded by
$\int_0^r s^{\gamma-1}ds$; the far comparison is
$h\int_{2h}^R s^{\gamma-2}ds=O(h^\gamma)$.
The fixed-cutoff and harmonic terms are smooth.  These are the actual
local Poisson estimates of Lemma~\ref{common-local-potentials}; they
give uniform $C^{2,\gamma}$ bounds on smaller buffers.
Ascoli and lower-exponent interpolation, followed by a countable
exhaustion and diagonal subsequence, give the stated strong local limit.
All ordered sources, curvatures and covariant products then converge
strongly locally.

The explicit coefficient convergence of \eqref{lower-damping-limit}
passes the persistent damping
$-\rho J(F)+\rho\vartheta(I-J(F))$.
Since $\tau_n=t_n/\mu_s\to0$ and the mass is fixed, the rotated
original residual in \eqref{lower-polar-force} vanishes, leaving exactly
$A_0I$.  In coefficient coordinates the limiting equations are
\[
 \Delta A=H'_{*0}-\alpha-\rho-(A_0+K_0)/q,
 \quad\Delta B=c-\rho-(A_0+K_2)/\beta,
\]
\[
 \Delta F=g^{-1}F_{\bar z}F^{-1}F_z+dF
       +F[-\rho J(F)+\rho\vartheta(I-J(F))]
       -F^{1/2}\mathcal L_V^{-1}(A_0I+K_1)F^{1/2}.
\]
These give \eqref{lower-punctured-rows} after current-unitary transport.
All $X,Y$ reference terms are the derivatives of the displayed limiting
arrows with the limiting metrics; they remain at $p$ when $p\ne q_0$.
Local strong convergence identifies all source densities on fixed
punctured tests; inherited global budgets and monotone exhaustion
give finite punctured masses.  No global uniform integrability, removal,
or interchange of $n$ with puncture removal has been used.
\end{proof}

\subsection{The angular potential and the original divisor flux}
\label{lower-angular-flux}

The strong punctured limit identifies source and curvature densities
away from $q_0$. To recover their global angular flux, we retain the
wedge divisor of the finite chains. For every finite $n$, the upper
kernel is $\mathcal O(1)$, while the
punctured limiting kernel is the constant $\mathcal O(2)e_2$.
The original wedge divisor records this degree difference.
Let
\[
 \delta_n=\operatorname{tr}(\Pi_nJ_n)
         =u_*e^{a_n-\ell_{L,n}}/p_{1,n}.
\]
Its only zero is the simple-section norm zero at $q_0$; it is positive
elsewhere.  The centered lower bound for $a$ and
\eqref{lower-logarithmic-values} give the integrable envelope
\begin{equation}\label{lower-angle-envelope}
 -C+\tfrac54\log u_*\le\log\delta_n\le0.
\end{equation}
Rescale $u_*$ by a fixed harmless constant so $u_*\le1$ in this
estimate.  The strong punctured limit therefore gives global $L^1$
convergence of $\log\delta_n$, by dominated convergence.
In the limit
\[
 \delta=\operatorname{tr}(\Pi J)
       =\det F/(F_{11}F_{22})>0\quad\hbox{on }X^\circ.
\]
The growth bounds also give $-C+\tfrac7{10}\log u_*\le\log\delta\le0$;
this is only a growth estimate, not metric removal.

The finite wedge Gauss--Codazzi formula, with the original kernel, is
\begin{equation}\label{lower-original-wedge}
 (\Delta\log\delta_n)\omega_X
   =\operatorname{tr}[(\Pi_n-J_n)\widehat H_{1,n}]\omega_X
      -(\kappa_{L,n}+\kappa_{K,n})\omega_X
      +\mathcal V\delta_{q_0}.
\end{equation}
All measures have bounded total variation.  Taking $n$ first identifies
the distributional limit from the global $L^1$ convergence; the locally
strong PDE identifies its density off $q_0$.
Thus
\begin{equation}\label{lower-angle-distribution}
 (\Delta\log\delta)\omega_X
     =f_\delta\omega_X+\gamma_{\rm fl}\mathcal V\delta_{q_0},
 \qquad
 \gamma_{\rm fl}=1+A_L-A_{\rm at},\quad 0\le\gamma_{\rm fl}\le\frac15.
\end{equation}
The coefficient follows directly from
$\log\delta=\log u_*+A-\log F_{11}-\log p_1$.
The atoms of $\Delta A,\Delta\log F_{11}$ are
$-\mathfrak a_0,-\mathfrak a_L$; $\log p_1$ is bounded with
finite-measure Laplacian and has no atom.  The lower bound for
$\gamma_{\rm fl}$ is $A_L\ge A_{\rm at}-1$.
The atom law gives its upper bound
$\gamma_{\rm fl}\le1-(4/5)A_{\rm at}\le1/5$.
All original atoms remain explicit.

For completeness, the full punctured angular equation retains the
noncommuting kinetic bracket.  Let $B_\delta=\Pi-J$,
$T_V=\operatorname{tr}V$.  Curvature subtraction for the two constant
holomorphic limit lines gives
\begin{equation}\label{lower-angular-equation}
 \Delta\log\delta=\operatorname{tr}(B_\delta D)
       -\rho(1+\vartheta)(1-\delta)-\kappa_L-\kappa_K.
\end{equation}
For any traceless Hermitian $2\times2$ matrix $B$,
$VB+BV-T_VB=\operatorname{tr}(VB)I$.  It follows, by substitution in
the Lyapunov equation, that
\[
 \mathcal L_V^{-1}B
  =\frac2{T_V}\{B-\tfrac12\operatorname{tr}(VB)V^{-1}\}.
\]
For $B_\delta$, $\operatorname{tr}(VB_\delta)=(p_1+r)(1-\delta)$.
Selfadjointness of the ordered inverse gives
\begin{equation}\label{lower-full-angular-bracket}
\begin{aligned}
 \Delta\log\delta={}&\frac2{T_V}
   \{\operatorname{tr}(B_\delta K_1)
     -\tfrac12(p_1+r)(1-\delta)\operatorname{tr}(V^{-1}K_1)\}\\
 &-\{A_0(p_1+r)/\det V+\rho(1+\vartheta)\}(1-\delta)
       -\kappa_L-\kappa_K.
\end{aligned}
\end{equation}
The complete kinetic bracket is not assigned a sign.  Its parallel and
perpendicular $X/Y$ components remain.  The undecomposed equation
has $|f_\delta|\le2\operatorname{tr}D+C+\kappa_L+\kappa_K$,
an integrable bound.  Applying \eqref{lower-angle-distribution} to
the constant test on compact $X$ therefore gives the global flux identity
\begin{equation}\label{lower-flux-compatibility}
 \mathcal V^{-1}\int_{X^\circ}
  [\operatorname{tr}(B_\delta D)-\rho(1+\vartheta)(1-\delta)
                             -\kappa_L-\kappa_K]
       =-\gamma_{\rm fl}.
\end{equation}
This preserves the finite $\mathcal O(1)$ versus limiting
$\mathcal O(2)$ kernel charge.

Let $H_K=2-\Delta\log F_{22}$ be the limiting kernel's induced
curvature on $X^\circ$.  Since
$\log F_{22}=\ell_Q^\infty-\log\delta$, its distributional extension
is $H_K\omega_X=H_Q\omega_X+(\Delta\log\delta)\omega_X$.
The physical quotient has total curvature $2\mathcal V$ and original
atom $\mathfrak a_Q$.  Thus
\begin{equation}\label{lower-kernel-curvature-mass}
 \mathcal V^{-1}\int_{X^\circ}H_K
   =2-A_Q-\gamma_{\rm fl}
   =1+\frac45A_{\rm at}+\frac{11}{5}A_2.
\end{equation}
The last equality uses
$\alpha A_{\rm at}=c(A_L+A_Q)+dA_2$.
Gauss--Codazzi and the persistent kernel damping give
$H_K=\alpha-\rho\vartheta+\operatorname{tr}(KD)-\kappa_K$.
Local strong convergence identifies
$k^\circ=\mathcal V^{-1}\int_{X^\circ}\kappa_K$.
Consequently
\begin{equation}\label{lower-kernel-source-mass}
 \mathcal V^{-1}\int_{X^\circ}\operatorname{tr}(KD)
 =1+\frac45A_{\rm at}+\frac{11}{5}A_2-\alpha
                                  +\rho\vartheta+k^\circ.
\end{equation}

\subsection{Controlling the full lower-arrow derivative}
\label{lower-covariant-payment}

The angular-flux calculation fixes the punctured upper-kernel curvature
mass. To compare it with the source bound, we still need to control the
mixed derivative term without assigning it a sign. We test the lower
arrow against its bounded kernel projection, remove the cutoff errors
using finite intrinsic energy, and use the full bottom-row source to
bound the cross term. This is a global integral estimate on the
punctured curve, not an extension theorem for the limiting metric.

The limiting lower arrow is a holomorphic section $v$ of the lower
Hom-bundle, intrinsically bounded by $|v|^2=r<d$, with full
common-connection derivative $Y$. Here $K=I-J$ projects onto the upper
kernel, whereas $\Pi$ projects onto the lower line.
In its actual common Chern connection the curvature is
\[
 \widehat H_1-H_2I=D-U_2I-m_\rho K,\qquad
 m_\rho=\rho(1+\vartheta)\ge0.
\]
Because $K$ is the holomorphic kernel on $X^\circ$,
$W_K=\nabla'_zJ=JW_KK$, $\nabla'_zK=-W_K$.
Differentiating $\phi_\infty K=0$ gives
$XK=u^*W_K$, so
$|XK|^2/(2g)=(p_1/2)\kappa_K$ and
$\|W_K\|_{\rm HS}^2/g=\kappa_K$.
Use the bounded section $\zeta=Kv$, and put
\[
 \begin{aligned}
 u_K&=|\zeta|^2=r(1-\delta)\le d,\quad
 K_{2,K}=|KY|^2/(2g),\quad K_{2,J}=|JY|^2/(2g),\\
 \mathscr X&=\operatorname{Re}\langle JY,W_KKv\rangle/g.
 \end{aligned}
\]
The product rule gives
$\nabla'_z\zeta=KY-W_KKv$ and
$\nabla''\zeta=-W_K^*Jv$.
Hence $|\nabla'\zeta|^2/g\le C(K_2+d\kappa_K)$ and
$|\nabla''\zeta|^2/g\le d\kappa_K$.
All these intrinsic energies are finite by the inherited budgets.

For a compactly supported cutoff $\chi$ on $X^\circ$, the bilinear
Bochner formula for the holomorphic section $v$ is
\[
 \int\chi\,\operatorname{Re}\langle Y,\nabla'_z\zeta\rangle/g
 =\int\chi\,\operatorname{Re}
     \langle(\widehat H_1-H_2I)v,\zeta\rangle+\mathcal E_\chi.
\]
In a holomorphic frame it is obtained from
$\nabla''\nabla'v=-g(\widehat H_1-H_2I)v$ by integration by parts.
Choose logarithmic capacity cutoffs vanishing near $q_0$, tending to
one off $q_0$, with $\|d\chi_\epsilon\|_2\to0$.
A smooth profile between radii $\epsilon^2$ and $\epsilon$ gives
$\|d\chi_\epsilon\|_2^2\le C/|\log\epsilon|$ by polar integration.
The cutoff error is bounded by
\[
 |\mathcal E_{\chi_\epsilon}|\le C\sqrt d\,
    \|d\chi_\epsilon\|_2(\int_{X^\circ}|Y|^2/g)^{1/2}\to0.
\]
Both bulk sides are integrable.  First take the established strong limit
on fixed punctured tests, then remove the cutoff.  This gives the
complete identity
\begin{equation}\label{lower-bilinear-bochner-payment}
 \int_{X^\circ}(2K_{2,K}-\mathscr X)
  =\int_{X^\circ}
       [\operatorname{Re}\langle Dv,Kv\rangle-u_K(U_2+m_\rho)].
\end{equation}
Bounded intrinsic norms suffice; no coordinate metric removal is needed.

Probe the ordered row $\mathcal L_VD=A_0I+K_1$ with $K$.
The $R=p_1J$ terms vanish under this trace pairing, whereas the
$P=vv^*$ terms are
$\operatorname{tr}(K\mathcal L_PD)=\operatorname{Re}\langle Dv,Kv\rangle$.
Therefore
\begin{equation}\label{lower-ordered-kernel-probe}
 c\operatorname{tr}(KD)+\operatorname{Re}\langle Dv,Kv\rangle
                =A_0+\tfrac{p_1}{2}\kappa_K+K_{2,K}.
\end{equation}
Combining its integral with \eqref{lower-bilinear-bochner-payment} gives
\[
 c\int_{X^\circ}\operatorname{tr}(KD)-A_0\mathcal V
  =\int_{X^\circ}
      [\tfrac{p_1}{2}\kappa_K-K_{2,K}
                    +\mathscr X-u_K(U_2+m_\rho)].
\]
The parallel $JY$ has no asserted favorable sign.  Instead use
$|W_KKv|^2/g=u_K\kappa_K$ and the ordinary square inequality
with the actual positive $\beta$:
\begin{equation}\label{lower-full-y-payment}
\begin{aligned}
 \mathscr X&\le(u_K/\beta)K_{2,J}+(\beta/2)\kappa_K,\\
 \mathscr X-u_KU_2-K_{2,K}
 &\le(\beta/2)\kappa_K-A_0u_K/\beta
                           -(1+u_K/\beta)K_{2,K}.
\end{aligned}
\end{equation}
All terms are integrable, since $u_KK_2/\beta\le dU_2$.
This remains valid at $u_K=0$ and never divides by $r$ or $\sigma$.
The positive $J$-Rayleigh quotient of $V$ gives
$0<c-p_1+r\delta$, hence
$p_1+\beta=p_1+d-r<c+d=8/3$.
Since $m_\rho\ge0$, we obtain
\begin{equation}\label{lower-global-kernel-cost}
 c\int_{X^\circ}\operatorname{tr}(KD)
              \le A_0\mathcal V+\frac43\int_{X^\circ}\kappa_K.
\end{equation}
This is the required global covariant estimate: the full bottom source
controls the cross term.

Insert \eqref{lower-kernel-source-mass} in
\eqref{lower-global-kernel-cost}.  With the frozen constants the result
is exactly
\begin{equation}\label{lower-final-incompatibility}
 \frac12k^\circ\ge\frac23A_{\rm at}+\frac{11}{6}A_2-\frac13
                                  +\frac56\rho\vartheta.
\end{equation}
Since $A_{\rm at}\ge1$, $A_2\ge0$, $\rho\ge0$ and
$\vartheta\ge777/323>0$, it forces
\[
 k^\circ\ge\frac23+\frac53\rho\vartheta\ge\frac23.
\]
But \eqref{lower-kernel-allocation-bound} gives
$k^\circ+\rho\le88/233<2/3$.  This contradiction excludes every
original unbounded solution sequence in both divisor configurations.

\begin{theorem}[Uniform values for every original lower-boundary chain]
\label{lower-value-bound}
Fix the original rank-$(2,1,1)$ coefficient chain, background, reference,
one finite mass and slab as above.  Every normalized positive-symbol
$C^{2,\gamma}$ auxiliary solution with $0<\tau\le\tau_b$ obeys
\[
 \|a\|_\infty+\|b\|_\infty+\|M\|_{{\rm op},\infty}\le B_*.
\]
The bound is independent of $\tau$, the solution and its original
symbol minima; dependence on the fixed chain/reference/mass is allowed.
\end{theorem}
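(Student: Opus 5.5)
The proof will be by contradiction, assembling the results of this section in the order in which they were established. Suppose no uniform bound $B_*$ exists. Then there is a sequence of normalized positive-symbol solutions with $0<\tau_n\le\tau_b$ whose full values are unbounded. Lemma~\ref{lower-mean-transfer} will be applied first. Its hypotheses \eqref{lower-green-u}--\eqref{lower-radial-upper} hold for every such solution, through Lemmas~\ref{common-negative-propagation}, \ref{common-scalar-fluctuations}, \ref{common-trace-comparison} and \ref{common-trace-bound}. None of those estimates divides by $\psi$, so the lemma converts unbounded values into $L_n=\overline{u_n}\to+\infty$ together with the linear growth bound. The converse direction of the same lemma shows that the theorem is equivalent to a uniform upper bound on this single mean.

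Next, Lemma~\ref{lower-residual-extraction} will be applied to the escaping sequence. It excludes isotropic escape using the bottom row, excludes $A>-1$ using the degree-one spectral test, and excludes $B>-1$ using the Hilbert--Cauchy allocation \eqref{lower-hilbert-cauchy}. What remains is the residual profile \eqref{lower-residual-profiles}: $t_nL_n\to\rho\ge0$, the limiting ratio $\vartheta>-1$, and low probes converging to the constant projection $\Pi_L$ onto the saturated line. On this residual I will carry out the following steps in order.
\begin{enumerate}
\item Record the atom laws of Lemma~\ref{lower-divisor-atoms}, in particular $A_{\rm at}\ge1$, together with $A_2\ge0$ from the positivity of the curvature atoms.
\item Apply the atom-subtracted source allocation, which gives \eqref{lower-escape-parameter-region} (including $\vartheta\ge777/323$) and the boxed bound \eqref{lower-kernel-allocation-bound}, $k^\circ+\rho\le 88/233$.
\item Pass to the strong punctured limit, Lemma~\ref{lower-strong-punctured-limit}. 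It rests on the two-scale gauges and on the local gradients and gaps of Lemma~\ref{lower-local-gradient-gaps}.
\item Use the angular flux identity \eqref{lower-flux-compatibility} to obtain the kernel source mass \eqref{lower-kernel-source-mass}.
\item Combine that mass with the covariant estimate \eqref{lower-global-kernel-cost} to obtain \eqref{lower-final-incompatibility}.
\end{enumerate}

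The contradiction then comes from comparing the two sides. Since $A_{\rm at}\ge1$, $A_2\ge0$, $\rho\ge0$ and $\vartheta>0$, \eqref{lower-final-incompatibility} gives $k^\circ\ge2/3$. This is incompatible with $k^\circ+\rho\le 88/233<2/3$. The argument covers both $p=q_0$ and $p\ne q_0$, because the divisor configuration enters only through the atom laws, and those laws are stated for both cases. Hence $\sup_n L_n<\infty$, and the reconstruction step in Lemma~\ref{lower-mean-transfer} gives the explicit bound $B_*=D+(C_T+DW)/\delta$ with $D=\max\{0,B_1+C_-\}$.

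The step I expect to need the most care is subsequence bookkeeping, not any new estimate. All the limits must be taken along one common subsequence: the normalized means, the relative-mean differences, the weak limits of the source measures and their atoms, and the strong punctured limit. This matters because quantities such as $s_\ell,s_h,A_{\rm at},k^\circ,\rho,\vartheta$ enter both inequalities, and each must be the same number on both sides. My approach is to fix the subsequence of Lemma~\ref{lower-residual-extraction} first, extract the finitely many bounded scalar limits and the countably many measure limits by a diagonal argument, and only then invoke the punctured PDE limit along a further subsequence. Each further refinement preserves the limits already identified, so the final inequalities are compatible.

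A second point to check is that every constant depends only on the fixed chain, reference, mass and slab, as the statement requires. It must not depend on $\tau$ or on the symbol minima, and it must not depend on $L_n$ except through the identified limits.
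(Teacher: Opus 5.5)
Your proposal is correct and follows the paper's route. It reduces the theorem via Lemma~\ref{lower-mean-transfer} to bounding the single mean $\overline{\log\operatorname{tr}(e^{-M})}$, excludes its escape by exactly the chain you list ending with \eqref{lower-final-incompatibility} against \eqref{lower-kernel-allocation-bound}, and reconstructs the values from the reference-weight comparison. One cosmetic correction: $D+(C_T+DW)/\delta$ bounds each of $\|a\|_\infty$, $\|b\|_\infty$, $\|M\|_{\mathrm{op},\infty}$ separately, so the stated sum needs three times that, as in the paper's choice $B_*=3(D_-+H_*)$.
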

\begin{proof}
If the solution family is empty the bound is vacuous.  Otherwise the
contradiction above gives a finite
$U_*:=\sup\overline{\log\operatorname{tr}(e^{-M})}$: an infinite
supremum would let us choose actual solutions with means at least $n$,
the excluded sequence.
Set $D_-:=\max(0,U_*+C_-)$ and
$H_*=(C_T+D_-W)/\delta$, $\delta=4/21$.
Lemma~\ref{lower-mean-transfer} gives nonnegative shifted fields;
the reference-weight comparison gives
\[
 (a+D_-)+\operatorname{tr}(M+D_-I)+(b+D_-)\le H_*.
\]
Every summand and every shifted middle eigenvalue is bounded.
For instance $B_*=3(D_-+H_*)$ bounds the requested sum.
This reconstruction uses only the one scalar mean and applies to all
six unrestricted fields, including at $p$ and when $p=q_0$.
\end{proof}

\begin{theorem}[Original rank-one-lower existence]\label{lower-existence}
Every original chain $\mathcal O(1)\xrightarrow{\psi}\mathcal O(2)^2
\xrightarrow{\phi}\mathcal O(3)$ with upper coefficient rank two,
lower coefficient rank one and nonzero composition admits a smooth
positive Hermitian metric triple satisfying the frozen reduced dHYM
equation and strict matrix calibration in all three blocks.
\end{theorem}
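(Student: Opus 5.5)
The plan is to follow the proof of Theorem~\ref{primary-existence} verbatim, replacing the primary value bound by Theorem~\ref{lower-value-bound} and the primary reference by the lower-boundary reference of Section~\ref{lower-reference}. Fix the original chain, the zero-safe reference $h'_j$, one exponent $\gamma\in(0,1)$, the slab endpoint $\tau_b=T_+/2$, and the single finite mass $\mu_s$ chosen by Lemma~\ref{common-anchor} in the boundary setting. Theorem~\ref{common-positive-continuation}, applied to this boundary setting, gives actual normalized smooth positive-symbol auxiliary solutions at every $\tau\in(0,1]$. That theorem does not assume a positive lower norm. Its stage estimates come from Lemma~\ref{common-positive-size} together with the zero-safe boundary localization $\Psi_B=\rho-a+e^{-b}$ in Theorem~\ref{common-gradient-gaps}.

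Next, restrict to $0<\tau\le\tau_b$. On this slab $C_1^\tau\ge A_0I/2$. Theorem~\ref{lower-value-bound} supplies a uniform bound $B_*$ on $a,b,M$, independent of $\tau$ and of the symbol minima. With $B=B_*$ and $c_1=A_0/2$, Theorem~\ref{common-gradient-gaps}, in its lower-boundary version on the positive-force slab, then gives a uniform full gradient bound and positive gaps $q\ge\delta_q$, $\beta\ge\delta_\beta$, $V\ge\delta_VI$. Two points need checking here: that this version uses only the top reconstruction and the bottom transform $e^{-b}$, and that the lower zero $p$ is automatically away from bottom and mixed faces because $\beta(p)=d$. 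Both points hold whether $p=q_0$ or $p\ne q_0$.

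Finally, take $\tau_n\downarrow0$ and a solution at each $\tau_n$, and apply Theorem~\ref{common-original-endpoint}. It yields a subsequence converging strongly in $C^{2,\gamma}$ in the original chart. The limit reconstructs smooth positive metrics $h_0=h'_0e^a$, $h_1=h'_1e^M$, $h_2=h'_2e^b$ on the original bundles with the original maps $\phi,\psi=v\sigma$ fixed. Because the mass is fixed and the values are bounded, the damping $\tau_n\mu_sB(w_n)w_n$ and the forcing $\tau_nF'_*$ tend to zero. The endpoint rows give $H_0,H_2>\alpha$ and $\widehat H_1>\alpha I$ via the positive Lyapunov inverse, so $\mathcal B_j>I/3$ and the calibration is strict, $\mathcal C_j=\mathcal B_j/\sin\theta_4>0$. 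The one point that needs care is the zero-safe regularity check in the endpoint theorem. Every coefficient of \eqref{common-scalar-principal} must stay smooth at $p$ and at a coincident $q_0$, using only $D\psi$ and never dividing by $r$, $\psi$ or the composition. This is exactly what the boundary reference was constructed to guarantee.

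The main obstacle is entirely in the value bound, which is Theorem~\ref{lower-value-bound}, already proved before this statement. Once that is available, this proof is an assembly step, and its only real content is confirming that each common result is used in its separately recomputed boundary form rather than its primary form.
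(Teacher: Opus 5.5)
Your proposal is correct and follows the paper's proof essentially step for step. You fix the boundary mass $\mu_s$ via Lemma~\ref{common-anchor}, obtain positive-parameter solutions from Theorem~\ref{common-positive-continuation}, feed the value bound of Theorem~\ref{lower-value-bound} with $c_1=A_0/2$ into Theorem~\ref{common-gradient-gaps}, and pass to the undamped strictly calibrated endpoint with Theorem~\ref{common-original-endpoint}, covering both $p=q_0$ and $p\ne q_0$; the zero-safe checks you flag (retaining $D\psi(p)$, never dividing by $r$, $\psi$ or the composition) are exactly the ones the paper records.
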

\begin{proof}
For this zero-safe reference choose once
\[
 \mu_s\ge\max(t_{\rm req},T_{\rm all})/\tau_b,
\]
using the finite inverse, contraction and all-solution anchor constants
of Lemma~\ref{common-anchor}.  This is before choosing a positive stage
floor.  All coefficients there are smooth at $p$, including $D\psi(p)$,
and have no inverse lower norm.  Thus the independent full six-field
construction of Theorem~\ref{common-positive-continuation} supplies an
actual normalized smooth positive-symbol solution at every
$\tau\in(0,1]$, at this same fixed mass.  Its bounded-stage compact
map and component argument do not assume an original zero endpoint.

Theorem~\ref{lower-value-bound} supplies a parameter-uniform value
bound on the fixed positive-force slab.  The hypotheses of
Theorem~\ref{common-gradient-gaps} are now satisfied with this bound,
the same mass/reference and $c_1=A_0/2$.
That theorem gives global gradients and positive $q,\beta,V$ gaps
without a positive lower-arrow norm. Choose $\tau_n=\tau_b/(n+1)$
and a solution at each parameter. The same fixed reference and mass,
these global bounds and positive-parameter existence meet the hypotheses
of Theorem~\ref{common-original-endpoint}. Its ordered inverse is
$E_M^{-1}\mathcal L_V^{-1}$, with the Lyapunov inverse applied first.
All coefficients remain smooth at the lower zero, including $D\psi(p)$,
and at repeated middle spectra; none divides by the lower norm.
The theorem gives strong global $C^{2,\gamma}$ subsequence convergence,
the single normalization, smoothness, the undamped equation and strict
calibration on the original bundles. This application includes $p=q_0$.
Here the original values are bounded, so fixed mass makes both the
damping and reference-forcing terms vanish. In contrast, the escaping
punctured sequence had $t_nL_n\to\rho$, which produces the persistent
damping in \eqref{lower-punctured-rows}.
\end{proof}

\section*{Acknowledgments}

The initial exploration, analytical derivations, and drafting of this manuscript were substantially assisted by OpenAI's GPT-5.6 Sol and GPT-6 Astra under the authors' direction. The authors have checked the arguments and take responsibility for the mathematical claims.

\end{document}